\theoremstyle{plain}
\newtheorem{thm}{Theorem}[section]
  \theoremstyle{remark}
  \newtheorem*{acknowledgement*}{Acknowledgement}
  \theoremstyle{plain}
  \newtheorem{lem}[thm]{Lemma}
  \theoremstyle{definition}
  \newtheorem{defn}[thm]{Definition}
  \theoremstyle{remark}
  \newtheorem*{rem*}{Remark}
  \theoremstyle{plain}
  \newtheorem{prop}[thm]{Proposition}
  \theoremstyle{plain}
  \newtheorem{cor}[thm]{Corollary}
  \theoremstyle{remark}
  \newtheorem*{note*}{Note}
  \theoremstyle{remark}
  \newtheorem{rem}[thm]{Remark}
\begin{document}

\title{Local-global compatibility and the action of monodromy on nearby
cycles}

\author{Ana Caraiani}
\maketitle
\begin{abstract}
We strengthen the local-global compatibility of Langlands correspondences
for $GL_{n}$ in the case when $n$ is even and $l\not=p$. Let $L$
be a CM field and $\Pi$ be a cuspidal automorphic representation
of $GL_{n}(\mathbb{A}_{L})$ which is conjugate self-dual. Assume
that $\Pi_{\infty}$ is cohomological and not {}``slightly regular'',
as defined by Shin. In this case, Chenevier and Harris constructed
an $l$-adic Galois representation $R_{l}(\Pi)$ and proved the local-global
compatibility up to semisimplification at primes $v$ not dividing
$l$. We extend this compatibility by showing that the Frobenius semisimplification
of the restriction of $R_{l}(\Pi)$ to the decomposition group at
$v$ corresponds to the image of $\Pi_{v}$ via the local Langlands
correspondence. We follow the strategy of Taylor-Yoshida, where it
was assumed that $\Pi$ is square-integrable at a finite place. To
make the argument work, we study the action of the monodromy operator
$N$ on the complex of nearby cycles on a scheme which is locally
etale over a product of semistable schemes and derive a generalization
of the weight-spectral sequence in this case. We also prove the Ramanujan-Petersson
conjecture for $\Pi$ as above. 
\end{abstract}

\section{Introduction}

In this paper, we aim to strengthen the local-global compatibility
of the Langlands correspondence.
\begin{thm}
\label{local-global compatibility}Let $n\in\mathbb{Z}_{\geq2}$ be
an integer and $L$ be any CM field with complex conjugation $c$.
Let $l$ be a prime of $\mathbb{Q}$ and $\iota_{l}$ be an isomorphism
$\iota_{l}:\bar{\mathbb{Q}}_{l}\to\mathbb{C}$. Let $\Pi$ be a cuspidal
automorphic representation of $GL_{n}(\mathbb{A}_{L})$ satisfying
\begin{itemize}
\item $\Pi^{\vee}\simeq\Pi\circ c$
\item $\Pi$ is cohomological for some irreducible algebraic representation
$\Xi$ of $GL_{n}(L\otimes_{\mathbb{Q}}\mathbb{C})$. 
\end{itemize}
Let \[
R_{l}(\Pi):\mbox{Gal}(\bar{L}/L)\to GL_{n}(\bar{\mathbb{Q}}_{l})\]
 be the Galois representation associated to $\Pi$ by \cite{Shin,CH}.
Let $p\not=l$ and let $y$ be a place of $L$ above $p$. Then we
have the following isomorphism of Weil-Deligne respresentations \[
WD(R_{l}(\Pi)|_{Gal(\bar{L}_{y}/L_{y})})^{F-ss}\simeq\iota_{l}^{-1}\mathcal{L}_{n,L_{y}}(\Pi_{y}).\]

\end{thm}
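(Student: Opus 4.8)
The strategy is to follow Taylor--Yoshida, realizing $R_l(\Pi)$ (after base change and twisting) in the cohomology of a Kottwitz-type Shimura variety attached to a unitary similitude group, and to compute the monodromy action at $p$ using a geometric model. First I would choose an auxiliary imaginary quadratic field and a CM field $F \supset L$ so that $\Pi$ descends (via base change and the patching of Chenevier--Harris / Shin) to a conjugate self-dual representation whose associated Galois representation appears in the cohomology of an Igusa-type / Shimura variety $X$ with good integral models away from $l$. The point where Taylor--Yoshida use square-integrability at a finite place is exactly where one gets a \emph{semistable} (strictly semistable) integral model, hence a clean weight-spectral sequence and a direct identification of the monodromy operator $N$ with the one on the Weil--Deligne side. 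Since we do not have square-integrability, I would instead arrange that, at the chosen place, the level structure produces a model which is only \emph{locally \'etale over a product of strictly semistable schemes}; the promised generalization of the weight-spectral sequence in this setting (the nearby-cycles computation advertised in the abstract) is what replaces the classical input.

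Concretely, the key steps in order: (1) Reduce Theorem~\ref{local-global compatibility} to the case where $L$ contains an imaginary quadratic field and $\Pi$ is (conjugate) self-dual of the right type, using solvable base change, the fact that local-global compatibility can be checked after base change, and the Brauer-type argument of Taylor--Yoshida/Shin to descend from $F$ to $L$ and from $GL_n$-type to $U(1,n-1)$-type. Here one also reduces, by a standard twisting/cyclic-base-change trick, to the case where $\Pi_y$ has an Iwahori-fixed vector or more generally where the monodromy is unipotent, so that only the exponent of the local parameter's nilpotent part must be pinned down. (2) Realize the relevant Hecke-isotypic piece of $\varinjlim H^*(X_{\bar{\mathbb{Q}}}, \mathcal{L}_\xi)$ and identify its semisimplification with $R_l(\Pi)|_{G_{L_y}}$ up to Frobenius semisimplification --- this is already known from \cite{CH}. (3) Choose a prime above $p$ and a $p$-integral model of $X$ (with appropriate parahoric/pro-$p$ level) which is, \'etale-locally, a product of strictly semistable schemes; compute $R\Psi$ and the monodromy $N$ on it using the generalized weight-spectral sequence established earlier in the paper. (4) Match the resulting filtration on the nearby-cycles cohomology with the monodromy filtration predicted by the local Langlands image $\mathcal{L}_{n,L_y}(\Pi_y)$: the graded pieces are controlled by cohomology of the (products of) strata, which by induction/known cases are described by representations attached to $\mathrm{GL}$ of smaller rank, and purity (Weil II) forces the filtration to be \emph{the} monodromy filtration, hence $N^{F\text{-}ss}$ on both sides agrees. (5) Deduce as a byproduct the Ramanujan--Petersson bound for $\Pi$ from purity of the nearby cycles (all Frobenius eigenvalues on the graded pieces are Weil numbers of the expected weight).

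The main obstacle --- and the technical heart --- is step~(3)--(4): establishing that $N$ acts on the nearby-cycle complex of a scheme \emph{locally \'etale over a product of semistable schemes} in the way a naive tensor-product / Künneth heuristic suggests, i.e.\ that $N = N_1 \otimes 1 + 1 \otimes N_2 + \cdots$ in the appropriate derived sense, and that the associated spectral sequence degenerates and is compatible with the weight filtration. Over a product of strictly semistable schemes this is a Künneth argument applied to the classical (Rapoport--Zink / Saito) weight-spectral sequence, but the ``locally \'etale'' hypothesis means one must work with a model that is not globally such a product, so one has to check that the local computation glues and that no extra monodromy is introduced by the \'etale-local structure; controlling this, and then verifying that the global filtration one obtains is genuinely the monodromy filtration (which requires a purity/weight-monodromy input, available here because the strata are proper smooth and one can bootstrap from known lower-rank cases), is where the real work lies. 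A secondary subtlety is bookkeeping the similitude factors and the shift/twist conventions so that the Weil--Deligne representation extracted from $R\Psi$ matches $\iota_l^{-1}\mathcal{L}_{n,L_y}(\Pi_y)$ on the nose rather than up to an unramified twist.
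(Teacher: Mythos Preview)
Your high-level outline captures the Taylor--Yoshida template, but it misses the central structural innovation that makes the argument go through in the even, non-slightly-regular case. You propose to realize $R_l(\Pi)$ (after base change) directly in the cohomology of a $U(1,n-1)$-type Shimura variety and then read off the monodromy. The obstruction is precisely that, when $n$ is even and $\Xi$ is not slightly regular, $R_l(\Pi)$ is \emph{not} known to occur in the cohomology of such a variety; this is why Shin's result stops where it does, and why Chenevier--Harris construct $R_l(\Pi)$ by an indirect congruence argument rather than by cutting it out of some $H^*(X,\mathcal{L}_\xi)$. So your step~(2), ``identify its semisimplification with $R_l(\Pi)|_{G_{L_y}}$ \dots\ already known from \cite{CH}'', does not follow: \cite{CH} gives you the Galois representation, not a geometric realization of it.

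What the paper does instead is pass to a further quadratic CM extension $F/F'$ in which the chosen place $\mathfrak{p}$ splits as $\mathfrak{p}_1\mathfrak{p}_2$, and work with a unitary similitude group whose real points have signature $(1,n-1)$ at \emph{two} archimedean places related by $\sigma\in\mathrm{Gal}(F/F')$ and $(0,n)$ elsewhere. The $\Pi^1$-isotypic part of the middle cohomology of the associated Shimura variety then realizes $R_l(\Pi)^{\otimes 2}$ (up to an explicit abelian twist), not $R_l(\Pi)$. This two-place signature is exactly why the integral model with Iwahori level at $\mathfrak{p}_1,\mathfrak{p}_2$ is locally \'etale over a \emph{product} of two strictly semistable schemes---the product structure is not an artifact of a single deep level, as your outline suggests, but reflects the two independent one-dimensional $p$-divisible groups at $\mathfrak{p}_1$ and $\mathfrak{p}_2$. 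The generalized weight spectral sequence is then built for this product-type geometry, with $N$ acting as $N_1\otimes 1+1\otimes N_2$.

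The logical order is also different from what you sketch. Temperedness (Ramanujan) is not a \emph{byproduct} of purity here; it is proved first, via the trace formula computation of the cohomology of Igusa varieties combined with Tadi\'c's classification of generic unitary representations, and is then used as \emph{input}: temperedness forces $H^j(Y_{\mathrm{Iw}(m),S,T},\mathcal{L}_\xi)[\Pi^{1,\mathfrak{S}}]$ to vanish outside the middle degree, which makes both spectral sequences degenerate at $E_1$ and yields purity of the Weil--Deligne representation attached to $R_l(\Pi)^{\otimes 2}$. The final step is not a direct matching of filtrations but a purity-plus-uniqueness argument: since the semisimplified $W_{L_y}$-representations already agree by \cite{CH}, and a pure Weil--Deligne representation has at most one compatible monodromy operator, purity of $WD(R_l(\Pi)^{\otimes 2})$ forces purity of $WD(R_l(\Pi))$, hence equality with $\iota_l^{-1}\mathcal{L}_{n,L_y}(\Pi_y)$.
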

Here $\mathcal{L}_{n,L_{y}}(\Pi_{y})$ is the image of $\Pi_{y}$
under the local Langlands correspondence, where the geometric normalization
is used. 

In the process of proving Theorem \ref{local-global compatibility},
we also prove the Ramanujan-Petersson conjecture for $\Pi$ as above. 
\begin{thm}
\label{Ramanujan}Let $n\in\mathbb{Z}_{\geq2}$ be an integer and
$L$ be any CM field. Let $\Pi$ be a cuspidal automorphic representation
of $GL_{n}(\mathbb{A}_{L})$ satisfying 
\begin{itemize}
\item $\Pi^{\vee}\simeq\Pi\circ c$
\item $\Pi_{\infty}$ is cohomological for some irreducible algebraic representation
$\Xi$ of $GL_{n}(L\otimes_{\mathbb{Q}}\mathbb{C})$.
\end{itemize}
Then $\Pi$ is tempered at any finite place of $L$. 

\end{thm}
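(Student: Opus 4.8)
The plan is to deduce Theorem~\ref{Ramanujan} from Theorem~\ref{local-global compatibility} by reducing temperedness of $\Pi_v$ to a purity statement for $WD(R_l(\Pi)|_{\mathrm{Gal}(\bar L_v/L_v)})$, and then reading that purity off the cohomology of the unitary Shimura varieties underlying the construction of $R_l(\Pi)$, via the generalized weight--spectral sequence of this paper. For the reduction, note first that since $\Pi$ is cuspidal on $GL_n$ it is globally generic, so each $\Pi_v$ is generic, and it is unitary. By Tadi\'c's classification of the generic unitary dual of $GL_n$ over the $p$-adic field $L_v$, $\Pi_v$ is an irreducible parabolic induction of representations that are either tempered or of complementary--series type $\delta|\det|^{s}\times\delta|\det|^{-s}$ with $\delta$ tempered and $0<s<\tfrac12$, and $\Pi_v$ is tempered precisely when no factor of the second type occurs. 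Under the local Langlands correspondence with the geometric normalization, a factor of the first type contributes a pure summand of a fixed weight to the Weil--Deligne representation, whereas a factor of the second type contributes a summand on which $N$ vanishes and whose Frobenius eigenvalues have two distinct absolute values, hence a non-pure summand. Thus $\Pi_v$ is tempered if and only if $\mathcal{L}_{n,L_v}(\Pi_v)$ is pure, and by Theorem~\ref{local-global compatibility} this is equivalent to $WD(R_l(\Pi)|_{\mathrm{Gal}(\bar L_v/L_v)})$ being pure; so it suffices to prove this purity for every finite place $v$.

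Next, purity of $\mathcal{L}_{n,L_v}(\Pi_v)$ can be tested after a solvable base change: for a place $w$ of a CM extension $L'/L$ above $v$, the representation $\Pi_{L',w}$ corresponds to the restriction of $\mathcal{L}_{n,L_v}(\Pi_v)$ to $W_{L'_w}$, and this restriction is pure if and only if $\mathcal{L}_{n,L_v}(\Pi_v)$ is. Using cyclic base change I would choose $L'$ so that $L'=E\cdot (L')^{+}$ with $(L')^{+}$ totally real and $E$ imaginary quadratic, so that $\Pi_{L'}$ remains cuspidal, conjugate self-dual and cohomological (possible since $\Pi$ has only finitely many self-twists), so that the rational prime $p$ below $v$ splits in $E$, and so that $L'_w/L_v$ is ramified enough that $\Pi_{L'}$ becomes Iwahori-spherical at every place of $(L')^{+}$ above $p$ --- the last point being possible because the inertia action on $\mathcal{L}_{n,L_v}(\Pi_v)$ factors through a finite quotient, so over a sufficiently ramified extension only a unipotent part can survive. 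After twisting by an algebraic Hecke character to fix the self-duality sign (an issue only because $n$ is even), $\Pi_{L'}$ descends to an automorphic representation of a unitary group $G/(L')^{+}$ attached to a division algebra, with $G$ of signature $(1,n-1)$ at one archimedean place and definite at the others; then, as in \cite{Shin,CH}, $R_l(\Pi_{L'})$ occurs up to a twist in the \'etale cohomology of the associated compact Shimura varieties $\mathrm{Sh}_K$ with $K$ of Iwahori type at $p$ --- directly when $\Pi_\infty$ is slightly regular, and otherwise through a limiting argument over representations of regular weight.

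Finally, when $p$ splits into several places of $(L')^{+}$ and Iwahori level is imposed at them, the natural integral model of $\mathrm{Sh}_K$ at a place above $p$ is not semistable but is only \'etale-locally a product of strictly semistable schemes --- exactly the geometric situation studied in the main technical result of this paper. That result computes the action of the monodromy operator $N$ on the complex of nearby cycles and yields a weight--spectral sequence generalizing that of Rapoport--Zink, whose consequence is that the weight filtration on $H^{*}(\mathrm{Sh}_{K,\bar\eta},\bar{\mathbb{Q}}_l)$ coincides with the monodromy filtration; in particular this cohomology is pure. As $K$ may be fixed once and for all --- deep enough to contain invariants of $\Pi_{L'}$ and of all the regular-weight representations approximating it --- the weight filtration on $H^{*}(\mathrm{Sh}_{K,\bar\eta})$ bounds the Frobenius eigenvalues of $WD(R_l(\Pi_{L'})|_{W_{L'_w}})$ uniformly and forces it to be pure; untwisting and restricting along $W_{L'_w}\subset W_{L_v}$ preserves purity, so $WD(R_l(\Pi)|_{\mathrm{Gal}(\bar L_v/L_v)})$ is pure and $\Pi_v$ is tempered. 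The main obstacle is this last step: proving the generalized weight--spectral sequence for schemes only locally \'etale over a product of semistable schemes (the technical core of the paper), together with --- in the non-slightly-regular case --- checking that the purity available at finite level genuinely descends to $R_l(\Pi)$, which is precisely why it matters that weight--monodromy here is a statement about the cohomology at a fixed level, uniform over the approximating family.
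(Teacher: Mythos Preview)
Your proposal has two structural problems that make it unable to reach the conclusion, and it also diverges from the paper's actual route.

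First, there is a circularity: you invoke Theorem~\ref{local-global compatibility} to pass from purity of $WD(R_l(\Pi)|_{W_{L_v}})$ to temperedness of $\Pi_v$. In this paper the logical order is the reverse: Theorem~\ref{Ramanujan} is proved first (as Theorem~5.8/Corollary~5.9) and is then \emph{used} as an input in the proof of Theorem~\ref{local-global compatibility}. You could avoid this by appealing only to the semisimplified compatibility of \cite{CH}, but as written the argument is circular.

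Second, and more seriously, the architecture you describe does not match the paper and does not close. You propose a unitary group with signature $(1,n-1)$ at a single archimedean place, whose Iwahori integral model at a split $p$ is already strictly semistable; in that setting the Taylor--Yoshida spectral sequence applies, and the product-of-semistable geometry of this paper never appears. The new spectral sequence here arises precisely because the paper uses a group with signature $(1,n-1)$ at \emph{two} archimedean places, so that $R_l(\Pi)^{\otimes 2}$ (not $R_l(\Pi)$) occurs in the middle cohomology, bypassing the slightly-regular obstruction of \cite{Shin}. Your alternative---a limiting/interpolation argument from regular weight---is exactly the point where Chenevier--Harris lose control of monodromy: $R_l(\Pi)$ is not a summand of any fixed-level cohomology in the non-slightly-regular case, so purity of that cohomology does not transfer to $R_l(\Pi)$. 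Your claim that ``purity at finite level descends to the limit'' is the whole difficulty, and you have not supplied an argument for it.

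Finally, the paper does not prove temperedness via the weight spectral sequence at all. It is established by a direct computation on closed strata through Igusa varieties: one shows (using the functor $\mathrm{Red}^{(h_1,h_2)}$ and Theorem~6.1) that if $\Pi^0_{\mathfrak{p}_1}\simeq\Pi^0_{\mathfrak{p}_2}$ were non-tempered then Tadi\'c's classification together with the tensor-square realization would force all complementary exponents to equal $1/4$, and then an explicit sign/weight count on $H^*(Y_{\mathrm{Iw}(m),S,T},\mathcal{L}_\xi)$ gives a contradiction. The weight spectral sequence is invoked only afterwards, to upgrade semisimplified compatibility to full compatibility.
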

The above theorems are already known when $n$ is odd or when $n$
is even and $\Pi$ is slightly regular, by work of Shin \cite{Shin}.
They are also known if $\Pi$ is square integrable at a finite place,
by the work of Harris-Taylor \cite{H-T} and Taylor-Yoshida \cite{T-Y}.
If $n$ is even then Chenevier and Harris construct in \cite{CH}
a global $\mbox{Gal}(\bar{L}/L)$-representation $R_{l}(\Pi)$ which
is compatible with the local Langlands correspondence up to semisimplification.
We extend the local-global compatibility up to Frobenius semisimplification,
by proving that both Weil-Deligne representations are pure. We use
Theorem \ref{Ramanujan} to deduce that $\iota_{l}^{-1}\mathcal{L}_{n,\mathcal{L}_{y}}(\Pi_{y})$
is pure. For the representation $WD(R_{l}(\Pi)|_{Gal(\bar{L}_{y}/L_{y})})$,
our strategy is as follows: we find the Galois representation $R_{l}(\Pi)^{\otimes2}$
in the cohomology of a system of Shimura varieties $X_{U}$ associated
to a unitary group which looks like \[
U(1,n-1)\times U(1,n-1)\times U(0,n)^{d-2}\]
at infinity. Following the same structure of argument as Taylor-Yoshida
in \cite{T-Y}, we prove that the Weil-Deligne representation associated
to \[
R_{l}(\Pi)^{\otimes2}|_{Gal(\bar{L}_{y}/L)}\]
is pure by explicitly computing the action of the monodromy operator
$N$ on the cohomology of the system of Shimura varieties. We use
Theorem \ref{Ramanujan} at a crucial point in the computation. We
conclude that $WD(R_{l}(\Pi)|_{Gal(\bar{L}_{y}/L_{y})})^{F-ss}$ must
also be pure. 

We briefly outline our computation of the action of $N$ on the Weil-Deligne
representation associated to $R_{l}(\Pi)^{\otimes2}|_{\mbox{Gal}(\bar{L}_{y}/L_{y})}.$
First, we base change $\Pi$ to a CM field $F'$ such that there is
a place $\mathfrak{p}$ of $F'$ above the place $y$ of $L$ where
$BC_{F'/L}(\Pi)_{\mathfrak{p}}$ has an Iwahori fixed vector. It suffices
to study the Weil-Deligne representation corresponding to $\Pi^{0}=BC_{F'/L}(\Pi)$
and prove that it is pure. We then take a quadratic extension $F$
of $F'$ which is also a CM field and in which the place $\mathfrak{p}$
splits $\mathfrak{p}=\mathfrak{p}_{1}\mathfrak{p}_{2}$. We let $\sigma\in\mbox{Gal}(F/F')$
be the automorphism which sends $\mathfrak{p}_{1}$ to $\mathfrak{p}_{2}$.
We choose $F$ and $F'$ such that they contain an imaginary quadratic
field $E$ in which $p$ splits. We take a $\mathbb{Q}$-group $G$
which satisfies the following: 
\begin{itemize}
\item $G$ is quasi-split at all finite places.
\item $G(\mathbb{R})$ has signature $(1,n-1)$ at two embeddings which
differ by $\sigma$ and $(0,n)$ everywhere else. 
\item $G(\mathbb{A}_{E})\simeq GL_{1}(\mathbb{A}_{E})\times GL_{n}(\mathbb{A}_{F})$. 
\end{itemize}
We let $\Pi^{1}=BC_{F/F'}(\Pi^{0})$. Then the Galois representation
$R_{l}(\Pi^{0})$ can be seen in the $\Pi^{1,\infty}$-part of the
(base change of the) cohomology of a system of Shimura varieties associated
to $G$. We let $X_{U}$ be the inverse system of Shimura varieties
associated to the group $G$. We let the level $U$ vary outside $\mathfrak{p}_{1}\mathfrak{p}_{2}$
and be equal to the Iwahori subgroup at $\mathfrak{p}_{1}$ and $\mathfrak{p}_{2}$.
We construct an integral model of $X_{U}$ which parametrizes abelian
varieties with Iwahori level structure at $\mathfrak{p}_{1}$ and
$\mathfrak{p}_{2}$. By abuse of notation, we will denote this integral
model by $X_{U}$ as well. The special fiber $Y_{U}$ of $X_{U}$
has a stratification by $Y_{U,S,T}$ where the $S,T\subseteq\{1,\dots n\}$
are related to the Newton polygons of the $p$-divisible groups above
$\mathfrak{p}_{1}$ and $\mathfrak{p}_{2}$. We compute the completed
strict local rings at closed geometric points of $X_{U}$ and use
this computation to show that $X_{U}$ is locally etale over a product
of semistable schemes, which on the special fiber are closely related
to the strata $Y_{U,S,T}$. If we let $\mathcal{A}_{U}$ be the universal
abelian variety over $X_{U}$, then $\mathcal{A}_{U}$ has the same
stratification and the same geometry as $X_{U}$. 

Let $\xi$ be an irreducible algebraic representation of $G$ over
$\bar{\mathbb{Q}}_{l}$, which determines non-negative integers $t_{\xi},m_{\xi}$
and an endomorphism $a_{\xi}\in\mbox{End}(\mathcal{A}_{U}^{m_{\xi}}/X_{U})\otimes_{\mathbb{Z}}\mathbb{Q}$.
We are interested in understanding the $\Pi^{1,\infty}$-part of \[
H^{j}(X_{U},\mathcal{L}_{\xi})=a_{\xi}H^{j+m_{\xi}}(\mathcal{A}_{U}^{m_{\xi}},\bar{\mathbb{Q}}_{l}(t_{\xi})).\]
Thus, we study the cohomology of the generic fiber $H^{j}(\mathcal{A}_{U}^{m_{\xi}},\bar{\mathbb{Q}}_{l})$
and we do so via the cohomology of the complex of nearby cycles $R\psi\bar{\mathbb{Q}}_{l}$
over the special fiber of $\mathcal{A}_{U}^{m_{\xi}}$. The key ingredients
in studying the complex of nearby cycles together with the action
of monodromy are logarithmic schemes, the weight spectral sequence
as constructed by Saito \cite{Saito} (which on the level of complexes
of sheaves describes the action of monodromy on the complex of nearby
cycles for semistable schemes), and the formula \[
(R\psi\bar{\mathbb{Q}}_{l})_{X_{1}\times X_{2}}\simeq(R\psi\bar{\mathbb{Q}}_{l})_{X_{1}}\otimes^{L}(R\psi\bar{\mathbb{Q}}_{l})_{X_{2}},\]
when $X_{1}$ and $X_{2}$ are semistable schemes. Using these ingredients,
we deduce the existence of two spectral sequences which end up relating
terms of the form $H^{j}(\mathcal{A}_{U,S,T}^{m_{\xi}},\bar{\mathbb{Q}}_{l})$
(up to twisting and shifting) to the object we're interested in, $H^{j}(\mathcal{A}_{U}^{m_{\xi}},\bar{\mathbb{Q}}_{l})$.
The cohomology of each stratum $H^{j}(\mathcal{A}_{U,S,T}^{m_{\xi}},\bar{\mathbb{Q}}_{l})$
is closely related to the cohomology of Igusa varieties. The next
step is to compute the $\Pi^{1,\infty}$-part of the cohomology of
certain Igusa varieties, for which we adapt the strategy of Theorem
6.1 of \cite{Shin}. Using the result on Igusa varieties, we prove
Theorem \ref{Ramanujan} and then we also make use of the classification
of tempered representations. We prove that the $\Pi^{1,\infty}$-part
of each $H^{j}(\mathcal{A}_{U,S,T}^{m_{\xi}},\bar{\mathbb{Q}}_{l})$
vanishes outside the middle dimension and thus that both our spectral
sequences degenerate at $E_{1}$. The $E_{1}$ page of the second
spectral sequence provides us with the exact filtration of the $\Pi^{1,\infty}$-part
of \[
\lim_{\stackrel{\longrightarrow}{U^{p}}}H^{2n-2}(X_{U},\mathcal{L}_{\xi})\]
 which exhibits its purity. 
\begin{acknowledgement*}
I am very grateful to my advisor, Richard Taylor, for suggesting this
problem and for his constant encouragement and advice. I am also greatly
indebted to Luc Illusie, Sophie Morel, Arthur Ogus, Sug Woo Shin and
Teruyoshi Yoshida for their help in completing this work. I am grateful
to David Geraghty and Jack Thorne for useful conversations. Finally,
I would like to thank Sophie Morel and Sug Woo Shin for providing
comments on an earlier draft of this paper. 
\end{acknowledgement*}

\section{An integral model}

\subsection{Shimura varieties}

Let $E$ be an imaginary quadratic field in which $p$ splits, let
$c$ be the non-trivial element in $\mbox{Gal}(E/\mathbb{Q})$ and
choose a prime $u$ of $E$ above $p$.

Let $F_{1}$ be a totally real field of finite degree over $\mathbb{Q}$
and $w$ a prime of $F_{1}$ above $p$. Let $F_{2}$ be a quadratic
totally real extension of $F_{1}$ in which $w$ splits $w=w_{1}w_{2}$.
Let $d=[F_{2}:\mathbb{Q}]$ and we assume that $d\geq3$. Let $F=F_{2}.E$.
Let $\mathfrak{p}_{i}$ be the prime of $F$ above $w_{i}$ and $u$
for $i=1,2$. We denote by $\mathfrak{p}_{i}$ for $2<i\leq r$ the
rest of the primes which lie above the prime $u$ of $E$. We choose
embeddings $\tau_{i}:F\hookrightarrow\mathbb{C}$ with $i=1,2$ such
that $\tau_{2}=\tau_{1}\circ\sigma$, where $\sigma$ is the element
of $\mbox{Gal}(F/\mathbb{Q})$ which takes $\mathfrak{p}_{1}$ to
$\mathfrak{p}_{2}$. In particular, this means that $\tau_{E}:=\tau_{1}|_{E}=\tau_{2}|_{E}$
is well-defined. By abuse of notation we will also denote by $\sigma$
the Galois automorphism of $F_{2}$ taking $w_{1}$ to $w_{2}$. 

We will work with a Shimura variety corresponding to the PEL datum
$(F,*,V,\langle\cdot,\cdot\rangle,h)$, where $F$ is the CM field
defined above and $*=c$ is the involution given by complex conjugation.
We take $V$ to be the $F$-vector space $F^{n}$ for some integer
$n$. The pairing \[
\langle\cdot,\cdot\rangle:V\times V\to\mathbb{Q}\]
is a non-degenerate Hermitian pairing such that $\langle fv_{1},v_{2}\rangle=\langle v_{1},f^{*}v_{2}\rangle$
for all $f\in F$ and $v_{1},v_{2}\in V$. The last element we need
is an $\mathbb{R}$-algebra homomorphism $h:\mathbb{C}\to\mbox{End}_{F}(V)\otimes_{\mathbb{Q}}\mathbb{R}$
such that the bilinear pairing \[
(v_{1},v_{2})\to\langle v_{1},h(i)v_{2}\rangle\]
 is symmetric and positive definite. 

We define an algebraic group $G$ over $\mathbb{Q}$ by \[
G(R)=\{(g,\lambda)\in\mbox{End}_{F\otimes_{\mathbb{Q}}R}(V\otimes_{\mathbb{Q}}R)\times R^{\times}\mid\langle gv_{1},gv_{2}\rangle=\lambda\langle v_{1},v_{2}\rangle\}\]
for any $\mathbb{Q}$-algebra $R$. For $\sigma\in\mbox{Hom}_{E,\tau_{E}}(F,\mathbb{C})$
we let $(p_{\sigma},q_{\sigma})$ be the signature at $\sigma$ of
the pairing $\langle\cdot,\cdot\rangle$ on $V\otimes_{\mathbb{Q}}\mathbb{R}$.
We claim that we can find a PEL datum as above, such that $(p_{\tau},q_{\tau})=(1,n-1)$
for $\tau=\tau_{1}$ or $\tau_{2}$ and $(p_{\tau},q_{\tau})=(0,n)$
otherwise and such that $G_{\mathbb{Q}_{v}}$ is quasi-split at every
finite place $v$. 
\begin{lem}
Let $F$ be a CM field as above. For any embeddings $\tau_{1},\tau_{2}:F\hookrightarrow\mathbb{C}$
there exists a PEL datum $(F,*,V,\langle\cdot,\cdot\rangle,h)$ as
above such that the associated group $G$ is quasi-split at every
finite place and has signature $(1,n-1)$ at $\tau_{1}$ and $\tau_{2}$
and $(0,n)$ everywhere else. \end{lem}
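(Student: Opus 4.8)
**Planning the proof of the Lemma on existence of a PEL datum with prescribed signatures and quasi-split at all finite places.**

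The plan is to construct the Hermitian space $V$ with the prescribed local invariants directly, using the classification of Hermitian forms over the CM field $F$ by their local invariants at the archimedean places (the signatures $(p_\tau, q_\tau)$) together with a local-global condition, and then to descend from the unitary similitude group to the group $G$ attached to the PEL datum. The key classical input is the Landherr–Hasse classification: an $n$-dimensional Hermitian space over $F/F^+$ is determined up to isomorphism by its signatures at the real places of $F^+$ (equivalently, at the places of $F$ over them that we have fixed via $\tau_E$) and by its discriminant, subject to the single global constraint that the number of places where the form is "non-split" (Hasse invariant $-1$, detected by discriminant class) has a prescribed parity matching the sum of the signatures modulo $2$. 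Concretely, one prescribes $(p_\tau, q_\tau) = (1, n-1)$ at $\tau_1, \tau_2$ and $(0,n)$ elsewhere, and one must check that the resulting parity condition $\sum_\tau p_\tau \equiv \text{(number of ramified/non-split finite places)} \pmod 2$ can be met while simultaneously arranging that at every finite place $v$ the local group $G_{\mathbb{Q}_v}$ is quasi-split.

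First I would recall that over a $p$-adic field a unitary (similitude) group in $n$ variables attached to a Hermitian space is quasi-split if and only if the Hermitian form is "as split as possible", i.e. it has a maximal isotropic subspace of dimension $\lfloor n/2 \rfloor$; for $n$ odd there is a unique Hermitian space up to scaling and the group is automatically quasi-split, while for $n$ even quasi-splitness pins down the discriminant class of the local form. So at the finite places $v$ that split in $E$ (in particular above $p$, where $G(\mathbb{Q}_v)$ becomes a product of $GL$'s times $GL_1$), quasi-splitness is automatic; at the finite places inert or ramified in $E/\mathbb{Q}$ one must choose the local Hermitian form to be the quasi-split one. Then I would invoke the surjectivity of the localization map for Hermitian forms — i.e. that a collection of local Hermitian spaces of the same dimension $n$ arises from a global one precisely when the product of their discriminants is trivial in $F^{+\times}/N_{F/F^+}(F^\times) $ globally, equivalently the sum of local invariants is zero — and observe that the archimedean contributions to this sum are controlled by the parities $p_\tau \bmod 2$. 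Since we get to choose the $(0,n)$ versus $(1,n-1)$ pattern and, if needed, adjust the Hermitian form at one auxiliary inert finite place (or use that $d = [F_2:\mathbb{Q}] \geq 3$ gives us enough archimedean places to absorb the parity), the global obstruction can always be killed.

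Second, having produced $V$ with these invariants, I would define $\langle\cdot,\cdot\rangle : V \times V \to \mathbb{Q}$ as the composite of the $F$-Hermitian form with $\mathrm{tr}_{F/\mathbb{Q}}$ (twisted by a suitable totally imaginary element $\delta \in F$ with $\delta^* = -\delta$ to make it alternating/$\mathbb{Q}$-valued and $*$-compatible), and define $h : \mathbb{C} \to \mathrm{End}_F(V)\otimes_\mathbb{Q}\mathbb{R}$ by choosing at each archimedean place the standard complex structure compatible with the signature, so that $(v_1,v_2) \mapsto \langle v_1, h(i) v_2\rangle$ is symmetric positive definite — this is a routine unpacking once the signatures are fixed. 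Finally I would identify the resulting $G$: at a place $v$ of $\mathbb{Q}$ split in $E$, $G_{\mathbb{Q}_v} \cong GL_1 \times \prod GL_n$ over the completions, and at other finite places $G_{\mathbb{Q}_v}$ is the unitary similitude group of the (quasi-split) local form, hence quasi-split; the archimedean group has the required signature by construction.

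The main obstacle is the parity/global-reciprocity bookkeeping: verifying that one can simultaneously (i) impose the two $(1,n-1)$ signatures at $\tau_1,\tau_2$ and $(0,n)$ elsewhere, and (ii) choose all finite-place local forms to be quasi-split, without violating the product formula for Hermitian discriminants (the Hasse principle for Hermitian forms). I expect this to come down to checking that the parity of $\sum_\tau p_\tau = 2$ is even, so that one can take the discriminant of $V$ to match the quasi-split discriminant at \emph{every} finite place with no leftover obstruction — and if a residual sign appears, it is absorbed by flipping the form at a single auxiliary inert finite place, which does not affect quasi-splitness when handled correctly (for $n$ odd there is nothing to check; for $n$ even one verifies the quasi-split discriminant is consistent globally). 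This is exactly the type of argument carried out in \cite{H-T} and \cite{Shin}, and I would follow their treatment, adapting it to the present signature pattern dictated by $\sigma$.
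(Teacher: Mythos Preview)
Your approach is essentially correct and parallel in spirit to the paper's, though phrased in terms of the Landherr--Hasse classification of Hermitian forms rather than the Galois cohomology of the adjoint group that the paper invokes (citing Clozel's lemmas in \cite{Cl1}). Both arguments reduce to checking that a single global $\mathbb{Z}/2\mathbb{Z}$ obstruction vanishes, and in both cases the decisive input is that $d = [F_2:\mathbb{Q}]$ is \emph{even} (since $F_2/F_1$ is quadratic): once quasi-splitness is imposed at all finite places and the signatures $(1,n-1),(1,n-1),(0,n),\ldots,(0,n)$ at infinity, the product of local invariants is trivial. The paper records this as $nd/2 + 2 \equiv 0 \pmod 2$ in the adjoint-cohomology normalization; your discriminant formulation gives the same conclusion.

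One caveat: your proposed fallbacks --- adjusting the form at an auxiliary inert finite place, or absorbing the parity at extra archimedean places --- do not actually work in the case that matters. For $n$ even, flipping the local Hasse invariant at a finite inert place takes the quasi-split unitary group to the non-quasi-split one, contradicting the requirement of the lemma; and the archimedean signatures are rigidly prescribed by the statement, so cannot be altered. (For $n$ odd the finite-place fallback is harmless, but there is no obstruction to kill anyway.) Fortunately neither fallback is needed: since $d$ is even, the parity works out on the nose, and you should simply state this directly rather than leave an escape hatch that does not exist.
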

\begin{proof}
This lemma is standard and follows from computations in Galois cohomology
found in section 2 of \cite{Cl1}, but see also lemma 1.7 of \cite{H-T}.
The problem is that of constructing a global unitary similitude group
with prescribed local conditions. It is enough to consider the case
of a unitary group $G^{0}$ over $\mathbb{Q}$, by taking it to be
the algebraic group defined by $\ker(G(R)\to R^{\times})$ sending
$(g,\lambda)\mapsto\lambda$. 

A group $G$ defined as above has a quasi-split inner form over $\mathbb{Q}$
denoted $G_{n}$, defined as in section 3 of \cite{Shin}. This inner
form $G_{n}$ is the group of similitudes which preserve the non-degenerate
Hermitian pairing $\langle v_{1},v_{2}\rangle=v_{1}\zeta\Phi^{t}v_{2}^{c}$
with $\Phi\in GL_{n}(\mathbb{Q})$ having entries \[
\Phi_{ij}=(-1)^{i+1}\delta_{i,n+1-j}\]
and $\zeta\in F^{*}$ an element of trace $0$. Let $G'$ be the adjoint
group of $G_{n}^{0}$. It suffices to show that the tuple of prescribed
local conditions, classified by elements in $\oplus_{v}H^{1}(F_{2,v},G')$
is in the image of \[
H^{1}(F_{2},G')\to\oplus_{v}H^{1}(F_{2,v},G'),\]
where the sum is taken over all places $v$ of $F_{2}$. For $n$
odd, Lemma 2.1 of \cite{Cl1} ensures that the above map is surjective,
so there is no cohomological obstruction for finding the global unitary
group. For $n$ even the image of the above map is equal to the kernel
of \[
\bigoplus_{v}H^{1}(F_{2,v},G')\to\mathbb{Z}/2\mathbb{Z}.\]
We can use Lemma 2.2 of \cite{Cl1} to compute all the local invariants
(i.e. the images of $H^{1}(F_{2,v},G')\to\mathbb{Z}/2\mathbb{Z}$
for all places $v)$. At the finite places, the sum of the invariants
is $0\pmod{2}$ (this is guaranteed by the existence of the quasi-split
inner form $G{}_{n}$ of $G$, which has the same local invariants
at finite places). At the infinite places $\tau_{1}$ and $\tau_{2}$
the invariants are $\frac{n}{2}+1\pmod{2}$ and at all other infinite
places they are $\frac{n}{2}\pmod{2}$. The global invariant is $\frac{nd}{2}+2\pmod{2}$,
where $d$ is the degree of $F_{2}$ over $\mathbb{Q}$. Since $d$
is even, the image in $\mathbb{Z}/2\mathbb{Z}$ is equal to $0\pmod{2}$,
so the prescribed local unitary groups arise from a global unitary
group. 
\end{proof}
We will choose the $\mathbb{R}$-homomorphism $h:$$\mathbb{C}\to\mbox{End}_{F}(V)\otimes_{\mathbb{Q}}\mathbb{R}$
such that under the natural $\mathbb{R}$-algebra isomorphism $\mbox{End}_{F}(V)_{\mathbb{R}}\simeq\prod_{\tau|_{E}=\tau_{E}}M_{n}(\mathbb{C})$
it equals \[
z\mapsto\left(\left(\begin{array}{cc}
zI_{p_{\tau}} & 0\\
0 & \bar{z}I_{q_{\tau}}\end{array}\right)_{\tau}\right),\]
where $\tau$ runs over elements of $\mbox{Hom}_{E,\tau_{E}}(F,\mathbb{C})$. 

Now that we've defined the PEL datum we can set up our moduli problem.
Note that the reflex field of the PEL datum is $F'=F_{1}\cdot E$.
Let $S/F'$ be a scheme and $A/S$ an abelian scheme of dimension
$dn$. Suppose we have an embedding $i:F\hookrightarrow\mbox{End}(A)\otimes_{\mathbb{Z}}\mathbb{Q}$.
$\mbox{Lie}A$ is a locally free $\mathcal{O}_{S}$-module of rank
$dn$ with an action of $F$. We can decompose $\mbox{Lie}A=\mbox{Lie}^{+}A\oplus\mbox{Lie}^{-}A$
where $\mbox{Lie}^{+}A=\mbox{Lie}A\otimes_{\mathcal{O}_{S}\otimes E}\mathcal{O_{S}}$
and the map $E\hookrightarrow F'\to\mathcal{O_{S}}$ is the natural
map followed by the structure map. $\mbox{Lie}^{-}A$ is defined in
the same way using the complex conjugate of the natural map $E\hookrightarrow F'$.
We ask that $\mbox{Lie}^{+}A$ be a free $\mathcal{O}_{S}$-module
of rank $2$ and that $\mbox{Lie}^{+}A\simeq\mathcal{O}_{S}\otimes_{F_{1}}F_{2}$
as an $\mathcal{O}_{S}$-module with an action of $F_{2}$. 
\begin{defn}
\label{compatibility}If the the conditions above are satisfied, we
will call the pair $(A,i)$ \emph{compatible}. \end{defn}
\begin{rem*}
This is an adptation to our situation of the notion of compatibility
defined in section III.1 of \cite{H-T}, which fulfills the same purpose
as the determinant condition defined on page 390 of \cite{Kottwitz}.
\end{rem*}
For an open compact subgroup $U\subset G(\mathbb{A}^{\infty})$ we
consider the contravariant functor $\mathfrak{X}_{U}$ mapping \[
\left(\begin{array}{c}
\mbox{Connected, locally noetherian }\\
F'\mbox{-schemes with geometric point}\\
(S,s)\end{array}\right)\to\left(\mbox{Sets}\right)\]
\[
(S,s)\mapsto\{(A,\lambda,i,\bar{\eta})\}/\sim\]
 where 
\begin{itemize}
\item $A$ is an abelian scheme over $S$;
\item $\lambda:A\to A^{\vee}$ is a polarization;
\item $i:F\hookrightarrow\mbox{End}^{0}(A)=\mbox{End}A\otimes_{\mathbb{Z}}\mathbb{Q}$
is such that $(A,i)$ is compatible and $\lambda\circ i(f)=i(f^{*})^{\vee}\circ\lambda,$
for all $f\in F$;
\item $\bar{\eta}$ is a $\pi_{1}(S,s)$-invariant $U$-orbit of isomorphism
of Hermitian $F\otimes_{\mathbb{Q}}\mathbb{A}^{\infty}$-modules \[
\eta:V\otimes_{\mathbb{Q}}\mathbb{A}^{\infty}\to VA_{s}\]
which take the fixed pairing $\langle\cdot,\cdot\rangle$ on $V$
to on $(\mathbb{A}^{\infty})^{\times}$-multiple of the $\lambda$-Weil
pairing on $VA_{s}$. Here, \[
VA_{s}=\left(\lim_{\leftarrow}A[N](k(s))\right)\otimes_{\mathbb{Z}}\mathbb{Q}\]
 is the adelic Tate module. 
\end{itemize}
We consider two quadruples as above equivalent if there is an isogeny
between the abelian varieties which is compatible with the additional
structures. If $s'$ is a different geometric point of $S$ then there
is a canonical bijection between $\mathfrak{X}_{U}(S,s)$ and $\mathfrak{X}_{U}(S,s')$.
We can forget about the geometric points and extend the definition
from connected to arbitrary locally noetherian $F'$-schemes. When
$U$ is sufficiently small, this functor is representable by a smooth
and quasi-projective Shimura variety $X_{U}/F'$ of dimension $2n-2$
(this is explained on page 391 of \cite{Kottwitz}). As $U$ varies,
the inverse system of the $X_{U}$ has a natural right action of $G(\mathbb{A}^{\infty})$. 

Let $\mathcal{A}_{U}$ be the universal abelian variety over $X_{U}$.
The action of $G(\mathbb{A}^{\infty})$ on the inverse system of the
$X_{U}$ extends to an action by quasi-isogenies on the inverse system
of the $\mathcal{A}_{U}$. The following construction goes through
as in section III.2 of \cite{H-T}. Let $l$ be a rational prime different
from $p$ and $\xi$ an irreducible algebraic representation of $G$
over $\mathbb{Q}_{l}^{ac}$. This defines a lisse $\mathbb{Q}_{l}^{ac}$-sheaf
$\mathcal{L}_{\xi,l}$ over each $X_{U}$ and the action of $G(\mathbb{A}^{\infty}$)
extends to the inverse system of sheaves. The direct limit \[
H^{i}(X,\mathcal{L}_{\xi,l})=\lim_{\to}H^{i}(X_{U}\times_{F'}\bar{F}',\mathcal{L}_{\xi,l})\]
is a (semisimple) admissible representation of $G(\mathbb{A}^{\infty})$
with a continuous action of $\mbox{Gal}(\bar{F}'/F')$. We can decompose
it as \[
H^{i}(X,\mathcal{L}_{\xi,l})=\bigoplus_{\pi}\pi\otimes R_{\xi,l}^{i}(\pi)\]
where the sum runs over irreducible admissible representations $\pi$
of $G(\mathbb{A}^{\infty})$ over $\mathbb{Q}_{l}^{ac}$. The $R_{\xi,l}^{i}(\pi)$
are finite dimensional continuous representations of $\mbox{Gal}(\bar{F}'/F')$
over $\mathbb{Q}_{l}^{ac}$. We shall suppress the $l$ from $\mathcal{L}_{\xi.l}$
and $R_{\xi,l}^{i}(\pi)$ where it is understood from context. To
the irreducible representation $\xi$ of $G$ we can associate as
in section III.2 of \cite{H-T} non-negative integers $m_{\xi}$ and
$t_{\xi}$ and an idempotent $\epsilon_{\xi}\in\mathbb{Q}[S_{m_{\xi}}]$
(where $S_{m_{\xi}}$ is the symmetric group on $m_{\xi}$ letters).
As on page 476 of \cite{T-Y}, define for each integer $N\geq2$,\[
\epsilon(m_{\xi},N)=\prod_{x=1}^{m_{\xi}}\prod_{y\not=1}\frac{[N]_{x}-N}{N-N^{y}}\in\mathbb{Q}[(N^{\mathbb{Z}_{\geq0}})^{m_{\xi}}],\]
where $[N]_{x}$ denotes the endomorphism generated by multiplication
by $N$ on the $x$-th factor and $y$ ranges from $0$ to $2[F_{2}:\mathbb{Q}]n^{2}$
but excluding $1$. Set \[
a_{\xi}=a_{\xi,N}=\epsilon_{\xi}P(\epsilon(m_{\xi},N)),\]
which can be thought of as an element of $\mbox{End}(\mathcal{A}_{U}^{m_{\xi}}/X_{U})\otimes_{\mathbb{Z}}\mathbb{Q}$.
Here $P(\epsilon(m_{\xi},N))$ is the polynomial \[
P(X)=((X-1)^{4n-3}+1)^{4n-3}.\]
If we let $\mbox{proj}:\mathcal{A}_{U}^{m_{\xi}}\to X_{U}$ be the
natural projection, then $\epsilon(m_{\xi},N)$ is an idempotent on
each of the sheaves $R^{j}\mbox{proj}_{*}\bar{\mathbb{Q}}_{l}(t_{\xi})$,
hence also on \[
H^{i}(X_{U}\times_{F'}\bar{F}',R^{j}\mbox{proj}_{*}\bar{\mathbb{Q}}_{l}(t_{\xi}))\Rightarrow H^{i+j}(\mathcal{A}_{U}^{m_{\xi}}\times_{F'}\bar{F}',\bar{\mathbb{Q}}_{l}(t_{\xi})).\]
We get an endomorphism $\epsilon(m_{\xi},N)$ of $H^{i+j}(\mathcal{A}_{U}^{m_{\xi}}\times_{F'}\bar{F}',\bar{\mathbb{Q}}_{l}(t_{\xi}))$
which is an idempotent on each graded piece of a filtration of length
at most $4n-3$. In this case, $P(\epsilon(m_{\xi},N))$ must be an
idempotent on all of $H^{i+j}(\mathcal{A}_{U}^{m_{\xi}}\times_{F'}\bar{F}',\bar{\mathbb{Q}}_{l}(t_{\xi}))$.
We have an isomorphism \[
H^{i}(X_{U}\times_{F'}\bar{F}',\mathcal{L}_{\xi})\cong a_{\xi}H^{i+m_{\xi}}(\mathcal{A}_{U}^{m_{\xi}}\times_{F'}\bar{F'},\bar{\mathbb{Q}}_{l}(t_{\xi})),\]
which commutes with the action of $G(\mathbb{A}^{\infty})$.

\subsection{An integral model for Iwahori level structure }

Let $K=F_{\mathfrak{p}_{1}}\simeq F_{\mathfrak{p}_{2}}$, where the
isomorphism is via $\sigma$, denote by $\mathcal{O}_{K}$ the ring
of integers of $K$ and by $\pi$ a uniformizer of $\mathcal{O}_{K}$.

Let $S/\mathcal{O}_{K}$ be a scheme and $A/S$ an abelian scheme
of dimension $dn$. Suppose we have an embedding $i:\mathcal{O}_{F}\hookrightarrow\mbox{End}(A)\otimes_{\mathbb{Z}}\mathbb{Z}_{(p)}$.
$\mbox{Lie}A$ is a locally free $\mathcal{O}_{S}$-module of rank
$dn$ with an action of $F$. We can decompose $\mbox{Lie}A=\mbox{Lie}^{+}A\oplus\mbox{Lie}^{-}A$
where $\mbox{Lie}^{+}A=\mbox{Lie}A\otimes_{\mathbb{Z}_{p}\otimes\mathcal{O}_{E}}\mathcal{O}_{E,u}$.
There are two natural actions of $\mathcal{O}_{F}$ on $\mbox{Lie}^{+}A$,
via $\mathcal{O}_{F}\to\mathcal{O}_{F_{\mathfrak{p}_{j}}}\stackrel{\sim}{\to}\mathcal{O}_{K}$
composed with the structure map for $j=1,2$. These two actions differ
by the automorphism $\sigma\in\mbox{Gal}(F/\mathbb{Q})$. There is
also a third action via the embedding $i$ of $\mathcal{O}_{F}$ into
the ring of endomorphisms of $A$. We ask that $\mbox{Lie}^{+}A$
be locally free of rank $2$, that the part of $\mbox{Lie}^{+}A$
where the first action of $\mathcal{O}_{F}$ on $\mbox{Lie}^{+}A$
coincides with $i$ be locally free of rank $1$ and that the part
where the second action coincides with $i$ also be locally free of
rank $1$. 
\begin{defn}
If the above conditions are satisfied, then we call $(A,i)$ \emph{compatible}.
One can check that for $S/K$ this notion of compatibility coincides
with the one in Definition \ref{compatibility}. 
\end{defn}
If $p$ is locally nilpotent on $S$ then $(A,i)$ is compatible if
and only if
\begin{itemize}
\item $A[\mathfrak{p}_{i}^{\infty}]$ is a compatible, one-dimensional Barsotti-Tate
$\mathcal{O}_{K}$-module for $i=1,2$ and
\item $A[\mathfrak{p}_{i}^{\infty}]$ is ind-etale for $i>2$. 
\end{itemize}
We will now define a few integral models for our Shimura varieties
$X_{U}$. We can decompose $G(\mathbb{A}^{\infty})$ as \[
G(\mathbb{A}^{\infty})=G(\mathbb{A}^{\infty,p})\times\mathbb{Q}_{p}^{\times}\times\prod_{i=1}^{r}GL_{n}(F_{\mathfrak{p}_{i}}).\]
For each $i$, let $\Lambda_{i}$ be an $\mathcal{O}_{F_{\mathfrak{p}_{i}}}$-lattice
in $F_{\mathfrak{p}_{i}}^{n}$ which is stable under $GL_{n}(\mathcal{O}_{F_{\mathfrak{p}_{i}}})$
and self-dual with respect to $\langle\cdot,\cdot\rangle$. For each
$\vec{m}=(m_{1},\dots,m_{r})$ and compact open $U^{p}\subset G(\mathbb{A}^{\infty,p})$
we define the compact open subgroup $U^{p}(\vec{m})$ of $G(\mathbb{A}^{\infty})$
as \[
U^{p}(\vec{m})=U^{p}\times\mathbb{Z}_{p}^{\times}\times\prod_{i=1}^{r}\ker(GL_{\mathcal{O}_{F_{\mathfrak{p}_{i}}}}(\Lambda_{i})\to GL_{\mathcal{O}_{F_{\mathfrak{p}_{i}}}}(\Lambda_{i}/\mathfrak{m}_{F_{\mathfrak{p}_{i}}}^{m_{i}}\Lambda_{i})).\]
The corresponding moduli problem of sufficiently small level $U^{p}(\vec{m})$
over $\mathcal{O}_{K}$ is given by the functor 

\[
\left(\begin{array}{c}
\mbox{Connected, locally noetherian }\\
\mathcal{O}_{K}\mbox{-schemes with geometric point}\\
(S,s)\end{array}\right)\to\left(\mbox{Sets}\right)\]
\[
(S,s)\mapsto\{(A,\lambda,i,\bar{\eta}^{p},\{\alpha_{i}\}_{i=1}^{r})\}/\sim\]
 where 
\begin{itemize}
\item $A$ is an abelian scheme over $S$;
\item $\lambda:A\to A^{\vee}$ is a prime-to-$p$ polarization;
\item $i:\mathcal{O}_{F}\hookrightarrow\mbox{End}(A)\otimes_{\mathbb{Z}}\mathbb{Z}_{(p)}$
such that $(A,i)$ is compatible and $\lambda\circ i(f)=i(f^{*})^{\vee}\circ\lambda,\forall f\in\mathcal{O}_{F}$;
\item $\bar{\eta}^{p}$ is a $\pi_{1}(S,s)$-invariant $U^{p}$-orbit of
isomorphisms of Hermitian $F\otimes_{\mathbb{Q}}\mathbb{A}^{\infty,p}$-modules
\[
\eta:V\otimes_{\mathbb{Q}}\mathbb{A}^{\infty,p}\to V^{p}A_{s}\]
which take the fixed pairing $\langle\cdot,\cdot\rangle$ on $V$
to an $(\mathbb{A}^{\infty,p})^{\times}$-multiple of the $\lambda$-Weil
pairing on $VA_{s}$. Here $V^{p}A_{s}$ is the adelic Tate module
away from $p$;
\item for $i=1,2$, $\alpha_{i}:\mathfrak{p}_{i}^{-m_{i}}\Lambda_{i}/\Lambda_{i}\to A[\mathfrak{p}_{i}^{m_{i}}]$
is a Drinfeld $\mathfrak{p}_{i}^{m_{i}}$-structure;
\item for $i>2$, $\alpha_{i}:(\mathfrak{p}_{i}^{-m_{i}}\Lambda_{i}/\Lambda)\stackrel{\sim}{\to}A[\mathfrak{p}_{i}^{m_{i}}]$
is an isomorphism of $S$-schemes with $\mathcal{O}_{F_{\mathfrak{p}_{i}}}$-actions; 
\item Two tuples $(A,\lambda,i,\bar{\eta}^{p},\{\alpha_{i}\}_{i=1}^{r})$
and $(A',\lambda',i',(\bar{\eta}^{p})^{'},\{\alpha'_{i}\}_{i=1}^{r}$
are equivalent if there is a prime-to-p isogeny $A\to A'$ taking
$\lambda,i,\bar{\eta}^{p},\alpha_{i}$ to $\gamma\lambda',i',(\bar{\eta}^{p})^{'},\alpha'_{i}$
for some $\gamma\in\mathbb{Z}_{(p)}^{\times}$.
\end{itemize}
This moduli problem is representable by a projective scheme over $\mathcal{O}_{K}$,
which will be denoted $X_{U^{p},\vec{m}}.$ The projectivity follows
from Theorem 5.3.3.1 and Remark 5.3.3.2 of \cite{Lan}. If $m_{1}=m_{2}=0$
this scheme is smooth as in Lemma III.4.1.2 of \cite{H-T}, since
we can check smoothness on the completed strict local rings at closed
geometric points and these are isomorphic to deformation rings for
$p$-divisible groups (with level structure only at $\mathfrak{p}_{i}$
for $i>2$, when the $p$-divisible group is etale). Moreover, if
$m_{1}=m_{2}=0$ the dimension of $X_{U^{p},\vec{m}}$ is $2n-1$. 

When $m_{1}=m_{2}=0$, we will denote $X_{U^{p},\vec{m}}$ by $X_{U_{0}}$.
If $\mathcal{A}_{U_{0}}$ is the universal abelian scheme over $X_{U_{0}}$
we write $\mathcal{G}_{i}=\mathcal{A}_{U_{0}}[\mathfrak{p}_{i}^{\infty}]$
for $i=1,2$ and $\mathcal{G}=\mathcal{G}_{1}\times\mathcal{G}_{2}$.
Over a base where $p$ is nilpotent, each of the $\mathcal{G}_{i}$
is a one-dimensional compatible Barsotti-Tate $\mathcal{O}_{K}$-module. 

Let $\bar{X}_{U_{0}}=X_{U_{0}}\times_{\mbox{Spec }\mathcal{O}_{K}}\mbox{Spec }\mathbb{F}$
be the special fiber of $X_{U_{0}}$. We define a stratification on
$\bar{X}_{U_{0}}$ in terms of $0\leq h_{1},h_{2}<n-1$. The scheme
$\bar{X}_{U_{0}}^{[h_{1},h_{2}]}$ will be the reduced closed subscheme
of $\bar{X}_{U_{0}}$ whose closed geometric points $s$ are those
for which the maximal etale quotient of $\mathcal{G}_{i}$ has $\mathcal{O}_{K}$-height
at most $h_{i}$. Let $\bar{X}_{U_{0}}^{(h_{1},h_{2})}=\bar{X}_{U_{0}}^{[h_{1},h_{2}]}-(\bar{X}_{U_{0}}^{[h_{1}-1,h_{2}]}\cup\bar{X}_{U_{0}}^{[h_{1},h_{2}-1]})$. 
\begin{lem}
The scheme $\bar{X}_{U_{0}}^{(h_{1},h_{2})}$ is non-empty and smooth
of pure dimension $h_{1}+h_{2}$. \end{lem}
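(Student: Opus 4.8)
The plan is to reduce the statement to a product of two copies of the Harris--Taylor situation, using the factorization $\mathcal{G}=\mathcal{G}_{1}\times\mathcal{G}_{2}$ of the $p$-divisible group at the two primes $\mathfrak{p}_{1},\mathfrak{p}_{2}$. First I would recall that on the special fiber $\bar{X}_{U_{0}}$, each $\mathcal{G}_{i}$ is a one-dimensional compatible Barsotti--Tate $\mathcal{O}_{K}$-module, so its Newton polygon is determined by the $\mathcal{O}_{K}$-height $h_{i}$ of its maximal \'etale quotient, which can take any value $0\leq h_{i}\leq n-1$; the connected part then has $\mathcal{O}_{K}$-height $n-h_{i}$, and since $\mathcal{G}_{i}$ is one-dimensional its connected part is formal of dimension one. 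The locally closed stratum $\bar{X}_{U_{0}}^{(h_{1},h_{2})}$ is exactly the locus where the \'etale heights are \emph{precisely} $h_{1}$ and $h_{2}$. The key input, available from the discussion preceding the lemma, is the computation of the completed strict local ring at a closed geometric point $s$: because the level structure at $\mathfrak{p}_{1},\mathfrak{p}_{2}$ is hyperspecial maximal (the case $m_{1}=m_{2}=0$) and trivial-deformation-theoretic at the other primes, this ring is the Serre--Tate deformation ring of the pair $(\mathcal{G}_{1,s},\mathcal{G}_{2,s})$, hence is a completed tensor product $R_{1}\hat{\otimes}_{\mathbb{F}}R_{2}$ of the one-dimensional Barsotti--Tate $\mathcal{O}_{K}$-module deformation rings.

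Next I would invoke the structure of these deformation rings from Harris--Taylor (the Lubin--Tate/Drinfeld analysis): the deformation ring of a one-dimensional compatible Barsotti--Tate $\mathcal{O}_{K}$-module of \'etale height $h_{i}$, together with its \'etale-height stratification, is (after passing to completed strict local rings) formally smooth of the appropriate relative dimension, and the locus where the \'etale height is exactly $h_{i}$ is formally smooth of dimension $h_{i}$ over $\mathbb{F}$. Concretely, one has $R_{i}\cong \mathbb{F}[[t_{1},\dots,t_{n-1}]]$ with the sublocus of \'etale height $\leq h_{i}$ cut out appropriately, so that the exact-height-$h_{i}$ locus is $\mathrm{Spec}\,\mathbb{F}[[t_{1},\dots,t_{h_{i}}]]$ up to formal smoothness. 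Taking the completed tensor product, the completed strict local ring of $\bar{X}_{U_{0}}^{(h_{1},h_{2})}$ at $s$ is formally smooth of dimension $h_{1}+h_{2}$. Since smoothness and dimension can both be checked on completed strict local rings at closed geometric points, this gives that $\bar{X}_{U_{0}}^{(h_{1},h_{2})}$ is smooth of pure dimension $h_{1}+h_{2}$ wherever it is non-empty.

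For non-emptiness I would argue as in Harris--Taylor: the existence of a point with prescribed Newton data $(h_{1},h_{2})$ on the two factors follows from the surjectivity of the relevant map on the special fiber, e.g.\ by exhibiting a suitable abelian variety with the required $p$-divisible group structure via the construction of points in the basic (or intermediate) strata, or by using that the stratum $\bar{X}_{U_{0}}^{[h_{1},h_{2}]}$ of \emph{bounded} \'etale height is closed and non-empty (it contains the supersingular-type locus) together with the dimension count just obtained to show the boundary $\bar{X}_{U_{0}}^{[h_{1}-1,h_{2}]}\cup\bar{X}_{U_{0}}^{[h_{1},h_{2}-1]}$ is a proper closed subset, hence cannot exhaust $\bar{X}_{U_{0}}^{[h_{1},h_{2}]}$. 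Alternatively, non-emptiness is inherited from the Harris--Taylor Shimura variety attached to a single unitary group via the fact that our $X_{U_{0}}$ maps to (a power of) such a variety compatibly with the stratifications. I expect the main obstacle to be bookkeeping: correctly identifying the completed strict local ring with the \emph{product} deformation problem and matching the \'etale-height stratification on each factor with the Harris--Taylor normal form, so that the two stratifications (one per prime $\mathfrak{p}_{i}$) are genuinely independent and the dimensions simply add. Once that product structure is pinned down, both smoothness and the dimension formula $h_{1}+h_{2}$ are immediate, and non-emptiness is the only remaining global (as opposed to local) point.
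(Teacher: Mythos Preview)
Your local argument---factoring the completed strict local ring as a completed tensor product of two one-dimensional Barsotti--Tate $\mathcal{O}_K$-module deformation rings and invoking the Harris--Taylor normal form on each factor---is correct and is exactly what the paper does (citing Lemmas II.1.1 and II.1.3 of \cite{H-T}). Smoothness and the dimension formula follow once non-emptiness is known.

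The gap is in non-emptiness. Your second alternative (the closed stratum $\bar{X}_{U_0}^{[h_1,h_2]}$ contains the supersingular-type locus, and the boundary has strictly smaller dimension by induction) is indeed the skeleton of the paper's argument, but you have not justified why the supersingular-type locus $\bar{X}_{U_0}^{(0,0)}$ is non-empty---and this is the whole difficulty. The paper handles it by Honda--Tate theory: one writes down an explicit $p$-adic type $(M,\eta)$ with $M=F$ and the correct slopes at each $\mathfrak{p}_i$, which produces an abelian variety over $\bar{\mathbb{F}}$ with the right $p$-divisible groups. The nontrivial remaining step is to equip this abelian variety with a polarization, an $\mathcal{O}_F$-action satisfying the compatibility condition, and level structures so that it actually defines an $\bar{\mathbb{F}}$-point of $X_{U_0}$; this is the analogue of the ``vanishing of the Kottwitz invariant'' and is forward-referenced to a later lemma (Lemma~\ref{vanishing of kottwitz invariant}). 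Simply producing the right Barsotti--Tate group is not enough.

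Your third alternative---inheriting non-emptiness from a map to a Harris--Taylor variety---does not work here: the group $G$ has signature $(1,n-1)$ at \emph{two} archimedean places, so $X_{U_0}$ is not (and does not map to) a product or power of Harris--Taylor Shimura varieties compatibly with the stratifications. The product structure you are exploiting is only at the level of local deformation rings, not globally.
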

\begin{proof}
In order to see that this is true, note that the formal completion
of $\bar{X}_{U_{0}}$ at any closed point is isomorphic to $\bar{\mathbb{F}}[[T_{2},\dots,T_{n},S_{2},\dots,S_{n}]]$
since it is the universal formal deformation ring of a product of
two one-dimensional compatible Barsotti-Tate groups of height $n$
each. (In fact it is the product of the universal deformation rings
for each of the two Barsotti-Tate groups.) Thus, $\bar{X}_{U_{0}}$
has dimension $2n-2$ and as in Lemma II.1.1 of \cite{H-T} each closed
stratum $\bar{X}_{U_{0}}^{[h_{1},h_{2}]}$ has dimension at least
$h_{1}+h_{2}$. The lower bound on dimension also holds for each open
stratum $\bar{X}_{U_{0}}^{(h_{1},h_{2})}$. In order to get the upper
bound on the dimension it suffices to show that the lowest stratum
$\bar{X}_{U_{0}}^{(0,0)}$ is non-empty. Indeed, once we have a closed
point $s$ in any stratum $\bar{X}_{U_{0}}^{(h_{1},h_{2})}$, we can
compute the formal completion $(\bar{X}_{U_{0}}^{(h_{1},h_{2})})_{s}^{\wedge}$
as in Lemma II.1.3 of \cite{H-T} and find that the dimension is exactly
$h_{1}+h_{2}$. We start with a closed point of the lowest stratum
$\bar{X}_{U_{0}}^{(0,0)}=\bar{X}_{U_{0}}^{[0,0]}$ and prove that
this stratum has dimension $0$. The higher closed strata $\bar{X}_{U_{0}}^{[h_{1},h_{2}]}=\cup_{j_{1}\leq h_{1},j_{2}\leq h_{2}}\bar{X}_{U_{0}}^{(j_{1},j_{2})}$
are non-empty and it follows by induction on $(h_{1},h_{2})$ that
the open strata $\bar{X}_{U_{0}}^{(h_{1},h_{2})}$ are also non-empty. 

It remains to see that $\bar{X}_{U_{0}}^{(0,0)}$ is non-empty. This
can be done using Honda-Tate theory as in the proof of Corollary V.4.5.
of \cite{H-T}, whose ingredients for Shimura varieties associated
to more general unitary groups are supplied in sections 8 through
12 of \cite{Shin-1}. In our case, Honda-Tate theory exhibits a bijection
between $p$-adic types over $F$ (see section 8 of \cite{Shin-1}
for the general definition) and pairs $(A,i)$ where $A/\bar{\mathbb{F}}$
is an abelian variety of dimension $dn$ and $i:F\hookrightarrow\mbox{End}(A)\otimes_{\mathbb{Z}}\mathbb{Q}$.
The abelian variety $A$ must also satisfy the following: $A[\mathfrak{p}_{i}^{\infty}]$
is ind-etale for $i>2$ and $A[\mathfrak{p}_{i}^{\infty}]$ is one-dimensional
of etale height $h_{i}$ for $i=1,2$. Note that the slopes of the
p-divisible groups $A[\mathfrak{p}_{i}^{\infty}]$ are fixed for all
$i$. All our $p$-adic types will be simple and given by pairs $(M,\eta)$
where $M$ is a CM field extension of $F$ and $\eta\in\mathbb{Q}[\mathfrak{P}]$
where $\mathfrak{P}$ is the set of places of $M$ above $p$. The
coefficients in $\eta$ of places $x$ of $M$ above $\mathfrak{p}_{i}$
are related to the slope of the corresponding $p$-divisible group
at $\mathfrak{p}_{i}$ as in Corollary 8.5 of \cite{Shin}. More precisely,
$A[x^{\infty}]$ has pure slope $\eta_{x}/e_{x/p}$. It follows that
the coefficients of $\eta$ at places $x$ and $x^{c}$ above $p$
satisfy the compatibility \[
\eta_{x}+\eta_{x^{c}}=e_{x/p}\]
 so to know $\eta$ it is enough to specify $\eta_{x}\cdot x$ as
$x$ runs through places of $M$ above $u$. 

In order to exhibit a pair $(A,i)$ with the right slope of $A[\mathfrak{p}_{i}^{\infty}]$
it suffices to exhibit its corresponding $p$-adic type. For this,
we can simply take $M=F$ and $\eta_{\mathfrak{p}_{i}}=\frac{e_{\mathfrak{p}_{i}/p}}{n[F_{\mathfrak{p}_{i}}:\mathbb{Q}_{p}]}\cdot\mathfrak{p}_{i}$
for $i=1,2$ and $\eta_{\mathfrak{p}_{i}}=0$ otherwise. The only
facts remaining to be checked are that the associated pair $(A,i)$
has a polarization $\lambda$ which induces $c$ on $F$ (this follows
from Lemma 9.2 of \cite{Kottwitz}) and that the triple $(A,i,\lambda)$
can be given additional structure to make it into a point on $\bar{X}_{U_{0}}^{(0,0)}$.
This will be proven in more generality in Lemma \ref{vanishing of kottwitz invariant},
an analogue of Lemma V.4.1 of \cite{H-T}. Note that the argument
is not circular, since the proof of Lemma \ref{vanishing of kottwitz invariant}
is independent of this section. 
\end{proof}
The next Lemma is an analogue of Lemma 3.1 of \cite{T-Y}.
\begin{lem}
\label{closure of stratum}If $0\leq h_{1},h_{2}\leq n-1$ then the
Zariski closure of $\bar{X}_{U_{0}}^{(h_{1},h_{2})}$ contains $\bar{X}_{U_{0}}^{(0,0)}$
. \end{lem}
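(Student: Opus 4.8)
The plan is to reduce the statement to the non-emptiness of the lowest stratum, which is already established in the previous lemma, by exhibiting, for a closed geometric point $s$ of $\bar{X}_{U_{0}}^{(h_{1},h_{2})}$, a one-parameter family (or rather a formal curve) specializing to a point of $\bar{X}_{U_{0}}^{(0,0)}$. The most direct approach is to work with the formal completion at such a point $s$. By the computation recalled in the proof of the previous lemma, $(\bar{X}_{U_{0}})_{s}^{\wedge}\simeq\mathrm{Spf}\,\bar{\mathbb{F}}[[T_{2},\dots,T_{n},S_{2},\dots,S_{n}]]$, the product of the universal deformation rings of the two one-dimensional compatible Barsotti-Tate $\mathcal{O}_{K}$-modules $\mathcal{G}_{1},\mathcal{G}_{2}$ at $s$. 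This is the analogue of the situation in Lemma 3.1 of \cite{T-Y}, and I would follow that argument: within this formal deformation space, the closed stratum $\bar{X}_{U_{0}}^{[h_{1},h_{2}]}$ pulls back to a closed formal subscheme, and the key point is to understand how the stratification behaves under deformation.

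First I would treat the case of a single one-dimensional Barsotti-Tate $\mathcal{O}_{K}$-module. By the theory of such groups (as in \cite{H-T}, II.1), the universal deformation of a one-dimensional compatible Barsotti-Tate $\mathcal{O}_{K}$-module $\mathcal{H}$ of total height $n$ with etale part of height $h$ can be analyzed via the connected-etale sequence: the connected part is a formal $\mathcal{O}_{K}$-module of dimension $1$ and height $n-h$, and one has explicit coordinates in which the locus where the etale height drops (equivalently, where the connected height rises) is cut out. In the universal deformation ring of a formal $\mathcal{O}_{K}$-module, the generic fiber has etale height $0$, i.e. the most supersingular point is the closed point and it lies in the closure of the ordinary locus. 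Combining this for $\mathcal{G}_{1}$ and $\mathcal{G}_{2}$ separately — using that $(\bar{X}_{U_{0}})_{s}^{\wedge}$ is the product of the two deformation spaces — gives that $\bar{X}_{U_{0}}^{(0,0)}$ (etale height exactly $0$ at both primes, i.e. the locus where both connected parts have full height $n$) meets every $(\bar{X}_{U_{0}}^{(h_{1},h_{2})})_{s}^{\wedge}$ in its closure. Since the formal completion detects Zariski closure, one concludes that $\bar{X}_{U_{0}}^{(0,0)}$ is contained in the Zariski closure of $\bar{X}_{U_{0}}^{(h_{1},h_{2})}$, provided the two are not disjoint — but connectedness of the relevant deformation spaces, together with the non-emptiness of $\bar{X}_{U_{0}}^{(0,0)}$ and of each stratum from the previous lemma, handles this.

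The main obstacle I anticipate is bookkeeping with the product structure and with the fact that the stratification is defined by \emph{upper} bounds on the etale height rather than by an exact value: one must be careful that $\bar{X}_{U_{0}}^{(h_{1},h_{2})}$, the locally closed stratum, actually has the supersingular point in its closure and not merely that $\bar{X}_{U_{0}}^{[h_{1},h_{2}]}$ does. This is where one invokes the explicit description of the deformation of a Barsotti-Tate $\mathcal{O}_{K}$-module to see that within the closed stratum $\bar{X}_{U_{0}}^{[h_{1},h_{2}]}$, the open stratum $\bar{X}_{U_{0}}^{(h_{1},h_{2})}$ is dense (its complement being the union of the lower-dimensional closed strata, by the dimension count in the previous lemma), so the closure of the open stratum equals the closed stratum, which contains $\bar{X}_{U_{0}}^{(0,0)}$. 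Alternatively — and this may be cleaner — one argues by descending induction on $(h_1,h_2)$: the previous lemma gives $\dim \bar{X}_{U_{0}}^{(h_1,h_2)} = h_1+h_2$ and $\bar X_{U_0}^{[h_1,h_2]} = \cup_{j_1\le h_1, j_2 \le h_2}\bar X_{U_0}^{(j_1,j_2)}$, and one shows each $\bar{X}_{U_{0}}^{(h_{1},h_{2})}$ with $(h_1,h_2)\ne(0,0)$ has $\bar X_{U_0}^{(0,0)}$ in its closure by noting that a generic deformation inside a closed stratum of one step lower lands in it; iterating down to $(0,0)$ gives the claim. I expect the Taylor–Yoshida argument to transport essentially verbatim, the only new feature being the harmless presence of two independent Barsotti-Tate groups instead of one.
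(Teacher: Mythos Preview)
Your primary approach has the direction of specialization backwards. You complete $\bar{X}_{U_0}$ at a closed point $s$ of $\bar{X}_{U_0}^{(h_1,h_2)}$; but every point of $\mathrm{Spf}\,(\bar{X}_{U_0})_s^{\wedge}$ specializes to $s$, and since the \'etale height of a one-dimensional Barsotti--Tate $\mathcal{O}_K$-module is \emph{lower} at specializations (the locus $[\,\cdot\leq h\,]$ is closed), every point of that formal neighbourhood has \'etale heights $\geq(h_1,h_2)$. In particular no point of $\bar{X}_{U_0}^{(0,0)}$ lives there unless $(h_1,h_2)=(0,0)$. The remark that ``the most supersingular point is the closed point and it lies in the closure of the ordinary locus'' is correct, but it describes the deformation space of a \emph{connected} group of height $n$, i.e.\ the formal completion at a point of $\bar{X}_{U_0}^{(0,0)}$, not at $s$. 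The paper's proof does exactly this: it fixes $x\in\bar{X}_{U_0}^{(0,0)}$ (non-empty by the previous lemma), writes $(\bar{X}_{U_0})_x^{\wedge}\simeq\mathrm{Spf}\,\bar{\mathbb F}[[T_2,\dots,T_n,S_2,\dots,S_n]]$ with explicit coordinates adapted to the $[\pi]$-series, and shows that the generic point of the closed formal subscheme cut out by $T_2=\dots=T_{n-h_1}=S_2=\dots=S_{n-h_2}=0$ maps to $\bar{X}_{U_0}^{(h_1,h_2)}$. That generic point specializes to $x$, giving $x\in\overline{\bar{X}_{U_0}^{(h_1,h_2)}}$.

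Your fallback argument---that the complement of $\bar{X}_{U_0}^{(h_1,h_2)}$ in $\bar{X}_{U_0}^{[h_1,h_2]}$ has strictly smaller dimension, hence the open stratum is dense in the closed one---does not follow: a closed subscheme of smaller dimension can still contain entire irreducible components of $\bar{X}_{U_0}^{[h_1,h_2]}$ if the latter fails to be equidimensional or irreducible, and nothing available at this point rules that out. (Indeed, the present lemma is exactly what one uses later to control such behaviour.) The descending-induction variant has the same gap. The fix is simple: swap the roles of the two strata and complete at a point of $\bar{X}_{U_0}^{(0,0)}$.
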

\begin{proof}
The proof follows exactly like the proof of Lemma 3.1 of \cite{T-Y}.
Let $x$ be a closed geometric point of $\bar{X}_{U_{0}}^{(0,0)}$.
The main point is to note that the formal completion of $\bar{X}_{U_{0}}\times\mbox{Spec }\bar{\mathbb{F}}$
at $x$ is isomorphic to the equicharacteristic universal deformation
ring of $\mathcal{G}_{1,x}\times\mathcal{G}_{2,x}$, so it is isomorphic
to \[
\mbox{Spf }\bar{\mathbb{F}}[[T_{2},\dots,T_{n},S_{2},\dots,S_{n}]].\]
We can choose the $T_{i}$, the $S_{i}$ and formal parameters $X$
on the universal deformation of $\mathcal{G}_{1,x}$ and $Y$ on the
universal deformation of $\mathcal{G}_{2,x}$ such that \[
[\pi](X)\equiv\pi X+\sum_{i=2}^{n}T_{i}X^{\#\mathbb{F}^{i-1}}+X^{\#\mathbb{F}^{n}}\pmod{X^{\#\mathbb{F}^{n}+1}}\mbox{ and }\]
\[
[\pi](Y)\equiv\pi Y+\sum_{i=2}^{n}S_{i}X^{\#\mathbb{F}^{i-1}}+S^{\#\mathbb{F}^{n}}\pmod{S^{\#\mathbb{F}^{n}+1}}.\]
We get a morphism \[
\mbox{Spec }\bar{\mathbb{F}}[[T_{2},\dots,T_{n},S_{2},\dots,S_{n}]]\to\bar{X}_{U_{0}}\]
lying over $x:\mbox{ Spec }\bar{\mathbb{F}}\to\bar{X}_{U_{0}}$ such
that if $k$ denotes the algebraic closure of the field of fractions
of \[
\mbox{Spec }\bar{\mathbb{F}}[[T_{2},\dots,T_{n},S_{2},\dots S_{n}]]/(T_{2},\dots,T_{n-h_{1}},S_{2},\dots,S_{n-h_{2}})\]
then the induced map $\mbox{Spec }k\to\bar{X}_{U_{0}}$ factors through
$\bar{X}_{U_{0}}^{(h_{1},h_{2})}$. 
\end{proof}
For $i=1,2$, let $\mbox{Iw}_{n,\mathfrak{p}_{i}}$ be the subgroup
of matrices in $GL_{n}(\mathcal{O}_{K})$ which reduce modulo $\mathfrak{p}_{i}$
to $B_{n}(\mathbb{F})$. We will define an integral model for $X_{U}$,
where $U\subseteq G(\mathbb{A}^{\infty})$ is equal to \[
U^{p}\times U_{p}^{\mathfrak{p}_{1},\mathfrak{p}_{2}}(m)\times\mbox{Iw}_{n,\mathfrak{p}_{1}}\times\mbox{Iw}_{n,\mathfrak{p}_{2}}\times\mathbb{Z}_{p}^{\times}.\]
We define the following functor $\mathfrak{X}_{U}$ from connected
locally noetherian $\mathcal{O}_{K}$-schemes with a geometric point
to sets sending \[
(S,s)\mapsto(A,\lambda,i,\bar{\eta}^{p},\mathcal{C}_{1},\mathcal{C}_{2},\alpha_{i}),\]
where $(A,\lambda,i,\bar{\eta}^{p},\alpha_{i})$ is as in the definition
of $X_{U_{0}}$ and for $i=1,2,$ $\mathcal{C}_{i}$ is a chain of
isogenies \[
\mathcal{C}_{i}:\mathcal{G}_{i,A}=\mathcal{G}_{i,0}\to\mathcal{G}_{i,1}\to\dots\to\mathcal{G}_{i,n}=\mathcal{G}_{i,A}/\mathcal{G}_{i,A}[\mathfrak{p}_{i}]\]
of compatible Barsotti-Tate $\mathcal{O}_{K}$-modules each of degree
$\#\mathbb{F}$ and with composite the canonical map $\mathcal{G}_{i,A}\to\mathcal{G}_{i,A}/\mathcal{G}_{i.A}[\mathfrak{p}_{i}]$. 
\begin{lem}
\label{dimension}If $U^{p}$ is sufficiently small, the functor $\mathfrak{X}_{U}$
is represented by a scheme $X_{U}$ which is finite over $X_{U_{0}}$.
The scheme $X_{U}$ has some irreducible components of dimension $2n-1$. \end{lem}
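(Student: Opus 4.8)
The plan is to realize $X_{U}$ as a relative moduli space of flags of finite flat subgroup schemes over $X_{U_{0}}$, to deduce finiteness from properness together with a fibrewise count, and then to pin down the dimension by comparison with the generic fibre.

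\emph{Representability by a proper $X_{U_{0}}$-scheme.} The first step is to rephrase the Iwahori data: for $i=1,2$, giving a chain of isogenies of compatible Barsotti--Tate $\mathcal{O}_{K}$-modules
\[
\mathcal{C}_{i}\colon\ \mathcal{G}_{i,A}=\mathcal{G}_{i,0}\to\mathcal{G}_{i,1}\to\cdots\to\mathcal{G}_{i,n}=\mathcal{G}_{i,A}/\mathcal{G}_{i,A}[\mathfrak{p}_{i}],
\]
with each isogeny of degree $\#\mathbb{F}$ and composite the canonical quotient, amounts to giving a full flag
\[
0=\mathcal{K}_{i,0}\subset\mathcal{K}_{i,1}\subset\cdots\subset\mathcal{K}_{i,n}=\mathcal{G}_{i,A}[\mathfrak{p}_{i}]
\]
of finite flat closed $\mathcal{O}_{K}$-subgroup schemes, with $\mathcal{K}_{i,j}$ of order $(\#\mathbb{F})^{j}$ and with each quotient $\mathcal{G}_{i,A}/\mathcal{K}_{i,j}$ again a compatible one-dimensional Barsotti--Tate $\mathcal{O}_{K}$-module (one sets $\mathcal{G}_{i,j}=\mathcal{G}_{i,A}/\mathcal{K}_{i,j}$). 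Since $\mathcal{G}_{i,A}[\mathfrak{p}_{i}]\to X_{U_{0}}$ is finite flat of rank $(\#\mathbb{F})^{n}$, the functor of finite flat closed subgroup schemes of fixed order is represented by a closed subscheme of a Quot scheme projective over $X_{U_{0}}$, and ``$\mathcal{O}_{K}$-stable subgroup'', ``incidence-compatible flag'' and ``compatible quotient'' are closed conditions; hence $\mathfrak{X}_{U}$ is represented by a scheme $X_{U}$ that is projective, in particular proper, over $X_{U_{0}}$. Here I use that $X_{U_{0}}$ is already a fine moduli scheme, which holds once $U^{p}$ is sufficiently small.

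\emph{Finiteness.} Since a proper quasi-finite morphism is finite, it suffices to check that $X_{U}\to X_{U_{0}}$ has finite fibres, and this can be tested on geometric fibres. Over a point of the generic fibre each $\mathcal{G}_{i,A}[\mathfrak{p}_{i}]$ is étale of rank $(\#\mathbb{F})^{n}$, so the flags form the finite set $(GL_{n}/B_{n})(\mathbb{F})$. Over a geometric point $s$ of $\bar{X}_{U_{0}}$ one splits off the maximal étale quotient of $\mathcal{G}_{i,s}$ (contributing only finitely many flags) and is reduced to counting $\mathcal{O}_{K}$-stable flags in the $\mathfrak{p}_{i}$-torsion of a connected one-dimensional formal $\mathcal{O}_{K}$-module over $\bar{\mathbb{F}}$; there one-dimensionality forces this set to be finite --- the kernels of the iterated relative Frobenius already provide a canonical flag, and any $\mathcal{O}_{K}$-stable subgroup of the $\pi$-torsion of such a Lubin--Tate-type module is one of finitely many, which is the rigidity underlying Drinfeld's and Harris--Taylor's treatment of Drinfeld and Iwahori level. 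I expect this characteristic-$p$ fibrewise rigidity to be the only genuinely non-formal ingredient: without the compatibility condition of Definition \ref{compatibility} there would be positive-dimensional families of subgroups and $X_{U}\to X_{U_{0}}$ would fail to be finite.

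\emph{Dimension.} Granting finiteness, $\dim X_{U}\le\dim X_{U_{0}}=2n-1$. Conversely, over a $\mathbb{Q}$-scheme a chain of isogenies of étale Barsotti--Tate $\mathcal{O}_{K}$-modules is exactly an Iwahori-level structure, so $X_{U}\times_{\mathcal{O}_{K}}K$ is the base change to $K$ of the smooth quasi-projective Shimura variety of level $U^{p}\times U_{p}^{\mathfrak{p}_{1},\mathfrak{p}_{2}}(m)\times\mbox{Iw}_{n,\mathfrak{p}_{1}}\times\mbox{Iw}_{n,\mathfrak{p}_{2}}\times\mathbb{Z}_{p}^{\times}$; it is non-empty of dimension $2n-2$ and maps finite étale onto $X_{U_{0}}\times_{\mathcal{O}_{K}}K$. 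As $X_{U}$ is proper over $\mathcal{O}_{K}$, the Zariski closure in $X_{U}$ of an irreducible component of this generic fibre surjects onto $\mbox{Spec }\mathcal{O}_{K}$ and so has dimension $(2n-2)+1=2n-1$. Hence $X_{U}$ has irreducible components of dimension $2n-1$, as asserted. I would deliberately not try to show here that \emph{every} component has dimension $2n-1$, since that is tantamount to flatness of $X_{U}$ over $\mathcal{O}_{K}$, which is cleaner to extract later from the explicit description of the completed strict local rings at closed geometric points of the special fibre.
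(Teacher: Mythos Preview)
Your proposal is correct and follows essentially the same route as the paper: rephrase the isogeny chains as flags $0=\mathcal{K}_{i,0}\subset\cdots\subset\mathcal{K}_{i,n}=\mathcal{G}_{i}[\mathfrak{p}_{i}]$, represent these by a closed subscheme of a projective scheme over $X_{U_{0}}$ (the paper phrases this as a Grassmannian of locally free direct summands of the sheaf of Hopf algebras, done in two steps for $i=1$ then $i=2$, while you use a Quot scheme once), deduce finiteness from properness plus quasi-finiteness, and read off the dimension from the generic fibre, where $X_{U}\to X_{U_{0}}$ is finite \'etale and $X_{U_{0}}$ has dimension $2n-1$. Your fibrewise discussion of the connected--\'etale splitting is more detailed than the paper's one-line ``finitely many choices of flags of $\mathcal{O}_{K}$-submodules'', but the content is the same.
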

\begin{proof}
The chains of isogenies $\mathcal{C}_{i}$ can be viewed as flags
\[
0=\mathcal{K}_{i,0}\subset\mathcal{K}_{i,1}\dots\subset\mathcal{K}_{i,n}=\mathcal{G}_{i}[\mathfrak{p}_{i}],\]
where $\mathcal{K}_{i,j}=\ker(\mathcal{G}_{i,0}\to\mathcal{G}_{i,j})$.
All the $\mathcal{K}_{i,j}$ are closed finite flat subgroup schemes
with $\mathcal{O}_{K}$-action and $\mathcal{K}_{i,j}/\mathcal{K}_{i,j-1}$
of order $\#\mathbb{F}$. The representability can be proved in the
same way as in Lemma 3.2 of \cite{T-Y} except in two steps: first
we note that the functor sending $S$ to points of $X_{U_{0}}(S)$
together with flags $\mathcal{C}_{1}$ of $\mathcal{G}_{1}[\mathfrak{p}_{1}]$
is representable by a scheme $X'_{U}$ over $X_{U_{0}}$. (If we let
$\mathcal{H}_{1}$ denote the sheaf of Hopf algebras over $X_{U_{0}}$
defining $\mathcal{G}_{1}[\mathfrak{p}_{1}]$, then $X'_{U}$ will
be a closed subscheme of the Grassmanian of chains of locally free
direct summands of $\mathcal{H}_{1}$.) Then we see in the same way
that the functor sending $S$ to points of $X'_{U}(S)$ together with
flags $\mathcal{C}_{2}$ of $\mathcal{G}_{2}[\mathfrak{p}_{2}]$ is
representable by a scheme $X_{U}$ over $X'_{U}$. We also have that
$X_{U}$ is projective and finite over $X_{U_{0}}$. (Indeed, for
each closed geometric point $x$ of $X_{U_{0}}$ there are finitely
many choices of flags of $\mathcal{O}_{K}$-submodules of each $\mathcal{G}_{i,x}$.)
On the generic fiber, the morphism $X_{U}\to X_{U_{0}}$ is finite
etale and $X_{U_{0}}$ has dimension $2n-1$, so $X_{U}$ has some
components of dimension $2n-1$. 
\end{proof}
We say that an isogeny $\mathcal{G}\to\mathcal{G}'$ of one-dimensional
compatible Barsotti-Tate $\mathcal{O}_{K}$-modules of degree $\#\mathbb{F}$
has connected kernel if it induces the zero map on $\mbox{Lie }\mathcal{G}$.
If we let $f=[\mathbb{F}:\mathbb{F}_{p}]$ and $F:\mathcal{G}\to\mathcal{G}^{(p)}$
be the Frobenius map, then $F^{f}:\mathcal{G}\to\mathcal{G}^{(\#\mathbb{F})}$
is an isogeny of one-dimensional compatible Barsotti-Tate $\mathcal{O}_{K}$-modules
and has connected kernel. The following lemma appears as Lemma 3.3
in \cite{T-Y}.
\begin{lem}
\label{rigidity lemma}Let $W$ denote the ring of integers of the
completion of the maximal unramified extension of $K$. Suppose that
$R$ is an Artinian local $W$-algebra with residue field $\bar{\mathbb{F}}.$
Suppose that \[
\mathcal{C}:\mathcal{G}_{0}\to\mathcal{G}_{1}\to\dots\to\mathcal{G}_{g}=\mathcal{G}_{0}/\mathcal{G}_{0}[\mathfrak{p}_{i}]\]
is a chain of isogenies of degree $\#\mathbb{F}$ of one-dimensional
compatible formal Barsotti-Tate $\mathcal{O}_{K}$-modules over $R$
of $\mathcal{O}_{K}$-height $g$ with composite equal to multiplication
by $\pi$. If every isogeny has connected kernel then $R$ is a $\bar{\mathbb{F}}$-algebra
and $\mathcal{C}$ is the pullback of a chain of isogenies of Barsotti-Tate
$\mathcal{O}_{K}$-modules over $\bar{\mathbb{F}}$, with all isogenies
isomorphic to $F^{f}$. 
\end{lem}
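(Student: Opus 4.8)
The plan is to mimic the proof of Lemma 3.3 of \cite{T-Y}, which handles the analogous statement for one-dimensional formal Barsotti-Tate $\mathcal{O}_{K}$-modules, the only difference here being the ``compatible'' condition, which for a single formal Barsotti-Tate $\mathcal{O}_{K}$-module of the appropriate dimension imposes nothing extra since each $\mathcal{G}_{j}$ is already one-dimensional. First I would reduce to the case $R$ local Artinian with a principal maximal ideal by dévissage: it suffices to show that if $R$ has residue field $\bar{\mathbb{F}}$ and the chain $\mathcal{C}$ has all isogenies with connected kernel, then $pR=0$, i.e.\ $R$ is an $\bar{\mathbb{F}}$-algebra; the descent of $\mathcal{C}$ to $\bar{\mathbb{F}}$ then follows from the rigidity of étale-locally-trivial deformations together with the identification of each isogeny with $F^{f}$, since over $\bar{\mathbb{F}}$ a connected-kernel isogeny of degree $\#\mathbb{F}$ between one-dimensional Barsotti-Tate $\mathcal{O}_{K}$-modules of the same height is forced to be $F^{f}$.

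The key computation is local: working with a formal parameter $X$ on $\mathcal{G}_{0}$ and $X_{j}$ on $\mathcal{G}_{j}$, the isogeny $\mathcal{G}_{j-1}\to\mathcal{G}_{j}$ has connected kernel, so $X_{j}=\phi_{j}(X_{j-1})$ with $\phi_{j}(T)\equiv c_{j}T^{\#\mathbb{F}}\pmod{\deg(\#\mathbb{F}+1)}$ and, crucially, $\phi_j$ has \emph{zero linear term} — that is what ``induces the zero map on $\mbox{Lie}$'' means. Composing all $g$ isogenies gives multiplication by $\pi$ on $\mathcal{G}_{0}$, hence $[\pi](X)=(\phi_{g}\circ\dots\circ\phi_{1})(X)$. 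Comparing linear terms: the left side is $\pi X+(\text{higher order})$ since $\mathcal{G}_{0}$ is a compatible $\mathcal{O}_{K}$-module, while the right side has linear term $0$ because each $\phi_{j}$ kills the linear term. Therefore $\pi=0$ in $R$. Since $R$ is a $W$-algebra and $\pi$ is a uniformizer of $K$ with $W/\pi W=\bar{\mathbb{F}}$ (note $W$ is the ring of integers of the completion of the maximal unramified extension of $K$, so $\pi$ is still a uniformizer of $W$), the relation $\pi=0$ in $R$ forces $R$ to be a $W/\pi W=\bar{\mathbb{F}}$-algebra.

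Once $R$ is an $\bar{\mathbb{F}}$-algebra, I would finish by noting that over such an $R$ each isogeny $\mathcal{G}_{j-1}\to\mathcal{G}_{j}$ with connected kernel of degree $\#\mathbb{F}$ must have kernel the $\#\mathbb{F}$-torsion of the Frobenius kernel, hence factors through $F^{f}$; comparing $\mathcal{O}_{K}$-heights (which are equal, both chains having height $g$ and the total composite being $[\pi]$ of height $g$ — wait, more precisely each step drops nothing and the total is an isomorphism onto $\mathcal{G}_0/\mathcal{G}_0[\mathfrak{p}_i]$ of degree $\#\mathbb{F}^{g}$) shows each isogeny is exactly $F^{f}$ up to isomorphism. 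Then $\mathcal{G}_{0}$, being a one-dimensional compatible Barsotti-Tate $\mathcal{O}_{K}$-module over the $\bar{\mathbb{F}}$-algebra $R$ whose $\pi$-divided power chain is entirely Frobenius-pullbacks, is the pullback along $R\to\bar{\mathbb{F}}$ — wait, this last descent actually uses that a formal Barsotti-Tate $\mathcal{O}_{K}$-module with $[\pi]=F^{fg}$ is already defined over $\mathbb{F}_{p}$, hence over $\bar{\mathbb{F}}$ — and the whole chain $\mathcal{C}$ descends with it. The main obstacle is the linear-algebra bookkeeping in the composition-of-power-series argument to extract the relation $\pi=0$ cleanly; everything after that is formal. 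I would simply cite \cite{T-Y} Lemma 3.3 for the details, remarking that the ``compatible'' hypothesis plays no role beyond ensuring $[\pi]$ has linear coefficient $\pi$ (not $0$) on each $\mathcal{G}_{j}$.
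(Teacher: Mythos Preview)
Your proposal is correct and matches the paper's treatment exactly: the paper does not give an independent proof but simply records that this is Lemma~3.3 of \cite{T-Y}, and your sketch (vanishing linear term of each $\phi_j$ forces $\pi=0$ in $R$, hence $R$ is an $\bar{\mathbb{F}}$-algebra, then each connected-kernel degree-$\#\mathbb{F}$ isogeny is $F^{f}$) is precisely the Taylor--Yoshida argument. Your closing remark that one should just cite \cite{T-Y} Lemma~3.3 is literally what the paper does.
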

Now let $\bar{X}_{U}=X_{U}\times_{\mbox{Spec }K}\mbox{Spec }\mathbb{F}$
denote the special fiber of $X_{U}$. For $i=1,2$ and $1\leq j\leq n,$
let $Y_{i,j}$ denote the closed subscheme of $\bar{X}_{U}$ over
which $\mathcal{G}_{i,j-1}\to\mathcal{G}_{i,j}$ has connected kernel.
Note that, since each $\mbox{Lie}\mathcal{G}_{i,j}$ is locally free
of rank $1$ over $\mathcal{O}_{X_{U}}$, we can pick a local basis
for all of them. Then we can find locally $X_{i,j}\in\mathcal{O}_{X_{U}}^{\times}$
to represent the linear maps $\mbox{Lie}\mathcal{G}_{i,j-1}\to\mbox{Lie}\mathcal{G}_{i,j}$.
Thus, each $Y_{i,j}$ is cut out locally in $X_{U}$ by the equation
$X_{i,j}=0$. 
\begin{prop}
Let $s$ be a closed geometric point of $X_{U}$ such that $\mathcal{G}_{i,s}$
has etale height $h_{i}$ for $i=1,2$. Let $W$ be the ring of integers
of the completion of the maximal unramified extension of $K$. Let
$\mathcal{O}_{X_{U},s}^{\wedge}$ be the completion of the strict
henselization of $X_{U}$ at $s$, i.e. the completed local ring of
$X\times_{\mbox{Spec }\mathcal{O}_{K}}\mbox{Spec }W$ at $s$. Then
\[
\mathcal{O}_{X_{U},s}^{\wedge}\simeq W[[T_{1},\dots,T_{n},S_{1},\dots,S_{n}]]/(\prod_{i=h_{1}+1}^{n}T_{i}-\pi,\prod_{i=h_{2}+1}^{n}S_{i}-\pi).\]

Assume that $Y_{1,j_{k}}$ for $k=1,\dots,n-h_{1}$ and $j_{k}\in\{1,\dots,n\}$
distinct are subschemes of $X_{U}$ which contain $s$ as a geometric
point. We can choose the generators $T_{i}$ such that the completed
local ring $\mathcal{O}_{Y_{1,j_{k}},s}^{\wedge}$ is cut out in $\mathcal{O}_{X_{U},s}^{\wedge}$
by the equation $T_{k+h_{1}}=0$. The analogous statement is true
for $Y_{2,j_{k}}$ with $k=1,\dots,n-h_{2}$ and $S_{k+h_{2}}=0$.\end{prop}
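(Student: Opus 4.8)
The plan is to compute the completed strict local ring of $X_U$ at $s$ by deforming the extra data (the two chains of isogenies) on top of the known deformation ring of $X_{U_0}$. First I would recall from the proof of Lemma \ref{closure of stratum} (and Lemma III.4.1.2 of \cite{H-T}) that the completed strict local ring of $X_{U_0}$ at the image $s_0$ of $s$ is $W[[T_2,\dots,T_n,S_2,\dots,S_n]]$, the product of the universal deformation rings of the two one-dimensional compatible Barsotti--Tate $\mathcal{O}_K$-modules $\mathcal{G}_{1,s}$ and $\mathcal{G}_{2,s}$ of $\mathcal{O}_K$-height $n$ and etale height $h_i$. The universal deformation of $\mathcal{G}_i$ carries a formal parameter $X$ (resp.\ $Y$) in which $[\pi]$ has the normal form displayed in that proof; I want to bring in the chain structure. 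Since the problem decouples into the two factors $i=1,2$, it suffices to treat one of them and then tensor the two answers over $W$.

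For a fixed $i$, the extra datum over $X_{U_0}$ is a chain $\mathcal{G}_{i,0}\to\mathcal{G}_{i,1}\to\dots\to\mathcal{G}_{i,n}=\mathcal{G}_{i,0}/\mathcal{G}_{i,0}[\mathfrak{p}_i]$ of isogenies of degree $\#\mathbb{F}$ with composite $[\pi]$. Equivalently this is a flag $0=\mathcal{K}_{i,0}\subset\mathcal{K}_{i,1}\subset\dots\subset\mathcal{K}_{i,n}=\mathcal{G}_{i,0}[\mathfrak{p}_i]$ of closed finite flat $\mathcal{O}_K$-stable subgroup schemes with successive quotients of order $\#\mathbb{F}$. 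Over $\bar{\mathbb{F}}$, the subgroup $\mathcal{G}_{i,s}[\mathfrak{p}_i]$ has connected part of $\mathcal{O}_K$-height $n-h_i$ and etale part of height $h_i$, so there is a unique flag on $\bar{\mathbb{F}}$ compatible with the etale-connected splitting insofar as the connected steps are forced; I would follow the argument of Lemma 3.4 of \cite{T-Y}, using the normal form for $[\pi](X)$ to parametrize deformations of this flag. Writing the parameter of the $j$-th isogeny's kernel generator, one finds that the map $\mathrm{Lie}\,\mathcal{G}_{i,j-1}\to\mathrm{Lie}\,\mathcal{G}_{i,j}$ is given (in suitable local bases) by an element $T_{j}$ of the local ring, and chasing through the normal form $[\pi](X)\equiv \pi X+\sum T_i X^{\#\mathbb{F}^{i-1}}+X^{\#\mathbb{F}^{n}}$ shows that the composite being $[\pi]$ forces exactly the relation $\prod_{i=h_i+1}^{n}T_i=\pi$ (the first $h_i$ steps being etale, hence invertible on Lie, and absorbed), while the remaining $h_i$ of the $T$'s are free parameters deforming the etale part of the flag. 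This simultaneously identifies $\mathcal{O}_{X_U,s}^{\wedge}$ with the displayed quotient and shows that the locus $Y_{i,j_k}$, defined by the vanishing of the linear map on Lie at the $j_k$-th step, is cut out by $T_{k+h_i}=0$ after relabeling so that the $n-h_i$ connected steps among $j_1,\dots,j_{n-h_i}$ correspond to the variables $T_{h_i+1},\dots,T_n$.

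The main obstacle I expect is bookkeeping: matching the abstract flag-deformation parameters with the variables $T_i,S_i$ coming from the Barsotti--Tate deformation theory, and checking that the connected-kernel condition at the $j_k$-th step translates precisely to the vanishing of the corresponding $T$ (and not some unit multiple or some combination). This requires a careful choice of compatible local bases of the $\mathrm{Lie}\,\mathcal{G}_{i,j}$ and tracking how Frobenius acts; the key input making it work is Lemma \ref{rigidity lemma}, which pins down that a chain with all connected kernels over an Artinian $W$-algebra with residue field $\bar{\mathbb{F}}$ is the pullback of the unique such chain over $\bar{\mathbb{F}}$ (with every isogeny $\cong F^f$), so that the ``connected directions'' in the deformation space are genuinely the $\pi$-adic ones cut out by the $T_i=0$. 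Once the one-variable ($i=1$) computation is in place, the full statement follows by taking the completed tensor product over $W$ of the two factors, which is exactly the displayed ring, and the statements about $Y_{1,j_k}$ and $Y_{2,j_k}$ are read off from the two factors separately.
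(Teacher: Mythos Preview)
Your approach is essentially the same as the paper's: reduce to the deformation problem, decouple into the two chains, and for each chain use the connected--\'etale analysis together with the rigidity lemma to produce the parameters $T_j$ with $\prod_{j\in J}T_j=\pi$. The paper carries out the single-chain step a bit more carefully in the style of Drinfeld's Proposition~4.5 (tracking the extension classes $f_j\in\mathrm{Hom}(T\mathcal{G}_j,\tilde\Sigma_j)$ explicitly), but the shape of the argument is the same.

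There is one genuine gap. Your ``chasing through the normal form'' and the rigidity lemma give you elements $T_1,\dots,T_n\in\mathcal{O}_{X_U,s}^{\wedge}$ satisfying the displayed relation, i.e.\ a surjection
\[
W[[T_1,\dots,T_n]]/\bigl(\textstyle\prod_{i=h_1+1}^{n}T_i-\pi\bigr)\twoheadrightarrow S
\]
onto the deformation ring of one chain; they do not by themselves show that there are no further relations. The paper closes this by first proving, via Deligne's homogeneity principle, that $X_U$ has \emph{pure} dimension $2n-1$: the dimension of $\mathcal{O}_{X_U,s}^{\wedge}$ is constant as $s$ ranges over points above $\bar X_{U_0}^{(0,0)}$, and using Lemma~\ref{closure of stratum} this constant value propagates to every closed geometric point, where it is then pinned down as $2n-1$ by Lemma~\ref{dimension}. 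Only with this dimension count in hand does the surjection become an isomorphism. You should insert this step explicitly; otherwise the argument as written does not rule out, e.g., that the true deformation ring is a proper quotient of the target.
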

\begin{proof}
First we prove that $X_{U}$ has pure dimension $2n-1$ by using Deligne's
homogeneity principle. We will follow closely the proof of Proposition
3.4.1 of \cite{T-Y}. The dimension of $\mathcal{O}_{X_{U},s}^{\wedge}$
as $s$ runs over geometric points of $X_{U}$ above $\bar{X}_{U_{0}}^{(0,0)}$
is constant, say it is equal to $m$. Then we claim that $\mathcal{O}_{X_{U},s}^{\wedge}$
has dimension $m$ for every closed geometric point of $X_{U}$. Indeed,
assume the subset of $X_{U}$ where $\mathcal{O}_{X_{U},s}^{\wedge}$
has dimension different from $m$ is non-empty. Then this subset is
closed, so its projection to $X_{U_{0}}$ is also closed and so it
must contain some $\bar{X}_{U_{0}}^{(h_{1},h_{2})}$ (since the dimension
of $\mathcal{O}_{X_{U},s}^{\wedge}$ only depends on the stratum of
$X_{U_{0}}$ that $s$ is above). By Lemma \ref{closure of stratum},
the closure of $\bar{X}_{U_{0}}^{(h_{1},h_{2})}$ contains $\bar{X}_{U_{0}}^{(0,0)}$,
which is a contradiction. Thus, $X_{U}$ has pure dimension $m$ and
by Lemma \ref{dimension}, $m=2n-1$.

The completed local ring $ $$\mathcal{O}_{X_{U},s}^{\wedge}$ is
the universal deformation ring for tuples $(A,\lambda,i,\bar{\eta}^{p},\mathcal{C}_{1},\mathcal{C}_{2},\alpha_{i})$
deforming $(A_{s},\lambda_{s},i_{s},\bar{\eta}_{s}^{p},\mathcal{C}_{1,s},\mathcal{C}_{2,s},\alpha_{i,s})$.
Deforming the abelian variety $A_{s}$ is the same as deforming its
$p$-divisible group $A_{s}[p^{\infty}]$ by Serre-Tate and $A_{s}[p^{\infty}]=A_{s}[u^{\infty}]\times A_{s}[(u^{c})^{\infty}]$.
The polarization $\lambda$ together with $A[u^{\infty}]$ determine
$A[(u^{c})^{\infty}]$, so it suffices to deform $A_{s}[u^{\infty}]$
as an $\mathcal{O}_{F}$-module together with the level structure.
At primes other than $\mathfrak{p}_{1}$ and $\mathfrak{p}_{2}$,
the $p$-divisible group is etale, so the deformation is uniquely
determined. Moreover, $A[(\mathfrak{p}_{1}\mathfrak{p}_{2})^{\infty}]$
decomposes as $A[\mathfrak{p}_{1}^{\infty}]\times A[\mathfrak{p}_{2}^{\infty}]$
(because $\mathcal{O}_{F}\otimes_{\mathcal{O}_{F'}}\mathcal{O}_{F'_{\mathfrak{p}_{1}\mathfrak{p}_{2}}}\simeq\mathcal{O}_{F,\mathfrak{p}_{1}}\times\mathcal{O}_{F,\mathfrak{p}_{2}})$,
so it suffices to consider deformations of the chains\[
\mathcal{C}_{i,s}:\mathcal{G}_{i,s}=\mathcal{G}_{i,0}\to\mathcal{G}_{i,1}\to\dots\to\mathcal{G}_{i,n}=\mathcal{G}_{i,s}/\mathcal{G}_{i,s}[\mathfrak{p}_{i}]\]
for $i=1,2$ separately. 

Let $\mathcal{G}\simeq\Sigma\times(K/\mathcal{O}_{K})^{h}$ be a $p$-divisible
$\mathcal{O}_{K}$-module over $\bar{\mathbb{F}}$ of dimension one
and total height $n$. Let \[
\mathcal{C}:\mathcal{G}=\mathcal{G}_{0}\to\mathcal{G}_{1}\to\dots\to\mathcal{G}_{n}=\mathcal{G}/\mathcal{G}[\pi]\]
be a chain of isogenies of degree $\#\mathbb{F}$. Since we are working
over $\bar{\mathbb{F}},$ the chain $\mathcal{C}$ splits into a formal
part and an etale part. Let $\mathcal{C}^{0}$ be the chain obtained
from $\mathcal{C}$ by restricting it to the formal part: \[
\tilde{\Sigma}\to\tilde{\Sigma}_{1}\to\dots\to\tilde{\Sigma}_{n}=\tilde{\Sigma}/\tilde{\Sigma}[\pi].\]
Let $J\subseteq\{1,\dots,n\}$ be the subset of indices $j$ for which
$\mathcal{G}_{j-1}\to\mathcal{G}_{j}$ has connected kernel. (The
cardinality of $J$ is $n-h$.) Also assume that the chain $\mathcal{C}^{\mathrm{et}}$
consists of \[
\mathcal{G}_{j}^{\mathrm{et}}=(K/\pi^{-1}\mathcal{O}_{K})^{j}\oplus(K/\mathcal{O}_{K})^{h-j}\]
for all $j\in J$ with the obvious isogenies between them. 

We claim that the universal deformation rings of $\mathcal{C}$ is
isomorphic to \[
W[[T_{1},\dots,T_{n}]]/(\prod_{j\in J}T_{j}-\pi).\]
We will follow the proof of Proposition 4.5 of \cite{D}. To see the
claim, we first consider deformations of $\mathcal{G}$ without level
structure. By proposition 4.5 of \cite{D}, the universal deformation
ring of $\Sigma$ is \[
R^{0}\simeq W[[X_{h+1},\dots,X_{h}]]/(X_{h+1}\cdot\dots\cdot X_{n}-\pi).\]
Let $\tilde{\Sigma}$ be the universal deformation of $\Sigma$. By
considering the connected-etale exact sequence, we see that the deformations
of $\mathcal{G}$ are classified by extensions of the form\[
0\to\tilde{\Sigma}\to\tilde{\mathcal{G}}\to(K/\mathcal{O}_{K})^{h}\to0.\]
Thus, the universal deformations of $\mathcal{G}$ are classified
by elements of $\mbox{Hom}(T\mathcal{G},\tilde{\Sigma})$, where $T\mathcal{G}$
is the Tate module of $\mathcal{G}$. The latter ring is non-canonically
isomorphic to \[
R\simeq W[[X_{1},\dots,X_{n}]]/(\prod_{j\in J}X_{j}-\pi).\]

Let $S$ be the universal deformation ring for deformations of the
chain $\mathcal{C}$ and $S^{0}$ be the universal deformation ring
for the chain $\mathcal{C}^{0}$. Let \[
\mathcal{\tilde{C}}:\mathcal{\tilde{G}}=\mathcal{\tilde{G}}_{0}\to\mathcal{\tilde{G}}_{1}\to\dots\to\mathcal{\tilde{G}}_{n}=\tilde{\mathcal{G}}/\mathcal{\tilde{G}}[\pi]\]
be the universal deformation of $\mathcal{C}$ which corresponds when
restricted to the formal part to the universal chain \[
\tilde{\Sigma}\to\tilde{\Sigma}_{1}\to\dots\to\tilde{\Sigma}_{n}=\tilde{\Sigma}/\tilde{\Sigma}[\pi].\]
Each deformation $\tilde{\mathcal{G}}_{j}$ of $\mathcal{G}_{j}$
is defined by a connected-etale exact sequence \[
0\to\tilde{\Sigma}_{j}\to\tilde{\mathcal{G}}_{j}\to(K/\mathcal{O}_{K})^{h}\to0,\]
so by an element $f_{j}\in\mbox{Hom}(T\mathcal{G}_{j},\tilde{\Sigma}_{j})$.
We will explore the compatibilities between the $\mbox{Hom}(T\mathcal{G}_{j},\tilde{\Sigma}_{j})$
as $j$ ranges from $0$ to $n$. If $j\in J$ then $\tilde{\mathcal{G}}_{j-1}\to\tilde{\mathcal{G}_{j}}$
has connected kernel, so $T\mathcal{G}_{j-1}\simeq T\mathcal{G}_{j}$.
The isogeny $\tilde{\Sigma}_{j-1}\to\tilde{\Sigma}_{j}$ determines
a map $\mbox{Hom}(T\mathcal{G}_{j-1},\tilde{\Sigma}_{j-1})\to\mbox{Hom}(T\mathcal{G}_{j},\tilde{\Sigma}_{j})$,
which determines the extension $\tilde{\mathcal{G}_{j}}$. Thus, in
order to know the extension classes of $\tilde{\mathcal{G}_{j}}$
it suffices to focus on the case $j\not\in J$. 

Let $(e_{j})_{j\in J}$ be a basis of $\mathcal{O}_{K}^{h}$, which
we identify with $T\mathcal{G}_{j}$ for each $j$. We claim that
it suffices to know $f_{j}(e_{j})\in\tilde{\Sigma}_{j}$ for each
$j\not\in J$. Indeed, if $j\not\in J$ then we know that $\tilde{\Sigma}_{j-1}\simeq\tilde{\Sigma}_{j}$
and we also have a map $T\mathcal{G}_{j-1}\to T\mathcal{G}_{j}$ sending
\[
e_{j'}\mapsto e_{j'}\mbox{ for }j'\not=j\mbox{ and }e_{j}\mapsto\pi e_{j}.\]
Thus, for $i\not=j$ we can identify $f_{j-1}(e_{i})\in\tilde{\Sigma}_{j-1}$
with $f_{j}(e_{i})\in\tilde{\Sigma}_{j}.$ Hence if we know $f_{j}(e_{j})$
then we also know $f_{j'}(e_{j})$ for all $j'>j$. Thus we know $f_{n}(e_{j})$,
but recall that $f_{n}$ corresponds to the extension\[
0\to\tilde{\Sigma}/\tilde{\Sigma}[\pi]\to\tilde{\mathcal{G}}/\mathcal{\tilde{G}}[\pi]\to(K/\pi^{-1}\mathcal{O}_{K})^{h}\to0,\]
which is isomorphic to the extension \[
0\to\tilde{\Sigma}\to\tilde{\mathcal{G}}\to(K/\mathcal{O}_{K})^{h}\to0.\]
Therefore we also know $f_{0}(e_{j})$ and by extension all $f_{j'}(e_{j})$
for $j'<j$. This proves the claim that the only parameters needed
to construct all the extensions $\tilde{\mathcal{G}_{j}}$ are the
elements $f_{j}(e_{j})\in\tilde{\Sigma}_{j}$ for all $j\not\in J$. 

We have a map $S^{0}\otimes_{R^{0}}R\to S$ induced by restricting
the Iwahori level structure to the formal part. From the discussion
above, we see that this map is finite and that $S$ is obtained from
$S^{0}\otimes_{R^{0}}R$ by adjoining for each $j\in J$ a root $T_{j}$
of \[
f(T_{j})=X_{j}\]
in $\tilde{\Sigma}$, where $f:\tilde{\Sigma}\to\tilde{\Sigma}$ is
the composite of the isogenies $\tilde{\Sigma}_{j}\to\tilde{\Sigma}_{j+1}\to\dots\to\tilde{\Sigma}_{n}$.
If we quotient $S$ by all the $T_{j}$ for $j\not\in J$, we are
left only with deformations of the chain $\mathcal{C}^{0}$, since
all of the connected-etale exact sequences will split. Thus $S/(T_{j})_{j\not\in J}\simeq S^{0}$. 

Now, the formal part $\tilde{\mathcal{C}}^{0}$ can be written as
a chain \[
\tilde{\Sigma}=\tilde{\Sigma}_{0}\to\dots\to\tilde{\Sigma}_{j}\to\dots\to\tilde{\Sigma}/\tilde{\Sigma}[\pi]\]
of length $n-h$. Choose bases $e_{j}$ for $\mbox{Lie }\mathcal{G}_{j}$
over $S^{0}$ as $j$ runs over $J$, such that \[
e_{n}=e_{j}\mbox{ for the largest }j\in J\]
maps to\[
e_{0}=e_{j}\mbox{ for the smallest }j\in J\]
under the isomorphism $\mathcal{G}_{n}=\mathcal{G}_{0}/\mathcal{G}_{0}[\pi]\stackrel{\sim}{\to}\mathcal{G}_{0}$
induced by $\pi$. Let $T_{j}\in S^{0}$ represent the linear map
$\mbox{Lie }\tilde{\Sigma}_{j'}\to\mbox{Lie }\tilde{\Sigma}_{j}$,
where $j'$ is the largest element of $J$ for which $j'<j$. Then
\[
\prod_{j\in J}T_{j}=\pi.\]
Moreover, $S^{0}/(T_{j})_{j\in J}=\bar{\mathbb{F}}$ by Lemma \ref{rigidity lemma}.
(See also the proof of Proposition 3.4 of \cite{T-Y}.) Hence we have
a surjection \[
W[[T_{1},\dots,T_{n}]]/(\prod_{j=h_{1}+1}^{n}T_{j}-\pi)\twoheadrightarrow S,\]
which by dimension reasons must be an isomorphism. 

Applying the preceding argument to the chains $\mathcal{C}_{1,s}$
and $\mathcal{C}_{2,s}$, we conclude that \[
\mathcal{O}_{X_{U},s}^{\wedge}\simeq W[[T_{1},\dots,T_{n},S_{1},\dots,S_{n}]]/(\prod_{i=h_{1}+1}^{n}T_{i}-\pi,\prod_{i=h_{2}+1}^{n}S_{i}-\pi).\]
Moreover, the closed subvariety $Y_{1,j_{k}}$ of $X_{U}$ is exactly
the locus where $\mathcal{G}_{j_{k}-1}\to\mathcal{G}_{j_{k}}$ has
connected kernel, so, if $s$ is a geometric point of $Y_{1,j_{k}}$,
then $\mathcal{O}_{Y_{1,j_{k}},s}^{\wedge}$ is cut out in $\mathcal{O}_{X_{U},s}^{\wedge}$
by the equation $T_{k+h_{1}}=0$. (Indeed, by our choice of the parameters
$T_{k+h_{1}}$ with $1\leq k\leq n-h_{1}$, the condition that $\mathcal{G}_{1,j_{k}-1}\to\mathcal{G}_{1,j_{k}}$
has connected kernel is equivalent to $T_{k+h_{1}}=0$.)
\end{proof}
For $S,T\subseteq\{1,\dots,n\}$ non-empty let \[
Y_{U,S,T}=\left(\bigcap_{i\in S}Y_{1,i}\right)\cap\left(\bigcap_{j\in T}Y_{2,j}\right).\]
Then $Y_{U,S,T}$ is smooth over $\mbox{Spec }\mathbb{F}$ of pure
dimension $2n-\#S-\#T$ (we can check smoothness on completed local
rings) and it is also proper over $\mbox{Spec }\mathbb{F}$, since
$Y_{U,S,T}\hookrightarrow\bar{X}_{U}$ is a closed immersion and $\bar{X}_{U}$
is proper over $\mbox{Spec }\mathbb{F}$. We also define \[
Y_{U,S,T}^{0}=Y_{U,S,T}\backslash\left(\left(\bigcup_{\substack{S'\supsetneq S}
}Y_{U,S',T}\right)\cup\left(\bigcup_{\substack{T'\supsetneq T}
}Y_{U,S,T'}\right)\right).\]
Note that the inverse image of $\bar{X}_{U}^{(h_{1},h_{2})}$ with
respect to the finite flat map $\bar{X}_{U}\to\bar{X}_{U_{0}}$ is
\[
\bigcup_{\substack{\#S=n-h_{1}\\
\#T=n-h_{2}}
}Y_{U,S,T}^{0}.\]

\begin{lem}
\label{locally etale over}The Shimura variety $X_{U}$ is locally
etale over \[
X_{r,s}=\mbox{Spec }\mathcal{O}_{K}[X_{1},\dots,X_{n},Y_{1},\dots Y_{n}]/(\prod_{i=1}^{r}X_{i}-\pi,\prod_{j-1}^{s}Y_{j}-\pi)\]
with $1\leq r,s\leq n$.\end{lem}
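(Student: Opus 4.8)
The plan is to deduce the statement directly from the structure-theoretic description of the completed strict local rings established in the preceding Proposition. Recall that for every closed geometric point $s$ of $X_U$ lying above the stratum where $\mathcal{G}_{i,s}$ has \'etale height $h_i$, we have an isomorphism
\[
\mathcal{O}_{X_U,s}^{\wedge}\simeq W[[T_1,\dots,T_n,S_1,\dots,S_n]]/\left(\prod_{i=h_1+1}^{n}T_i-\pi,\ \prod_{i=h_2+1}^{n}S_i-\pi\right),
\]
where $W$ is the ring of integers of the completion of the maximal unramified extension of $K$. Setting $r=n-h_1$ and $s=n-h_2$ (so that $1\leq r,s\leq n$, using $0\leq h_1,h_2\leq n-1$), this is precisely the completed strict local ring of $X_{r,s}$ at the origin of its special fiber, after relabelling the free variables $T_{h_1+1},\dots,T_n$ as $X_1,\dots,X_r$, the remaining $T_i$ together with $S_1,\dots,S_n$ as the other coordinates, and base-changing from $\mathcal{O}_K$ to $W$. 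So on the level of complete local rings, $X_U$ and $X_{r,s}$ look identical.

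First I would make precise what "locally \'etale over" means here: for each closed point $x$ of $X_U$ there is an open neighbourhood $V$ of $x$, a choice of $r,s$, and an \'etale morphism $V\to X_{r,s}$. To produce such a morphism I would exhibit global functions on an open neighbourhood of $x$ realizing the coordinates $X_i,Y_j$. Concretely, as noted just before the Proposition, each $Y_{i,j}$ is cut out locally in $X_U$ by an equation $X_{i,j}=0$ coming from a chosen local basis of the line bundles $\mathrm{Lie}\,\mathcal{G}_{i,j}$ and the transition maps $\mathrm{Lie}\,\mathcal{G}_{i,j-1}\to\mathrm{Lie}\,\mathcal{G}_{i,j}$; on a suitable affine neighbourhood $V=\mathrm{Spec}\,A$ of $x$ these give elements $X_1,\dots,X_n\in A$ (from the chain $\mathcal{C}_1$) and $Y_1,\dots,Y_n\in A$ (from $\mathcal{C}_2$). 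The composite of all isogenies in each chain is multiplication by $\pi$, which on Lie algebras forces the product relations $\prod_i X_i = (\text{unit})\cdot\pi$ and $\prod_j Y_j = (\text{unit})\cdot\pi$; after adjusting the chosen bases by units one arranges $\prod_i X_i=\pi$ and $\prod_j Y_j=\pi$ exactly, which is what is needed to define a map $A\leftarrow \mathcal{O}_K[X_1,\dots,X_n,Y_1,\dots,Y_n]/(\prod X_i-\pi,\prod Y_j-\pi)$, i.e. a morphism $V\to X_{n,n}$. (If $x$ lies in a stratum with $h_i>0$, some of these $X_i$ or $Y_j$ are units near $x$; shrinking $V$ and discarding those coordinates yields a map to $X_{r,s}$ with $r=n-h_1$, $s=n-h_2$ instead.)

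Next I would check this morphism is \'etale at $x$. Both source and target are, by the Proposition applied to $X_U$ and the analogous elementary computation for $X_{r,s}$, schemes whose completed strict local rings at $x$ and at the image point are the same explicit complete intersection ring displayed above. The morphism induces on completed strict local rings the evident identification (it sends the coordinate $X_i$ of $X_{r,s}$ to the coordinate $X_i$ of $X_U$, etc.), which is an isomorphism. A morphism of locally noetherian schemes which induces an isomorphism on completed strict local henselizations at a point is \'etale in a neighbourhood of that point (both are flat of relative dimension zero and unramified there; equivalently, flatness plus an isomorphism on fibres can be checked after completion). Hence $V\to X_{r,s}$ is \'etale on a possibly smaller open neighbourhood of $x$, giving the claim.

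The main obstacle is the bookkeeping in the second step: one must verify that the locally-defined functions $X_{i,j}$ cutting out the $Y_{i,j}$ can be chosen coherently on a single affine neighbourhood and can be normalized so that the two product relations hold on the nose (not merely up to a unit), and one must track which coordinates become invertible over non-supersingular strata so as to land in $X_{r,s}$ with the correct $r,s$. This is exactly parallel to Lemma 3.5 of Taylor--Yoshida, and once the normalization is arranged the \'etaleness is formal from the Proposition, so I expect no conceptual difficulty beyond careful comparison of the two deformation-ring computations.
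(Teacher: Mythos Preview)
Your approach is correct and essentially the same as the paper's: both construct local divisor equations $T_i,S_j$, normalize so that their products equal $\pi$ on the nose (the paper carefully argues that the unit discrepancy lies in $\mathcal{O}_{X_U,x}$ and not merely in its completion, which is the step you flag as ``the main obstacle''), and then use the Proposition's identification of completed strict local rings to conclude. The only organizational difference is that the paper first maps to affine $2n$-space over $\mathcal{O}_K$, observes this is formally unramified, and invokes EGA IV 18.4.7 to factor as a closed immersion followed by an \'etale morphism---then identifies the ideal of the closed immersion as $(\prod X_i-\pi,\prod Y_j-\pi)$ via the completion---whereas you map directly to $X_{r,s}$ and appeal to the criterion that an isomorphism on completed strict henselizations implies \'etaleness.
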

\begin{proof}
Let $x$ be a closed point of $X_{U}$. The completion of the strict
henselization of $X_{U}$ at $x$ $\mathcal{O}_{X_{U},x}^{\wedge}$
is isomorphic to \[
\mathcal{O}_{r,s}=W[[X_{1},\dots,X_{n},Y_{1}\dots,Y_{n}]]/(\prod_{i=1}^{r}X_{i}-\pi,\prod_{j=1}^{s}Y_{j}-\pi)\]
for certain $1\leq r,s\leq n$. We will show that there is an open
affine neighbourhood $U$ of $x$ in $X$ such that $U$ is etale
over $X_{r,s}$. Note that there are local equations $T_{i}=0$ with
$1\leq i\leq r$ and $S_{j}=0$ with $1\leq j\leq s$ which define
the closed subschemes $Y_{1,i}$ with $1\leq i\leq r$ and $Y_{2,j}$
with $1\leq j\leq s$ passing through $x$. Moreover, the parameters
$T_{i}$ and $S_{j}$ satisfy \[
\prod_{i=1}^{r}T_{i}=u\pi\mbox{ and }\prod_{j=1}^{s}S_{i}=u'\pi\]
with $u$ and $u'$ units in the local ring $\mathcal{O}_{X_{U},x}$.
We will explain why this is the case for the $T_{i}$. In the completion
of the strict henselization $\mathcal{O}_{X_{U},x}^{\wedge}$ both
$T_{i}$ and $X_{i}$ cut out the completion of the strict henselization
$\mathcal{O}_{Y_{1,i},x}^{\wedge}$, which means that $T_{i}$ and
$X_{i}$ differ by a unit. Taking the product of the $T_{i}$ we find
that $\prod_{i=1}^{r}T_{i}=u\pi$ for $u\in\mathcal{O}_{X_{U},x}^{\wedge}$
a unit in the completion of the strict henselization of the local
ring. At the same time, in an open neighborhood of $x$, the special
fiber of $X$ is a union of the divisors corresponding to $T_{i}=0$
for $1\leq i\leq r$, so that $\prod_{i=1}^{r}T_{i}$ belongs to the
ideal of $\mathcal{O}_{X_{U},x}$ generated by $\pi$. We conclude
that $u$ is actually a unit in the local ring $\mathcal{O}_{X_{U},x}$,
not only in $\mathcal{O}_{X_{U},x}^{\wedge}$. In a neighborhood of
$x$, we can change one of the $T_{i}$ by $u^{-1}$ and one of the
$S_{i}$ by $(u')^{-1}$ to ensure that\[
\]
\[
\prod_{i=1}^{r}T_{i}=\pi\mbox{ and }\prod_{j=1}^{s}S_{i}=\pi.\]

We will now adapt the argument used in the proof of Proposition 4.8
of \cite{Y} to our situation. We first construct an unramified morphism
$f$ from a neighborhood of $x$ in $X_{U}$ to $\mbox{Spec }\mathcal{O}_{K}[X_{1},\dots,X_{n},Y_{1},\dots Y_{n}]$.
We can do this simply by sending the $X_{i}$ to the $T_{i}$ and
the $Y_{j}$ to the $S_{j}$. Then $f$ will be formally unramified
at the point $x$. By \cite{EGA4} 18.4.7 we see that when restricted
to an open affine neighbourhood $\mbox{Spec }A$ of $x$ in $X$,
$f|_{\mathrm{Spec}A}$ can be decomposed as a closed immersion $\mbox{Spec }A\to\mbox{Spec }B$
followed by an etale morphism $\mbox{Spec }B\to\mbox{Spec }\mathcal{O}_{K}[X_{1},\dots X_{n},Y_{1},\dots,Y_{n}]$.
The closed immersion translates into the fact that $A\simeq B/I$
for some ideal $I$ of $B$. The inverse image of $I$ in $W[X_{1},\dots,X_{n},Y_{1},\dots Y_{n}]$
is an ideal $J$ which contains $\prod_{i=1}^{r}X_{i}-\pi$ and $\prod_{j=1}^{s}Y_{j}-\pi$.
The morphism $f$ factors through the morphism $g:\mbox{Spec }A\to\mbox{Spec }\mathcal{O}_{K}[X_{1},\dots,X_{n},Y_{1},\dots,Y_{n}]/J$
which is etale. Moreover, $J$ is actually generated by $\prod_{i=1}^{r}X_{i}-\pi$
and $\prod_{j=1}^{s}Y_{j}-\pi$, since $g$ induces an isomorphism
on completed strict local rings \[
W[[X_{1},\dots,X_{n},Y_{1},\dots,Y_{n}]]/J\stackrel{\sim}{\to}\mathcal{O}_{r,s}.\]
This completes the proof of the lemma. 
\end{proof}
Let $\mathcal{A}_{U}$ be the universal abelian variety over the integral
model $X_{U}$. Recall that $\xi$ was an irreducible representation
of $G$ over $\bar{\mathbb{Q}}_{l}$. The sheaf $\mathcal{L}_{\xi}$
extends to a lisse sheaf on the integral models $X_{U_{0}}$ and $X_{U}$.
Also, $a_{\xi}\in\mbox{End}(\mathcal{A}_{U}^{m_{\xi}}/X_{U})\otimes_{\mathbb{Z}}\mathbb{Q}$
extends as an etale morphism on $\mathcal{A}_{U}^{m_{\xi}}$ over
the integral model. We have \[
H^{j}(X_{U}\times_{F'}\bar{F}'_{\mathfrak{p}},\mathcal{L}_{\xi})\simeq a_{\xi}H^{j+m_{\xi}}(\mathcal{A}_{U}^{m_{\xi}}\times_{F'}\bar{F}_{\mathfrak{p}}',\bar{\mathbb{Q}}_{l}(t_{\xi}))\]
and we can compute the latter via the nearby cycles $R\psi\bar{\mathbb{Q}}_{l}$
on $\mathcal{A}_{U}^{m_{\xi}}$ over the integral model of $X_{U}$.
Note that $\mathcal{A}_{U}^{m_{\xi}}$ is smooth over $X_{U}$, so
$\mathcal{A}_{U}^{m_{\xi}}$ is locally etale over \[
X_{r,s,m}=\mbox{Spec }\mathcal{O}_{K}[X_{1},\dots,X_{n},Y_{1},\dots Y_{n},Z_{1},\dots,Z_{m}]/(\prod_{j=1}^{r}X_{i_{j}}-\pi,\prod_{j=1}^{s}Y_{i_{j}}-\pi)\]
for some non-negative integer $m$.

\section{Sheaves of nearby cycles}

Let $K/\mathbb{Q}_{p}$ be finite with ring of integers $\mathcal{O}_{K}$
which has uniformiser $\pi$ and residue field $\mathbb{F}$. Let
$I_{K}=\mbox{Gal}(\bar{K}/K^{\mbox{ur}})\subset G_{K}=\mbox{Gal}(\bar{K}/K)$
be the inertia subgroup of $K$. Let $\Lambda$ be either one of $\mathbb{Z}/l^{r}\mathbb{Z}$,
$\mathbb{Z}_{l}$, $\mathbb{Q}_{l}$ or $\bar{\mathbb{Q}}_{l}$ for
$l\not=p$ prime. Let $X/\mathcal{O}_{K}$ be a scheme such that $X$
is locally etale over \[
\mbox{Spec }\mathcal{O}_{K}[X_{1},\dots,X_{n},Y_{1},\dots Y_{n},Z_{1},\dots,Z_{m}]/(\prod_{j=1}^{r}X_{i_{j}}-\pi,\prod_{j=1}^{s}Y_{i_{j}}-\pi).\]
Let $Y$ be the special fiber of $X$. Assume that $Y$ is a union
of closed subschemes $Y_{1,j}$ with $j\in\{1,\dots,n\}$ which are
cut out locally by one equation and that this equation over $X_{r,s,m}$
corresponds to $X_{j}=0$. Similarly, assume that $Y$ is a union
of closed subschemes $Y_{2,j}$ with $j\in\{1,\dots,n\}$ which are
cut out over $X_{r,s,m}$ by $Y_{j}=0$. 

Let $j:X_{K}\hookrightarrow X$ be the inclusion of the generic fiber
and $i:Y\hookrightarrow X$ be the inclusion of the special fiber.
Let $S=\mbox{Spec }\mathcal{O}_{K},$ with generic point $\eta$ and
closed point $s$. Let $\bar{K}$ be an algebraic closure of $K$,
with ring of integers $\mathcal{O}_{\bar{K}}$. Let $\bar{S}=\mbox{Spec }\mathcal{O}_{\bar{K}}$,
with generic point $\bar{\eta}$ and closed point $\bar{s}$. Let
$\bar{X}=X\times_{S}\bar{S}$ be the base change of $X$ to $\bar{S}$,
with generic fiber $\bar{j}:X_{\bar{\eta}}\hookrightarrow\bar{X}$
and special fiber $\bar{i}:X_{\bar{s}}\hookrightarrow\bar{X}$. The
sheaves of nearby cycles associated to the constant sheaf $\Lambda$
on $X_{\bar{K}}$ are sheaves $R^{k}\psi\Lambda$ on $X_{\bar{s}}$
defined for $k\geq0$ as \[
R^{k}\psi\Lambda=\bar{i}^{*}R^{k}\bar{j}_{*}\Lambda\]
and they have continuous actions of $I_{K}$. 
\begin{prop}
\label{trivial inertia}The action of $I_{K}$ on $R^{k}\psi\Lambda$
is trivial for any $k\geq0$. 
\end{prop}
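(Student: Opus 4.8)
The plan is to reduce the statement to the explicit semistable local models and then to combine the classical Rapoport--Zink description of nearby cycles with the Künneth formula for nearby cycles recorded in the introduction; the hypothesis $l\neq p$ will enter only through the fact that inertia acts trivially on Tate twists. The formation of $R^k\psi\Lambda$ together with its $I_K$-action commutes with étale base change, so since $X$ is by hypothesis locally étale over $X_{r,s,m}$ for suitable $r,s,m$, it suffices to treat $X=X_{r,s,m}$. Relabelling the variables so that the two relations read $X_1\cdots X_r=\pi$ and $Y_1\cdots Y_s=\pi$, we may write $X_{r,s,m}\cong X_1'\times_{\mathcal O_K}X_2'\times_{\mathcal O_K}\mathbb A^{N}_{\mathcal O_K}$, where $X_1'=\mathrm{Spec}\,\mathcal O_K[X_1,\dots,X_r]/(X_1\cdots X_r-\pi)$ and $X_2'=\mathrm{Spec}\,\mathcal O_K[Y_1,\dots,Y_s]/(Y_1\cdots Y_s-\pi)$ are the standard strictly semistable local models and $N=(n-r)+(n-s)+m$. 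Absorbing the affine factor into $X_1'$ — the scheme $X_1'\times_{\mathcal O_K}\mathbb A^N_{\mathcal O_K}$ is again regular and strictly semistable over $\mathcal O_K$ — we are reduced to the case $X=X_1\times_{\mathcal O_K}X_2$ with $X_1,X_2$ strictly semistable over $\mathcal O_K$.

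Next I would treat a single strictly semistable $X/\mathcal O_K$ with special fibre $Y=\bigcup_i D_i$ a reduced normal crossings divisor. Here Rapoport--Zink provide a canonical exact sequence $0\to\Lambda_Y\to\bigoplus_i(\iota_{D_i})_*\Lambda_{D_i}\to R^1\psi\Lambda(1)\to 0$ (the first map being the sum of the restrictions) together with a canonical isomorphism $R^a\psi\Lambda\xrightarrow{\sim}\bigwedge^a_{\Lambda_Y}R^1\psi\Lambda$ induced by cup product. All of these are $I_K$-equivariant, and $I_K$ acts trivially on every constituent: the constant sheaves $\Lambda_Y$ and $\Lambda_{D_i}$ are pulled back from the residue field $\mathbb F$ and so carry no Galois action, while $I_K$ acts trivially on the Tate twist $\Lambda(1)$ precisely because $l\neq p$, so that $\mu_{l^\infty}(\bar K)\subseteq K^{\mathrm{ur}}$ and inertia fixes $\mathbb Z_l(1)$ and all of its twists. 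Hence $I_K$ acts trivially on $R^1\psi\Lambda$ and therefore, via the wedge description and the $I_K$-equivariance of cup product, on every $R^a\psi\Lambda$.

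Finally I would combine the two steps. By the Künneth formula $R\psi_{X_1\times_{S}X_2}\Lambda\simeq R\psi_{X_1}\Lambda\otimes^{L}R\psi_{X_2}\Lambda$ of the introduction — which is $I_K$-equivariant for the diagonal action on the right-hand side — the sheaf $R^k\psi_X\Lambda$ is built, through the associated Künneth (hyper-$\mathrm{Tor}$) spectral sequence, out of the external tensor products and $\mathrm{Tor}$-sheaves of the $R^a\psi_{X_1}\Lambda$ and $R^b\psi_{X_2}\Lambda$, compatibly with the diagonal $I_K$-action. Since $I_K$ acts trivially on each $R^a\psi_{X_i}\Lambda$ by the previous step, and a trivial $I_K$-action is inherited by tensor products, $\mathrm{Tor}$-sheaves, subquotients and extensions, $I_K$ acts trivially on $R^k\psi_X\Lambda$. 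This proves the proposition for $\Lambda=\mathbb Z/l^r\mathbb Z$; the cases $\Lambda=\mathbb Z_l,\mathbb Q_l,\bar{\mathbb Q}_l$ then follow by passing to the inverse limit over $r$ and extending scalars.

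The only arithmetic ingredient in the whole argument is the triviality of the $I_K$-action on Tate twists, which is exactly where $l\neq p$ is used; everything else is geometry over $\mathbb F$. Accordingly, the point that demands real care — the expected main obstacle — is the bookkeeping of the $I_K$-action through the two structural isomorphisms: checking that the Künneth formula for nearby cycles is equivariant for the diagonal inertia action, and that the Rapoport--Zink exact sequence and wedge isomorphism are built, $I_K$-equivariantly, out of the inertia-trivial sheaves identified above.
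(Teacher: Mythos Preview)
Your argument is correct and essentially complete, but it follows a different path from the paper.

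The paper proves this by endowing $X$ with the canonical log structure associated to its special fibre, checking that $(X,M)\to(S,N)$ is log smooth, and then invoking Nakayama's computation of nearby cycles for log smooth schemes: one gets $R^{k}\psi\Lambda\simeq R^{k}\tilde\epsilon_{*}\Lambda$ where $\tilde\epsilon$ forgets the log structure, and the inertia action on the right-hand side factors through the tame quotient and is computed stalkwise via the cokernel $E_{\bar x}$ of the map of log inertia groups $I_x\to I_s$. For the monoid $P_{r,s}$ arising here this cokernel is trivial, whence the result.

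Your route is the more classical one: reduce \'etale-locally to $X_{r,s,m}$, factor it as a product of strictly semistable pieces, use the Rapoport--Zink description of nearby cycles in the semistable case (where triviality of the $I_K$-action is immediate from the exact sequence and the wedge isomorphism, since everything is built from constant sheaves on strata and Tate twists), and then assemble via the K\"unneth formula for nearby cycles. Two remarks. First, be careful with circularity: the K\"unneth formula you need is stated later in the paper as Proposition~\ref{product}, and the paper's own proof of that proposition \emph{uses} Proposition~\ref{trivial inertia}. You should therefore invoke Illusie's general result \cite{I2} (for $\Lambda$ torsion, as you do) rather than the paper's version; your limit-and-base-change step at the end then handles $\mathbb Z_l$, $\mathbb Q_l$, $\bar{\mathbb Q}_l$. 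Second, the paper's log-geometric proof does double duty: beyond triviality of inertia it yields the explicit isomorphism $R^{k}\psi\Lambda(k)\simeq\bigwedge^{k}\bar M^{gp}_{\mathrm{rel}}$ globally on $Y$, which is the starting point for the later analysis of the monodromy filtration. Your argument, while cleaner for the bare statement, does not directly produce this global description; if you want to continue along the paper's lines you would eventually need it.
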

The proof of this proposition is based on endowing $X$ with a logarithmic
structure, showing that the resulting log scheme is log smooth over
$\mbox{Spec }\mathcal{O}_{K}$ (with the canonical log structure determined
by the special fiber) and then using the explicit computation of the
action of $I_{K}$ on the sheaves of nearby cycles that was done by
Nakayama \cite{Nakayama}.

\subsection{Log structures}
\begin{defn}
A log structure on a scheme $Z$ is a sheaf of monoids $M$ together
with a morphism $\alpha:M\to\mathcal{O}_{Z}$ such that $\alpha$
induces an isomorphism $\alpha^{-1}(\mathcal{O}_{Z}^{*})\simeq\mathcal{O}_{Z}^{*}$. 
\end{defn}
From now on, we will regard $\mathcal{O}_{Z}^{*}$ as a subsheaf of
$M$ via $\alpha^{-1}$ and define $\bar{M}:=M/\mathcal{O}_{Z}^{*}$. 

Given a scheme $Z$ and a closed subscheme $V$ with complement $U$
there is a canonical way to associate to $V$ a log structure. If
$j:U\hookrightarrow X$ is an open immersion, we can simply define
$M=j_{*}((\mathcal{O}_{X}|U)^{*})\cap\mathcal{O}_{X}\to\mathcal{O}_{X}$.
This amounts to formally {}``adjoining'' the sections of $\mathcal{O}_{X}$
which are invertible outside $V$ to the units $\mathcal{O}_{X}^{*}$.
The sheaf $\bar{M}$ will be supported on $V$. 

If $P$ is a monoid, then the scheme $\mbox{Spec }\mathbb{Z}[P]$
has a canonical log structure associated to the natural map $P\to\mathbb{Z}[P]$.
A chart for a log structure on $Z$ is given by a monoid $P$ and
a map $Z\to\mbox{Spec }\mathbb{Z}[P]$ such that the log structure
on $Z$ is pulled back from the canonical log structure on $\mbox{Spec }\mathbb{Z}[P]$.
A chart for a morphism of log schemes $Z_{1}\to Z_{2}$ is a triple
of maps $Z_{1}\to\mbox{Spec }\mathbb{Z}[Q],$ $Z_{2}\to\mbox{Spec }\mathbb{Z}[P]$
and $P\to Q$ such that the first two maps are charts for the log
structures on $Z_{1}$ and $Z_{2}$ and such that the obvious diagram
is commutative.
\begin{defn}
A scheme endowed with a log structure is a log scheme. A morphism
of log schemes $(Z_{1},M_{1})\to(Z_{2},M_{2})$ consists of a pair
$(f,h)$ where $f:Z_{1}\to Z_{2}$ is a morphism of schemes and $h:f^{*}M_{2}\to M_{1}$
is a morphism of sheaves of monoids.
\end{defn}
For more background on log schemes, the reader should consult \cite{I1,Kato}. 

We endow $S=\mbox{Spec }\mathcal{O}_{K}$ with the log structure given
by $N=j_{*}(K^{*})\cap\mathcal{O}_{K}\hookrightarrow\mathcal{O}_{K}$.
The sheaf $\bar{N}$ is trivial outside the closed point and is isomorphic
to a copy of $\mathbb{N}$ over the closed point. Another way to describe
the log structure on $S$ is by pullback of the canonical log structure
via the map \[
S\to\mbox{Spec }\mathbb{Z}[\mathbb{N}]\]
where $1\mapsto\pi\in\mathcal{O}_{K}$. 

We endow $X$ with the log structure given by $M=j_{*}(\mathcal{O}_{X_{K}}^{*})\cap\mathcal{O}_{X}\hookrightarrow\mathcal{O}_{X}$.
It is easy to check that the only sections of $\mathcal{O}_{X}$ which
are invertible outside the special fiber, but not invertible globally
are those given locally by the images of the $X_{i}$ for $1\leq i\leq n$
and the $Y_{j}$ for $1\leq j\leq n$ . On etale neighborhoods $U$
of $X\times_{\mathcal{O}_{K}}W$ which are etale over $X_{r,s,m}$
this log structure is given by the chart \[
U\to X_{r,s,m}\to\mbox{Spec }\mathbb{Z}[P_{r,s}]\]
 where \[
P_{r,s}=\mathbb{N}^{r}\oplus\mathbb{N}^{s}/(1,\dots1,0,\dots0)=(0,\dots0,1,\dots1).\]
The map $X_{r,s,m}\to\mbox{Spec }\mathbb{Z}[P_{r,s}]$ can be described
as follows: the element with $1$ only in the $k$th place, $(0,\dots,0,1,0,\dots0)\in P_{r,s}$
maps to $X_{k}$ if $k\leq r$ and to $Y_{k-r}$ if $k\geq r+1$.
Note that the log structure on $X$ is trivial outside the special
fiber, so $X$ is a \textbf{vertical} log scheme.

The map $X\to S$ induces a map of the corresponding log schemes.
Etale locally, this map has a chart subordinate to the map of monoids
$\mathbb{N}\to P_{r,s}$ such that \[
1\mapsto(1,\dots,1,0,\dots,0)=(0,\dots0,1,\dots1)\]
to reflect the relations $X_{1}\dots X_{r}=Y_{1}\dots Y_{s}=\pi$. 
\begin{lem}
The map of log schemes $(X,M)\to(S,N)$ is log smooth. \end{lem}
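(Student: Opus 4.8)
The plan is to verify log smoothness of $(X,M) \to (S,N)$ using Kato's combinatorial criterion (see \cite{Kato}): a morphism of log schemes which étale-locally admits a chart subordinate to an injection of finitely generated integral monoids $\theta : Q \to P$ is log smooth provided the kernel and the torsion part of the cokernel of $\theta^{\mathrm{gp}} : Q^{\mathrm{gp}} \to P^{\mathrm{gp}}$ are finite groups of order invertible on the base. Since $X$ is étale-locally étale over $X_{r,s,m}$, and étale morphisms are log smooth (with trivial log structure contributing nothing), it suffices to check log smoothness of $X_{r,s,m} \to S$, equipped with the charts $P_{r,s} = \mathbb{N}^r \oplus \mathbb{N}^s / \bigl((1,\dots,1,0,\dots,0) = (0,\dots,0,1,\dots,1)\bigr)$ and $\mathbb{N} \to P_{r,s}$ described above; the extra polynomial variables $Z_1,\dots,Z_m$ only contribute a smooth factor and are harmless.

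First I would make the monoid $P_{r,s}$ explicit. Set $Q = \mathbb{N}$ and let $\theta : Q \to P_{r,s}$ send $1$ to the common image of $(1,\dots,1,0,\dots,0)$ and $(0,\dots,0,1,\dots,1)$. On groups, $P_{r,s}^{\mathrm{gp}} = \bigl(\mathbb{Z}^r \oplus \mathbb{Z}^s\bigr) / \bigl\langle (e_1 + \cdots + e_r) - (f_1 + \cdots + f_s)\bigr\rangle$, which is a free abelian group of rank $r + s - 1$ (quotient of $\mathbb{Z}^{r+s}$ by a primitive vector), hence torsion-free. The map $\theta^{\mathrm{gp}} : \mathbb{Z} \to P_{r,s}^{\mathrm{gp}}$ sends $1$ to $e_1 + \cdots + e_r$, which is part of a basis of $\mathbb{Z}^r$ and maps to a nonzero element of infinite order in the quotient; thus $\ker \theta^{\mathrm{gp}} = 0$. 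The cokernel is $P_{r,s}^{\mathrm{gp}} / \langle e_1 + \cdots + e_r\rangle$; a direct computation (pick the basis $e_1,\dots,e_r,f_1,\dots,f_{s-1}$ for $P_{r,s}^{\mathrm{gp}}$, using the relation to eliminate $f_s$) shows this cokernel is free of rank $r + s - 2$, in particular its torsion subgroup is trivial. So both finiteness conditions in Kato's criterion hold vacuously, with no hypothesis on the residue characteristic needed, and log smoothness follows. I would also remark that $P_{r,s}$ is visibly integral (it embeds in $P_{r,s}^{\mathrm{gp}}$, which one checks since the relation vector is primitive) and finitely generated, so the chart is of the required type.

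The main obstacle, such as it is, is bookkeeping rather than mathematics: one must confirm that the chart $U \to X_{r,s,m} \to \mathrm{Spec}\,\mathbb{Z}[P_{r,s}]$ genuinely induces the log structure $M = j_*(\mathcal{O}_{X_K}^*) \cap \mathcal{O}_X$ on étale neighborhoods, i.e. that the only functions invertible on the generic fiber but not globally are (up to units) monomials in the $X_i$ and $Y_j$ — this is exactly the local description of $M$ asserted in the paragraph preceding the lemma, so I would simply invoke it. After that, the compatibility of the chart $\mathbb{N} \to P_{r,s}$ with the chart $\mathbb{N} \to \mathcal{O}_K$, $1 \mapsto \pi$, on $S$ is immediate from the relations $X_1 \cdots X_r = Y_1 \cdots Y_s = \pi$. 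I would conclude by citing \cite{Kato} (Theorem 3.5) for the criterion and noting that étale-local log smoothness is Zariski-local, so it propagates from $X_{r,s,m}$ to $X$.
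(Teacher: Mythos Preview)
Your proposal is correct and follows essentially the same approach as the paper: both apply Kato's criterion (Theorem 3.5 of \cite{Kato}) by checking that the chart map $\mathbb{N}\to P_{r,s}$ induces an injection $\mathbb{Z}\to P_{r,s}^{\mathrm{gp}}$ with torsion-free cokernel $\mathbb{Z}^{r+s-2}$. Your version is somewhat more explicit about the monoid computation and the bookkeeping around charts, but the argument is the same.
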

\begin{proof}
The map of monoids $\mathbb{N}\to P_{r,s}$ induces a map on groups
$\mathbb{Z}\to P_{r,s}^{gp}$, which is injective and has torsion-free
cokernel $\mathbb{Z}^{r+s-2}$ . Since the map of log schemes $(X,M)\to(S,N)$
is given etale locally by charts subordinate to such maps of monoids,
by Theorem 3.5 of \cite{Kato} the map $(X,M)\to(S,N)$ is log smooth.
\end{proof}

\subsection{Nearby cycles and log schemes}

There is a generalization of the functor of nearby cycles to the category
of log schemes.

Recall that $\mathcal{O}_{\bar{K}}$ is the integral closure of $\mathcal{O}_{K}$
in $\bar{K}$ and $\bar{S}=\mbox{Spec }\mathcal{O}_{\bar{K}}$, with
generic point $\bar{\eta}$ and closed point $\bar{s}$. The canonical
log structure associated to the special fiber (given by the inclusion
$\bar{j}_{*}(\bar{K}^{*})\cap\mathcal{O}_{\bar{K}}\hookrightarrow\mathcal{O}_{\bar{K}})$
defines a log scheme $\tilde{S}$ with generic point $\bar{\eta}$
and closed point $\tilde{s}$. Note that $\tilde{s}$ is a log geometric
point of $\tilde{S}$, so it has the same underlying scheme as $\bar{s}$.
The Galois group $G_{K}$ acts on $\tilde{s}$ through its tame quotient.
Let $\tilde{X}=X\times_{S}\tilde{S}$ in the category of log schemes,
with special fiber $X_{\tilde{s}}$ and generic fiber $X_{\bar{\eta}}$.
Note that, in general, the underlying scheme of $X_{\tilde{s}}$ is
not the same at $X_{\bar{s}}$. This is because $X_{\tilde{s}}$ is
the fiber product of $X_{\bar{s}}$ and $\tilde{s}$ in the category
of fine and saturated log schemes and saturation corresponds to normalization,
so it changes the underlying scheme. 

The sheaves of log nearby cycles are sheaves on $X_{\tilde{s}}$ defined
by \[
R^{k}\psi^{\mathrm{log}}\Lambda=\tilde{i}^{*}R^{k}\tilde{j}_{*}\Lambda,\]
where $\tilde{i},\tilde{j}$ are the obvious maps and the direct and
inverse images are taken with respect to the Kummer etale topology.
Theorem 3.2 of \cite{Nakayama} states that when $X/S$ is a log smooth
scheme we have $R^{0}\psi^{\mathrm{log}}\Lambda\cong\Lambda$ and
$R^{p}\psi^{\mathrm{log}}\Lambda=0$ for $p>0$. Let \[
\tilde{\epsilon}:\tilde{X}\to\bar{X},\]
which restricts to $\epsilon:X_{\bar{\eta}}\to X_{\bar{\eta}}$, be
the morphism that simply forgets the log structure. Note that we have
$\bar{j}_{*}\epsilon_{*}=\tilde{\epsilon}_{*}\tilde{j}_{*}$, by commutativity
of the square \[
\xymatrix{X_{\bar{\eta}}\ar[r]^{\tilde{j}}\ar[d]_{\epsilon} & \tilde{X}\ar[d]_{\tilde{\epsilon}}\\
X_{\bar{\eta}}\ar[r]^{\bar{j}} & \bar{X}}
\]
We also have $\bar{i}^{*}\tilde{\epsilon}_{*}\mathcal{F}=\tilde{\epsilon}_{*}\tilde{i}^{*}\mathcal{F}$
for every Kummer etale sheaf $\mathcal{F}$, since the sections of
both sheaves over some etale open $V$ of $X_{\bar{s}}$ are obtained
as the direct limit of $\mathcal{F}(\tilde{U})$, where $U$ runs
over etale neighborhoods of $V$ in $\bar{X}$ and $\tilde{U}$ has
the canonical log structure associated to the special fiber. $ $We
deduce that \[
\bar{i}^{*}\bar{j}_{*}\epsilon_{*}=\tilde{\epsilon}_{*}\tilde{i}^{*}\tilde{j}_{*}\]
so the corresponding derived functors must satisfy a similar relation.
When we write this out, using $R\psi^{\mathrm{log}}\Lambda\cong\Lambda$
by Nakayama's result and $R\epsilon_{*}\Lambda\cong\Lambda$ because
the log structure is vertical and so $\epsilon$ is an isomorphism,
we get \[
R^{k}\psi^{\mathrm{cl}}\Lambda=R^{k}\tilde{\epsilon}_{*}(\Lambda|X_{\tilde{s}}).\]
 Therefore, it suffices to figure out what the sheaves $R^{k}\tilde{\epsilon}_{*}\Lambda$
look like and how $I_{K}$ acts on them, where $\tilde{\epsilon}:X_{\tilde{s}}\to X_{\bar{s}}$.
This has been done in general by Nakayama, Theorem 3.5 of \cite{Nakayama},
thus deriving an SGA 7 I.3.3-type formula for log smooth schemes.
We will describe his argument below and specialize to our particular
case. 
\begin{lem}
$I_{K}$ acts on $R^{p}\epsilon_{*}(\Lambda|X_{\tilde{s}}$) through
its tame quotient. \end{lem}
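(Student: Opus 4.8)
The plan is to reduce the statement to a purely local computation of the log structure near a geometric point and then invoke Nakayama's explicit description of the sheaves $R^p\tilde\epsilon_*\Lambda$. First I would recall, as in the discussion preceding the lemma, that $\tilde\epsilon : X_{\tilde s}\to X_{\bar s}$ is obtained by base change along $\tilde S\to \bar S$ in the category of fine saturated log schemes, and that the $G_K$-action on $R^p\tilde\epsilon_*\Lambda$ is induced by the $G_K$-action on the log geometric point $\tilde s$. The key point is that $G_K$ acts on $\tilde s$ \emph{through its tame quotient} $G_K^{\mathrm{tame}} = \operatorname{Gal}(\bar K^{\mathrm{tame}}/K)$, and consequently $I_K$ acts through its tame quotient $I_K^{\mathrm{tame}}\simeq \varprojlim_{(m,p)=1}\mu_m$; this is already noted in the text when $\tilde S$ is introduced. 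So the content of the lemma is that the functor $R^p\tilde\epsilon_*$ does not reintroduce wild ramification.

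The cleanest route is to use Nakayama's computation directly. By Theorem 3.5 of \cite{Nakayama}, for a log smooth $X/S$ the sheaf $R^p\tilde\epsilon_*(\Lambda|X_{\tilde s})$ is, locally on $X_{\tilde s}$, isomorphic to a sum of exterior powers of the sheaf $R^1\tilde\epsilon_*(\Lambda|X_{\tilde s})$, which in turn is expressed in terms of the relative characteristic monoid $\bar M_{\tilde X/\tilde S}^{gp}$ twisted by $\Lambda(-1)$, i.e. $R^1\tilde\epsilon_*\Lambda \simeq \Lambda(-1)\otimes (\text{a lattice built from }\bar M^{gp})$. The inertia $I_K$ acts on the Tate twist $\Lambda(-1)$ through the cyclotomic character, which factors through $I_K^{\mathrm{tame}}$, and it acts on the combinatorial lattice $\bar M^{gp}$ trivially (the characteristic monoid is a constructible sheaf of monoids pulled back from the log structure on $X$, on which $I_K$ acts trivially since that log structure is defined over $K$, not $\bar K$). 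Hence $I_K$ acts on each $R^1\tilde\epsilon_*\Lambda$, and therefore on all its exterior powers and on $R^p\tilde\epsilon_*\Lambda$, through $I_K^{\mathrm{tame}}$.

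Concretely, in our situation I would make this explicit using the chart. Near a closed geometric point, $X$ is étale over $X_{r,s,m}$ with the chart $P_{r,s}=\mathbb N^r\oplus\mathbb N^s/\!\sim$ and structure map governed by $\mathbb N\to P_{r,s}$, $1\mapsto(1,\dots,1,0,\dots)=(0,\dots,0,1,\dots,1)$. The base change $\tilde X = X\times_S\tilde S$ in fine saturated log schemes is computed from the pushout/saturation of $P_{r,s}$ along $\mathbb N\to \bar N^{gp}\otimes\mathbb Q \supset \frac{1}{m}\mathbb N$; taking the $m$-th root of $\pi$ adjoins $m$-th roots of $X_1\cdots X_r$ and of $Y_1\cdots Y_s$, and $\tilde\epsilon$ over such a chart is a Kummer covering whose Galois group is a quotient of $\mu_m\times\mu_m$ — visibly a tame object. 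The sheaves $R^p\tilde\epsilon_*\Lambda$ are then computed from the cohomology of these $\mu_m$-coverings (a Koszul-type complex on the $\Lambda(-1)$'s coming from the $X_i$ and $Y_j$), and $I_K$ permutes the layers exactly via its action on $\mu_m = \mu_m(\bar K)$, i.e. through $I_K^{\mathrm{tame}}$. I expect the main obstacle to be purely expository: one must be careful that the saturation really does introduce only prime-to-$p$ covers (this is where log smoothness, giving torsion-free cokernel $\mathbb Z^{r+s-2}$ of $\mathbb Z\to P_{r,s}^{gp}$, is essential — it guarantees no inseparable or wild phenomena appear), and that Nakayama's formula is being applied with the correct normalization of Tate twists. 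Once the identification $R^p\tilde\epsilon_*\Lambda \simeq \wedge^p\big(R^1\tilde\epsilon_*\Lambda\big)$ with $R^1$ a twist of a constant combinatorial sheaf is in hand, the tameness of the $I_K$-action is immediate from tameness of the cyclotomic character on inertia.
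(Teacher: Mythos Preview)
Your argument is correct but takes a different and more elaborate route than the paper. You compute $R^{p}\tilde{\epsilon}_{*}\Lambda$ explicitly via Nakayama's formula $R^{p}\tilde{\epsilon}_{*}\Lambda \simeq \wedge^{p}\bar{M}_{\mathrm{rel}}^{gp}\otimes\Lambda(-p)$ and then read off tameness from the fact that $I_K$ acts trivially on the combinatorial sheaf and through the (tame) cyclotomic character on the Tate twist. This is valid, but in the paper's logic it is somewhat backward: the explicit identification you invoke is exactly what the paper establishes \emph{after} the lemma, so you are importing the endpoint of Nakayama's argument to prove an intermediate step.

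The paper's proof is much shorter and purely structural. It introduces $S^{t}=\operatorname{Spec}\mathcal{O}_{K^{t}}$ with its canonical log structure and observes that the projection $\tilde{s}\to s^{t}$ is a limit of universal Kummer homeomorphisms (Theorem~2.8 of \cite{I1}), and that this property is stable under base change to $X$. Hence every automorphism of $X_{\tilde{s}}$ descends uniquely to an automorphism of $X_{s^{t}}$, and since $I_K$ already acts on $s^{t}$ through $I^{t}$, the same holds for $X_{\tilde{s}}$ and therefore for $R^{p}\tilde{\epsilon}_{*}\Lambda$. No chart computation or identification of the sheaves is needed. Your approach has the advantage of giving the explicit form of the action (and hence immediately yields the stronger triviality statement used later), while the paper's approach has the advantage of working for any log smooth $X/S$ without reference to the specific monoid $P_{r,s}$ or to Nakayama's formula.
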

\begin{proof}
Let $S^{t}=\mbox{Spec }\mathcal{O}_{K^{t}}$ endowed with the canonical
log structure (here $K^{t}\subset\bar{K}$ is the maximal extension
of $K$ which is tamely ramified). The closed point $s^{t}$ with
its induced log structure is a universal Kummer etale cover of $s$
and $I_{K}$ acts on it through its tame quotient $I^{t}$. Moreover,
the projection $\tilde{s}\to s^{t}$ is a limit of universal Kummer
homeomorphisms and it remains so after base change with $X$. (See
Theorem 2.8 of \cite{I1}). Thus, every automorphism of $X_{\tilde{s}}$
comes from a unique automorphism of $X_{s^{t}}$, on which $I_{K}$
acts through $I^{t}$. 
\end{proof}
Now we have the commutative diagram \[
\xymatrix{X_{\tilde{s}}^{log}\ar[d]_{\epsilon}\ar[r]^{\alpha} & X_{\bar{s}}^{log}\ar[d]^{\epsilon}\\
X_{\tilde{s}}^{cl}\ar[r]^{\beta} & X_{\bar{s}}^{cl}}
,\]
where the objects in the top row are log schemes and the objects in
the bottom row are their underlying schemes. The morphisms labeled
$\epsilon$ are forgetting the log structure and we have $\tilde{\epsilon}=\epsilon\circ\alpha=\beta\circ\epsilon$.
We can use either of these decompositions to compute $R^{k}\tilde{\epsilon}_{*}\Lambda$.
For example, we have $R\tilde{\epsilon}_{*}\Lambda=R\beta_{*}R\epsilon_{*}\Lambda$,
which translates into having a spectral sequence \[
R^{n-k}\beta_{*}R^{k}\epsilon_{*}\Lambda\Rightarrow R^{n}\tilde{\epsilon}_{*}\Lambda.\]
We know that $R^{k}\epsilon_{*}\Lambda=\wedge^{k}\bar{M}_{rel}^{gp}\otimes\Lambda(-k)$,
where \[
\bar{M}_{rel}^{gp}=\mbox{coker }(\bar{N}^{gp}\to\bar{M}^{gp})/torsion.\]
This follows from theorem 2.4 of \cite{Kato-Nakayama}. On the other
hand, at a geometric point $\bar{x}$ of $X_{\bar{s}}^{cl}$, we have
$(\beta_{*}\mathcal{F})_{\bar{x}}\cong\mathcal{F}[E_{\bar{x}}]$ for
a sheaf $\mathcal{F}$ of $\Lambda$-modules on $X_{\tilde{s}}^{cl}$,
where $E_{\bar{x}}$ is the cokernel of the map of log inertia groups
\[
I_{x}\to I_{s}.\]
Indeed, $\beta^{-1}(\bar{x})$ consists of $\#\mbox{coker }(I_{x}\to I_{s})$
points, which follows from the fact that $X_{\tilde{s}}^{cl}$ is
the normalization of $(X_{\bar{s}}\times_{\bar{s}}\tilde{s})^{cl}$.
The higher derived functors $R^{n-k}\beta_{*}\mathcal{F}$ are all
trivial, since $\beta_{*}$ is exact. Therefore, the spectral sequence
becomes\[
\wedge^{k}\bar{M}_{rel,\bar{x}}^{gp}\otimes\Lambda[E_{\bar{x}}]\otimes\Lambda(-k)=(R^{k}\tilde{\epsilon}_{*}\Lambda)_{\bar{x}}.\]
The tame inertia acts on the stalks of these sheaves through $I^{t}\cong I_{\bar{s}}\mapsto\Lambda[I_{\bar{s}}]\to\Lambda[E_{\bar{x}}]$. 

In our particular case, it is easy to compute $R^{k}\tilde{\epsilon}_{*}\Lambda$
globally. Let \[
\hat{\mathbb{Z}}^{'}(1)=\lim_{\substack{\longleftarrow\\
(m,p)=1}
}\mu_{m}.\]
 We have \[
I_{x}=Hom(\bar{M}_{x}^{gp},\hat{\mathbb{Z}}^{'}(1))\]
 and \[
I_{s}=Hom(\bar{N}_{s}^{gp},\hat{\mathbb{Z}}^{'}(1)).\]
The map of inertia groups is induced by the map $\bar{M}_{s}^{gp}\to\bar{M}_{x}^{gp}$,
which is determined by $1\mapsto(1,\dots,1,0\dots,0)$ , where the
first $n$ terms are nonzero. Any homomorphism of $\bar{M}_{s}^{gp}\cong\mathbb{Z}\to\hat{\mathbb{Z}}^{'}(1)$
can be obtained from some homomorphism $\bar{M}_{x}^{gp}\to\hat{\mathbb{Z}}^{'}(1)$.
Thus $E_{\bar{x}}$ is trivial for all log geometric points $\bar{x}$
and $I^{t}$ acts trivially on the stalks of the sheaves of nearby
cycles. Moreover, in our situation we can check that $\beta$ is an
isomorphism, since $(X_{\bar{s}}\times_{\bar{s}}\tilde{s})^{cl}$
is already normal, which follows from the fact that $X_{\bar{s}}$
is reduced, which can be checked etale locally. Thus we have the global
isomorphism \[
R^{k}\tilde{\epsilon}_{*}\Lambda\simeq\wedge^{k}\bar{M}_{rel}^{gp}\otimes\Lambda(-k).\]

The above discussion also allows us to determine the sheaves of nearby
cycles. Indeed, we have $R^{k}\psi\Lambda\simeq\wedge^{k}\bar{M}_{rel}^{gp}\otimes\Lambda(-k)$
and $\bar{M}_{rel}^{gp}$ can be computed explicitly on neighborhoods.
If $U$ is a neighborhood of $X$ with $U$ etale over $X_{r,s}$
then the log structure on $U$ is induced from the log structure on
$X_{r,s}$. Let $I,$$J\subseteq\{1,\dots,n\}$ be sets of indices
with cardinalities $r$ and $s$ respectively, corresponding to sets
of divisors $Y_{1,i}$ and $Y_{2,j}$ which intersect $U$. 
\begin{prop}
\label{localnearbycycles}If $x_{i}\in\mathcal{O}_{X}$ is the image
of $1\in\mathcal{O}_{Y_{1,i}}$ pushed forward under the closed immersion
$a_{1,i}:Y_{1,i}\hookrightarrow X$ and $y_{j}\in\mathcal{O}_{X}$
is the image of $1\in\mathcal{O}_{Y_{2,j}}$ pushed forward under
$a_{2,j}:Y_{2,j}\hookrightarrow X,$ then we have 
\end{prop}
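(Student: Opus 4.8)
The plan is to unwind the global formula $R^{k}\psi\Lambda\simeq\wedge^{k}\bar{M}_{rel}^{gp}\otimes\Lambda(-k)$ established in the preceding discussion and make it explicit on the neighborhood $U$. First I would compute $\bar{M}^{gp}$ over $U$: since the log structure on $U$ is pulled back from the chart $X_{r,s,m}\to\mathrm{Spec}\,\mathbb{Z}[P_{r,s}]$, the group $\bar{M}^{gp}$ is (Zariski-locally, near a point lying on exactly the divisors indexed by $I$ and $J$) the constant sheaf $P_{I,J}^{gp}$, where $P_{I,J}=\mathbb{N}^{I}\oplus\mathbb{N}^{J}/\!\sim$ with the relation identifying $\sum_{i\in I}e_i$ with $\sum_{j\in J}e_j$. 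Concretely the generator $e_i$ of the $i$-th factor maps to the class of $x_i$ and $e_j$ to the class of $y_j$ in $M/\mathcal{O}_X^{*}$. Then $\bar{M}_{rel}^{gp}=\mathrm{coker}(\bar{N}^{gp}\to\bar{M}^{gp})/\mathrm{torsion}$, and since $1\in\bar{N}^{gp}=\mathbb{Z}$ maps to $\sum_{i\in I}[x_i]=\sum_{j\in J}[y_j]$, this cokernel is the free abelian group on the $[x_i]$ ($i\in I$) and $[y_j]$ ($j\in J$) modulo the single relation $\sum_{i\in I}[x_i]=\sum_{j\in J}[y_j]$ — a free $\mathbb{Z}$-module of rank $r+s-1$ (the cokernel is already torsion-free here, matching the log-smoothness computation). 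I would then take the $k$-th exterior power and twist by $\Lambda(-k)$ to obtain the explicit description of $R^{k}\psi\Lambda$ on $U$, and glue over the cover to get the global statement.

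The key steps, in order, are: (i) identify $\bar{M}^{gp}$ on $U$ with the constant sheaf $P_{I,J}^{gp}$ using the chart, being careful that sections $x_i$ are only defined up to units — which is precisely why the statement is phrased in terms of the images of $1\in\mathcal{O}_{Y_{1,i}}$ under pushforward, a canonical choice; (ii) compute the quotient by $\bar{N}^{gp}$ and check torsion-freeness; (iii) wedge and Tate-twist, using the already-proven isomorphism $R^{k}\psi\Lambda\simeq R^{k}\tilde\epsilon_*\Lambda\simeq\wedge^k\bar{M}_{rel}^{gp}\otimes\Lambda(-k)$ and the Kato–Nakayama formula $R^k\epsilon_*\Lambda=\wedge^k\bar{M}_{rel}^{gp}\otimes\Lambda(-k)$ quoted above; (iv) verify compatibility of these local descriptions on overlaps so they patch into a genuine sheaf, which amounts to tracking how the generating classes $[x_i],[y_j]$ transform.

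The main obstacle I anticipate is purely bookkeeping rather than conceptual: pinning down the exterior-power sheaf canonically. The group $\bar{M}_{rel}^{gp}$ comes with a distinguished generating set indexed by the local divisors through each point, but these index sets $I,J$ change from chart to chart, so $\wedge^k\bar{M}_{rel}^{gp}$ must be expressed as a sum over tuples of divisors $(Y_{1,i_1},\dots)$ passing through the given point, with the wedge of the corresponding classes $[x_{i_1}]\wedge\cdots$ subject to the one relation $\sum_{i\in I}[x_i]=\sum_{j\in J}[y_j]$ when all of $I$ (or all of $J$) is involved. Making the resulting formula for $R^{k}\psi\Lambda$ both explicit and manifestly independent of the chart — i.e.\ writing it as a direct sum of constant sheaves $\Lambda(-k)$ on the smooth strata $Y_{U,S,T}$ indexed by the combinatorics of which divisors meet, modulo the relation — is where the care is needed, but it follows the template of the semistable case (SGA 7 I, and Saito \cite{Saito}) combined with the Künneth-type product structure coming from the two independent chains $\mathcal{C}_1,\mathcal{C}_2$.
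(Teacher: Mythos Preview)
Your overall strategy is exactly what the paper does: the proposition is stated without a separate proof because it is meant to follow directly from the global identification $R^{k}\psi\Lambda\simeq\wedge^{k}\bar{M}_{rel}^{gp}\otimes\Lambda(-k)$ together with the explicit chart $U\to X_{r,s,m}\to\mathrm{Spec}\,\mathbb{Z}[P_{r,s}]$ describing the log structure. Steps (i)--(iv) are the right ones.

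However, your computation in step (ii) contains an error that changes the answer. You write that $\bar{M}_{rel}^{gp}$ is the free abelian group on $[x_{i}]_{i\in I}$ and $[y_{j}]_{j\in J}$ modulo the single relation $\sum_{i\in I}[x_{i}]=\sum_{j\in J}[y_{j}]$, of rank $r+s-1$. But that is $\bar{M}^{gp}\simeq P_{r,s}^{gp}$ itself, not its quotient by $\bar{N}^{gp}$. Under the map $\bar{N}^{gp}=\mathbb{Z}\to\bar{M}^{gp}$, the element $1$ hits the common class $\sum_{i\in I}[x_{i}]=\sum_{j\in J}[y_{j}]$, so passing to the cokernel forces \emph{both} sums to vanish separately. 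Concretely,
\[
\bar{M}_{rel}^{gp}\big|_{U}\;\simeq\;\bigl(\mathbb{Z}^{I}/\mathbb{Z}\cdot\textstyle\sum_{i\in I}e_{i}\bigr)\oplus\bigl(\mathbb{Z}^{J}/\mathbb{Z}\cdot\textstyle\sum_{j\in J}e_{j}\bigr),
\]
which is free of rank $r+s-2$, not $r+s-1$. This is precisely the origin of the direct-sum form $(\oplus_{i\in I}x_{i}\Lambda/\sum x_{i})\oplus(\oplus_{j\in J}y_{j}\Lambda/\sum y_{j})$ in the statement, and it is what makes the subsequent K\"unneth-type decomposition $R^{k}\psi\Lambda\simeq\bigoplus_{l}\wedge^{l}\mathcal{L}_{1}\otimes\wedge^{k-l}\mathcal{L}_{2}$ in Corollary~\ref{globalnearby} work. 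With your single relation you would get the wrong sheaf (one extra generator surviving) and the later K\"unneth splitting would fail. Once you correct this, the rest of your outline goes through and coincides with the paper's argument.
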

\[
R^{k}\psi\Lambda(k)|_{U}\simeq\wedge^{k}[(\oplus_{i\in I}x_{i}\Lambda/\sum_{i\in I}x_{i})\oplus(\oplus_{j\in J}y_{j}\Lambda/\sum_{j\in J}y_{j})]|_{U}.\]

We can define a global map of sheaves \[
\wedge^{k}[(\oplus_{i=1}^{n}x_{i}\Lambda)\oplus(\oplus_{i=1}^{n}y_{j}\Lambda)]\to\wedge^{k}\bar{M}_{rel}^{gp}\simeq R^{k}\psi\Lambda(k)\]
by sending $x_{i}$ to the generator of the divisor $Y_{1,i}$ and
$y_{j}$ to the generator of the divisor $Y_{2,j}$. The image of
$x_{i}$ in $\mathcal{O}_{X}$ will be a unit over $X\backslash Y_{i,1}$,
so in particular it will be a unit outside the special fiber $Y$
of $X$. Since the log structure on $X$ is the canonical log structure
associated to the special fiber, the image of $x_{i}$ will be a nontrivial
element of $\bar{M}_{rel}^{gp}$. The same holds true for the $y_{j}$.
We see from the local computation in proposition \ref{localnearbycycles}
that the above map of sheaves is surjective and that the kernel is
generated by the two sections $\sum_{i=1}^{n}x_{i}$ and $\sum_{j=1}^{n}y_{j}.$ 
\begin{cor}
\label{globalnearby}There is a global isomorphism
\end{cor}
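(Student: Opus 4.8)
The plan is to deduce the corollary immediately from two facts already in hand: the global identification $R^{k}\psi\Lambda(k)\simeq\wedge^{k}\bar{M}_{rel}^{gp}$, and the local computation of Proposition~\ref{localnearbycycles}. I expect the statement to read
\[
R^{k}\psi\Lambda\;\simeq\;\wedge^{k}\!\Bigl[\bigl(({\textstyle\bigoplus_{i=1}^{n}}x_{i}\Lambda)/{\textstyle\sum_{i=1}^{n}}x_{i}\bigr)\oplus\bigl(({\textstyle\bigoplus_{j=1}^{n}}y_{j}\Lambda)/{\textstyle\sum_{j=1}^{n}}y_{j}\bigr)\Bigr](-k),
\]
with $x_{i},y_{j}\in\mathcal{O}_{X}$ the pushforward sections of Proposition~\ref{localnearbycycles}; equivalently, $R^{k}\psi\Lambda(k)$ is $\wedge^{k}$ of the cokernel of the two global relations $\sum_{i}x_{i}$, $\sum_{j}y_{j}$ inside $(\bigoplus x_{i}\Lambda)\oplus(\bigoplus y_{j}\Lambda)$. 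So it suffices to upgrade the surjection of sheaves built just before the corollary to the assertion that $\bar{M}_{rel}^{gp}$ is, globally, the quotient of $(\bigoplus_{i=1}^{n}x_{i}\Lambda)\oplus(\bigoplus_{j=1}^{n}y_{j}\Lambda)$ by the subsheaf generated by $\sum_{i}x_{i}$ and $\sum_{j}y_{j}$, and then to apply $\wedge^{k}(-)$.

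First I would check that the morphism $\phi\colon(\bigoplus_{i=1}^{n}x_{i}\Lambda)\oplus(\bigoplus_{j=1}^{n}y_{j}\Lambda)\to\bar{M}_{rel}^{gp}$ sending $x_{i}$ to the generator of $\bar{M}$ corresponding to $Y_{1,i}$ (and to $0$ on étale opens not meeting $Y_{1,i}$, where the summand $x_{i}\Lambda$ is the zero sheaf) and $y_{j}$ to the generator corresponding to $Y_{2,j}$ is well defined globally: on an overlap of two étale charts over $X_{r,s,m}$ the local functions cutting out a given divisor differ by a unit of $\mathcal{O}_{X}$, hence define the same class in $\bar{M}=M/\mathcal{O}_{X}^{*}$ and a fortiori in $\bar{M}_{rel}^{gp}$, so the $x_{i},y_{j}$ are genuine global sections; and since $x_{i}$ is invertible exactly away from $Y_{1,i}$ while the log structure on $X$ is the canonical one attached to the special fiber, the image of $x_{i}$ in $\bar{M}_{rel}^{gp}$ is a nonzero section along $Y_{1,i}$.

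Surjectivity of $\phi$ and the computation of $\ker\phi$ are étale-local questions, hence follow from Proposition~\ref{localnearbycycles}: over a chart $U$ étale over $X_{r,s,m}$, with $Y_{1,i}$ ($i\in I$) and $Y_{2,j}$ ($j\in J$) the divisors meeting $U$, the restriction $\phi|_{U}$ is exactly the map appearing there, which is surjective with kernel generated by $\sum_{i\in I}x_{i}$ and $\sum_{j\in J}y_{j}$; since $x_{i}\Lambda|_{U}=0$ for $i\notin I$ and likewise for the $y_{j}$, these are precisely the restrictions of the global sections $\sum_{i=1}^{n}x_{i}$ and $\sum_{j=1}^{n}y_{j}$. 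Hence $\phi$ is a surjection whose kernel is, locally and therefore globally, the subsheaf generated by $\sum_{i}x_{i}$ and $\sum_{j}y_{j}$; this yields the global presentation of $\bar{M}_{rel}^{gp}$, and applying $\wedge^{k}$ together with $R^{k}\psi\Lambda(k)\simeq\wedge^{k}\bar{M}_{rel}^{gp}$ gives the corollary. The only point that is not completely formal is this patching step — coherently globalizing the chart-by-chart presentations of $\bar{M}_{rel}^{gp}$ while tracking which divisors meet a given étale open and the resulting vanishing of $x_{i}\Lambda$ off $Y_{1,i}$ — but it is light, and the substance is already contained in Proposition~\ref{localnearbycycles} and in the global isomorphism $R^{k}\psi\Lambda(k)\simeq\wedge^{k}\bar{M}_{rel}^{gp}$.
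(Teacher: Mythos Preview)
Your proposal is correct and follows essentially the same route as the paper: the argument in the paper is the paragraph immediately preceding the corollary, which defines the global map $(\oplus_{i}x_{i}\Lambda)\oplus(\oplus_{j}y_{j}\Lambda)\to\bar{M}_{rel}^{gp}\simeq R^{1}\psi\Lambda(1)$, observes the images of $x_{i},y_{j}$ are nontrivial, and then invokes Proposition~\ref{localnearbycycles} to conclude surjectivity and to identify the kernel as the subsheaf generated by $\sum_{i}x_{i}$ and $\sum_{j}y_{j}$. You spell out the \'etale-local patching and the support of $x_{i}\Lambda$ a bit more carefully than the paper does, but the substance is identical.
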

\[
\bigwedge^{k}[(\oplus_{i=1}^{n}x_{i}\Lambda/\sum_{i=1}^{n}x_{i})\oplus(\oplus_{j=1}^{n}y_{j}\Lambda/\sum_{j=1}^{n}y_{j})]\simeq R^{k}\psi\Lambda(k).\]

Let $\mathcal{L}_{1}=\oplus x_{i}\Lambda/\sum_{i=1}^{n}x_{i}$ and
$\mathcal{L}_{2}=\oplus_{j=1}^{n}y_{j}\Lambda/\sum_{j=1}^{n}y_{j}$.
From the above corollary, we see that $R^{k}\psi\Lambda(k)$ can be
decomposed as $\sum_{l=0}^{k}\wedge^{l}\mathcal{L}_{1}\otimes\wedge^{k-l}\mathcal{L}_{2}$.
If $X$ was actually a product of semistable schemes, $X=X_{1}\times_{S}X_{2}$,
then the sheaves $\wedge^{l}\mathcal{L}_{1}$ and $\wedge^{k-l}\mathcal{L}_{2}$
would have an interpretation as pullbacks of the nearby cycles sheaves
$R^{l}\psi\Lambda$ and $R^{k-l}\psi\Lambda$ associated to $X_{1}$
and $X_{2}$ respectively. Corollary \ref{globalnearby} would then
look like a Künneth-type formula computing the sheaves of nearby cycles
for a product of semistable schemes. In fact, in such a situation,
the computation of the sheaves of nearby cycles reflects the stronger
relation between the actual complexes of nearby cycles\[
R\psi\Lambda_{X_{1}\times_{S}X_{2}}\simeq R\psi\Lambda_{X_{1}}\otimes_{s}^{L}R\psi\Lambda_{X_{2}}\]
which takes place in the derived category of constructible sheaves
of $\Lambda$-modules on $(X_{1}\times_{S}X_{2})_{\bar{s}}$. This
result was proven in \cite{I2} for a product of schemes of finite
type. The isomorphism is stated in the case when $\Lambda$ is torsion,
however the analogue morphism for $\Lambda$ a finite extension of
$\mathbb{Z}_{l}$ or $\mathbb{Q}_{l}$ can be defined by passage to
the limit (see the formalism in \cite{E}) and it will still be an
isomorphism. We would like to give here a different proof of this
result in the case of the product of two semistable schemes. We will
use log schemes, specifically Nakayama's computation of log vanishing
cycles for log smooth schemes.

Recall that the scheme $S$ has generic point $\eta$ and closed point
$s$. We will freely use the notations $\bar{S},\tilde{S}$ and $\bar{s},\tilde{s}$,
and also the corresponding notations for a scheme $X$ fixed in the
begining of this subsection. We first need a preliminary result. 
\begin{lem}
\label{flatness of sheaves}Let $X_{1}$ be a strictly semistable
scheme over $S.$ Then the sheaves $R^{k}\psi\Lambda$ are flat over
$\Lambda$. \end{lem}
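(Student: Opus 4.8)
The plan is to reduce the statement to a concrete description of the sheaves $R^{k}\psi\Lambda$ for a strictly semistable scheme and then observe that the resulting sheaves are built out of pieces that are manifestly flat over $\Lambda$. Recall that for a strictly semistable scheme $X_{1}/S$ the classical weight-spectral sequence analysis (equivalently, the log-smooth computation of Nakayama specialized to the semistable case, which is the $s=0$ or one-factor version of what is done above) gives the well-known formula
\[
R^{k}\psi\Lambda \simeq \bigwedge^{k}\bigl(R^{1}\psi\Lambda\bigr)\simeq \bigwedge^{k}\mathcal{L}\otimes\Lambda(-k),
\]
where, locally on an étale neighborhood meeting the components $Y_{1},\dots,Y_{m}$ of the special fiber through a point, $\mathcal{L}$ is the cokernel of the diagonal map $\Lambda\to\bigoplus_{i=1}^{m}\Lambda$, i.e. $\mathcal{L}$ is $(\bigoplus_{i}x_{i}\Lambda)/(\sum_{i}x_{i})$ in the notation of Corollary \ref{globalnearby}. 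So the first step is simply to invoke this description (it is the one-variable specialization of Proposition \ref{localnearbycycles} and Corollary \ref{globalnearby}, with $J=\emptyset$).

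The second step is to check that $\mathcal{L}$, and hence each exterior power $\bigwedge^{k}\mathcal{L}$, is flat (indeed locally free) over $\Lambda$. Flatness is étale-local, so it suffices to treat the stalks. At a geometric point lying on exactly $m$ components, the stalk of $\mathcal{L}$ is the cokernel of the diagonal $\Lambda\xrightarrow{\Delta}\Lambda^{\oplus m}$. This cokernel is free of rank $m-1$: choosing the splitting $\Lambda^{\oplus m}\simeq \Lambda\cdot(1,\dots,1)\oplus\Lambda^{\oplus(m-1)}$ (project onto, say, the last $m-1$ coordinates after subtracting off the appropriate multiple of $(1,\dots,1)$) exhibits $\mathcal{L}_{\bar{x}}\simeq\Lambda^{\oplus(m-1)}$. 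The point is that the map $\Delta$ has a retraction over $\Lambda$ — it is a split injection of finite free $\Lambda$-modules — so its cokernel is finite free, no matter which of the rings $\mathbb{Z}/l^{r}\mathbb{Z}$, $\mathbb{Z}_{l}$, $\mathbb{Q}_{l}$, $\bar{\mathbb{Q}}_{l}$ we take as $\Lambda$. Exterior powers of a locally free module are locally free, so $R^{k}\psi\Lambda\simeq\bigwedge^{k}\mathcal{L}\otimes\Lambda(-k)$ is locally free, in particular flat, over $\Lambda$.

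The only point that requires a little care — and the place I would expect the main (mild) obstacle — is the passage between the torsion coefficient case and the $l$-adic cases $\Lambda=\mathbb{Z}_{l},\mathbb{Q}_{l},\bar{\mathbb{Q}}_{l}$: strictly speaking $R\psi$ for $l$-adic coefficients is defined by passage to the limit over $\mathbb{Z}/l^{r}\mathbb{Z}$ (as already noted in the excerpt, following the formalism of \cite{E}), so one must know that the local description of $R^{k}\psi$ is compatible with this limit. This is harmless here because the transition maps $R^{k}\psi(\mathbb{Z}/l^{r+1}\mathbb{Z})\to R^{k}\psi(\mathbb{Z}/l^{r}\mathbb{Z})$ are, under the identification above, just reduction $\bigwedge^{k}\mathcal{L}_{r+1}\to\bigwedge^{k}\mathcal{L}_{r}$ on finite free modules, so the system is Mittag-Leffler and its limit is $\bigwedge^{k}\mathcal{L}_{\mathbb{Z}_{l}}$, which is finite free over $\mathbb{Z}_{l}$; inverting $l$ (resp. tensoring up to $\bar{\mathbb{Q}}_{l}$) preserves freeness. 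Hence $R^{k}\psi\Lambda$ is flat over $\Lambda$ in all four cases, which is the claim.
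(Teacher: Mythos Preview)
Your proof is correct but takes a different route from the paper's. The paper argues by descending induction on $k$ using the short exact sequence
\[
0\to R^{k}\psi\Lambda\to i^{*}R^{k+1}j_{*}\Lambda(1)\to R^{k+1}\psi\Lambda(1)\to 0
\]
from \cite{Saito}: it computes the middle term via log \'etale cohomology as $\wedge^{k+1}(\oplus_{i}x_{i}\Lambda)\otimes\Lambda(-k-1)$, which is free because it is an exterior power of a free module \emph{before} quotienting by the diagonal; the right term is flat by induction (base case $R^{n}\psi\Lambda=0$), so the left term is flat. You instead go straight to the explicit formula $R^{k}\psi\Lambda\simeq\wedge^{k}\mathcal{L}\otimes\Lambda(-k)$ with $\mathcal{L}=(\oplus_{i}x_{i}\Lambda)/\Lambda$ and observe that the diagonal $\Lambda\hookrightarrow\Lambda^{\oplus m}$ is split, so $\mathcal{L}$ is locally free of rank $m-1$ on stalks. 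Your argument is more elementary and in fact yields the slightly stronger conclusion of local freeness; the paper's argument trades the (easy) splitting check for an induction and an appeal to Saito's exact sequence, needing only the obvious freeness of $\wedge^{k+1}(\oplus_{i}x_{i}\Lambda)$. There is no circularity in your approach: the explicit description you invoke comes from the log-smooth computation (the one-factor case of Proposition~\ref{localnearbycycles}/Corollary~\ref{globalnearby}) and does not rely on the flatness lemma.
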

\begin{proof}
By Proposition 1.1.2.1 of \cite{Saito}, we have an exact sequence
of sheaves on $X_{1,\bar{s}}$ \[
0\to R^{k}\psi\Lambda\to i^{*}R^{k+1}j_{*}\Lambda(1)\to R^{k+1}\psi\Lambda(1)\to0.\]
We will prove by induction on $k$ that $R^{n-k}\psi\Lambda$ is flat
over $\Lambda$. Indeed, $R^{n}\psi\Lambda=0$ so the induction hypothesis
is true for $k=0$. For the induction step, note that we can compute
$i^{*}R^{n-k+1}j_{*}\Lambda$ using log etale cohomology. If we let
$x_{i}$ be a generator of the ideal defining the irreducible component
$Y_{1,i}$ of $X_{1,s}$ then we can endow $X_{1,s}$ with the log
structure given by $\oplus_{i\in I}x_{i}\mathbb{N}$$ $. We know
that $i^{*}R^{n-k+1}j_{*}\Lambda\simeq R^{n-k+1}\epsilon_{*}\Lambda$,
where $\epsilon$ is the morphism which forgets the log structure.
We can compute $R^{n-k+1}\epsilon_{*}\Lambda$ as above to get \[
i^{*}R^{n-k+1}j_{*}\simeq\wedge^{n-k+1}(\oplus_{i\in I}x_{i}\Lambda)\otimes_{\Lambda}\Lambda(-n+k-1),\]
which is free over $\Lambda$. In the short exact sequence\[
0\to R^{n-k}\psi\Lambda\to i^{*}R^{n-k+1}j_{*}\Lambda(1)\to R^{n-k+1}\psi\Lambda(1)\to0\]
 the middle term is free, the right term is flat by the induction
hypothesis, so the left term must be flat as well. \end{proof}
\begin{prop}
\label{product}Let $X_{1}$ and $X_{2}$ be strictly semistable schemes
over $S$. Then we have the following equality in the derived category
of constructible $\Lambda[I_{s}]$-modules on $(X_{1}\times_{S}X_{2})_{s}$:
\[
R\psi(\Lambda_{X_{1,\eta}})\otimes_{s}^{L}R\psi(\Lambda_{X_{2,\eta}})\simeq R\psi(\Lambda_{(X_{1}\times_{S}X_{2})_{\eta}}),\]
where the external tensor product of a complexes is obtained by taking
$pr_{1}^{*}\otimes pr_{2}^{*}$ and where the superscript $L$ refers
to left derived tensor product. \end{prop}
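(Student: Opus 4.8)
The plan is to reprove this by the log-geometric method of the previous subsection: realize all three nearby-cycle complexes uniformly as $R\tilde\epsilon_*\Lambda$ via Nakayama's theorem, exhibit a canonical $I_s$-equivariant comparison morphism, and check it is a quasi-isomorphism on stalks.

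First I would put the canonical (divisorial) log structures on $X_1$, $X_2$ and $X:=X_1\times_S X_2$, and the log structure $N$ on $S$. Being strictly semistable, $X_1$ and $X_2$ are log smooth over $(S,N)$; and $X$ is log smooth as well, since étale-locally it is étale over a scheme $X_{r,s,m}$ (indeed étale-locally over a product of two strictly semistable models), whose log smoothness over $S$ was shown above, the chart $P_{r,s}=\mathbb N^{r}\oplus_{\mathbb N}\mathbb N^{s}$ exhibiting $(X,M)$ as the fine-saturated fibre product $(X_1,M_1)\times_{(S,N)}(X_2,M_2)$, with underlying scheme the scheme-theoretic product because $X_{\bar s}$ is reduced (checked étale-locally, as above). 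Hence by Nakayama's theorem and the computation preceding Corollary \ref{globalnearby}, for $Z\in\{X_1,X_2,X\}$ one has $R\psi\Lambda_{Z}\simeq R\tilde\epsilon_{Z,*}\Lambda$, with cohomology sheaves $R^{k}\psi\Lambda_{Z}\simeq\wedge^{k}\bar M^{\mathrm{gp}}_{Z,\mathrm{rel}}\otimes\Lambda(-k)$ and trivial $I_s$-action on them; and restriction of log structures along the projections $p_i:X\to X_i$ identifies $p_1^{*}\bar M^{\mathrm{gp}}_{X_1,\mathrm{rel}}\oplus p_2^{*}\bar M^{\mathrm{gp}}_{X_2,\mathrm{rel}}\xrightarrow{\ \sim\ }\bar M^{\mathrm{gp}}_{X,\mathrm{rel}}$ (additivity under fibre product, visible from $P_{r,s}^{\mathrm{gp}}=\mathbb Z^{r}\oplus_{\mathbb Z}\mathbb Z^{s}$).

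Next I would write down the canonical morphism
\[
R\psi(\Lambda_{X_{1,\eta}})\otimes^{L}_{s}R\psi(\Lambda_{X_{2,\eta}})\longrightarrow R\psi(\Lambda_{(X_1\times_S X_2)_{\eta}}),
\]
namely the composite of the base-change maps $p_i^{*}R\psi_{X_i}\to R\psi_{X}\,p_i^{*}$ with the external cup-product on nearby cycles (using $p_1^{*}\Lambda\otimes^{L}p_2^{*}\Lambda=\Lambda$ on $X_{\eta}$); it is equivariant for the diagonal action of $I_{s}$. To see it is a quasi-isomorphism it suffices to check on stalks at geometric points $\bar x$ of $X_{\bar s}$ above $\bar x_i\in X_{i,\bar s}$. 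There the complexes $(R\psi\Lambda_{X_i})_{\bar x_i}$ are formal — being the log-inertia cohomology $R\Gamma$ of the log points, with free $\Lambda$-modules as cohomology by Lemma \ref{flatness of sheaves} — so the degree-$k$ part of the left-hand stalk is $\bigoplus_{a+b=k}\wedge^{a}\bar M^{\mathrm{gp}}_{X_1,\mathrm{rel},\bar x_1}\otimes_\Lambda\wedge^{b}\bar M^{\mathrm{gp}}_{X_2,\mathrm{rel},\bar x_2}\otimes\Lambda(-k)$; by the previous paragraph this matches $(\wedge^{k}\bar M^{\mathrm{gp}}_{X,\mathrm{rel}}\otimes\Lambda(-k))_{\bar x}=(R^{k}\psi\Lambda_X)_{\bar x}$, and one checks the comparison morphism induces exactly this identification (cup products of the fundamental classes of the divisors $Y_{1,i}$, $Y_{2,j}$, which over $X$ pull back from $X_1$, $X_2$). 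This finishes the argument, the $I_s$-equivariance being automatic from the construction.

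The main obstacle is conceptual rather than computational: one must remember that $R\psi\Lambda_Z$ is not formal as an object with its $I_s$-action — this is what carries the monodromy, even though $I_s$ acts trivially on each individual cohomology sheaf — so matching cohomology sheaves alone is not enough; the canonical morphism of complexes together with its $I_s$-equivariance is essential, and the log-smooth description is what makes all three complexes computable enough (and their stalks formal) for the stalk-wise comparison to go through. A secondary point requiring care is the verification that the fine-saturated log fibre product $(X_1,M_1)\times_{(S,N)}(X_2,M_2)$ has underlying scheme the ordinary product $X_1\times_S X_2$, which relies on the reducedness of the special fibers.
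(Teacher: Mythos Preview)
Your proposal is correct and follows essentially the same approach as the paper's proof: reduce via Nakayama's theorem to a statement about $R\tilde\epsilon_*\Lambda$, write down the canonical K\"unneth-type comparison map, and verify it is a quasi-isomorphism on stalks using the flatness of the cohomology sheaves (Lemma~\ref{flatness of sheaves}). The only cosmetic difference is that the paper phrases the stalk computation in terms of the relative log inertia groups $J_i$ (noting $J=J_1\times J_2$ and invoking the K\"unneth isomorphism in group cohomology), whereas you use the equivalent explicit description via $\wedge^k\bar M^{\mathrm{gp}}_{\mathrm{rel}}$ and the splitting $\bar M^{\mathrm{gp}}_{X,\mathrm{rel}}\simeq p_1^{*}\bar M^{\mathrm{gp}}_{X_1,\mathrm{rel}}\oplus p_2^{*}\bar M^{\mathrm{gp}}_{X_2,\mathrm{rel}}$.
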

\begin{proof}
We've seen from the above discussion that in the case of a log smooth
scheme with vertical log structure the complex of vanishing cycles
depends only on the special fiber endowed with the canonical log structure.
In other words, for $i=1,2$, we have $R\psi\Lambda_{X_{i,\eta}}\simeq R\tilde{\epsilon}_{i,*}\Lambda_{X_{i,s}}$
as complexes on $X_{i,s}$, where $\tilde{\epsilon}_{i}:\tilde{X}_{i,\tilde{s}}\to\bar{X}_{i,\bar{s}}$
is the identity morphism on the underlying schemes and forgets the
log structure. Analogously, we also have $R\psi\Lambda_{(X_{1}\times_{S}X_{2})_{\eta}}=R\tilde{\epsilon}_{*}\Lambda$,
where \[
\tilde{\epsilon}:(\tilde{X}_{1}\times_{\tilde{S}}\tilde{X}_{2})_{\tilde{s}}\to(\bar{X}_{1}\times_{\bar{S}}\bar{X}_{2})_{\bar{s}}\]
is the morphism which forgets the log structure. (Here we've used
the fact that the fiber product of log smooth schemes with vertical
log structure is log smooth with vertical log structure and that the
underlying scheme of the fiber product of log schemes $\tilde{X}_{1}\times_{\tilde{S}}\tilde{X}_{2}$
is just $\bar{X}_{1}\times_{\bar{S}}\bar{X}_{2}).$ Therefore, it
suffices to prove that we have an isomorphism \[
R\tilde{\epsilon}_{*}\Lambda_{(\tilde{X}_{1}\times_{\tilde{S}}\tilde{X}_{2})_{\tilde{s}}}\simeq R\tilde{\epsilon}_{1,*}\Lambda_{\tilde{X}_{1,\tilde{s}}}\otimes_{\bar{s}}^{L}R\tilde{\epsilon}_{2,*}\Lambda_{\tilde{X}_{2,\tilde{s}}}\]
in the derived category of constructible sheaves of $\Lambda[I_{s}]$-modules
on $(\bar{X}_{1}\times_{\bar{S}}\bar{X}_{2})_{\bar{s}}$. 

It is enough to show that the Künneth map \[
\mathcal{C}=R\tilde{\epsilon}_{1,*}\Lambda_{\tilde{X}_{1},s}\otimes_{\bar{s}}^{L}R\tilde{\epsilon}_{2,*}\Lambda_{\tilde{X}_{2,\tilde{s}}}\to R\tilde{\epsilon}_{*}\Lambda_{(\tilde{X}_{1}\times_{\tilde{S}}\tilde{X}_{2})_{\tilde{s}}}=\mathcal{D},\]
which is defined as in \cite{SGA4} XVII 5.4.1.4, induces an isomorphism
on the cohomology of the two complexes above, for then the map itself
will be a quasi-isomorphism. The cohomology of the product complex
can be computed using a Künneth formula as $H^{n}(\mathcal{C})=\bigoplus_{k=0}^{n}R^{k}\tilde{\epsilon}_{1,*}\Lambda\otimes_{\bar{s}}R^{n-k}\tilde{\epsilon}_{2,*}\Lambda$.
In general, the Künneth formula involves a spectral sequence with
terms \[
E_{2}^{l,n-l}=\sum_{k=0}^{n-l}Tor_{l}^{\Lambda[I_{s}]}(R^{k}\tilde{\epsilon}_{1,*}\Lambda,R^{n-l-k}\tilde{\epsilon}_{2,*}\Lambda)\Rightarrow H^{n}(\mathcal{C}),\]
see \cite{EGA3} XVII 6.5.4.2 for a statement using homology. In our
case the cohomology sheaves $R^{k}\tilde{\epsilon}_{i,*}\Lambda$
are flat $\Lambda$-modules with trivial $I_{s}$-action by Lemmas
\ref{trivial inertia} and \ref{flatness of sheaves}, so for $l>0$
all the $E_{2}^{l,n-l}$ terms vanish. (Alternatively, one can prove
the formula for $H^{n}(\mathcal{C})$ by taking flat resolutions for
both of the factor complexes and using the fact that the cohomology
sheaves of the flat complexes are flat as well.) 

In order to prove that the induced map $H^{n}(\mathcal{C})\to H^{n}(\mathcal{D})$
is an isomorphism, it suffices to check that it induces an isomorphism
on stalks at geometric points. Let $x$ be a geometric point of $X_{1}\times_{S}X_{2}$
above the geometric point $\bar{s}$ of $S$. The point $x$ will
project to geometric points $x_{1}$ and $x_{2}$ of $X_{1}$ and
$X_{2}$. From \cite{I1} it follows that there is an isomorphism
on stalks\[
R^{k}\tilde{\epsilon}_{i,*}\Lambda_{x_{i}}\simeq H^{k}(J_{i},\Lambda)\]
 for $0\leq k\leq n$ and $i=1,2$, where $J_{i}$ is the relative
log inertia group \[
\ker(\pi_{1}^{log}(X_{i},x_{i})\to\pi_{1}^{log}(S,s)).\]
A similar statement holds for the stalks at $x$\[
R^{n}\tilde{\epsilon}_{*}\Lambda_{x}\simeq H^{n}(J,\Lambda),\]
where $J$ is the relative log inertia group $\ker(\pi_{1}^{log}(X,x)\to\pi_{1}^{log}(S,s))$.
Directly from the definition of the log fundamental group we can compute
$J=J_{1}\times J_{2}$. We have the following commutative diagram\[
\xymatrix{H^{n}(\mathcal{C})_{x}\ar[r]\ar^{\cong}[d] & H^{n}(\mathcal{D})_{x}\ar^{\cong}[d]\\
\bigoplus_{k=0}^{n}H^{k}(J_{1},\Lambda)\otimes_{\Lambda}H^{n-k}(J_{2},\Lambda)\ar[r] & H^{n}(J_{1}\times J_{2},\Lambda)}
\]
where the bottom arrow is the Künneth map in group cohomology and
is also an isomorphism. (Again, the Künneth spectral sequence \[
E_{2}^{l,n-l}=\sum_{k=0}^{n-l}Tor_{l}^{\Lambda}(H^{k}(J_{1},\Lambda),H^{n-k}(J_{2},\Lambda))\]
degenerates at $E_{2}$ and all terms outside the vertical line $l=0$
vanish because these cohomology groups are flat $\Lambda$-modules.)
Therefore the top arrow $H^{n}(\mathcal{C})_{x}\to H^{n}(\mathcal{D})_{x}$
has to be an isomorphism for all geometric points $x$ of $X$ which
means it comes from a global isomorphism of sheaves on $X$. 
\end{proof}

\section{The monodromy filtration}

\subsection{Overview of the semistable case}

In this section, we will explain a way of writing down explicitly
the monodromy filtration on the complex of nearby cycles $R\psi\Lambda$,
in the case of a semistable scheme. Our exposition will follow that
of \cite{Saito}, which constructs the monodromy filtration using
perverse sheaves. We let $\Lambda=\mathbb{Z}/l^{r}\mathbb{Z},\mathbb{Z}_{l},\mathbb{Q}_{l}$
or $\bar{\mathbb{Q}}_{l}.$ 

Let $X_{1}/\mathcal{O}_{K}$ be a strictly semistable scheme of relative
dimension $n-1$ with generic fiber $X_{1,\eta}$ and special fiber
$Y_{1}=X_{1,s}$. Let $R\psi\Lambda=\bar{i}^{*}R\bar{j}_{*}\Lambda$
be the complex of nearby cycles over $Y_{1,\bar{\mathbb{F}}}$. Let
$D_{1},\dots,D_{m}$ be the irreducible components of $Y_{1}$ and
for each index set $I\subseteq\{1,\dots,m\}$ let $Y_{I}=\cap_{i\in I}D_{i}$
and $a_{I}:Y_{I}\to Y_{1}$ be the immersion. The scheme $Y_{I}$
is smooth of dimension $n-1-k$ if $\#I=k+1$. For all $0\leq k\leq m-1$
we set \[
Y_{1}^{(k)}=\bigsqcup_{I\subseteq\{1,\dots,m\},\#I=k+1}Y_{I}\]
and let $a_{k}:Y_{1}^{(k)}\to Y_{1}$ be the projection. We identify
$a_{k*}\Lambda=\wedge^{k+1}a_{0*}\Lambda$. 

We will work in the derived category of bounded complexes of constructible
sheaves of $\Lambda$-modules on $Y_{1,\bar{\mathbb{F}}}$. We will
denote this category by $D_{c}^{b}(Y_{1,\bar{\mathbb{F}}},\Lambda)$. 

Let $\partial[\pi]$ be the boundary of $\pi$ with respect to the
Kummer sequence obtained by applying $i^{*}Rj_{*}$ to the exact sequence
of etale sheaves on $X_{1,\eta}$ \[
0\to\Lambda(1)\to\mathcal{O}_{X_{1,\eta}}^{*}\to\mathcal{O}_{X_{1,\eta}}^{*}\to0\]
for $\Lambda=\mathbb{Z}/l^{r}\mathbb{Z}$. Taking an inverse limit
over $r$ and tensoring we get an element $\partial[\pi]\in i^{*}R^{1}j_{*}\Lambda(1)$
for $\Lambda=\mathbb{Q}_{l}$ or $\bar{\mathbb{Q}}_{l}$. Let $\theta:\Lambda_{Y_{1}}\to i^{*}R^{1}j_{*}\Lambda(1)$
be the map sending $1$ to $\partial[\pi]$. Let $\delta:\Lambda_{Y_{1}}\to a_{0*}\Lambda$
be the canonical map. The following result appears as Corollary 1.3
of \cite{Saito}. 
\begin{prop}
\label{isom exact seq}1. There is an isomorphism of exact sequences

\[
\xymatrix{\Lambda_{Y_{1}}\ar[r]\sp-{\delta}\ar[d] & a_{0*}\Lambda\ar[r]\sp-{\delta\wedge}\ar[d] & \dots\ar[r]\sp-{\delta\wedge}\ar[d] & a_{n-1*}\Lambda\ar[r]\ar[d] & 0\\
\Lambda_{Y_{1}}\ar[r]\sp-{\theta} & i^{*}R^{1}j_{*}\Lambda(1)\ar[r]\sp-{\theta\cup} & \dots\ar[r]\sp-{\theta\cup} & i^{*}R^{n}j_{*}\Lambda(n)\ar[r] & 0}
,\]
 where the first vertical arrow is the identity and all the other
vertical arrows are isomorphisms. 

2. For $k\geq0$ we have an exact sequence \[
0\to R^{k}\psi\Lambda\to i^{*}R^{k+1}j_{*}\Lambda(1)\to\dots\to i^{*}R^{n}j_{*}\Lambda(n-k)\to0,\]
where all the horizontal maps are induced from $\theta\cup$. \end{prop}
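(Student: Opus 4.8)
The plan is to reduce both assertions to a single homological fact — the exactness of a Koszul-type complex on a unimodular section — after first identifying the sheaves $i^{*}R^{q}j_{*}\Lambda$, together with $\theta$ and $\theta\cup$, by means of log geometry. Since $X_{1}/\mathcal{O}_{K}$ is strictly semistable, the log scheme $(X_{1},M)$ with $M$ the canonical log structure attached to $Y_{1}$ is log smooth and vertical over $(S,N)$, and the local nearby-cycle computation already carried out in the proof of Lemma~\ref{flatness of sheaves} applies: for every $q\geq 1$ there is a canonical isomorphism of sheaves on $Y_{1,\bar{\mathbb{F}}}$
\[
i^{*}R^{q}j_{*}\Lambda(q)\simeq\wedge^{q}\bar{M}^{gp}\simeq\wedge^{q}a_{0*}\Lambda=a_{q-1*}\Lambda,
\]
where $\bar{M}^{gp}$ is identified with $a_{0*}\Lambda=\oplus_{i}x_{i}\Lambda$ via the classes $x_{i}$ of the components $D_{i}$; this identification is multiplicative (Theorem~2.4 of \cite{Kato-Nakayama}), so $\theta\cup$ corresponds to the exterior product. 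Moreover $\theta(1)=\partial[\pi]$ corresponds, under $i^{*}R^{1}j_{*}\Lambda(1)\simeq a_{0*}\Lambda$, to the class of $\pi$ in $\bar{M}^{gp}$; since étale-locally $\pi=x_{1}\cdots x_{r}$, the product running over the branches through a chosen point, this class equals $\sum_{i}x_{i}=\delta(1)$. Consequently the two rows of part~1 are carried, by the identity on $\Lambda_{Y_{1}}$ and by the isomorphisms above, onto the single complex of sheaves
\[
\Lambda_{Y_{1}}\xrightarrow{\delta}a_{0*}\Lambda\xrightarrow{\delta\wedge}a_{1*}\Lambda\xrightarrow{\delta\wedge}\cdots\xrightarrow{\delta\wedge}a_{n-1*}\Lambda\to 0,
\]
where $\delta\wedge$ denotes the wedge with the diagonal section $\delta(1)$.

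It then remains to prove this complex exact, which may be checked stalkwise. At a geometric point lying on $p\leq n$ branches the stalk is $(\wedge^{\bullet}\Lambda^{p},\,e\wedge(-))$ with $\wedge^{0}\Lambda^{p}=\Lambda$ and $e=(1,\dots,1)\in\Lambda^{p}$, the terms in degrees $>p$ vanishing. Since $e$ has a coordinate equal to a unit, it extends to a $\Lambda$-basis $e,f_{2},\dots,f_{p}$, whence $(\wedge^{\bullet}\Lambda^{p},e\wedge(-))\cong\bigl(\Lambda\xrightarrow{\,\cdot e\,}\Lambda e\bigr)\otimes_{\Lambda}\wedge^{\bullet}\langle f_{2},\dots,f_{p}\rangle$; the first tensor factor is acyclic and the second is free over $\Lambda$, so the whole stalk complex is acyclic. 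This gives part~1.

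For part~2 I would splice the short exact sequences
\[
0\to R^{k}\psi\Lambda\to i^{*}R^{k+1}j_{*}\Lambda(1)\to R^{k+1}\psi\Lambda(1)\to 0
\]
of Proposition~1.1.2.1 of \cite{Saito}, for $k,k+1,\dots,n-1$, using $R^{n}\psi\Lambda=0$ (as $X_{1}$ has relative dimension $n-1$); this yields the asserted exact sequence $0\to R^{k}\psi\Lambda\to i^{*}R^{k+1}j_{*}\Lambda(1)\to\cdots\to i^{*}R^{n}j_{*}\Lambda(n-k)\to 0$, whose inner maps are $\theta\cup$ because the composite $i^{*}R^{q}j_{*}\Lambda(1)\twoheadrightarrow R^{q}\psi\Lambda(1)\hookrightarrow i^{*}R^{q+1}j_{*}\Lambda(2)$ is, up to the Tate twist, the differential $\delta\wedge$ of part~1. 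Alternatively, and without invoking \cite{Saito}, one argues directly from the log computation: the same log-smooth computation gives $R^{k}\psi\Lambda(k)\simeq\wedge^{k}\bar{M}^{gp}_{rel}$ with $\bar{M}^{gp}_{rel}=\mathrm{coker}(\delta)$, and the stalkwise Koszul computation above identifies $\wedge^{k}\bar{M}^{gp}_{rel}$ with $\ker\bigl((\delta\wedge)\colon\wedge^{k+1}a_{0*}\Lambda\to\wedge^{k+2}a_{0*}\Lambda\bigr)$, the remaining terms $a_{k+1*}\Lambda\to\cdots\to a_{n-1*}\Lambda\to 0$ forming the exact tail of the same complex.

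The main obstacle is not any individual step but the naturality required in part~1: one must verify that a \emph{single} global isomorphism $i^{*}R^{\bullet}j_{*}\Lambda(\bullet)\simeq\wedge^{\bullet}a_{0*}\Lambda$ simultaneously intertwines $\theta$ with the diagonal, $\theta\cup$ with the exterior product, and (for part~2) the edge maps of the sequences of Proposition~1.1.2.1 of \cite{Saito} with the Koszul differential, and that it glues over the overlaps of the étale charts, on which the index set of branches through a point changes. This compatibility bookkeeping, rather than exactness, is where the real work lies; one may instead bypass log geometry altogether and follow Saito's construction of these complexes via perverse sheaves in \cite{Saito}, of which the present proposition is Corollary~1.3.
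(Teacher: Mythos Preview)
Your argument is essentially correct, and you yourself flag the one genuine soft spot (the global naturality of the log-geometric identification with the maps $\theta$, $\theta\cup$, and the edge maps from \cite{Saito}). Modulo that bookkeeping, the stalkwise Koszul computation and the splicing of the short exact sequences from Proposition~1.1.2.1 of \cite{Saito} are sound.

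It is worth noting, however, that the paper does not prove this proposition at all: it is stated as Corollary~1.3 of \cite{Saito} and simply quoted. The only content the paper adds is the Note following the statement, which records that the vertical isomorphisms arise from the Kummer boundary maps $\theta_i:\Lambda_{D_i}\to i^{*}R^{1}j_{*}\Lambda(1)$, $1\mapsto\partial[\pi_i]$, one for each component $D_i$, and that the cup product structure on $i^{*}R^{\bullet}j_{*}\Lambda$ is generated by these. So the paper's (implicit) route, following Saito, is purely via the Kummer sequence and purity for divisors, with no log input. Your route instead invokes the log computation $i^{*}R^{q}j_{*}\Lambda(q)\simeq\wedge^{q}\bar{M}^{gp}$ already used later in the paper (e.g.\ in the proof of Lemma~\ref{flatness of sheaves}) and reads off the identification $\theta\leftrightarrow\delta$ from the equality $[\pi]=\sum_i x_i$ in $\bar{M}^{gp}$. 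The two constructions agree because the Kummer boundary $\partial[\pi_i]$ is precisely the image of the generator $x_i\in\bar{M}^{gp}$ under the log--\'etale comparison; this is the naturality you would have to spell out. What your approach buys is conceptual unity with the paper's later arguments and a cleaner handling of the product-of-semistable case; what Saito's approach buys is that the compatibilities are already written down in \cite{Saito}, so nothing needs to be re-verified.
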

\begin{note*}
1. The vertical isomorphisms in the first part of Proposition \ref{isom exact seq}
come from the Kummer sequence corresponding to each of the $D_{i}$
for $i=1,\dots,m$. The maps $\theta_{i}:\Lambda_{D_{i}}\to i^{*}R^{1}j_{*}\Lambda(1)$
are defined by sending $1$ to $\partial[\pi_{i}]$, where $\pi_{i}$
is the generator of the ideal defining $D_{i}$ and $\partial$ is
the connecting differential in the Kummer sequence. The isomorphism
$a_{0*}\Lambda\stackrel{\sim}{\to}i^{*}R^{1}j_{*}\Lambda(1)$ is the
direct sum of the $\theta_{i}$ for $i=1,\dots,m$. 

2. Putting together the two isomorphisms, we get a quasi-isomorphism
of complexes \begin{equation}
R^{k}\psi\Lambda(k)[-k]\stackrel{\sim}{\to}[a_{k*}\Lambda\to\dots\to a_{n-1*}\Lambda\to0],\label{eq:resolution}\end{equation}
where $R^{k}\psi\Lambda(k)$ is put in degree $k$ and $a_{n-1*}\Lambda$
is put in degree $n-1$. \end{note*}
\begin{lem}
\label{lem:perversity}The complex $a_{l*}\Lambda[-l]$ is a $-(n-1)$-shifted
perverse sheaf for all $0\leq l\leq n-1$ and so is the complex $R^{k}\psi\Lambda(k)[-k]$
for all $0\leq k\leq n-1$.\end{lem}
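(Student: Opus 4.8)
The plan is to establish the assertion about $a_{l*}\Lambda[-l]$ first, and then deduce the assertion about $R^{k}\psi\Lambda(k)[-k]$ from it by a dévissage along the resolution \eqref{eq:resolution}.

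For the first part, recall that $Y_{1}^{(l)}=\bigsqcup_{\#I=l+1}Y_{I}$ is a finite disjoint union of the smooth schemes $Y_{I}$, each of dimension $n-1-l$, so $Y_{1}^{(l)}$ is smooth of pure dimension $n-1-l$ and the shifted constant sheaf $\Lambda_{Y_{1}^{(l)}}[n-1-l]$ is perverse on $Y_{1}^{(l)}$. The morphism $a_{l}\colon Y_{1}^{(l)}\to Y_{1}$ is finite, since each $Y_{I}\hookrightarrow Y_{1}$ is a closed immersion and there are only finitely many index sets $I$; hence $Ra_{l*}=a_{l*}$ is exact, and being finite it commutes with Verdier duality and preserves dimensions of supports, so $a_{l*}$ is t-exact for the perverse t-structure. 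Therefore $a_{l*}\Lambda[n-1-l]=a_{l*}\bigl(\Lambda_{Y_{1}^{(l)}}[n-1-l]\bigr)$ is perverse on $Y_{1}$, which is exactly the statement that $a_{l*}\Lambda[-l]=\bigl(a_{l*}\Lambda[n-1-l]\bigr)[-(n-1)]$ is a $-(n-1)$-shifted perverse sheaf. (When $Y_{1}^{(l)}$ is empty the sheaf is zero and there is nothing to check.)

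For the second part, I would use the quasi-isomorphism \eqref{eq:resolution}, which identifies $R^{k}\psi\Lambda(k)[-k]$ with the honest bounded complex of constructible sheaves $K^{\bullet}=[\,a_{k*}\Lambda\to a_{k+1*}\Lambda\to\cdots\to a_{n-1*}\Lambda\to 0\,]$ in which $a_{j*}\Lambda$ sits in degree $j$ and the differentials are the maps $\delta\wedge$ of Proposition \ref{isom exact seq}. The brutal truncations $\sigma_{\geq p}K^{\bullet}$ then fit into short exact sequences of complexes $0\to\sigma_{\geq p+1}K^{\bullet}\to\sigma_{\geq p}K^{\bullet}\to a_{p*}\Lambda[-p]\to 0$, hence into distinguished triangles in $D_{c}^{b}(Y_{1,\bar{\mathbb{F}}},\Lambda)$. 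The full subcategory of $-(n-1)$-shifted perverse sheaves is the heart of a t-structure (a shift of the perverse t-structure), so it is closed under extensions; since $\sigma_{\geq n-1}K^{\bullet}=a_{n-1*}\Lambda[-(n-1)]$ and each $a_{p*}\Lambda[-p]$ is $-(n-1)$-shifted perverse by the first part, descending induction on $p$ from $n-1$ down to $k$ shows that every $\sigma_{\geq p}K^{\bullet}$ is $-(n-1)$-shifted perverse; taking $p=k$ gives that $K^{\bullet}\simeq R^{k}\psi\Lambda(k)[-k]$ is $-(n-1)$-shifted perverse.

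Neither step poses a genuine obstacle: the first is the standard fact that the shifted constant sheaf on a smooth variety of pure dimension $d$ is perverse, combined with the t-exactness of finite pushforward, and the second is a formal argument with brutal truncations in the shifted perverse t-structure. The only point requiring care is that \eqref{eq:resolution} really does present $R^{k}\psi\Lambda(k)[-k]$ as a genuine complex of constructible sheaves (and not merely as a filtered object of the derived category), so that its brutal truncations and the associated short exact sequences of complexes make sense; this is precisely what Proposition \ref{isom exact seq} supplies, via the identification $a_{k*}\Lambda=\wedge^{k+1}a_{0*}\Lambda$.
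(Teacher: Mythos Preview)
Your proof is correct and follows essentially the same approach as the paper: both argue that $\Lambda[-l]$ is $-(n-1)$-shifted perverse on the smooth $(n-1-l)$-dimensional $Y_1^{(l)}$, push forward along the finite (hence $t$-exact) map $a_l$, and then use the resolution \eqref{eq:resolution} to realize $R^{k}\psi\Lambda(k)[-k]$ as a successive extension of the $a_{l*}\Lambda[-l]$, concluding by closure of the heart under extensions. Your version is slightly more explicit in spelling out the d\'evissage via brutal truncations, but the argument is the same.
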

\begin{proof}
Since $Y_{1}^{(l)}$ is smooth of dimension $n-1-l$, we know that
$\Lambda[-l]$ is a $-(n-1)$-shifted perverse sheaf on $Y_{1}^{(l)}$.
The map $a_{l}:Y^{(l)}\to Y$ is finite and since the direct image
for a finite map is exact for the perverse $t$-structure, we deduce
that $a_{l*}\Lambda[-l]$ is a $-(n-1)$-shifted perverse sheaf on
$Y$. This is true for each $0\leq l\leq n-1$. The complex $R^{k}\psi\Lambda(k)[-k]$
is a successive extension of terms of the form $a_{l*}\Lambda[-l]$
(as objects in the triangulated category $D_{c}^{b}(Y_{\bar{\mathbb{F}}},\Lambda)$.
Because the category of $-(n-1)$-shifted perverse sheaves is stable
under extensions, we conclude that $R^{k}\psi\Lambda(k)[-k]$ is also
a $-(n-1)$-shited perverse sheaf. 
\end{proof}
Let $\mathcal{L}\in D_{c}^{b}(Y_{1,\bar{\mathbb{F}}},\Lambda)$ be
represented by the complex \[
\dots\to\mathcal{L}^{k-1}\to\mathcal{L}^{k}\to\mathcal{L}^{k+1}\to\dots.\]

\begin{defn}
We define $\tau_{\leq k}\mathcal{L}$ to be the standard truncation
of $\mathcal{L}$, represented by the complex \[
\dots\to\mathcal{L}^{k-1}\to\ker(\mathcal{L}^{k}\to\mathcal{L}^{k+1})\to0.\]
Then $\tau_{\leq k}$ is a functor on $D_{c}^{b}(Y_{1,\bar{\mathbb{F}}},\Lambda)$.
We also define $\tilde{\tau}_{\leq k}\mathcal{K}$ to be represented
by the complex \[
\dots\to\mathcal{L}^{k-1}\to\mathcal{L}^{k}\to\mbox{im }(\mathcal{L}^{k}\to\mathcal{L}^{k+1})\to0.\]

\end{defn}
For every $k$ we have a quasi-isomorphism $\tau_{\leq k}\mathcal{L}\stackrel{\sim}{\to}\tilde{\tau}_{\leq k}\mathcal{L}$,
which is given degree by degree by the inclusion map. 
\begin{cor}
\label{perversity of filtration}The complex $R\psi\Lambda$ is a
$-(n-1)$-shifted perverse sheaf and the truncations $\tau_{\leq k}R\psi\Lambda$
make up a decreasing filtration of $R\psi\Lambda$ by $-(n-1)$-shifted
perverse sheaves. \end{cor}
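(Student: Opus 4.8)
The plan is to combine Lemma \ref{lem:perversity} with the standard fact that the standard-truncation functors $\tau_{\leq k}$, when applied to an object lying in the heart of a $t$-structure, produce a finite filtration by subobjects of that heart. First I would recall that by the resolution \eqref{eq:resolution} and Lemma \ref{lem:perversity}, the whole complex $R\psi\Lambda$ sits in an iterated extension whose graded pieces are the $a_{l*}\Lambda[-l]$, each a $-(n-1)$-shifted perverse sheaf; since the category of $-(n-1)$-shifted perverse sheaves is closed under extensions inside $D^b_c(Y_{1,\bar{\mathbb{F}}},\Lambda)$, this already gives that $R\psi\Lambda$ is itself a $-(n-1)$-shifted perverse sheaf. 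Equivalently, one can say $R\psi\Lambda(k)[-k]$ is perverse for each $k$ by the same lemma and then untwist.

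Next I would turn to the truncations. The key observation is that the quasi-isomorphism of \eqref{eq:resolution} identifies $\tau_{\leq k}R\psi\Lambda$, up to the twist and shift, with the naive brutal truncation of the complex $[a_{0*}\Lambda \to a_{1*}\Lambda \to \cdots \to a_{n-1*}\Lambda]$ (more precisely, via the $\tilde\tau_{\leq k}$ description, with the subcomplex that stops at the image in degree $k$). Concretely, there are short exact triangles relating $\tau_{\leq k-1}R\psi\Lambda$, $\tau_{\leq k}R\psi\Lambda$ and the cohomology object $R^k\psi\Lambda(-k)[-k]$ — wait, I should be careful with the twist: the triangle is $\tau_{\leq k-1}R\psi\Lambda \to \tau_{\leq k}R\psi\Lambda \to \mathcal H^k(R\psi\Lambda)[-k] \xrightarrow{+1}$, and $\mathcal H^k(R\psi\Lambda)[-k] = R^k\psi\Lambda[-k] \cong R^k\psi\Lambda(k)[-k] (-k)$, which by Lemma \ref{lem:perversity} is a $-(n-1)$-shifted perverse sheaf (a Tate twist does not affect perversity). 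Since the two outer terms are inductively perverse — the base case $\tau_{\leq 0}R\psi\Lambda = R^0\psi\Lambda = \Lambda_{Y_1}$ being the perverse sheaf $\Lambda_{Y_1}$ on the $(n-1)$-dimensional $Y_1$ — the long exact perverse-cohomology sequence of the triangle forces all the connecting maps to vanish in the relevant degrees, so each $\tau_{\leq k}R\psi\Lambda$ is $-(n-1)$-shifted perverse and the inclusions $\tau_{\leq k}R\psi\Lambda \hookrightarrow \tau_{\leq k+1}R\psi\Lambda$ are monomorphisms in the abelian category of $-(n-1)$-shifted perverse sheaves.

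Finally I would assemble these into the statement: as $k$ runs from $0$ to $n-1$ the $\tau_{\leq k}R\psi\Lambda$ form an increasing chain of perverse subobjects of $R\psi\Lambda$ with $\tau_{\leq n-1}R\psi\Lambda = R\psi\Lambda$; reindexing (or passing to $\tau_{\geq \bullet}$ on the Tate-twisted side) gives the decreasing filtration as stated. The only genuinely delicate point — and the step I expect to be the main obstacle to write cleanly — is bookkeeping the interplay between the standard truncation on $R\psi\Lambda$, the brutal truncation on the resolving complex $a_{\bullet *}\Lambda$, and the Tate twists $(k)$ and shifts $[-k]$, making sure that "perverse" is preserved at each stage; once one knows each graded piece $R^k\psi\Lambda(k)[-k]$ is $-(n-1)$-shifted perverse (Lemma \ref{lem:perversity}) and that this subcategory is stable under extensions, everything else is formal manipulation of truncation triangles in $D^b_c(Y_{1,\bar{\mathbb F}},\Lambda)$.
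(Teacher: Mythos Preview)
Your proposal is correct and follows essentially the same route as the paper: induction on $k$ using the distinguished triangle $(\tau_{\leq k-1}R\psi\Lambda,\ \tau_{\leq k}R\psi\Lambda,\ R^{k}\psi\Lambda[-k])$, with the base case $\tau_{\leq 0}R\psi\Lambda\simeq R^{0}\psi\Lambda$ and Lemma~\ref{lem:perversity} supplying perversity of each graded piece. The digressions about brutal truncations of the resolving complex and Tate-twist bookkeeping are unnecessary (and the claim that \eqref{eq:resolution} identifies $\tau_{\leq k}R\psi\Lambda$ with a brutal truncation is not quite right, since \eqref{eq:resolution} resolves a single cohomology sheaf, not the whole complex), but since you ultimately argue via the truncation triangle anyway, these detours are harmless.
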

\begin{proof}
Since the cohomology of $R\psi\Lambda$ vanishes in degrees greater
than $n-1$, we have $R\psi\Lambda\simeq\tau_{\leq n-1}R\psi\Lambda$
so it suffices to prove by induction that each $\tau_{\leq k}R\psi\Lambda$
is a $-(n-1)$-shifted perverse sheaf. For $k=0$, we have $\tau_{\leq0}R\psi\Lambda\simeq R^{0}\psi\Lambda$,
which is a $-(n-1)$-shifted perverse sheaf by Lemma \ref{lem:perversity}.
For $k\geq1$ we have a distinguished triangle \[
(\tau_{\leq k-1}R\psi\Lambda,\tau_{\leq k}R\psi\Lambda,R^{k}\psi\Lambda[-k])\]
 and assuming that $\tau_{\leq k-1}R\psi\Lambda$ is a $-(n-1)$-shifted
perverse sheaf, we conclude that $\tau_{\leq k}R\psi\Lambda$ is as
well. The distinguished triangles become short exact sequences in
the abelian category of perverse sheaves, from which we deduce that
the $\tau_{\leq k}R\psi\Lambda$ make up a decreasing filtration of
$R\psi\Lambda$ and that the graded pieces of this filtration are
the $R^{k}\psi\Lambda$. 
\end{proof}
The complex $R\psi\Lambda$ has an action of $I_{s}$, which acts
trivially on the cohomology sheaves $R^{k}\psi\Lambda$. From this,
it follows that the action of $ $$I_{s}$ factors through the action
of its tame pro-$l$-quotient. Let $T$ be a generator of pro-$l$-part
of the tame inertia (i.e. such that $t_{l}(T)$ is a generator of
$\mathbb{Z}_{l}(1))$. We are interested in understanding the action
of $T$ on $R\psi\Lambda$. In fact, we're interested in understanding
the action of $N=\log T$, by recovering its kernel and image filtration.
We've seen that $T$ acts trivially on the $R^{k}\psi\Lambda$, which
means that $N$ sends $\tau_{\leq k}R\psi\Lambda\to\tilde{\tau}_{\leq k-1}R\psi\Lambda\stackrel{\sim}{\to}\tau_{\leq k-1}R\psi\Lambda$.
We get an induced map \[
\bar{N}:R^{k}\psi\Lambda[-k]\to R^{k-1}\psi\Lambda[-k+1].\]

The next result appears as part $4$ of Lemma 2.2.1 of \cite{Saito},
except that it is stated and proved for the map $\bar{\nu}:R^{k}\psi\Lambda[-k]\to R^{k-1}\psi\Lambda[-k+1]$
induced from $\nu=T-1$. However, $\log T\equiv T-1\pmod{(T-1)^{2}}$,
and $(T-1)^{2}$ sends $\tau_{\leq k}R\psi\Lambda\to\tau_{\leq k-2}R\psi\Lambda$.
Since $R^{k-1}\psi\Lambda[-k+1]\simeq\tau_{\leq k-1}R\psi\Lambda/\tau_{\leq k-2}R\psi\Lambda$,
we deduce that the two maps $\bar{N}$ and $\bar{\nu}$ coincide,
i.e. \[
\bar{N}=\bar{\nu}:R^{k}\psi\Lambda[-k]\to R^{k-1}\psi\Lambda[-k+1].\]
 Therefore, we can rewrite part 4 of Lemma 2.5 of \cite{Saito} as
follows. 
\begin{lem}
\label{twisting}The map $\bar{N}$ and the isomorphisms of Note \ref{eq:full resolution}
make a commutative diagram

\[
\xymatrix{R^{k+1}\psi\Lambda[-(k+1)]\ar[r]\sp-{\sim}\ar[d]^{\bar{N}} & [0\ar[r]\ar[d] & a_{k+1*}\Lambda(-(k+1))\ar[r]\sp-{\delta\wedge}\ar[d]^{\otimes t_{l}(T)} & \dots\ar[r]\sp-{\delta\wedge} & a_{n-1*}\Lambda(-(k+1))]\ar[d]\\
R^{k}\psi\Lambda[-k]\ar[r]\sp-{\sim} & [a_{k*}\Lambda(-k)\ar[r]\sp-{\delta\wedge} & a_{k+1*}\Lambda(-k)\ar[r]\sp-{\delta\wedge} & \dots\ar[r]\sp-{\delta\wedge} & a_{n-1*}\Lambda(-k)]}
,\]
where the sheaves $a_{n-1*}\Lambda(-(k+1))$ and $a_{n-1*}\Lambda(-k)$
are put in degree $n-1$. 
\end{lem}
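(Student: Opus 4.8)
The plan is to deduce the lemma from Saito's explicit description of the monodromy action on the nearby-cycle complex of a strictly semistable scheme, transporting his statement for $\nu = T-1$ across the identification $\bar{N} = \bar{\nu}$ noted just above.

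First I would pin down the identifications entering the two rows of the diagram. By \eqref{eq:resolution}, the complex $R^{k}\psi\Lambda(k)[-k]$ is quasi-isomorphic to the stupid truncation $[a_{k*}\Lambda \to \cdots \to a_{n-1*}\Lambda]$, placed in degrees $k,\dots,n-1$, of the Koszul-type resolution of $\Lambda_{Y_1}$ from Proposition \ref{isom exact seq}; the vertical isomorphisms there are built out of the Kummer maps $\theta_i \colon \Lambda_{D_i} \to i^{*}R^{1}j_{*}\Lambda(1)$, $1 \mapsto \partial[\pi_i]$, where $\pi_i$ generates the ideal of $D_i$. With these fixed, part 4 of Lemma 2.2.1 of \cite{Saito} is exactly the commutative square in the statement, except that $\bar{N}$ is there replaced throughout by the map $\bar{\nu} \colon R^{k+1}\psi\Lambda[-(k+1)] \to R^{k}\psi\Lambda[-k]$ induced by $\nu = T-1$ on the graded pieces of the filtration $(\tau_{\leq \bullet}R\psi\Lambda)$. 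The twist $\otimes\, t_l(T)$ on the middle vertical arrow records the fact that $T$ acts on an $l^{r}$-th root of $\pi_i$ by multiplication by the root of unity $t_l(T) \bmod l^{r}$, so that $\nu$ carries the degree-$(k+1)$ contribution of the Kummer class $\partial[\pi_i]$ to $t_l(T)$ times its degree-$k$ counterpart; this is also the source of the discrepancy between the Tate twists $\Lambda(-(k+1))$ and $\Lambda(-k)$ in the two rows.

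Next I would invoke the identification $\bar{N} = \bar{\nu}$ established in the paragraph preceding the statement: since $N = \log T \equiv \nu \pmod{\nu^{2}}$ and $\nu^{2}$ moves the filtration down by two steps, $\nu^{2}$ induces the zero map on the subquotient $R^{k}\psi\Lambda[-k] \simeq \tau_{\leq k}R\psi\Lambda / \tau_{\leq k-1}R\psi\Lambda$, whence $\bar{N}$ and $\bar{\nu}$ agree as maps $R^{k+1}\psi\Lambda[-(k+1)] \to R^{k}\psi\Lambda[-k]$. Substituting $\bar{N}$ for $\bar{\nu}$ in Saito's diagram gives the lemma.

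The step I expect to cost the most effort is not geometric at all: the precise shape of the $I_s$-action on $R\psi\Lambda$ for strictly semistable $X_1$ is entirely contained in Saito's lemma and is not reproved. What does need care is matching his normalizations to the present ones---the placement of the $a_{l*}\Lambda$ in degrees $k,\dots,n-1$ versus $k+1,\dots,n-1$, the direction and bookkeeping of the Tate twists, and the identification of $t_l(T)$ with a chosen generator of $\Lambda(1)$ under $t_l$. Should a self-contained argument be preferred, one would instead work on stalks, present $R^{k}\psi\Lambda(k)$ as $\ker\big(a_{k*}\Lambda \to a_{k+1*}\Lambda\big)$ via Proposition \ref{isom exact seq}(2) together with part 1 of that proposition, and compute $\nu$ directly from the Kummer sequences of the divisors $D_i$; this reproduces the combinatorial core of the Rapoport--Zink/Saito weight-monodromy complex, but it is longer and uses nothing beyond what is already in the excerpt.
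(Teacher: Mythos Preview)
Your proposal is correct and follows essentially the same approach as the paper: the lemma is not proved independently there but is simply the rewriting of part~4 of Lemma~2.2.1 of \cite{Saito} (stated for $\bar{\nu}=T-1$) via the identification $\bar{N}=\bar{\nu}$ established in the paragraph immediately preceding the statement. Your additional remarks on normalizations and the alternative stalk computation go beyond what the paper does, but the core argument is identical.
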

From the above commutative diagram, it is easy to see that the map
$\bar{N}$ is injective, since we can just compute the cone of the
map of complexes on the right. In general, to compute the kernel and
cokernel of a map of perverse sheaves, we have to compute the cone
$C$ of that map, then the perverse truncation $\tau_{\geq0}^{p}C$
will be the cokernel and $\tau_{\leq-1}^{p}C[-1]$ will be the kernel
(see the proof of Theorem 1.3.6 of \cite{BBD}). It is straightforward
to check that the cone of $\bar{N}$ is quasi-isomorphic to $a_{k*}\Lambda(-k)[-k]$,
which is a $-(n-1)$-shifted perverse sheaf. We deduce that $\bar{N}$
has kernel $0$ and cokernel $a_{k*}\Lambda(-k)[-k]$. 

The fact that $\bar{N}$ is injective means that the canonical filtration
$\tau_{\leq k}R\psi\Lambda$ coincides with the kernel filtration
of $N$ on $R\psi\Lambda$ and that the $R^{k}\psi\Lambda[-k]$ for
$0\leq k\leq n-1$ are the graded pieces of the kernel filtration.
Moreover, the graded pieces of the induced image filtration of $N$
on the $R^{k}\psi\Lambda$ are $a_{k+l*}\Lambda(-l)[-(k+l)]$ for
$0\leq l\leq n-1-k$. This information suffices to reconstruct the
graded pieces of the monodromy filtration on $R\psi\Lambda$. 
\begin{prop}
There is an isomorphism \[
\bigoplus_{l-k=r}a_{(k+l)*}\Lambda(-l)[-(k+l)]\to Gr_{r}^{M}R\psi\Lambda.\]

\end{prop}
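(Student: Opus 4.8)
The plan is to read off the monodromy filtration $M_\bullet$ of $N$ on the $-(n-1)$-shifted perverse sheaf $R\psi\Lambda$ directly from the data already assembled in this subsection; this is, in effect, Saito's computation of $Gr^M R\psi\Lambda$ for a strictly semistable scheme, organised around the kernel and image filtrations of $N$. No further geometry is needed.

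First I would record the input. By Corollary \ref{perversity of filtration} the canonical filtration $L_k:=\tau_{\le k}R\psi\Lambda$ (with $L_{-1}=0$, $L_{n-1}=R\psi\Lambda$) is an increasing filtration of $R\psi\Lambda$ by $-(n-1)$-shifted perverse sheaves, it is the kernel filtration of $N$, and $Gr^L_k R\psi\Lambda\cong R^k\psi\Lambda[-k]$. By Lemma \ref{twisting} the induced map $\bar N\colon Gr^L_{k+1}\to Gr^L_k$ is injective, with cokernel $a_{k*}\Lambda(-k)[-k]$ for $0\le k\le n-1$ (the $k=n-1$ cokernel being $Gr^L_{n-1}=a_{(n-1)*}\Lambda(-(n-1))[-(n-1)]$ itself). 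Iterating $\bar N$ equips each $Gr^L_k$ with a decreasing (image) filtration whose graded pieces are the primitive constituents $a_{(k+l)*}\Lambda(-l)[-(k+l)]$, $0\le l\le n-1-k$, and identifies the distinct primitive constituents via the isomorphisms of Lemma \ref{twisting} (each step of $\bar N$ supplying a $\otimes t_l(T)$).

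The point of this is that, at the level of the associated graded for $N$, the pair $(R\psi\Lambda,N)$ is a direct sum of ``Jordan blocks'': to the cokernel $a_{k*}\Lambda(-k)[-k]$ in $Gr^L_k$ one attaches a block of length $k+1$, whose $k+1$ layers are the successive twist--shifts of that constituent appearing in $Gr^L_k,Gr^L_{k-1},\dots,Gr^L_0$, permuted by $N$. This is precisely the situation governed by the standard reconstruction of the monodromy filtration of a nilpotent operator from a filtration on which it acts injectively on graded pieces; see \S1--2 of \cite{Saito} (whose homological input ultimately rests on \cite{BBD}). Invoking it, $Gr^M_r R\psi\Lambda$ is the direct sum, over all blocks, of their $r$-th monodromy-graded pieces. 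The monodromy filtration of the length-$(k+1)$ block attached to $a_{k*}\Lambda(-k)[-k]$ is the classical one, symmetric about $0$; its graded piece in degree $r$ (nonzero exactly for $-k\le r\le k$, $r\equiv k\pmod 2$) is, after tracing the twists and shifts of Lemma \ref{twisting}, the layer living in cohomological degree $k+l$ with $l$ determined by $l-k=r$, namely $a_{(k+l)*}\Lambda(-l)[-(k+l)]$. Summing over the blocks with $l-k=r$ --- equivalently writing $k+l=2k_0+r$ with $k_0\ge\max(0,-r)$ --- gives the asserted isomorphism
\[
\bigoplus_{l-k=r}a_{(k+l)*}\Lambda(-l)[-(k+l)]\ \xrightarrow{\ \sim\ }\ Gr^M_r R\psi\Lambda.
\]

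The only delicate point is bookkeeping rather than conceptual: one must track the Tate twists and cohomological shifts through the quasi-isomorphism \eqref{eq:resolution} and the diagram of Lemma \ref{twisting} so that the centred monodromy filtration emerges with exactly the twists $(-l)$ and shifts $[-(k+l)]$ written above, and one must check that $Gr^M_r R\psi\Lambda$ is an \emph{honest} direct sum and not merely an object filtered by these constituents. For the latter I would not argue abstractly (the relevant $\mathrm{Ext}$-groups between pushforwards of constant sheaves need not vanish) but would instead use Saito's explicit monodromy-filtered complex: being built degreewise from the sheaves $a_{j*}\Lambda$ via the maps $\delta\wedge$, the graded pieces of the monodromy filtration on it are visibly direct sums of twist--shifts of the $a_{j*}\Lambda$, and the quasi-isomorphism with $R\psi\Lambda$ transports this splitting.
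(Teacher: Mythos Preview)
Your approach is correct and is essentially what the paper does, only fleshed out in more detail. In the paper this proposition sits in the ``Overview of the semistable case'' section and is not given a self-contained proof: the sentence immediately preceding it (``the canonical filtration \ldots\ coincides with the kernel filtration \ldots\ Moreover, the graded pieces of the induced image filtration \ldots\ are $a_{(k+l)*}\Lambda(-l)[-(k+l)]$ \ldots\ This information suffices to reconstruct the graded pieces of the monodromy filtration'') together with the reference to Corollary~2.2.4 of \cite{Saito} \emph{is} the paper's argument. Your write-up unpacks exactly this: kernel filtration $=$ canonical truncation, image filtration on each graded piece read off from Lemma~\ref{twisting}, then the convolution formula $Gr^M_r \simeq \bigoplus Gr^qGr_p$ (Lemma~2.1 of \cite{Saito}) reassembles the pieces.

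Two small points. First, the identification of the canonical filtration with the kernel filtration of $N$ is not contained in Corollary~\ref{perversity of filtration} as you cite; it is deduced in the paragraph after Lemma~\ref{twisting} from the injectivity of $\bar N$. Second, your closing worry about whether $Gr^M_r$ is an honest direct sum rather than merely filtered is handled in the paper (and in \cite{Saito}) by the convolution formula itself, which already produces a direct sum decomposition; one does not need to appeal separately to an explicit filtered complex.
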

This isomorphism, together with the spectral sequence associated to
the monodromy filtration induces the weight spectral sequence (see
Corollary 2.2.4 of \cite{Saito}).

\subsection{The product of semistable schemes}

Let $X_{1}$ and $X_{2}$ be semistable schemes of relative dimension
$n-1$ over $\mathcal{O}_{K}$, and let $\Lambda=\mathbb{Q}_{l}$
or $\bar{\mathbb{Q}}_{l}$. Let $R\psi\Lambda_{X_{i}}$ be the complex
of vanishing cycles on $X_{i,\bar{s}}$ for $i=1,2$ and let $R\psi\Lambda_{X_{1}\times X_{2}}$
be the complex of vanishing cycles on $(X_{1}\times_{S}X_{2})_{\bar{s}}$.
By Proposition \ref{product}, we have \[
R\psi\Lambda_{X_{1}\times X_{2}}\simeq R\psi\Lambda_{X_{1}}\otimes_{\Lambda}R\psi\Lambda_{X_{2}}\]
and notice that this isomorphism is compatible with the action of
the inertia $I$ in $G_{K}$. From Proposition \ref{trivial inertia},
the action of $I$ is trivial on the cohomology sheaves of $R\psi\Lambda_{X_{1}\times X_{2}}$,
so only the pro-$l$ part of $I$ acts nontrivially on $R\psi\Lambda_{X_{1}\times X_{2}}$.
Let $T$ be a generator of the pro-$l$ part of $I$ and set $N=\log T$.
Let $N,N_{1},N_{2}$ denote the action of $N$ on $R\psi\Lambda_{X_{1}\times X_{2}}$,
$R\psi\Lambda_{X_{1}}$ and $R\psi\Lambda_{X_{2}}$ respectively.
Since the above isomorphism is compatible with the action of $T$,
we deduce that $T$ acts on $R\psi\Lambda_{X_{1}}\otimes_{\Lambda[I]}R\psi\Lambda_{X_{2}}$
via $T\otimes T$. From this, we conclude that $N$ acts on $R\psi\Lambda_{X_{1}}\otimes_{\Lambda[I]}R\psi\Lambda_{X_{2}}$
as $N_{1}\otimes1+1\otimes N_{2}$. 

As in the proof of Proposition \ref{localnearbycycles}, we have a
decomposition\[
R^{k}\psi\Lambda\simeq\bigoplus_{l=0}^{k}R^{l}\psi\Lambda_{X_{1}}\otimes R^{k-l}\psi\Lambda_{X_{2}}.\]
We shall see that $N$ induces a map \[
\bar{N}:R^{k}\psi\Lambda_{X_{1}\times X_{2}}[-k]\to R^{k-1}\psi\Lambda_{X_{1}\times X_{2}}[-k+1]\]
 which acts on $R^{l}\psi\Lambda_{X_{1}}\otimes_{\Lambda}R^{k-l}\psi\Lambda_{X_{2}}[-k]$
by $\bar{N}_{1}\otimes1+1\otimes\bar{N}_{2}$. First we prove a few
preliminary results. 

For $i=1,2$ and $0\leq l\leq n$ define the following schemes:
\begin{itemize}
\item Let $Y_{i}/\mathbb{F}$ be the special fiber of $X_{i}$
\item Let $D_{i,1},\dots,D_{i,m_{i}}$ be the irreducible components of
$X_{i}$ 
\item For $J\subseteq\{1,\dots,m_{i}\}$ let $Y_{i,J}$ be $\cap_{j\in J}D_{i,j}$
and let $a_{J}^{i}:Y_{i,J}\to Y_{i}$ be the immersion. Note that
if the cardinality of $J$ is $l+1$, then the scheme $Y_{i,J}$ is
smooth of dimension $n-l-1$. 
\item For all $0\leq l\leq m-1$ set $Y_{i}^{(l)}=\bigsqcup_{\#J=l+1}Y_{i,J}$
and let $a_{l}^{i}:Y_{i}^{(l)}\to Y_{i}$ be the projection.
\end{itemize}
Then for each $i=1,2$ we have a resolution of $R^{l}\psi\Lambda_{X_{i}}[-l]$
in terms of the sheaves $a_{j*}^{i}\Lambda$:\[
R^{l}\psi\Lambda_{X_{i}}[-l]\stackrel{\sim}{\to}[a_{l*}^{i}\Lambda(-l)\to\dots\to a_{n-1*}^{i}\Lambda(-l)],\]
where $a_{n-1*}^{i}\Lambda(-l)$ is put in degree $n-1$. 

Now let $Y/\mathbb{F}$ be the special fiber of $X_{1}\times X_{2}$.
Let \[
Y_{J_{1},J_{2}}=\bigcap_{j_{1}\in J_{1},j_{2}\in J_{2}}(D_{j_{1}}\times_{\mathbb{F}}D_{j_{2}}).\]
Set $Y^{(l_{1},l_{2})}=\bigsqcup_{\#J_{1}=l_{1}+1,\#J_{2}=l_{2}+1}Y_{J_{1},J_{2}}$
and let $a_{l_{1},l_{2}}:Y^{(l_{1},l_{2})}\to Y$ be the projection.
The scheme $Y^{(l_{1},l_{2})}$ is smooth of dimension $2n-2-l_{1}-l_{2}$.
Note that $Y^{(l_{1},l_{2})}=Y_{1}^{(l_{1})}\times Y_{2}^{(l_{2})}$
and that $a_{l_{1},l_{2}*}\Lambda\simeq a_{l_{1}*}^{1}\Lambda\otimes a_{l_{2}*}^{2}\Lambda$,
where the tensor product of sheaves is an external tensor product. 
\begin{lem}
\label{resolution}We have the following resolution of $R^{l}\psi\Lambda_{X_{1}}\otimes R^{k-l}\psi\Lambda_{X_{2}}[-k]$
as the complex \[
a_{l,k-l*}\Lambda(-k)\to a_{l,k-l+1*}\Lambda(-k)\oplus a_{l+1,k-l*}\Lambda(-k)\to\dots\to a_{n-1,n-1*}\Lambda(-k),\]
where the sheaf $a_{n-1,n-1*}\Lambda(-k)$ is put in degree $2n-2$.
The general term of the complex which appears in degree $l_{1}+l_{2}$
is \[
\bigoplus_{\substack{l_{1}\geq l\\
l_{2}\geq k-l}
}a_{l_{1},l_{2}*}\Lambda(-k)\]
For each $l_{1},l_{2}$ the complexes $a_{l_{1},l_{2}*}\Lambda(-k)[-l_{1}-l_{2}]$
are $-(2n-2)$-shifted perverse sheaves, so the complex $R^{k}\psi\Lambda_{X_{1}\times X_{2}}[-k]$
is also $-(2n-2)$-shifted perverse sheaf. \end{lem}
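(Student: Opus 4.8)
The plan is to build the asserted resolution by taking the external tensor product of the two semistable resolutions recalled in the Overview subsection, and then to deduce perversity exactly as in Lemma \ref{lem:perversity} and Corollary \ref{perversity of filtration}, by filtering that tensor-product complex by its stupid truncations. Concretely, for each $i=1,2$ the Overview provides a quasi-isomorphism
\[
R^{l}\psi\Lambda_{X_{i}}[-l]\stackrel{\sim}{\to}\bigl[a_{l*}^{i}\Lambda(-l)\to\dots\to a_{n-1*}^{i}\Lambda(-l)\bigr],
\]
with $a_{n-1*}^{i}\Lambda(-l)$ placed in degree $n-1$ (this is \eqref{eq:resolution} for $X_{i}$). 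Since $\Lambda$ is a field, every sheaf occurring is flat over $\Lambda$ and each $pr_{j}^{*}$ is exact, so the external tensor product (total complex) of these two complexes of sheaves computes the derived external tensor product of the two sides; it is therefore quasi-isomorphic to $(R^{l}\psi\Lambda_{X_{1}}[-l])\otimes^{L}(R^{k-l}\psi\Lambda_{X_{2}}[-(k-l)])=R^{l}\psi\Lambda_{X_{1}}\otimes R^{k-l}\psi\Lambda_{X_{2}}[-k]$.

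Next I would carry out the bookkeeping of indices, shifts and Tate twists. Using $Y^{(l_{1},l_{2})}=Y_{1}^{(l_{1})}\times_{\mathbb{F}}Y_{2}^{(l_{2})}$ and proper base change for the finite maps $a_{l_{i}}^{i}$ one gets $a_{l_{1}*}^{1}\Lambda\boxtimes a_{l_{2}*}^{2}\Lambda\simeq a_{l_{1},l_{2}*}\Lambda$, and since Tate twists add, $\Lambda(-l)\otimes\Lambda(-(k-l))=\Lambda(-k)$. Hence the total complex has general term $\bigoplus_{l_{1}+l_{2}=d,\;l_{1}\geq l,\;l_{2}\geq k-l}a_{l_{1},l_{2}*}\Lambda(-k)$ in degree $d$; the bottom term $a_{l,k-l*}\Lambda(-k)$ sits in degree $k$ and the top term $a_{n-1,n-1*}\Lambda(-k)$ in degree $2n-2$, as claimed. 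For the perversity of the building blocks: $Y^{(l_{1},l_{2})}$ is smooth of dimension $2n-2-l_{1}-l_{2}$, so $\Lambda[-(l_{1}+l_{2})]$ on it is a $-(2n-2)$-shifted perverse sheaf; the morphism $a_{l_{1},l_{2}}$ is finite, hence exact for the perverse $t$-structure, and the Tate twist $(-k)$ is harmless, so $a_{l_{1},l_{2}*}\Lambda(-k)[-(l_{1}+l_{2})]$ — and any finite direct sum of such — is a $-(2n-2)$-shifted perverse sheaf on $Y$.

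To finish, I would invoke the decomposition $R^{k}\psi\Lambda_{X_{1}\times X_{2}}\simeq\bigoplus_{l=0}^{k}R^{l}\psi\Lambda_{X_{1}}\otimes R^{k-l}\psi\Lambda_{X_{2}}$ recalled just before the lemma (which itself follows from Proposition \ref{product} together with the flatness of the sheaves $R^{l}\psi\Lambda_{X_{i}}$, Lemma \ref{flatness of sheaves}). Thus $R^{k}\psi\Lambda_{X_{1}\times X_{2}}[-k]$ is a direct sum of complexes of the type just resolved, i.e.\ of bounded complexes all of whose terms are $-(2n-2)$-shifted perverse sheaves. Filtering such a complex by its stupid truncations $\sigma^{\geq d}$ expresses it as a successive extension — a finite chain of distinguished triangles — of its individual terms; since the category of $-(2n-2)$-shifted perverse sheaves is stable under extensions in $D_{c}^{b}(Y_{\bar{\mathbb{F}}},\Lambda)$, a downward induction on $d$ shows that each $\sigma^{\geq d}$ of the complex, and hence the whole complex, is $-(2n-2)$-shifted perverse, exactly as in Lemma \ref{lem:perversity}.

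The step I expect to be the main obstacle is the identification of the underived external tensor product of the two semistable resolutions with $R^{l}\psi\Lambda_{X_{1}}\otimes R^{k-l}\psi\Lambda_{X_{2}}[-k]$ in a way compatible, summand by summand, with the K\"unneth decomposition of $R^{k}\psi\Lambda_{X_{1}\times X_{2}}$: one must keep careful track of all shifts and Tate twists, and one must use flatness (Lemma \ref{flatness of sheaves}) to know that the naive external product of these complexes of flat $\Lambda$-sheaves already computes the derived one. This is essentially a refinement of the computation already performed in the proof of Proposition \ref{product}, so once that input is granted the remaining arguments are formal.
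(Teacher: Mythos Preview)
Your proposal is correct and follows essentially the same approach as the paper: form the total complex of the external tensor product of the two semistable resolutions \eqref{eq:resolution}, identify its terms via $a_{l_{1}*}^{1}\Lambda\boxtimes a_{l_{2}*}^{2}\Lambda\simeq a_{l_{1},l_{2}*}\Lambda$, deduce perversity of each $a_{l_{1},l_{2}*}\Lambda(-k)[-l_{1}-l_{2}]$ from the smoothness of $Y^{(l_{1},l_{2})}$ and finiteness of $a_{l_{1},l_{2}}$, and conclude by successive extensions. The only difference is expository---you are more explicit than the paper about why the naive tensor product of the two complexes already computes the derived one (flatness, automatic here since $\Lambda$ is a field) and about the stupid-truncation filtration realizing the ``successive extensions'' step.
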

\begin{proof}
Each of the complexes $R^{l}\psi\Lambda_{X_{1}}$ and $R^{k}\psi\Lambda_{X_{2}}$
have resolutions in terms of $a_{1,l_{1}*}\Lambda(-l)$ and $a_{2,l_{2}*}\Lambda(-k+l)$
respectively, where $l\leq l_{1}\leq n-1$ and $k-l\leq l_{2}\leq n-1$.
We form the double complex associated to the product of these resolutions
and the single complex associated to it is a resolution of $R^{l}\psi\Lambda_{X_{1}}\otimes R^{k-l}\psi\Lambda_{X_{2}}[-k]$
of the following form: \[
\xymatrix{a_{l*}^{1}\Lambda(-l)\otimes a_{k-l*}^{2}\Lambda(-k+l)\ar[d]\\
a_{l+1*}^{1}\Lambda(-l)\otimes a_{k-l*}^{2}\Lambda(-k+l)\oplus a_{l*}^{1}\Lambda\otimes a_{k-l+1*}^{2}\Lambda(-k+l)\ar[d]\\
\dots\ar[d]\\
a_{n*}^{1}\Lambda(-l)\otimes a_{n*}^{2}\Lambda(-k+l).}
\]
In the above complex, the sheaf $a_{n-1*}^{1}\Lambda(-l)\otimes a_{n-1*}^{2}\Lambda(-k+l)$
is put in degree $2n-2$. Now we use the formula\[
a_{l_{1},l_{2}*}\Lambda(-k)=a_{l_{1}*}^{1}\Lambda(-l)\otimes a_{l_{2}*}^{1}\Lambda(-k+l)\]
to conclude the first part of the lemma. The complex $a_{l_{1},l_{2}*}\Lambda(-k)[-l_{1}-l_{2}]$
is the direct image via $a_{l_{1},l_{2}*}$ of the complex $\Lambda(-k)[-l_{1}-l_{2}]$
on $Y^{(l_{1},l_{2})}$. Since $Y^{(l_{1},l_{2})}$ is smooth of dimension
$2n-2-l_{1}-l_{2}$, we know that $\Lambda(-k)[-l_{1}-l_{2}]$ is
a $-(2n-2)$-shifted perverse sheaf, so its direct image under the
finite map $a_{l_{1},l_{2}*}$ is also a $-(2n-2)$-shifted perverse
sheaf. We've just seen that each $R^{l}\psi\Lambda_{X_{1}}\otimes R^{k-l}\psi\Lambda_{X_{2}}$
can be obtained from successive extensions of factors of the form
$a_{l_{1},l_{2}*}\Lambda(-k)[-l_{1}-l_{2}]$ and since the category
of $-(2n-2)$-shifted perverse sheaves is stable under extensions
we deduce that $R^{l}\psi\Lambda_{X_{1}}\otimes R^{k-l}\psi\Lambda_{X_{2}}[-k]$
is a $-(2n-2)$-shifted perverse sheaf. Now $R^{k}\psi\Lambda_{X_{1}\times X_{2}}[-k]=\bigoplus_{l=0}^{k}R^{l}\psi\Lambda_{X_{1}}\otimes R^{k-l}\psi\Lambda_{X_{2}}[-k]$,
so it is also a $-(2n-2)$-shifted perverse sheaf.\end{proof}
\begin{cor}
$R\psi\Lambda_{X_{1}\times X_{2}}$ is a $-(2n-2)$-shifted perverse
sheaf. The canonical truncation $\tau_{\leq k}R\psi\Lambda_{X_{1}\times X_{2}}$
is a filtration by $-(2n-2)$-shifted perverse sheaves and the graded
pieces of this filtration are the $R^{k}\psi\Lambda_{X_{1}\times X_{2}}[-k]$.\end{cor}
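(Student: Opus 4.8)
The plan is to repeat, essentially verbatim, the argument used for Corollary \ref{perversity of filtration} in the semistable case, with Lemma \ref{resolution} now playing the role that Lemma \ref{lem:perversity} played there. All the geometric input is already in hand: Lemma \ref{resolution} tells us that for each $0\leq k\leq 2n-2$ the complex $R^{k}\psi\Lambda_{X_{1}\times X_{2}}[-k]$ is a $-(2n-2)$-shifted perverse sheaf on $(X_{1}\times_{S}X_{2})_{\bar{s}}$, and I will combine this with the standard fact that the category of $-(2n-2)$-shifted perverse sheaves, being the heart of a $t$-structure, is stable under extensions in $D_{c}^{b}((X_{1}\times_{S}X_{2})_{\bar{s}},\Lambda)$.

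First I would note that $X_{1}\times_{S}X_{2}$ has relative dimension $2n-2$, so the cohomology sheaves $R^{k}\psi\Lambda_{X_{1}\times X_{2}}$ vanish for $k>2n-2$ and hence $R\psi\Lambda_{X_{1}\times X_{2}}\simeq\tau_{\leq 2n-2}R\psi\Lambda_{X_{1}\times X_{2}}$. It therefore suffices to show by induction on $k$ that each $\tau_{\leq k}R\psi\Lambda_{X_{1}\times X_{2}}$ is a $-(2n-2)$-shifted perverse sheaf. For $k=0$ this is immediate, since $\tau_{\leq 0}R\psi\Lambda_{X_{1}\times X_{2}}\simeq R^{0}\psi\Lambda_{X_{1}\times X_{2}}$, which is perverse by Lemma \ref{resolution}. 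For the inductive step I would invoke the distinguished triangle
\[
(\tau_{\leq k-1}R\psi\Lambda_{X_{1}\times X_{2}},\ \tau_{\leq k}R\psi\Lambda_{X_{1}\times X_{2}},\ R^{k}\psi\Lambda_{X_{1}\times X_{2}}[-k]);
\]
its first vertex is perverse by the inductive hypothesis and its third by Lemma \ref{resolution}, so stability under extensions forces the middle vertex to be perverse as well.

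Once every vertex of each such triangle is an object of the abelian category of $-(2n-2)$-shifted perverse sheaves, the triangle becomes a short exact sequence in that abelian category, exactly as in the proof of Corollary \ref{perversity of filtration} (cf.\ the proof of Theorem 1.3.6 of \cite{BBD}). This yields at once that the $\tau_{\leq k}R\psi\Lambda_{X_{1}\times X_{2}}$ form a filtration of $R\psi\Lambda_{X_{1}\times X_{2}}$ by perverse subsheaves whose successive graded pieces are the $R^{k}\psi\Lambda_{X_{1}\times X_{2}}[-k]$, which is the assertion of the corollary.

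I do not expect a genuine obstacle here: the statement is a formal consequence of Lemma \ref{resolution} together with standard properties of $t$-structures, and in this sense the real content has already been absorbed into the construction of the resolution in Lemma \ref{resolution}. The only two points requiring a moment's care are controlling the cohomological amplitude of $R\psi\Lambda_{X_{1}\times X_{2}}$ (so that truncation in degree $2n-2$ recovers the full complex) and the passage from distinguished triangles to short exact sequences of perverse sheaves; this last step is precisely where one uses that all three vertices have been shown to be perverse.
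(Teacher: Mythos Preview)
Your proposal is correct and matches the paper's own proof essentially verbatim: the paper likewise says the argument is ``exactly the same as that of Corollary \ref{perversity of filtration}'', proceeds by induction on $k$ via the distinguished triangle $(\tau_{\leq k-1}R\psi\Lambda_{X_{1}\times X_{2}},\tau_{\leq k}R\psi\Lambda_{X_{1}\times X_{2}},R^{k}\psi\Lambda_{X_{1}\times X_{2}}[-k])$, and then observes that in the abelian category of shifted perverse sheaves these triangles become short exact sequences giving the desired filtration.
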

\begin{proof}
The proof is exactly the same as that of Corollary \ref{perversity of filtration}.
It suffices to show that each $\tau_{\leq k}R\psi\Lambda$ is a $-(2n-2)$-shifted
perverse sheaf and we can do this by induction, using the distinguished
triangle \[
(\tau_{\leq k-1}R\psi\Lambda_{X_{1}\times X_{2}},\tau_{\leq k}R\psi\Lambda_{X_{1}\times X_{2}},R^{k}\psi\Lambda_{X_{1}\times X_{2}}[-k]).\]
Once everything is proven to be in an abelian category, the distinguished
triangle becomes a short exact sequence and we get a filtration on
$R\psi\Lambda_{X_{1}\times X_{2}}$ with its desired graded pieces. 
\end{proof}
Now we can deduce that there is a map \[
\bar{N}:R^{k}\psi\Lambda_{X_{1}\times X_{2}}[-k]\to R^{k-1}\psi\Lambda_{X_{1}\times X_{2}}[-(k-1)].\]
Indeed, since $T$ acts trivially on the cohomology sheaves of $R\psi\Lambda_{X_{1}\times X_{2}}$,
we deduce that $N$ sends $\tau_{\leq k}R\psi\Lambda_{X_{1}\times X_{2}}$
to $\tau_{\leq k-1}R\psi\Lambda_{X_{1}\times X_{2}}$, which induces
$\bar{N}$. It remains to check that this induced map $\bar{N}$ restricted
to $R^{l}\psi\Lambda_{X_{1}}\otimes R^{k-l}\psi\Lambda_{X_{2}}$ is
the same map as $\bar{N}_{1}\otimes1+1\otimes\bar{N}_{2}$, sending
\[
R^{l}\psi\Lambda_{X_{1}}\otimes R^{k-l}\psi\Lambda_{X_{2}}[-k]\to(R^{l-1}\psi\Lambda_{X_{1}}\otimes R^{k}\psi\Lambda_{X_{1}}\oplus R^{l}\psi\Lambda_{X_{1}}\otimes R^{k-l-1}\psi\Lambda_{X_{2}})[-(k-1)].\]

To see this, first notice that for each $0\leq l\leq k\leq n-1$ the
complex $\tau_{\leq l}R\psi\Lambda_{X_{1}}\otimes\tau_{\leq k-l}R\psi\Lambda_{X_{2}}$
is a $-(2n-2)$-shifted perverse sheaf, because it is the external
tensor product of $-(n-1)$-shifted perverse sheaves on $X_{1}$ and
on $X_{2}$. (See proposition 4.2.8 of \cite{BBD}). Let \[
\tau_{\leq l-1}R\psi\Lambda_{X_{1}}\otimes\tau_{\leq k-l}R\psi\Lambda_{X_{2}}+\tau_{\leq l}R\psi\Lambda_{X_{1}}\otimes\tau_{\leq k-l-1}R\psi\Lambda_{X_{2}}\]
 be the image of \[
\tau_{\leq l-1}R\psi\Lambda_{X_{1}}\otimes\tau_{\leq k-l}R\psi\Lambda_{X_{2}}\oplus\tau_{\leq l}R\psi\Lambda_{X_{1}}\otimes\tau_{\leq k-l-1}R\psi\Lambda_{X_{2}}\to\tau_{\leq k-1}R\psi\Lambda_{X_{1}\times X_{2}}.\]
 We have a commutative diagram of $-(2n-2)$-shifted perverse sheaves\[
\xymatrix{\tau_{\leq l}R\psi\Lambda_{X_{1}}\otimes\tau_{\leq k-l}R\psi\Lambda_{X_{2}}\ar[r]\ar[d]^{N_{1}\otimes1+1\otimes N_{2}} & \tau_{\leq k}R\psi\Lambda_{X_{1}\times X_{2}}\ar[d]^{N}\\
\tau_{\leq l-1}R\psi\Lambda_{X_{1}}\otimes\tau_{\leq k-l}R\psi\Lambda_{X_{2}}+\tau_{\leq l}R\psi\Lambda_{X_{1}}\otimes\tau_{\leq k-l-1}R\psi\Lambda_{X_{2}}\ar[r] & \tau_{\leq k-1}R\psi\Lambda_{X_{1}\times X_{2}},}
\]
where the horizontal maps are the natural maps of complexes. 
\begin{lem}
The image of $R_{l,k-l}=\tau_{\leq l}R\psi\Lambda_{X_{1}}\otimes\tau_{\leq k-l}R\psi\Lambda_{X_{2}}$
in $R^{k}\psi\Lambda[-k]$ is $R^{l}\psi\Lambda_{X_{1}}\otimes R^{k-l}\psi\Lambda_{X_{2}}[-k]$. \end{lem}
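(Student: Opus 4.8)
The plan is to identify the morphism $\phi\colon R_{l,k-l}\to R^{k}\psi\Lambda_{X_{1}\times X_{2}}[-k]$ obtained by composing the natural map $R_{l,k-l}=\tau_{\leq l}R\psi\Lambda_{X_{1}}\otimes\tau_{\leq k-l}R\psi\Lambda_{X_{2}}\to\tau_{\leq k}R\psi\Lambda_{X_{1}\times X_{2}}$ from the upper row of the commutative diagram preceding the lemma with the projection $\tau_{\leq k}R\psi\Lambda_{X_{1}\times X_{2}}\to R^{k}\psi\Lambda_{X_{1}\times X_{2}}[-k]$ onto the top graded piece of the canonical filtration, and then to compute its image inside the abelian category of $-(2n-2)$-shifted perverse sheaves. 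The key external input is that $\Lambda$ is a field, so the external tensor product is exact and K\"unneth carries no $\mathrm{Tor}$-terms; together with $\mathcal{H}^{p}(\tau_{\leq l}R\psi\Lambda_{X_{1}})=R^{p}\psi\Lambda_{X_{1}}$ for $p\le l$ and $=0$ for $p>l$ (and likewise over $X_{2}$), this shows $\mathcal{H}^{j}(R_{l,k-l})=0$ for $j>k$, while in degree $j=k$ only the summand with $p=l,\ q=k-l$ survives, so $\mathcal{H}^{k}(R_{l,k-l})=R^{l}\psi\Lambda_{X_{1}}\otimes R^{k-l}\psi\Lambda_{X_{2}}$.

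First I would factor $\phi$. Since $R^{k}\psi\Lambda_{X_{1}\times X_{2}}[-k]$ is a single sheaf placed in degree $k$, applying $\mathrm{Hom}(-,R^{k}\psi\Lambda_{X_{1}\times X_{2}}[-k])$ to the truncation triangle $\tau_{\leq k-1}R_{l,k-l}\to R_{l,k-l}\xrightarrow{\pi}\mathcal{H}^{k}(R_{l,k-l})[-k]$ and using that $\mathrm{Hom}(C,\mathcal{F})=0$ whenever $C$ has cohomology in strictly negative degrees shows that $\phi=\iota\circ\pi$ for a unique $\iota\colon R^{l}\psi\Lambda_{X_{1}}\otimes R^{k-l}\psi\Lambda_{X_{2}}[-k]\to R^{k}\psi\Lambda_{X_{1}\times X_{2}}[-k]$. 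Next I would identify $\mathcal{H}^{k}(\iota)$: tracing the construction of the K\"unneth quasi-isomorphism $R\psi\Lambda_{X_{1}}\otimes R\psi\Lambda_{X_{2}}\simeq R\psi\Lambda_{X_{1}\times X_{2}}$ of Proposition \ref{product}, and using that the truncation inclusions $\tau_{\leq l}R\psi\Lambda_{X_{1}}\hookrightarrow R\psi\Lambda_{X_{1}}$ and $\tau_{\leq k-l}R\psi\Lambda_{X_{2}}\hookrightarrow R\psi\Lambda_{X_{2}}$ are isomorphisms on $\mathcal{H}^{l}$ and $\mathcal{H}^{k-l}$, one sees that $\mathcal{H}^{k}(\iota)$ is exactly the inclusion of the $(l,k-l)$-th summand of $R^{k}\psi\Lambda_{X_{1}\times X_{2}}=\bigoplus_{p}R^{p}\psi\Lambda_{X_{1}}\otimes R^{k-p}\psi\Lambda_{X_{2}}$. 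As $\iota$ is a morphism of perverse sheaves concentrated in degree $k$ whose underlying sheaf map is a split injection, it is a monomorphism in the perverse category.

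Second I would show that $\pi\colon R_{l,k-l}\to R^{l}\psi\Lambda_{X_{1}}\otimes R^{k-l}\psi\Lambda_{X_{2}}[-k]$ is an epimorphism of perverse sheaves. One clean way: $\pi$ is the composite of the two surjections of $-(2n-2)$-shifted perverse sheaves obtained by applying $-\otimes\tau_{\leq k-l}R\psi\Lambda_{X_{2}}$ to $0\to\tau_{\leq l-1}R\psi\Lambda_{X_{1}}\to\tau_{\leq l}R\psi\Lambda_{X_{1}}\to R^{l}\psi\Lambda_{X_{1}}[-l]\to0$ and then $R^{l}\psi\Lambda_{X_{1}}[-l]\otimes-$ to $0\to\tau_{\leq k-l-1}R\psi\Lambda_{X_{2}}\to\tau_{\leq k-l}R\psi\Lambda_{X_{2}}\to R^{k-l}\psi\Lambda_{X_{2}}[-(k-l)]\to0$ (these are short exact because $\Lambda$ is a field, and remain short exact sequences of $-(2n-2)$-shifted perverse sheaves since external tensor products of $-(n-1)$-shifted perverse sheaves are $-(2n-2)$-shifted perverse by proposition 4.2.8 of \cite{BBD}); here one uses that both this composite and $\pi$ induce the identity on $\mathcal{H}^{k}$, hence agree. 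Alternatively, $\mathrm{cone}(\pi)\simeq\tau_{\leq k-1}R_{l,k-l}[1]$ lives in perverse degree $-1$, because $\tau_{\leq k-1}R_{l,k-l}$ is a successive extension of the $-(2n-2)$-shifted perverse sheaves $R^{p}\psi\Lambda_{X_{1}}\otimes R^{q}\psi\Lambda_{X_{2}}[-p-q]$ with $p+q\le k-1$, by the resolution argument of Lemma \ref{resolution}. In either case $\mathrm{im}(\pi)=R^{l}\psi\Lambda_{X_{1}}\otimes R^{k-l}\psi\Lambda_{X_{2}}[-k]$, so $\mathrm{im}(\phi)=\iota(\mathrm{im}\,\pi)$ is the direct summand $R^{l}\psi\Lambda_{X_{1}}\otimes R^{k-l}\psi\Lambda_{X_{2}}[-k]$ of $R^{k}\psi\Lambda_{X_{1}\times X_{2}}[-k]$, as claimed.

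The main obstacle is the bookkeeping in the first step rather than any single computation: one must justify that ``image in $R^{k}\psi\Lambda[-k]$'' may legitimately be taken in the perverse abelian category (this is fine, as $R_{l,k-l}$, $R^{k}\psi\Lambda_{X_{1}\times X_{2}}[-k]$ and $\phi$ are all perverse, by the corollary following Lemma \ref{resolution}) and, more delicately, that the K\"unneth isomorphism of Proposition \ref{product} is sufficiently natural that $\mathcal{H}^{k}(\iota)$ is \emph{literally} the summand inclusion and not merely some injection with the same source and target --- this is what pins the image down on the nose. Granting these naturality points, the remainder is formal manipulation of canonical truncations using exactness of the external tensor product over a field.
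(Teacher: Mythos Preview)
Your argument is correct and follows the same skeleton as the paper's proof: factor the map $R_{l,k-l}\to R^{k}\psi\Lambda[-k]$ through the summand inclusion $R^{l}\psi\Lambda_{X_{1}}\otimes R^{k-l}\psi\Lambda_{X_{2}}[-k]\hookrightarrow R^{k}\psi\Lambda[-k]$, then show that the resulting map $g$ (your $\pi$) from $R_{l,k-l}$ onto that summand is a surjection of $-(2n-2)$-shifted perverse sheaves.

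The execution differs, however. The paper proves surjectivity of $g$ by identifying its fiber explicitly: it picks representatives $\mathcal{K}^{\cdot}$, $\mathcal{L}^{\cdot}$ and checks degree by degree that the fiber of $g$ agrees with $\mathcal{M}=\tilde{\tau}_{\leq l-1}\mathcal{K}\otimes\tilde{\tau}_{\leq k-l}\mathcal{L}+\tilde{\tau}_{\leq l}\mathcal{K}\otimes\tilde{\tau}_{\leq k-l-1}\mathcal{L}$, the delicate point being the computation in degree $k$, which is handled by the K\"unneth spectral sequence for two two-term complexes together with flatness of the $R^{j}\psi\Lambda$. Your Method~2 bypasses this entirely: since $R_{l,k-l}$ has no cohomology above degree $k$ (by K\"unneth over a field), $\pi$ is literally the projection in the standard truncation triangle, so $\mathrm{cone}(\pi)\simeq\tau_{\leq k-1}R_{l,k-l}[1]$; the canonical filtration then exhibits $\tau_{\leq k-1}R_{l,k-l}$ as an iterated extension of the perverse sheaves $R^{p}\psi\Lambda_{X_{1}}\otimes R^{q}\psi\Lambda_{X_{2}}[-p-q]$, hence perverse. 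This is shorter and avoids the complex-level chase. Your Method~1, building $\pi$ as the composite of two surjections coming from tensoring the truncation triangles one factor at a time, recovers in spirit what the paper's computation yields (namely that the kernel sits inside $R_{l-1,k-l}+R_{l,k-l-1}$). The naturality concern you raise about $\mathcal{H}^{k}(\iota)$ being the honest summand inclusion is exactly what the paper dismisses with ``this can be checked on the level of complexes''; your justification via $\mathcal{H}^{k}(\phi)=\mathcal{H}^{k}(\iota)$ and the K\"unneth description of $\mathcal{H}^{k}$ of the inclusion $\tau_{\leq l}\otimes\tau_{\leq k-l}\hookrightarrow R\psi\Lambda_{X_{1}}\otimes R\psi\Lambda_{X_{2}}$ is the right way to say this.
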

\begin{proof}
The map of perverse sheaves $R_{l,k-l}\to\tau_{\leq k}R\psi\Lambda_{X_{1}\times X_{2}}\to R^{k}\psi\Lambda[-k]$
factors through \[
R^{l}\psi\Lambda_{X_{1}}\otimes R^{k-l}\psi\Lambda_{X_{2}}[-k]\hookrightarrow R^{k}\psi\Lambda[-k].\]
This can be checked on the level of complexes. We only need to know
that the natural map \[
R_{l,k-l}\stackrel{g}{\to}R^{l}\psi\Lambda_{X_{1}}\otimes R^{k-l}\psi\Lambda_{X_{2}}[-k]\]
is a surjection. This follows once we know that the triangle \[
R_{l-1,k-l}+R_{l,k-l-1}\stackrel{f}{\to}R_{l,k-l}\stackrel{g}{\to}R^{l}\psi\Lambda_{X_{1}}\otimes R^{k-l}\psi\Lambda_{X_{2}}[-k]\]
is distinguished, since then it has to be a short exact sequence of
$-(2n-2)$-shifted perverse sheaves, so $g$ would be a surjection.
To check that the triangle is distinguished, it suffices to compute
the fiber of $g$ and check that it is quasi-isomorphic to \[
\mathcal{M}=\tilde{\tau}_{\leq l-1}R\psi\Lambda_{X_{1}}\otimes\tilde{\tau}_{\leq k-l}R\psi\Lambda_{X_{2}}+\tilde{\tau}_{\leq l}R\psi\Lambda_{X_{1}}\otimes\tilde{\tau}_{\leq k-l-1}R\psi\Lambda_{X_{2}}.\]

Let $\mathcal{K}^{\cdot}$ be a representative for $R\psi\Lambda_{X_{1}}$
and $\mathcal{L}^{\cdot}$ be representative for $R\psi\Lambda_{X_{2}}$.
The degree $j<k$ term of $\mathcal{M}$ and of the fiber of $g$
are both equal to \[
\bigoplus_{i=j-k+l+1}^{l-1}\mathcal{K}^{i}\otimes\mathcal{L}^{j-i}\bigoplus\mathcal{K}^{j-k+l}\otimes\ker(\mathcal{L}^{k-l}\to\mathcal{L}^{k-l+1})\bigoplus\ker(\mathcal{K}^{l}\to\mathcal{K}^{l+1})\otimes\mathcal{L}^{j-l}\]
and the differentials are identical. The last non-zero term $\mathcal{M}^{k}$
in $\mathcal{M}$ appears in degree $k$ and is equal to \[
\ker(\mathcal{K}^{l}\to\mathcal{K}^{l+1})\otimes\mbox{im }(\mathcal{L}^{k-l-1}\to\mathcal{L}^{k-l})+\mbox{im }(\mathcal{K}^{l-1}\to\mathcal{K}^{l})\otimes\ker(\mathcal{L}^{k-l}\to\mathcal{L}^{k-l+1}).\]
The main problem is checking that the following map of complexes is
a quasi-isomorphism\[
\xymatrix{\mathcal{M}^{k}\ar[r]\ar[d]^{h} & 0\ar[d]\\
\ker(\mathcal{K}^{l}\to\mathcal{K}^{l+1})\otimes\ker(\mathcal{L}^{k-l}\to\mathcal{L}^{k-l+1})\ar[r] & H^{l}(\mathcal{K})\otimes H^{k-l}(\mathcal{L}),}
\]
where the left vertical arrow $h$ is the natural inclusion. It is
equivalent to prove that the object in the lower right corner is the
cokernel of $h$. This follows from the Künneth spectral sequence,
when computing the cohomology of the product of the two complexes
\[
\tilde{\mathcal{K}}:=[\mbox{im }(\mathcal{K}^{l-1}\to\mathcal{K}^{l})\to\ker(\mathcal{K}^{l}\to\mathcal{K}^{l+1})]\mbox{ and }\]
\[
\tilde{\mathcal{L}}:=[\mbox{im }(\mathcal{L}^{k-l-1}\to\mathcal{L}^{k-l})\to\ker(\mathcal{L}^{k-l}\to\mathcal{L}^{k-l+1})]\]
Indeed, since $H^{1}(\tilde{\mathcal{K}})=R^{l}\psi\Lambda_{X_{1}}$
and $H^{1}(\tilde{\mathcal{L}})=R^{k-l}\psi\Lambda_{X_{2}}$ are both
flat over $\Lambda$ the Künneth spectral sequence degenerates. We
get $H^{2}(\tilde{\mathcal{K}}\otimes\tilde{\mathcal{L}})=H^{1}(\tilde{\mathcal{K}})\otimes H^{1}(\tilde{\mathcal{L}})$
and this is exactly the statement that $H^{l}(\mathcal{K})\otimes H^{k-l}(\mathcal{L})$
is the cokernel of $h$. 
\end{proof}
Putting together the above discussion, we conclude the following result. 
\begin{prop}
\label{formula for N bar}The action of $N$ on $R\psi\Lambda_{X_{1}\times X_{2}}$
induces a map \[
\bar{N}:R^{k}\psi\Lambda_{X_{1}\times X_{2}}[-k]\to R^{k-1}\psi\Lambda_{X_{1}\times X_{2}}[-(k-1)]\]
 which coincides with $\bar{N}_{1}\otimes1+1\otimes\bar{N}_{2}$ when
restricted to $R^{l}\psi\Lambda_{X_{1}}\otimes R^{k-l}\psi\Lambda_{X_{2}}[-k]$
for each $0\leq l\leq k$. 
\end{prop}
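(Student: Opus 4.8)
The plan is to deduce the Proposition from the commutative diagram displayed immediately before its statement, combined with the preceding Lemma identifying the image of $R_{l,k-l}=\tau_{\leq l}R\psi\Lambda_{X_{1}}\otimes\tau_{\leq k-l}R\psi\Lambda_{X_{2}}$ in $R^{k}\psi\Lambda_{X_{1}\times X_{2}}[-k]$.

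First I would set up the formal framework exactly as in the semistable case treated in the previous subsection. By the Corollary above, the canonical truncations $\tau_{\leq k}R\psi\Lambda_{X_{1}\times X_{2}}$ form a decreasing filtration of $R\psi\Lambda_{X_{1}\times X_{2}}$ by $-(2n-2)$-shifted perverse sheaves with graded pieces $R^{k}\psi\Lambda_{X_{1}\times X_{2}}[-k]$. Since $T$ acts trivially on the cohomology sheaves $R^{k}\psi\Lambda_{X_{1}\times X_{2}}$ by Propositions \ref{trivial inertia} and \ref{product}, the operator $N=\log T$ carries $\tau_{\leq k}R\psi\Lambda_{X_{1}\times X_{2}}$ into $\tilde\tau_{\leq k-1}R\psi\Lambda_{X_{1}\times X_{2}}\stackrel{\sim}{\to}\tau_{\leq k-1}R\psi\Lambda_{X_{1}\times X_{2}}$ and therefore induces $\bar N$ on the graded pieces. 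Applying the same reasoning to $X_{1}$ and $X_{2}$ separately produces $\bar N_{1}$ and $\bar N_{2}$, and also shows that the natural map $\tau_{\leq l}R\psi\Lambda_{X_{i}}\to R^{l}\psi\Lambda_{X_{i}}[-l]$ intertwines $N_{i}$ with $\bar N_{i}$; here one uses, as before, that $\log T\equiv T-1\pmod{(T-1)^{2}}$ while $(T-1)^{2}$ drops the canonical filtration by two, so that the induced map on graded pieces is unambiguous.

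Next I would invoke the fact, established at the start of this subsection, that under the isomorphism $R\psi\Lambda_{X_{1}\times X_{2}}\simeq R\psi\Lambda_{X_{1}}\otimes_{\Lambda}R\psi\Lambda_{X_{2}}$ of Proposition \ref{product} the generator $T$ acts through $T\otimes T$, so that $N$ acts as $N_{1}\otimes1+1\otimes N_{2}$; this is precisely what makes the displayed square before the statement commute. I would then pass to graded pieces of the relevant canonical filtrations. By the preceding Lemma, the composite $\tau_{\leq l}R\psi\Lambda_{X_{1}}\otimes\tau_{\leq k-l}R\psi\Lambda_{X_{2}}\to\tau_{\leq k}R\psi\Lambda_{X_{1}\times X_{2}}\to R^{k}\psi\Lambda_{X_{1}\times X_{2}}[-k]$ is a surjection of $-(2n-2)$-shifted perverse sheaves onto the summand $R^{l}\psi\Lambda_{X_{1}}\otimes R^{k-l}\psi\Lambda_{X_{2}}[-k]$. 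On the other hand, the left vertical map of the displayed square induces on graded pieces the operator $\bar N_{1}\otimes1+1\otimes\bar N_{2}$ on $R^{l}\psi\Lambda_{X_{1}}[-l]\otimes R^{k-l}\psi\Lambda_{X_{2}}[-(k-l)]$, landing in $(R^{l-1}\psi\Lambda_{X_{1}}\otimes R^{k-l}\psi\Lambda_{X_{2}}\oplus R^{l}\psi\Lambda_{X_{1}}\otimes R^{k-l-1}\psi\Lambda_{X_{2}})[-(k-1)]$; this uses the previous paragraph applied to each tensor factor together with the Lemma again to identify the target summands inside $R^{k-1}\psi\Lambda_{X_{1}\times X_{2}}[-(k-1)]$. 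Since forming graded pieces of the canonical filtration is functorial and compatible with the external tensor product, the displayed square descends to a commutative square of graded pieces; because its top arrow is surjective, the restriction of $\bar N$ to $R^{l}\psi\Lambda_{X_{1}}\otimes R^{k-l}\psi\Lambda_{X_{2}}[-k]$ must agree with $\bar N_{1}\otimes1+1\otimes\bar N_{2}$, which is the assertion of the Proposition.

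I expect the genuinely delicate point to be the bookkeeping in the last step: verifying that the surjection $\tau_{\leq l}R\psi\Lambda_{X_{1}}\otimes\tau_{\leq k-l}R\psi\Lambda_{X_{2}}\twoheadrightarrow R^{l}\psi\Lambda_{X_{1}}\otimes R^{k-l}\psi\Lambda_{X_{2}}[-k]$ of the Lemma is genuinely compatible with the monodromy operators --- i.e.\ that the quotient of $N_{1}\otimes1+1\otimes N_{2}$ really is $\bar N_{1}\otimes1+1\otimes\bar N_{2}$ and not merely something congruent to it modulo higher filtration --- and that the decompositions of $R^{k}\psi\Lambda_{X_{1}\times X_{2}}$ and $R^{k-1}\psi\Lambda_{X_{1}\times X_{2}}$ into summands indexed by $l$ are respected. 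Both are checked on the level of the explicit complexes of Lemma \ref{resolution}, exactly as in the corresponding argument in the semistable case; granting them, the Proposition follows by the diagram chase above.
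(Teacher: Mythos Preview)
Your proposal is correct and matches the paper's own argument: the paper does not give a separate proof but simply writes ``Putting together the above discussion, we conclude the following result,'' where the discussion is precisely the commutative square expressing $N=N_{1}\otimes1+1\otimes N_{2}$ on the truncated tensor products together with the Lemma identifying the image of $R_{l,k-l}$ in $R^{k}\psi\Lambda_{X_{1}\times X_{2}}[-k]$. You have reconstructed this assembly accurately, and your flagging of the bookkeeping point (compatibility of the surjection with monodromy and with the summand decomposition) is exactly the content implicitly handled by that preceding Lemma and diagram.
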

We now use the decomposition of $R^{k}\psi\Lambda_{X_{1}\times X_{2}}[-k]$
in terms of $R^{l}\psi\Lambda_{X_{1}}\otimes R^{k-l}\psi\Lambda_{X_{2}}[-k]$
for $0\leq l\leq k$ and the resolution of $R^{l}\psi\Lambda_{X_{1}}\otimes R^{k-l}\psi\Lambda_{X_{2}}[-k]$
in terms of $a_{l_{1},l_{2}*}\Lambda(-k)[-(l_{1}+l_{2})]$ to get
a resolution of $R^{k}\psi\Lambda_{X_{1}\times X_{2}}[-k]$, of the
form\begin{equation}
\bigoplus_{l_{1}+l_{2}=k}a_{l_{1},l_{2}*}\Lambda(-k)^{\oplus c_{l_{1},l_{2}}^{k}}\to\dots\to\bigoplus_{l_{1}+l_{2}=k+j}a_{l_{1},l_{2}*}\Lambda(-k)^{\oplus c_{l_{1},l_{2}}^{k}}\to\dots,\label{eq:full resolution}\end{equation}
where the first term is put in degree $k$ and the coefficients $c_{l_{1},l_{2}}^{k}$
count how many copies of $a_{l_{1},l_{2}*}\Lambda(-k)$ show up in
the direct sum. 
\begin{lem}
\label{coefficients}Let $c_{l_{1},l_{2}}^{k}$ be the coefficient
of $a_{l_{1},l_{2}*}\Lambda(-k)[-(l_{1}+l_{2})]$ in the resolution
of $R^{k}\psi\Lambda_{X_{1}\times X_{2}}[-k]$. Then \[
c_{l_{1},l_{2}}^{k}=\min(\min(l_{1},l_{2})+1,l_{1}+l_{2}-k+1,k+1).\]
\end{lem}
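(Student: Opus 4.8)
The plan is to observe that, once the results already established are in hand, the statement is purely a matter of counting integers. Recall from the paragraph preceding Lemma~\ref{resolution} the direct sum decomposition
\[
R^{k}\psi\Lambda_{X_{1}\times X_{2}}[-k]\simeq\bigoplus_{l=0}^{k}R^{l}\psi\Lambda_{X_{1}}\otimes R^{k-l}\psi\Lambda_{X_{2}}[-k],
\]
and that the resolution (\ref{eq:full resolution}) of $R^{k}\psi\Lambda_{X_{1}\times X_{2}}[-k]$ is obtained by splicing together this decomposition with the resolutions of the individual summands furnished by Lemma~\ref{resolution}. Since forming a resolution commutes with finite direct sums, the coefficient $c^{k}_{l_{1},l_{2}}$ is, by definition, the sum over $0\le l\le k$ of the multiplicity with which $a_{l_{1},l_{2}*}\Lambda(-k)$ occurs in the resolution of $R^{l}\psi\Lambda_{X_{1}}\otimes R^{k-l}\psi\Lambda_{X_{2}}[-k]$.

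First I would read off from Lemma~\ref{resolution} that, in the resolution of a single summand $R^{l}\psi\Lambda_{X_{1}}\otimes R^{k-l}\psi\Lambda_{X_{2}}[-k]$, the sheaf $a_{l_{1},l_{2}*}\Lambda(-k)$ occurs (in degree $l_{1}+l_{2}$) with multiplicity exactly $1$ if $l_{1}\ge l$ and $l_{2}\ge k-l$, and with multiplicity $0$ otherwise; the constraints $l_{1},l_{2}\le n-1$ are automatically satisfied by the terms that actually appear. Summing over $l$ then gives
\[
c^{k}_{l_{1},l_{2}}=\#\{\, l\in\mathbb{Z}\ :\ 0\le l\le k,\ l\le l_{1},\ k-l\le l_{2}\,\}=\#\{\, l\in\mathbb{Z}\ :\ \max(0,k-l_{2})\le l\le\min(k,l_{1})\,\},
\]
so $c^{k}_{l_{1},l_{2}}=\min(k,l_{1})-\max(0,k-l_{2})+1$ whenever the right-hand side is non-negative, which, as one checks at once, happens exactly when $l_{1}+l_{2}\ge k$, i.e. exactly when $a_{l_{1},l_{2}*}\Lambda(-k)$ appears in (\ref{eq:full resolution}) at all.

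It then remains only to verify the elementary identity
\[
\min(k,l_{1})-\max(0,k-l_{2})+1=\min\bigl(\min(l_{1},l_{2})+1,\ l_{1}+l_{2}-k+1,\ k+1\bigr),
\]
which I would do by distinguishing the four cases determined by $l_{1}\ge k$ or $l_{1}<k$ and by $l_{2}\ge k$ or $l_{2}<k$: when $l_{1},l_{2}\ge k$ both sides equal $k+1$; when $l_{1}\ge k>l_{2}$ (resp. $l_{2}\ge k>l_{1}$) both sides equal $l_{2}+1=\min(l_{1},l_{2})+1$ (resp. $l_{1}+1$); and when $l_{1},l_{2}<k$ the left side is $l_{1}+l_{2}-k+1$, which in this range is both $\le\min(l_{1},l_{2})+1$ and $\le k+1$, so the right side equals it too. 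There is no genuine obstacle here: the entire content of the lemma is this combinatorial identity together with the bookkeeping of the shift $[-k]$ and the Tate twist $(-k)$ needed to match a term $a_{l_{1},l_{2}*}\Lambda(-k)$ of (\ref{eq:full resolution}) against the pairs $(l_{1},l_{2})$ produced by Lemma~\ref{resolution}, and the only point requiring a little care is keeping that matching, and the various ranges of $l,l_{1},l_{2}$, straight.
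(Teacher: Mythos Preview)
Your proof is correct and takes essentially the same approach as the paper: both recognize that $c^{k}_{l_{1},l_{2}}$ counts the integers $l\in\{0,\dots,k\}$ for which $l\le l_{1}$ and $k-l\le l_{2}$, and then evaluate this count by a short case analysis. Your organization via the closed form $\min(k,l_{1})-\max(0,k-l_{2})+1$ and the four cases according to $l_{1}\gtrless k$, $l_{2}\gtrless k$ is arguably tidier than the paper's split according to $l_{1}+l_{2}\lessgtr 2k$, but the underlying argument is the same.
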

\begin{proof}
The coefficient $c_{l_{1},l_{2}}^{k}$ counts for how many values
of $0\leq l\leq k$ the resolution of $R^{l}\psi\Lambda_{X_{1}}\otimes R^{k-l}\psi\Lambda_{X_{2}}$
contains the term $a_{l_{1},l_{2}*}\Lambda(-k)[-(l_{1}+l_{2})]$.
This count is clearly bounded by $k+1$, because there are $k+1$
possible values of $l$. When $l_{1}+l_{2}-k+1\leq k+1$, the count
is \[
\min(\min(l_{1},l_{2})+1,l_{1}+l_{2}-k+1),\]
because $a_{l_{1},l_{2}*}\Lambda(-k)[-(l_{1}+l_{2})]$ will show up
in the resolution of $R^{l_{1}-j}\psi\Lambda_{X_{1}}\otimes R^{k-l_{1}+j}\psi\Lambda_{X_{2}}$
for all $0\leq j\leq l_{1}+l_{2}-k+1$ which satisfy $0\leq l_{1}-j\leq k$.
When both $l_{1}$ and $l_{2}$ are less than $k$, all the $j\in0,\dots,l_{1}+l_{2}-k+1$
satisfy the requirement. When $l_{2}\geq k$, there are exactly $l_{1}+1$
values of $j$ which satisfy the requirement and we can treat the
case $l_{1}\geq k$ analogously to get $l_{2}+1$ values of $j$.
This covers the case $l_{1}+l_{2}\leq2k$. In the case $l_{1}+l_{2}\geq2k$,
we need to count all $0\leq j\leq k$ which satisfy $0\leq l_{1}-j\leq k$.
The result is \[
\min(\min(l_{1},l_{2})+1,k+1).\]
This completes the determination of $c_{l_{1},l_{2}}^{k}$. 
\end{proof}
Note that for all $l_{1}+l_{2}\leq2k-2$, we have $c_{l_{1},l_{2}}^{k}\leq c_{l_{1},l_{2}}^{k-1}$.
For $l_{1}+l_{2}=2k-1$ we always have $\min(l_{1},l_{2})+1\leq k<k+1$,
so that \[
c_{l_{1},l_{2}}^{k}=c_{l_{1},l_{2}}^{k-1}=\min(l_{1},l_{2})+1.\]
However, $c_{k,k}^{k}=k+1>k=c_{k,k}^{k-1}$ and for $l_{1}+l_{2}\geq2k$
we have $c_{l_{1},l_{2}}^{k}\geq c_{l_{1},l_{2}}^{k-1}$. 

We now have an explicit description of \[
\bar{N}:R^{k}\psi\Lambda_{X_{1}\times X_{2}}[-k]\to R^{k-1}\psi\Lambda_{X_{1}\times X_{2}}[-k]\]
as a map of complexes with terms of the form $\bigoplus_{l_{1}+l_{2}=k+j}c_{l_{1},l_{2}}^{k}\cdot a_{l_{1},l_{2}*}\Lambda(-k)$,
which are put in degree $k+j$. Writing $\bar{N}=\bar{N}_{1}\otimes1+1\otimes\bar{N}_{2}$
as a map of complexes, we will be able to compute both the kernel
and cokernel of $\bar{N}$. First we need a few preliminary results. 
\begin{lem}
\label{distinguished}Let $\mathcal{C},\mathcal{D}$ be complexes
in the derived category of an abelian category. Let $f:\mathcal{C}\to\mathcal{D}$
be a map of complexes which is injective degree by degree, $f^{k}:\mathcal{C}^{k}\hookrightarrow\mathcal{D}^{k}$.
Let $\bar{\mathcal{D}}^{k}=\mbox{coker }(f^{k})$ and let $\bar{\mathcal{D}}$
be the complex with terms $\bar{\mathcal{D}}^{k}$ and differential
$\bar{d}$ induced by the differential $d$ of $\mathcal{D}$. Then
there exists a quasi-isomorphism $\mbox{Cone}(f)\simeq\mathcal{\bar{D}}$. \end{lem}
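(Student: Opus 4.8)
The plan is to exhibit an explicit quasi-isomorphism $\mathrm{Cone}(f)\to\bar{\mathcal{D}}$ and verify it is one via a short exact sequence of complexes whose third term is acyclic. Recall the standard model: $\mathrm{Cone}(f)^{k}=\mathcal{C}^{k+1}\oplus\mathcal{D}^{k}$ with differential $(c,d)\mapsto(-d_{\mathcal{C}}c,\, f(c)+d_{\mathcal{D}}d)$. First I would check that the degreewise projection $p\colon\mathrm{Cone}(f)^{k}\to\bar{\mathcal{D}}^{k}$, sending $(c,d)$ to the class of $d$ in $\mathcal{D}^{k}/f^{k}(\mathcal{C}^{k})$, is a morphism of complexes: this is immediate because the $\mathcal{C}^{k+1}$-component of the cone differential lands in $f^{k}(\mathcal{C}^{k})$ after applying $f$, so it dies in $\bar{\mathcal{D}}$, and the remaining part is exactly the differential $\bar d$ induced by $d_{\mathcal{D}}$ (which is well defined precisely because $f$ is a chain map). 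Note also that $p$ is surjective, since $f^{k}$ being injective makes $\mathcal{D}^{k}\to\bar{\mathcal{D}}^{k}$ surjective.

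Next I would identify the kernel of $p$. Since $\bar{\mathcal{D}}^{k}=\mathrm{coker}(f^{k})$, the condition $p(c,d)=0$ says $d\in f^{k}(\mathcal{C}^{k})$, so, using that $f^{k}$ is injective, $\ker(p^{k})\cong\mathcal{C}^{k+1}\oplus\mathcal{C}^{k}$ with the induced differential $(c,c')\mapsto(-d_{\mathcal{C}}c,\, c+d_{\mathcal{C}}c')$. This is exactly $\mathrm{Cone}(\mathrm{id}_{\mathcal{C}})$, which is contractible: the map $h\colon(c,c')\mapsto(0,c)$, together with $\mathrm{id}$, gives $dh+hd=\mathrm{id}$, so $\mathrm{Cone}(\mathrm{id}_{\mathcal{C}})$ is homotopy equivalent to the zero complex and hence acyclic.

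Finally, I would assemble the short exact sequence of complexes
\[
0\to\mathrm{Cone}(\mathrm{id}_{\mathcal{C}})\to\mathrm{Cone}(f)\xrightarrow{\ p\ }\bar{\mathcal{D}}\to0
\]
and pass to the associated long exact sequence in cohomology. Because the left-hand term is acyclic, the connecting maps force $H^{k}(p)\colon H^{k}(\mathrm{Cone}(f))\xrightarrow{\sim}H^{k}(\bar{\mathcal{D}})$ to be an isomorphism for every $k$, i.e. $p$ is a quasi-isomorphism. This yields the claimed identification $\mathrm{Cone}(f)\simeq\bar{\mathcal{D}}$ in the derived category. The argument involves no real obstacle; the only point requiring care is bookkeeping of the sign convention in the cone differential and checking that the contracting homotopy $h$ on $\mathrm{Cone}(\mathrm{id}_{\mathcal{C}})$ is compatible with these signs, after which everything is formal.
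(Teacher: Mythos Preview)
Your proof is correct and takes a cleaner, more structural route than the paper's. The paper defines the same projection $p\colon\mathrm{Cone}(f)\to\bar{\mathcal{D}}$ and then verifies directly, by an element-by-element diagram chase, that $p$ induces isomorphisms on cohomology in each degree. You instead identify $\ker p$ with $\mathrm{Cone}(\mathrm{id}_{\mathcal{C}})$, observe that this is contractible, and conclude via the long exact sequence of the short exact sequence $0\to\mathrm{Cone}(\mathrm{id}_{\mathcal{C}})\to\mathrm{Cone}(f)\to\bar{\mathcal{D}}\to0$. Your argument avoids the chase entirely and makes the mechanism transparent; the paper's version is more hands-on but requires no auxiliary short exact sequence.

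One small slip: with your convention $\mathrm{Cone}(\mathrm{id}_{\mathcal{C}})^{k}=\mathcal{C}^{k+1}\oplus\mathcal{C}^{k}$, the contracting homotopy should be $h(c,c')=(c',0)\in\mathcal{C}^{k}\oplus\mathcal{C}^{k-1}$, not $(0,c)$ (which does not even land in the right degree). With this correction one checks $dh(c,c')=(-d_{\mathcal{C}}c',c')$ and $hd(c,c')=(c+d_{\mathcal{C}}c',0)$, so $dh+hd=\mathrm{id}$ as desired. You already flagged the sign bookkeeping as the only delicate point, so this is exactly the place to be careful.
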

\begin{proof}
The proof is essentially a diagram chase. Since we are in an abelian
category, we can pretend that our objects have elements and perform
diagram chases. For an explanation of why we can do this, see \cite{Ryan}.
Alternatively, the abelian category we want to apply this to is that
of sheaves of $\Lambda$-modules on $Y$, so we can talk about sections. 

Note that $Cone(f)$ is the complex which has the degree $k$ term
equal to $\mathcal{D}^{k}\oplus\mathcal{C}^{k+1}$. The differential
sends $(x,y)\in\mathcal{D}^{k}\oplus\mathcal{C}^{k+1}$ to $(dx+f(y),-dy)\in\mathcal{D}^{k+1}\oplus\mathcal{C}^{k+2}$.
There is a natural map of complexes $Cone(f)\to\bar{\mathcal{D}}$
which is defined degree by degree as $\mathcal{D}^{k}\oplus\mathcal{C}^{k+1}\to\bar{\mathcal{D}}^{k}$,
where the map $\mathcal{D}^{k}\to\bar{\mathcal{D}}^{k}$ is the natural
projection and where $\mathcal{C}^{k+1}$ is sent to $0$. In order
to check that this map is a quasi-isomorphism, we we just need to
check that it induces an isomorphism on cohomology in degree $k$.
We have the following diagram\[
\xymatrix{\dots\ar[r] & \mathcal{D}^{k-1}\oplus\mathcal{C}^{k}\ar[d]\ar[r] & \mathcal{D}^{k}\oplus\mathcal{C}{}^{k+1}\ar[d]\ar[r] & \mathcal{D}^{k+1}\oplus\mathcal{C}^{k+2}\ar[d]\ar[r] & \dots\\
\dots\ar[r] & \bar{\mathcal{D}}^{k-1}\ar[r] & \bar{\mathcal{D}}^{k}\ar[r] & \bar{\mathcal{D}}^{k+1}\ar[r] & \dots}
.\]
Let $(x,y)\in\ker(\mathcal{D}^{k}\oplus\mathcal{C}^{k+1}\to\mathcal{D}^{k+1}\oplus\mathcal{C}^{k+2})$.
This means that $dx=-f^{k+1}(y)$. Assume that $(x,y)\mapsto0\in\bar{\mathcal{D}}^{k}/\mbox{im}\bar{\mathcal{D}}^{k-1}$.
Then $x-dx'=f^{k}(y')$ for some $x'\in\mathcal{D}^{k-1},y'\in\mathcal{C}^{k}$.
We find that $f^{k+1}(dy')=df^{k}(y')=dx=-f^{k+1}(y)$ and since $f^{k+1}$
is injective we conclude $-dy'=y$. So $(x,y)=(f(y')+dx',-dy')$ which
is the image of the element $(x',y')\in\mathcal{D}^{k-1}\oplus\mathcal{C}^{k}$.
So the induced map on cohomology is injective. The proof that the
map is surjective is even easier: take $x\in\mathcal{D}^{k}$ such
that $dx\in\mbox{im }(\mathcal{C}^{k+1}\to\mathcal{D}^{k+1})$. Then
$dx=-f(y)$ for a unique $y\in\mathcal{C}^{k+1}$. Then $(x,y)\mapsto\bar{x}\in\ker(\bar{\mathcal{D}}^{k}\to\bar{\mathcal{D}}^{k+1})$
and $(x,y)\in\ker(\mathcal{D}^{k}\oplus\mathcal{C}^{k+1}\to\mathcal{D}^{k+1}\oplus\mathcal{C}^{k+2})$.
To see this, note that $dx+f(y)=0$ by the choice of $y$ and we also
have $dy=0$ because $d^{2}x=-f(dy)=0$ and because $f$ is injective. \end{proof}
\begin{cor}
\label{ker and coker}Assume that we are now working in the category
$D_{c}^{b}(Y,\Lambda)$ and that we have a map of bounded complexes
$f:\mathcal{C}\to\mathcal{D}$ which is injective degree by degree.
We can form the complex $\bar{\mathcal{D}}$ as in Lemma \ref{distinguished}.
Assume that the short exact sequence of sheaves \[
0\to\mathcal{C}^{k}\stackrel{f^{k}}{\to}\mathcal{D}^{k}\to\bar{\mathcal{D}}^{k}\to0\]
is splittable. Assume also that $\mathcal{C}^{k}[-k]$ and $\mathcal{D}^{k}[-k]$
are $-(2n-2)$-shifted perverse sheaves. 

Then $\bar{\mathcal{D}}^{k}[-k]$ is a $-(2n-2)$-shifted perverse
sheaf and thus so is $\mathcal{\bar{D}}$ (since it is an extension
of $\bar{\mathcal{D}}^{k}[-k]$ for finitely many $k$). Moreover,
the following is an exact sequence of $-(2n-2)$-shifted perverse
sheaves\[
0\to\mathcal{C}\to\mathcal{D}\to\bar{\mathcal{D}}\to0\]
\end{cor}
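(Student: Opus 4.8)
The plan is to reduce everything to Lemma \ref{distinguished} together with the fact that the $-(2n-2)$-shifted perverse sheaves on $Y$ form the heart of a $t$-structure on $D_c^b(Y,\Lambda)$, hence an abelian category stable under extensions (this is the same formal input already used in the proofs of Lemma \ref{lem:perversity} and Corollary \ref{perversity of filtration}).

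\emph{Step 1: each $\bar{\mathcal{D}}^k[-k]$ is $-(2n-2)$-shifted perverse, and hence so are $\mathcal{C}$, $\mathcal{D}$ and $\bar{\mathcal{D}}$.} By the splittability hypothesis the sheaf sequence $0\to\mathcal{C}^k\to\mathcal{D}^k\to\bar{\mathcal{D}}^k\to0$ splits, so $\bar{\mathcal{D}}^k$ is a direct summand of $\mathcal{D}^k$ as a sheaf, hence also as an object of $D_c^b(Y,\Lambda)$: for sheaves concentrated in a single degree, $\mathrm{Hom}$ in the derived category coincides with $\mathrm{Hom}$ of sheaves, so the splitting idempotent transports to the derived category. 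Since $\mathcal{D}^k[-k]$ is $-(2n-2)$-shifted perverse and the heart is abelian, the direct summand $\bar{\mathcal{D}}^k[-k]$ is again $-(2n-2)$-shifted perverse. Now $\bar{\mathcal{D}}$ is built by successively extending the $\bar{\mathcal{D}}^k[-k]$ via its brutal (stupid) truncations $\sigma_{\geq k}\bar{\mathcal{D}}$, exactly as $R^k\psi\Lambda_{X_1\times X_2}[-k]$ was built from the $a_{l_1,l_2*}\Lambda(-k)[-(l_1+l_2)]$ in the proof of Lemma \ref{resolution}; stability of the heart under extensions then gives that $\bar{\mathcal{D}}$ is $-(2n-2)$-shifted perverse. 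The identical argument, applied to $\mathcal{C}$ and $\mathcal{D}$ whose terms are assumed shifted perverse, shows $\mathcal{C}$ and $\mathcal{D}$ are shifted perverse too.

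\emph{Step 2: the short exact sequence.} Since $f$ is injective degree by degree, Lemma \ref{distinguished} gives a quasi-isomorphism $\mathrm{Cone}(f)\simeq\bar{\mathcal{D}}$, hence a distinguished triangle
\[
\mathcal{C}\xrightarrow{f}\mathcal{D}\to\bar{\mathcal{D}}\to\mathcal{C}[1]
\]
in $D_c^b(Y,\Lambda)$. By Step 1 all three vertices lie in the heart of the shifted perverse $t$-structure, so applying its cohomology functor to the triangle produces a long exact sequence in which every term outside a single cohomological degree vanishes; it therefore collapses to the exact sequence
\[
0\to\mathcal{C}\to\mathcal{D}\to\bar{\mathcal{D}}\to0
\]
of $-(2n-2)$-shifted perverse sheaves. (Equivalently, the connecting morphism $\bar{\mathcal{D}}\to\mathcal{C}[1]$ induces zero on this cohomology functor, so the triangle becomes a short exact sequence in the heart, just as in the proof of Theorem 1.3.6 of \cite{BBD}.)

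The one point that needs care is Step 1: for an arbitrary degreewise-injective map of complexes of perverse sheaves the sheaf-theoretic cokernel $\bar{\mathcal{D}}^k$ need not be perverse, so the splittability hypothesis is doing essential work — it is precisely what realizes $\bar{\mathcal{D}}^k$ as a direct summand of $\mathcal{D}^k$, hence as a perverse sheaf. Once that is established, Step 2 is purely formal $t$-structure manipulation.
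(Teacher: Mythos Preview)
Your Step 1 matches the paper verbatim. Step 2 is where you and the paper diverge, and there is a genuine gap in your version that the paper explicitly confronts.

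You invoke Lemma \ref{distinguished} directly to obtain $\mathrm{Cone}(f)\simeq\bar{\mathcal{D}}$ in $D_c^b(Y,\Lambda)$. But Lemma \ref{distinguished} is stated for complexes in the derived category of an abelian category, and the paper points out that when $\Lambda=\mathbb{Q}_l$ or $\bar{\mathbb{Q}}_l$ (the cases of interest), $D_c^b(Y,\Lambda)$ is \emph{not} a priori the derived category of the abelian category of constructible $\Lambda$-sheaves --- it is defined through the adic formalism. So your appeal to Lemma \ref{distinguished} is unjustified as written. The paper says exactly this: ``If $\Lambda$ was torsion, then we could identify the category $D_c^b(Y,\Lambda)$ with the derived category of the category of constructible sheaves of $\Lambda$-modules and the corollary would follow from Lemma \ref{distinguished}. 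However, the cases we are most interested in are $\Lambda=\mathbb{Q}_l$ or $\bar{\mathbb{Q}}_l$.''

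The paper's workaround is to invoke Beilinson's theorem identifying $D_c^b(Y,\Lambda)$ with $D^b(\mathrm{Perv})$, and then to argue \emph{inside the abelian category of shifted perverse sheaves}: it shows $f$ is injective by induction on the brutal filtration (a diagram chase on the successive extensions $\mathcal{C}^k[-k]\hookrightarrow\mathcal{D}^k[-k]$), then identifies the cokernel as a successive extension of the $\bar{\mathcal{D}}^k[-k]$ via the snake lemma, and finally checks that the extension classes defining this cokernel agree with the differentials of $\bar{\mathcal{D}}$ by an explicit comparison of boundary maps. Your formal $t$-structure manipulation in Step 2 would indeed be a cleaner finish \emph{once} the distinguished triangle is in hand, but you have not earned that triangle for non-torsion coefficients. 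Even granting Beilinson, note that $\mathcal{C}$ and $\mathcal{D}$ are presented as complexes of sheaves, not complexes of perverse sheaves, so a direct application of Lemma \ref{distinguished} in $D^b(\mathrm{Perv})$ still requires you to first reinterpret them --- which is effectively what the paper's filtration argument does.
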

\begin{proof}
$\bar{\mathcal{D}}^{k}[-k]$ is a $-(2n-2)$-shifted perverse sheaf
because it is a direct factor of $\mathcal{D}^{k}[-k]$ and so $\bar{\mathcal{D}}$
is also a $-(2n-2)$-shifted perverse sheaf. If $\Lambda$ was torsion,
then we could identify the category $D_{c}^{b}(Y,\Lambda)$ with the
derived category of the category of constructible sheaves of $\Lambda$-modules
and the corollary would follow from Lemma \ref{distinguished}. However,
the cases we are most interested in are$\Lambda=\mathbb{Q}_{l}$ or
$\bar{\mathbb{Q}}_{l}$. It is possible that by checking the definition
of the category $D_{c}^{b}(Y,\Lambda)$ carefully, we could ensure
that a version of Lemma \ref{distinguished} applies to our case.
However, an alternative approach uses Beilinson's result which identifies
$D_{c}^{b}(Y,\Lambda)$ with the derived category of perverse sheaves
on $Y$, see \cite{Be}. 

We see that the map $f:\mathcal{C}\to\mathcal{D}$ is injective, since
we can think of it as a map of filtered objects, which is injective
on the $k$th graded pieces for each $k$. Indeed $\mathcal{C}$ is
a successive extension of the $-(2n-2)$-shifted perverse sheaves
$\mathcal{C}^{k}[-k]$ and $\mathcal{D}$ is a successive extension
of $\mathcal{D}^{k}[-k]$ and the fact that $f$ is a map of complexes
implies that $f$ respects these extension. Let $k$ be the largest
integer for which either of $\mathcal{C}^{k}$ and $\mathcal{D}^{k}$
is non-zero. We have the commutative diagram of exact sequences\[
\xymatrix{0\ar[r] & \mathcal{C}^{k}[-k]\ar[d]^{f^{k}[-k]}\ar[r] & \mathcal{C}'\ar[d]\ar[r] & \mathcal{C}^{k-1}[-k+1]\ar[d]^{f^{k-1}[-k+1]}\ar[r] & 0\\
0\ar[r] & \mathcal{D}^{k}[-k]\ar[r] & \mathcal{D}'\ar[r] & \mathcal{D}^{k-1}[-k+1]\ar[r] & 0}
,\]
where the arrows on the left and on the right are injective. The fact
that the middle map is also injective follows from a standard diagrm
chase. (Note that we are working in the category of $-(2n-2)$-shifted
perverse sheaves, which is abelian, so we can perform diagram chases
by \cite{Ryan}.) The injectivity of $f$ follows by induction. 

By a repeated application of the snake lemma in the abelian category
of $-(2n-2)$-shifted perverse sheaves, we see that the cokernel of
$f$ is a succesive extension of terms of the form $\mathcal{\bar{D}}^{k}[-k]$.
In order to identify this cokernel with $\bar{\mathcal{D}}$, it suffices
to check that the differential of $\bar{\mathcal{D}}$ coincides in
$\mathrm{Ext}^{1}(\bar{\mathcal{D}}^{k}[-k],\bar{\mathcal{D}}^{k-1}[-k+1])$
with the extension class which defines the cokernel. To check this,
it is enough to see that the following square is commutative \[
\xymatrix{\mathcal{D}^{k-1}[-k+1]\ar[d]^{f^{k-1}[-k+1]}\ar[r] & \mathcal{D}^{k}[-k+1]\ar[d]^{f^{k}[-k+1]}\\
\bar{\mathcal{D}}^{k-1}[-k+1]\ar[r] & \bar{\mathcal{D}}^{k}[-k+1]}
,\]
where the top (resp. bottom) horizontal map is the boundary map obtained
from considering the distinguished triangle $(\mathcal{D}^{k}[-k],\mathcal{D}',\mathcal{D}^{k-1}[-k+1])$
(resp. $(\bar{\mathcal{D}}^{k}[-k],\mathcal{\bar{D}}',\bar{\mathcal{D}}^{k-1}[-k+1])$)
in $D_{c}^{b}(Y,\Lambda)$. The top boundary map is the differential
of $\mathcal{D}$ and if the square is commutative, then the bottom
map must be the differential of $\bar{\mathcal{D}}.$ The commutativity
can be checked by hand, by making the boundary maps explicit using
the construction of the cone. (There is a natural map \[
\xymatrix{\mathcal{D}^{k}[-k]\ar[d]\ar[r] & \mathcal{D}'\ar[d]\\
0\ar[r] & \mathcal{D}^{k-1}[-k+1]}
,\]
which is a quasi-isomorphism in $D_{c}^{b}(Y,\Lambda)$. The boundary
map of the distinguished triangle is obtained by composing the inverse
of this quasi-isomorphism with the natural map \[
\xymatrix{\mathcal{D}^{k}[-k]\ar[d]\ar[r] & \mathcal{D}'\ar[d]\\
\mathcal{D}^{k}[-k]\ar[r] & 0}
.\]
The same construction works for $\bar{\mathcal{D}}$ and it is straightforward
to check the commutativity now.) \end{proof}
\begin{lem}
\label{ker of N bar}Let $k\geq1$. Consider the map \[
\bar{N}:R^{k}\psi\Lambda_{X_{1}\times X_{2}}[-k]\to R^{k-1}\psi\Lambda_{X_{1}\times X_{2}}[-(k-1)].\]
Define the complex \[
\mathcal{P}_{k}=\left[a_{k,k*}\Lambda(-k)\stackrel{\wedge\delta}{\to}a_{k,k+1*}\Lambda(-k)\bigoplus a_{k+1,k*}\Lambda(-k)\to\dots\stackrel{\wedge\delta}{\to}a_{n-1,n-1*}\Lambda(-k)\right],\]
where $a_{k,k*}\Lambda(-k)$ is put in degree $2k$. The factor $a_{l_{1},l_{2}*}\Lambda(-k)$
appears in the resolution of $\mathcal{P}$ in degree $l_{1}+l_{2}$
whenever $k\leq l_{1},l_{2}\leq n-1$. Also define the complex \[
\mathcal{R}_{k}=\left[\bigoplus_{j=0}^{k-1}a_{j,k-1-j*}\Lambda(-(k-1))\to\dots\to a_{k-1,k-1*}\Lambda((-(k-1))\right],\]
where the first term is put in degree $k-1$ and the term $a_{l_{1},l_{2}*}\Lambda(-(k-1))$
appears in degree $l_{1}+l_{2}$ whenever $0\leq l_{1},l_{2}\leq k-1$.

Then $\mathcal{P}_{k}\simeq\ker(\bar{N})$ and $\mathcal{R}_{k}\simeq\mbox{coker}(\bar{N})$. \end{lem}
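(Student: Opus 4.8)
The plan is to make $\bar N$ completely explicit as a map of complexes, using the resolutions from the semistable case together with Proposition \ref{formula for N bar}, and then to read off its kernel and cokernel one $a_{l_1,l_2*}\Lambda$-slot at a time. Concretely, I would first use Proposition \ref{formula for N bar} to write $\bar N = \bar N_1\otimes 1 + 1\otimes\bar N_2$ on each summand $R^l\psi\Lambda_{X_1}\otimes R^{k-l}\psi\Lambda_{X_2}[-k]$, then replace each $R^j\psi\Lambda_{X_i}[-j]$ by its resolution $[a^i_{j*}\Lambda(-j)\to\cdots\to a^i_{n-1*}\Lambda(-j)]$ and each $\bar N_i$ by the explicit description of Lemma \ref{twisting}: on the $a^i_{l*}\Lambda$-component, $\bar N_i\colon R^l\psi\Lambda_{X_i}[-l]\to R^{l-1}\psi\Lambda_{X_i}[-(l-1)]$ is multiplication by $t_l(T)$, which is an isomorphism since $\Lambda=\mathbb Q_l$ or $\bar{\mathbb Q}_l$, and it is zero into the newly created bottom term $a^i_{(l-1)*}\Lambda$. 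Taking tensor products of these resolutions produces the resolutions \eqref{eq:full resolution} of $R^k\psi\Lambda_{X_1\times X_2}[-k]$ and of $R^{k-1}\psi\Lambda_{X_1\times X_2}[-(k-1)]$, and with respect to them $\bar N$ becomes a map of complexes that preserves each slot $a_{l_1,l_2*}\Lambda$: at the slot $(l_1,l_2)$ it is the map obtained by tensoring two Jordan strings (of lengths $l_1+1$ and $l_2+1$, each with its lowering operator $t_l(T)$), graded so that its degree-$k$ part is the $(l_1,l_2)$-slot of $R^k\psi\Lambda_{X_1\times X_2}[-k]$.

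Next I would analyze this slot map. By the Clebsch--Gordan decomposition (equivalently, a direct reduction of the bidiagonal matrix, using that $t_l(T)$ is a unit), the tensor product of a Jordan string of length $l_1+1$ and one of length $l_2+1$ splits into Jordan strings of lengths $l_1+l_2+1-2r$ for $0\le r\le\min(l_1,l_2)$; combined with the dimension counts of Lemma \ref{coefficients}, this shows that the slot map at $(l_1,l_2)$ from degree $k$ to degree $k-1$ is an isomorphism unless $l_1,l_2\ge k$ or $l_1,l_2\le k-1$; that it is surjective with one-dimensional kernel $\cong a_{l_1,l_2*}\Lambda(-k)$ precisely when $k\le l_1,l_2\le n-1$; and that it is injective with one-dimensional cokernel $\cong a_{l_1,l_2*}\Lambda(-(k-1))$ precisely when $0\le l_1,l_2\le k-1$. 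Hence the termwise kernel complex of $\bar N$ has exactly the terms of $\mathcal P_k$ and the termwise cokernel complex has exactly the terms of $\mathcal R_k$, and one checks that the induced differentials are the maps $\wedge\delta$ appearing in $\mathcal P_k$ and $\mathcal R_k$.

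Finally I would upgrade this termwise computation to an identification in $D^b_c(Y,\Lambda)$. Since $\bar N$ preserves each slot $(l_1,l_2)$, it respects the decomposition of the source resolution into the subcomplex $\mathcal C_{\ge k}$ spanned by the slots with $\max(l_1,l_2)\ge k$ (which is stable under $\wedge\delta$) and the quotient complex $\mathcal C_{\le k-1}$ spanned by the slots with $l_1,l_2\le k-1$, and likewise for the target $\mathcal D$. On $\mathcal C_{\ge k}\to\mathcal D_{\ge k}$ the map $\bar N$ is termwise surjective with termwise kernel complex $\mathcal P_k$ and termwise-split kernels; applying Corollary \ref{ker and coker} to the inclusion $\mathcal P_k\hookrightarrow\mathcal C_{\ge k}$ shows that $\mathcal P_k$ is $-(2n-2)$-shifted perverse and that $0\to\mathcal P_k\to\mathcal C_{\ge k}\xrightarrow{\bar N}\mathcal D_{\ge k}\to 0$ is exact. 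On $\mathcal C_{\le k-1}\to\mathcal D_{\le k-1}$ the map $\bar N$ is termwise injective with termwise-split cokernels, and Corollary \ref{ker and coker} gives directly that $\mathcal R_k$ is $-(2n-2)$-shifted perverse with $0\to\mathcal C_{\le k-1}\xrightarrow{\bar N}\mathcal D_{\le k-1}\to\mathcal R_k\to 0$ exact. The short exact sequence of two-term complexes $0\to(\mathcal C_{\ge k}\to\mathcal D_{\ge k})\to(\mathcal C\to\mathcal D)\to(\mathcal C_{\le k-1}\to\mathcal D_{\le k-1})\to 0$ then yields a distinguished triangle $\mathcal P_k[1]\to\mathrm{Cone}(\bar N)\to\mathcal R_k\to\mathcal P_k[2]$; and since $\mathrm{Cone}(\bar N)$ has perverse cohomology only in degrees $-1$ (equal to $\ker\bar N$) and $0$ (equal to $\mathrm{coker}\bar N$), the perverse-cohomology long exact sequence of this triangle forces $\ker\bar N\simeq\mathcal P_k$ and $\mathrm{coker}\bar N\simeq\mathcal R_k$.

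I expect the main obstacle to be the combinatorial bookkeeping in the middle step: extracting the precise Jordan-string structure from Lemma \ref{twisting}, matching the slot dimension counts with Lemma \ref{coefficients} across the various ranges of $(l_1,l_2)$ relative to $k$, and verifying that the differentials induced on the termwise kernel and cokernel complexes are, up to an isomorphism of complexes, the maps $\wedge\delta$ occurring in $\mathcal P_k$ and $\mathcal R_k$.
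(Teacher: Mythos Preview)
Your approach is correct and reaches the same conclusion, but it is organized differently from the paper's proof. The paper proceeds more directly: it writes down an explicit map $f:\mathcal{P}_k\to R^k\psi\Lambda_{X_1\times X_2}[-k]$ sending $x$ to the alternating vector $(x,-x,\dots,(-1)^kx)$ in the $k{+}1$ copies at each slot $(l_1,l_2)$ with $l_1,l_2\ge k$, checks by a one-line computation that $\bar N\circ f=0$, and then applies Corollary~\ref{ker and coker} twice in succession---once to $f$ (producing an intermediate quotient $\mathcal Q_k$) and once to the induced map $\mathcal Q_k\to R^{k-1}\psi\Lambda_{X_1\times X_2}[-(k-1)]$ (producing $\mathcal R_k$). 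Your Clebsch--Gordan argument is a more conceptual way of locating that same alternating vector (it is precisely the highest-weight line in the Jordan summand of length $l_1+l_2-2k+1$), and your splitting into $\mathcal C_{\ge k}$ and $\mathcal C_{\le k-1}$ followed by the snake lemma replaces the paper's two-step factorization through $\mathcal Q_k$. What you gain is a structural explanation of \emph{why} the kernel lives exactly in the slots with $l_1,l_2\ge k$ and the cokernel in those with $l_1,l_2\le k-1$; what the paper gains is that it never needs to invoke Clebsch--Gordan and never needs to verify separately that the induced differential on the termwise kernel is $\wedge\delta$---that is built into the explicit definition of $f$ from the start. Your deferred check that the differential on the kernel complex is $\wedge\delta$ amounts exactly to writing down the paper's alternating map and observing it is a chain map, so in the end the two arguments converge on the same computation.
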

\begin{proof}
Note that both $\mathcal{P}_{k}$ and $\mathcal{R}_{k}$ are $-(2n-2)$-shifted
perverse sheaves, by the same argument we've used before. The proof
will go as follows: we will first define a map $\mathcal{P}_{k}\to R^{k}\psi\Lambda_{X_{1}\times X_{2}}[-k]$
and check that $\bar{N}$ kills the image of $\mathcal{P}_{k}$. We
use Corollary \ref{ker and coker} to check that the map $\mathcal{P}_{k}\to R^{k}\psi\Lambda_{X_{1}\times X_{2}}[-k]$
is an injection and to compute its cokernel $\mathcal{Q}_{k}$. Then
we check using Corollary \ref{ker and coker} again that the induced
map $\mathcal{Q}_{k}\to R^{k-1}\psi\Lambda[-(k-1)]$ is an injection
and we identify its cokernel with $\mathcal{R}.$

For the first step, note that it suffices to define the maps \[
f^{l_{1},l_{2}}:a_{l_{1},l_{2}*}\Lambda(-k)\to a_{l_{1},l_{2}*}\Lambda(-k)^{\oplus(k+1)}\]
for all $l_{1},l_{2}\geq k$ and we do so by $x\mapsto(x,-x,\dots,(-1)^{k}x)$.
These maps are clearly compatible with the differentials $\wedge\delta$,
so they induce a map $f:\mathcal{P}_{k}\to R^{k}\psi\Lambda_{X_{1}\times X_{2}}[-k]$
(this is a map of complexes between $\mathcal{P}$ and the standard
representative of $R^{k}\psi\Lambda_{X_{1}\times X_{2}}[-k]$). Moreover,
we can check that the restriction \[
\bar{N}:a_{l_{1},l_{2}*}\Lambda(-k)^{\oplus(k+1)}\to a_{l_{1},l_{2}*}\Lambda(-k)^{\oplus k}\]
sends $(x,-x,\dots,(-1)^{k}x)\mapsto(0,\dots,0)$. 

Indeed, the $j$th factor $a_{l_{1},l_{2}*}\Lambda(-k)$ appears in
the resolution of $R^{j}\psi\Lambda_{X_{1}}\otimes R^{k-j}\psi\Lambda_{X_{2}}[-k]$.
The latter object is sent by $\bar{N}_{1}\otimes1$ to $R^{j-1}\psi\Lambda_{X_{1}}\otimes R^{k-j}\psi\Lambda_{X_{2}}[-(k-1)]$
for $1\leq j\leq k$ and by $1\otimes\bar{N}_{2}$ to $R^{j}\psi\Lambda_{X_{1}}\otimes R^{k-1-j}\psi\Lambda_{X_{2}}[-(k-1)]$
for $0\leq j\leq k-1$. We also know that $\bar{N}_{1}\otimes1$ kills
$R^{0}\psi\Lambda_{X_{1}}\otimes R^{k}\psi\Lambda_{X_{2}}[-k]$ and
similarly $1\otimes\bar{N}_{2}$ kills $R^{k}\psi\Lambda_{X_{1}}\otimes R^{0}\psi\Lambda_{X_{2}}[-k]$.
By Lemma \ref{twisting}, we find that for $1\leq j\leq k-1$ \[
(0,\dots,0,x,0\dots,0)\mapsto(0,\dots,x\otimes t_{l}(T),x\otimes t_{l}(T),0\dots,0),\]
where the term $x$ is put in position $j$ and the terms $x\otimes t_{l}(T)$
are put in positions $j-1$ and $j$. We also have \[
(x,0,\dots,0)\mapsto(x\otimes t_{l}(T),0,\dots,0)\mbox{ and }(0,\dots,0,x)\mapsto(0,\dots,0,x\otimes t_{l}(T)).\]
Thus, we find that $\bar{N}$ sends \[
(x,-x,\dots,(-1)^{k}x)\mapsto(x\otimes t_{l}(T)-x\otimes t_{l}(T),\dots,(-1)^{k-1}x\otimes t_{l}(T)+(-1)^{k}x\otimes t_{l}(T)),\]
and the term on the right is $(0,\dots,0)$. Since we have exhibited
$\bar{N}\circ f$ as a chain map and we've checked that it vanishes
degree by degree, we conclude that $\bar{N}\circ f=0$. Thus, $f(\mathcal{P}_{k})\subseteq\ker\bar{N}$. 

Note that for all $l_{1},l_{2}$ we can identify the quotient of $a_{l_{1},l_{2}*}\Lambda(-k)^{\oplus(k+1)}$
by $f^{l_{1},l_{2}}(a_{l_{1},l_{2}*}\Lambda(-k))$ with $a_{l_{1},l_{2}*}\Lambda(-k)^{\oplus k}$.
The resulting exact sequence \[
0\to a_{l_{1},l_{2}*}\Lambda(-k)\stackrel{f^{l_{1},l_{2}}}{\to}a_{l_{1},l_{2}*}\Lambda(-k)^{\oplus(k+1)}\stackrel{}{\to}a_{l_{1},l_{2}*}\Lambda(-k)^{\oplus k}\to0\]
is splittable, because the third term is free over $\Lambda$. By
Corollary \ref{ker and coker}, the map $f:\mathcal{P}_{k}\to R^{k}\psi\Lambda_{X_{1}\times X_{2}}[-k]$
is injective and we can identify degree by degree the complex $\mathcal{Q}_{k}$
representing the cokernel of $f$. In degrees less than $2k-1$, the
terms of $\mathcal{Q}_{k}$ are the same as those of $R^{k}\psi\Lambda_{X_{1}\times X_{2}}[-k]$
and in degrees at least $2k-1$, they are the terms of $R^{k-1}\psi\Lambda_{X_{1}\times X_{2}}[-k+1]$. 

To prove that the induced map $\mathcal{Q}_{k}\to R^{k-1}\psi\Lambda_{X_{1}\times X_{2}}[-(k-1)]$
is injective it suffices to check degree by degree and the proof is
analogous to the one for $f:\mathcal{P}_{k}\to R^{k}\psi\Lambda_{X_{1}\times X_{2}}[-k]$.
The cokernel is identified with $\mathcal{R}_{k}$ degree by degree,
via the exact sequence \[
0\to a_{l_{1},l_{2}*}\Lambda(-k)^{\oplus(k-1)}\stackrel{\bar{N}^{l_{1},l_{2}}}{\to}a_{l_{1},l_{2}*}\Lambda(-(k-1))^{\oplus k}\to a_{l_{1},l_{2}*}\Lambda(-(k-1))\to0\]
 for $0\leq l_{1},l_{2}\leq k-1$. \end{proof}
\begin{note*}
1. The complex $\mathcal{P}_{k}$ has as its factors exactly the terms
$a_{l_{1},l_{2}*}\Lambda(-k)[-(l_{1}+l_{2})]$ for which $c_{l_{1},l_{2}}^{k}-c_{l_{1},l_{2}}^{k-1}=1$,
while $\mathcal{R}_{k}$ has as its factors the terms $a_{l_{1},l_{2}*}\Lambda(-(k-1))[-(l_{1}+l_{2})]$
for which $c_{l_{1},l_{2}}^{k-1}-c_{l_{1},l_{2}}^{k}=1$. 

2. Another way to express the kernel of $\bar{N}$ is as the image
of $R^{2k}\psi\Lambda_{X_{1}\times X_{2}}[-2k]$ in $R^{k}\psi\Lambda_{X_{1}\times X_{2}}[-k]$
under the map \[
\bar{N}_{1}^{k}\otimes1-\bar{N}_{1}^{k-1}\otimes\bar{N}_{2}+\dots+(-1)^{k}1\otimes\bar{N}_{2}^{k}.\]
This follows from Lemmas \ref{twisting} and \ref{ker of N bar}.\end{note*}
\begin{cor}
\label{kernel of N}The filtration of $R\psi\Lambda_{X_{1}\times X_{2}}$
by $\tau_{\leq k}R\psi\Lambda_{X_{1}\times X_{2}}$ induces a filtration
on $\ker N$. The first graded piece of this filtration $gr^{1}\ker N$
is $R^{0}\psi\Lambda_{X_{1}\times X_{2}}$. The graded piece $gr^{k+1}\ker N$
of this filtration is $\mathcal{P}_{k}$. \end{cor}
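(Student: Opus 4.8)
The plan is to exhibit $\ker N$, as a subobject of $R\psi\Lambda_{X_{1}\times X_{2}}$ in the abelian category of $-(2n-2)$-shifted perverse sheaves, together with the filtration obtained by intersecting with the canonical filtration, and to identify the graded pieces. Since $N$ carries $\tau_{\leq k}R\psi\Lambda_{X_{1}\times X_{2}}$ into $\tau_{\leq k-1}R\psi\Lambda_{X_{1}\times X_{2}}$, the subobjects $F_{k}:=\ker N\cap\tau_{\leq k}R\psi\Lambda_{X_{1}\times X_{2}}$ form an increasing filtration of $\ker N$, which I reindex by $gr^{k+1}\ker N:=F_{k}/F_{k-1}$. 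The bottom step is immediate: $\tau_{\leq0}R\psi\Lambda_{X_{1}\times X_{2}}=R^{0}\psi\Lambda_{X_{1}\times X_{2}}$ and $N$ annihilates it since its image lies in $\tau_{\leq-1}R\psi\Lambda_{X_{1}\times X_{2}}=0$; hence $F_{0}=R^{0}\psi\Lambda_{X_{1}\times X_{2}}$ and $gr^{1}\ker N=R^{0}\psi\Lambda_{X_{1}\times X_{2}}$.

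For $k\geq1$, reduction modulo $\tau_{\leq k-1}$ gives a map $gr^{k+1}\ker N\to\tau_{\leq k}R\psi\Lambda_{X_{1}\times X_{2}}/\tau_{\leq k-1}R\psi\Lambda_{X_{1}\times X_{2}}=R^{k}\psi\Lambda_{X_{1}\times X_{2}}[-k]$ that is injective (a section of $F_{k}$ vanishing in the quotient already lies in $\tau_{\leq k-1}$, so in $F_{k-1}$) and whose image is contained in $\ker\bar N$ (if $N$ kills a section, $\bar N$ kills its class). By Lemma \ref{ker of N bar}, $\ker\bar N\simeq\mathcal{P}_{k}$, so the point is that $gr^{k+1}\ker N\hookrightarrow\mathcal{P}_{k}$ is onto. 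Equivalently, applying the snake lemma to the morphism of short exact sequences induced by $N$ from $0\to\tau_{\leq k-1}\to\tau_{\leq k}\to R^{k}\psi\Lambda_{X_{1}\times X_{2}}[-k]\to0$ to its degree $k-1$ analogue, one identifies $gr^{k+1}\ker N$ with the kernel of the connecting homomorphism $\ker\bar N\to\tau_{\leq k-2}R\psi\Lambda_{X_{1}\times X_{2}}/N\tau_{\leq k-1}R\psi\Lambda_{X_{1}\times X_{2}}$, and the claim is that this connecting homomorphism vanishes.

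I would establish this by lifting the explicit chain map $f\colon\mathcal{P}_{k}\to R^{k}\psi\Lambda_{X_{1}\times X_{2}}[-k]$ from the proof of Lemma \ref{ker of N bar} to a chain map $\tilde f\colon\mathcal{P}_{k}\to\tau_{\leq k}R\psi\Lambda_{X_{1}\times X_{2}}$ with $N\circ\tilde f=0$: then every local section of $\mathcal{P}_{k}=\ker\bar N$ lifts into $\ker N\cap\tau_{\leq k}R\psi\Lambda_{X_{1}\times X_{2}}$, giving the surjectivity. The construction of $\tilde f$ repeats that of $f$, but on the complex $R\psi\Lambda_{X_{1}}\otimes^{L}R\psi\Lambda_{X_{2}}$ instead of on its cohomology sheaves: using the pieces $\tau_{\leq l_{1}}R\psi\Lambda_{X_{1}}\otimes\tau_{\leq l_{2}}R\psi\Lambda_{X_{2}}$ with $l_{1}+l_{2}=k$ mapping into $\tau_{\leq k}R\psi\Lambda_{X_{1}\times X_{2}}$, one sends a local section $x$ of a factor $a_{l_{1},l_{2}*}\Lambda(-k)$ with $l_{1},l_{2}\geq k$ to the alternating sum $\sum_{j=0}^{k}(-1)^{j}x_{j}$, where $x_{j}$ is the copy of $x$ in the summand indexed by $R^{j}\psi\Lambda_{X_{1}}\otimes R^{k-j}\psi\Lambda_{X_{2}}$, lifted into $\tau_{\leq j}R\psi\Lambda_{X_{1}}\otimes\tau_{\leq k-j}R\psi\Lambda_{X_{2}}$. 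With $N=N_{1}\otimes1+1\otimes N_{2}$ and the description of the $\bar N_{i}$ from Lemma \ref{twisting} (multiplication by $t_{l}(T)$ followed by inclusion one step down in the resolution), one gets $N(x_{j})=t_{l}(T)(x'_{j-1}+x''_{j})$, with $x'_{j-1}$ and $x''_{j-1}$ landing in, and coinciding as sections of, the summand $\tau_{\leq j-1}R\psi\Lambda_{X_{1}}\otimes\tau_{\leq k-j}R\psi\Lambda_{X_{2}}$ of $\tau_{\leq k-1}R\psi\Lambda_{X_{1}\times X_{2}}$ (with $x'_{-1}=x''_{k}=0$ since $\bar N_{1}$ kills $R^{0}\psi\Lambda_{X_{1}}$ and $\bar N_{2}$ kills $R^{0}\psi\Lambda_{X_{2}}$); the alternating signs then force $N(\sum_{j}(-1)^{j}x_{j})=0$, which is exactly the telescoping cancellation used in the proof of Lemma \ref{ker of N bar}. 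The main obstacle is this last point carried out honestly: one must check that the alternating local sections assemble into a genuine morphism of complexes with values in the \emph{subcomplex} $\tau_{\leq k}R\psi\Lambda_{X_{1}\times X_{2}}$ and keep track of the Koszul signs and Tate twists, so that the cancellation holds on the whole nearby-cycles complex and not merely modulo $\tau_{\leq k-1}$ (which is all Lemma \ref{ker of N bar} provides). Granting this, $gr^{k+1}\ker N=\mathcal{P}_{k}$, as asserted.
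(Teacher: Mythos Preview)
Your overall framework---filtering $\ker N$ by intersection with the $\tau_{\leq k}$, identifying $gr^{1}$ immediately, and reducing the higher graded pieces to showing that $\mathcal{P}_{k}=\ker\bar N$ surjects onto $gr^{k+1}\ker N$---matches the paper's. The divergence is in how you establish that surjectivity.

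You try to construct, in one stroke, an explicit lift $\tilde f\colon\mathcal{P}_{k}\to\tau_{\leq k}R\psi\Lambda$ with $N\circ\tilde f=0$, using the alternating sum of copies of $x$ in the various $\tau_{\leq j}R\psi\Lambda_{X_{1}}\otimes\tau_{\leq k-j}R\psi\Lambda_{X_{2}}$ and the formula $N=N_{1}\otimes1+1\otimes N_{2}$. You are candid that the obstacle is making the telescoping cancellation hold on the full complex rather than merely modulo $\tau_{\leq k-1}$; that obstacle is real. The description of $\bar N_{i}$ from Lemma~\ref{twisting} tells you what $N_{i}$ does to $\tau_{\leq j}$ only after passing to the quotient $R^{j}[-j]$; controlling the residual pieces in $\tau_{\leq j-2}$ requires a genuine chain-level computation (with the Koszul signs you mention), and the derived-category lift of $f$ is not canonical. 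So as written the argument is a sketch with a nontrivial verification outstanding.

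The paper sidesteps this entirely. Rather than producing a global lift into $\ker N$, it picks an arbitrary lift $\tilde x\in\tau_{\leq k}$ of $x\in\mathcal{P}_{k}$, observes $N\tilde x\in\tau_{\leq k-2}$, and notes that the image of $N\tilde x$ in $\mathcal{R}_{k-1}=\operatorname{coker}(\bar N\colon R^{k-1}[-k+1]\to R^{k-2}[-k+2])$ is independent of the choice of lift. This gives a well-defined map $\mathcal{P}_{k}\to\mathcal{R}_{k-1}$, and the paper shows it vanishes by a Jordan--H\"older argument: the simple constituents of $\mathcal{P}_{k}$ are pushforwards $a_{J_{1},J_{2}*}\Lambda$ with $\#J_{1},\#J_{2}\geq k+1$, while those of $\mathcal{R}_{k-1}$ have $\#J_{1},\#J_{2}\leq k-1$, so there are no nonzero maps between them. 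One then modifies $\tilde x$ so that $N\tilde x\in\tau_{\leq k-3}$, repeats the argument with $\mathcal{R}_{k-2}$, and iterates down. This is less explicit than what you attempt but avoids any chain-level bookkeeping; the ``no common simple factors'' observation does all the work, and it is robust enough to be reused verbatim in the more general setting of Section~4.3.
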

\begin{proof}
We've already seen that $N$ maps all of $R^{0}\psi\Lambda_{X_{1}\times X_{2}}$
to $0$, since $T$ acts trivially on the cohomology of $R\psi\Lambda_{X_{1}\times X_{2}}$.
This identifies the first graded piece to be $R^{0}\psi\Lambda_{X_{1}\times X_{2}}$. 

In order to identify the $(k+1)$st graded piece, we will once more
pretend that our shifted perverse sheaves have elements. We can do
this since the $(2-2n)$-shifted perverse sheaves form an abelian
category and we only need to do this in order to simplify the exposition.
First notice that $gr^{k}\ker N\subseteq\mathcal{P}_{k}$, since anything
in the kernel of $N$ reduces to something in the kernel of $\bar{N}$. 

So it suffices to show that any $x\in\mathcal{P}_{k}$ lifts to some
$\tilde{x}\in\ker N$. Pick any $\tilde{x}\in\tau_{\leq k}R\psi\Lambda_{X_{1}\times X_{2}}$
lifting $x$. Since $\bar{N}$ sends $x$ to $0$, we conclude that
$N$ maps $\tilde{x}$ to $\tau_{\leq k-2}R\psi\Lambda_{X_{1}\times X_{2}}$.
The image of $N\tilde{x}$ in $R^{k-2}\psi\Lambda_{X_{1}\times X_{2}}[-k+2]$
depends on our choice of the lift $\tilde{x}$. However, the image
of $N\tilde{x}$ in $\mathcal{R}_{k-1}$ only depends on $x$. If
we can show that that image is $0$, we conclude that we can pick
a lift $\tilde{x}$ such that $N\tilde{x}\in\tau_{\leq k-3}R\psi\Lambda$.
We can continue applying the same argument while modifying our choice
of lift $\tilde{x}$, such that $N\tilde{x}\in\tau_{\leq k-j}R\psi\Lambda_{X_{1}\times X_{2}}$
for larger and larger $j$. In the end we see that $N\tilde{x}=0$. 

It remains to check that the map $\mathcal{P}_{k}\to\mathcal{R}_{k-1}$
sending $x\in\mathcal{P}_{k}$ to the image of $N\tilde{x}$ in $\mathcal{R}_{k-1}$
is $0$. We can see this by checking that any map $\mathcal{P}_{k}\to\mathcal{R}_{k-1}$
is $0$. Indeed, we have the following decompositions of $\mathcal{P}_{k}$
and $\mathcal{R}_{k-1}$ as $(2-2n)$-shifted perverse sheaves:\[
\mathcal{P}_{k}=\left[a_{k,k*}\Lambda(-k)\stackrel{\wedge\delta}{\to}a_{k,k+1*}\Lambda(-k)\bigoplus a_{k+1,k*}\Lambda(-k)\to\dots\stackrel{\wedge\delta}{\to}a_{n-1,n-1*}\Lambda(-k)\right]\]
\[
\mbox{and }\mathcal{R}_{k-1}=\left[\bigoplus_{j=0}^{k-1}a_{j,k-2-j*}\Lambda(-(k-2))\to\dots\to a_{k-2,k-2*}\Lambda((-(k-2))\right].\]
 Each of the factors $a_{l_{1},l_{2}*}\Lambda$ is a direct sum of
factors of the form $a_{J_{1},J_{2}*}\Lambda$, where $\mbox{card}J_{i}=l_{i}$
for $i=1,2$ and $a_{J_{1},J_{2}}:Y_{J_{1},J_{2}}\hookrightarrow Y$
is a closed immersion. Each factor $a_{J_{1},J_{2}*}\Lambda$ is a
simple $(2-2n)$-shifted perverse sheaf, so we have decompositions
into simple factors for both $\mathcal{P}_{k}$ and $\mathcal{R}_{k-1}$.
It is straightforward to see that $\mathcal{P}_{k}$ and $\mathcal{R}_{k-1}$
have no simple factors in common. Thus, any map $\mathcal{P}_{k}\to\mathcal{R}_{k-1}$
must vanish. The same holds true for any map $\mathcal{P}_{k}\to\mathcal{R}_{k-j}$
for any $2\leq j\leq k$. 
\end{proof}
The filtration with graded pieces $\mathcal{P}_{k}$ on $\ker N$
induces a filtration on $\ker N/\mbox{im}N\cap\ker N$ whose graded
pieces are $\mathcal{P}_{k}/\mbox{im}\bar{N}$. Indeed, it suffices
to check that the image of $\mbox{im}N$ in $\mathcal{P}_{k}$ coincides
with $\mbox{im}\bar{N}$. The simplest way to see this is again by
using a diagram chase. First, it is obvious that for \[
\bar{N}:R^{k}\psi\Lambda_{X_{1}\times X_{2}}\to R^{k-1}\psi\Lambda_{X_{1}\times X_{2}}\]
 we have $\mbox{im}\bar{N}\subseteq gr^{k}\mbox{im}N$. Now let $x\in gr^{k}\mbox{im}N$.
This means that there exists a lift $\tilde{x}\in\tau_{\leq k-1}R\psi\Lambda_{X_{1}\times X_{2}}$
of $x$ and an element $\tilde{y}\in\tau_{\leq k+j}R\psi\Lambda_{X_{1}\times X_{2}}$
with $0\leq j\leq2n-k$ such that $\tilde{x}=N\tilde{y}$. In order
to conclude that $x\in\mbox{im}\bar{N}$, it suffices to show that
we can take $j=0$. In the case $j\geq1$, let $y\in R^{k+j}\psi\Lambda_{X_{1}\times X_{2}}$
be the image of $\tilde{y}$. We have $\bar{N}y=0$ and in this case
we've seen in the proof of Corollary \ref{kernel of N} that we can
find $\tilde{y}^{(1)}\in\tau_{\leq k+j-1}R\psi\Lambda_{X_{1}\times X_{2}}$
such that $N(\tilde{y}-\tilde{y}^{(1)})=0$. In other words, $\tilde{x}=N\tilde{y}^{(1)}$
and we can replace $j$ by $j-1$. After finitely many steps, we can
find $\tilde{y}^{(j)}\in\tau_{\leq k}R\psi\Lambda_{X_{1}\times X_{2}}$
such that $\tilde{x}=N\tilde{y}^{(j)}$ . Thus, $x\in\mbox{im}\bar{N}$. 
\begin{lem}
\label{base case}The filtration of $R\psi\Lambda_{X_{1}\times X_{2}}$
by $\tau_{\leq k}R\psi\Lambda$ induces a filtration on $\ker N/\mbox{im}N\cap\ker N$
with the $(k+1)$-st graded piece $a_{k,k*}\Lambda(-k)[-2k]$ for
$0\leq k\leq n-1$. \end{lem}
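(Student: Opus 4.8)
The plan is to reduce the statement to the explicit complexes computed in Lemma \ref{ker of N bar}. By the discussion following Corollary \ref{kernel of N}, the filtration $\tau_{\leq\bullet}R\psi\Lambda_{X_{1}\times X_{2}}$ induces on $\ker N/(\mbox{im}N\cap\ker N)$ a filtration whose $(k+1)$-st graded piece is $\mathcal{P}_{k}/(\mathcal{P}_{k}\cap\mbox{im}\bar{N}_{k+1})$, where $\bar{N}_{k}$ denotes the map $R^{k}\psi\Lambda_{X_{1}\times X_{2}}[-k]\to R^{k-1}\psi\Lambda_{X_{1}\times X_{2}}[-(k-1)]$ induced by $N$ and $\mathcal{P}_{k}=\ker\bar{N}_{k}$ as in Lemma \ref{ker of N bar}. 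Since $\mathcal{R}_{k+1}=\mbox{coker}\,\bar{N}_{k+1}$ by Lemma \ref{ker of N bar}, the projection $R^{k}\psi\Lambda_{X_{1}\times X_{2}}[-k]\twoheadrightarrow\mathcal{R}_{k+1}$ has kernel $\mbox{im}\bar{N}_{k+1}$, so the graded piece in question is exactly the image of the composite
\[
\mathcal{P}_{k}\hookrightarrow R^{k}\psi\Lambda_{X_{1}\times X_{2}}[-k]\twoheadrightarrow\mathcal{R}_{k+1}.
\]
It therefore suffices to identify this image with $a_{k,k*}\Lambda(-k)[-2k]$.

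By Lemma \ref{ker of N bar} the complex representing $\mathcal{P}_{k}$ is concentrated in cohomological degrees $2k,\dots,2n-2$, with degree-$2k$ term $a_{k,k*}\Lambda(-k)$, while the complex representing $\mathcal{R}_{k+1}$ is concentrated in degrees $k,\dots,2k$, with degree-$2k$ term $a_{k,k*}\Lambda(-k)$. Hence the composite chain map $\mathcal{P}_{k}\to\mathcal{R}_{k+1}$ has zero source in degrees below $2k$ and zero target in degrees above $2k$, so it vanishes in every degree except $2k$, where it is a map $a_{k,k*}\Lambda(-k)\to a_{k,k*}\Lambda(-k)$. Unwinding the constructions in the proof of Lemma \ref{ker of N bar} (and using Lemma \ref{coefficients} to the effect that $c_{k,k}^{k}=k+1$, $c_{k,k}^{k+1}=k$, while the other $(l_{1},l_{2})$ with $l_{1}+l_{2}=2k$ contribute nothing to either complex), the first arrow is on the degree-$2k$ component the map $f^{k,k}:x\mapsto(x,-x,\dots,(-1)^{k}x)$ into the $k+1$ copies of $a_{k,k*}\Lambda(-k)$ indexed by $0\leq l\leq k$, while the second arrow is on the same component the cokernel of $\bar{N}_{k+1}^{k,k}:e_{m}\mapsto t_{l}(T)(e_{m-1}+e_{m})$ for $1\leq m\leq k$, which is the linear functional $(z_{0},\dots,z_{k})\mapsto\sum_{l=0}^{k}(-1)^{l}z_{l}$. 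The composite is therefore multiplication by $\sum_{l=0}^{k}(-1)^{l}(-1)^{l}=k+1$ on $a_{k,k*}\Lambda(-k)$.

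Because $\Lambda$ is $\mathbb{Q}_{l}$ or $\bar{\mathbb{Q}}_{l}$, the scalar $k+1$ is invertible, so the term-by-term image of the chain map $\mathcal{P}_{k}\to\mathcal{R}_{k+1}$ is $a_{k,k*}\Lambda(-k)$ placed in degree $2k$, i.e.\ $a_{k,k*}\Lambda(-k)[-2k]$; by Corollary \ref{ker and coker} (applied to the subcomplex $\mathcal{P}_{k}$ of $R^{k}\psi\Lambda_{X_{1}\times X_{2}}[-k]$ and to the subcomplex $\mbox{im}\bar{N}_{k+1}$, whose term-wise extensions split since the relevant sheaves are free over $\Lambda$) this complex represents the image in the abelian category of $-(2n-2)$-shifted perverse sheaves. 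This identifies the $(k+1)$-st graded piece with $a_{k,k*}\Lambda(-k)[-2k]$ for all $0\leq k\leq n-1$. The delicate point is the bookkeeping: one must verify that $\mathcal{P}_{k}$ and $\mathcal{R}_{k+1}$ overlap only in cohomological degree $2k$ and only in the summand $a_{k,k*}\Lambda(-k)$ there (so the image is automatically concentrated in that degree), and then track $f^{k,k}$ and the cokernel projection through the identifications of Lemma \ref{ker of N bar} precisely enough to see that the resulting scalar is $k+1$; it is the non-vanishing of this scalar — hence the hypothesis of characteristic-zero coefficients — that makes the graded piece all of $a_{k,k*}\Lambda(-k)[-2k]$ rather than a proper subobject.
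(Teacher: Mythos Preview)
Your proof is correct and follows essentially the same approach as the paper: identify the $(k+1)$-st graded piece with the image of the composite $\mathcal{P}_{k}\hookrightarrow R^{k}\psi\Lambda[-k]\twoheadrightarrow\mathcal{R}_{k+1}$, observe that this composite is concentrated in cohomological degree $2k$ on the summand $a_{k,k*}\Lambda(-k)$, and check that the resulting endomorphism of $a_{k,k*}\Lambda(-k)$ is an isomorphism. The only differences are that you pin down the degree-$2k$ concentration by a support argument (the complexes $\mathcal{P}_{k}$ and $\mathcal{R}_{k+1}$ are concentrated in degrees $\geq 2k$ and $\leq 2k$ respectively) whereas the paper argues via common simple factors, and you compute the scalar explicitly as $k+1$ and invoke characteristic zero, whereas the paper simply asserts the composite is an isomorphism.
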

\begin{proof}
First, we need to compute the quotient $R^{0}\psi\Lambda_{X_{1}\times X_{2}}/\mbox{im}N$,
which is the same as $R^{0}\psi\Lambda_{X_{1}\times X_{2}}/\mathcal{Q}_{1}=\mathcal{R}_{1}$
and $\mathcal{R}_{1}\simeq a_{0,0*}\Lambda$ by Lemma \ref{ker and coker}.

Now we must compute for each $k\geq0$ the quotient of $(2-2n)$-shifted
perverse sheaves $\mathcal{P}_{k}/\mbox{im}\bar{N}$. This is the
same as $\mathcal{P}_{k}/\mathcal{Q}_{k+1}$, which is also the image
of $\mathcal{P}_{k}$ in $\mathcal{R}_{k+1}$ via \[
\mathcal{P}_{k}\hookrightarrow R^{k}\psi\Lambda[-k]\twoheadrightarrow\mathcal{R}_{k+1}.\]
Recall that we have decompositions for both $\mathcal{P}_{k}$ and
$\mathcal{R}_{k+1}$ in terms of simple objects in the category of
$(2-2n)$-shifted perverse sheaves,\[
\mathcal{P}_{k}=\left[a_{k,k*}\Lambda(-k)\stackrel{\wedge\delta}{\to};a_{k,k+1*}\Lambda(-k)\bigoplus a_{k+1,k*}\Lambda(-k)\to\dots\stackrel{\wedge\delta}{\to}a_{n-1,n-1*}\Lambda(-k)\right]\]
 \[
\mbox{ and }\mathcal{R}_{k+1}=\left[\bigoplus_{j=0}^{k+1}a_{j,k-j*}\Lambda(-k)\to\dots\to a_{k,k*}\Lambda(-k)\right].\]
The only simple factors that show up in both decompositions are those
that show up in $a_{k,k*}\Lambda(-k)[-2k]$, so these are the only
factors that may have non-zero image in $\mathcal{R}_{k+1}$. Thus,
$\mathcal{P}_{k}/\mbox{im}\bar{N}$ is a quotient of $a_{k,k*}\Lambda(-k)[-2k]$
and it remains to see that it is the whole thing. As seen in Lemma
\ref{ker of N bar}, the map $\mathcal{P}_{k}\to\mathcal{R}_{k+1}$
can be described as a composition of chain maps. The composition in
degree $2k$ is the map \[
a_{k,k*}\Lambda(-k)\hookrightarrow a_{k,k*}\Lambda(-k)^{\oplus k+1}\twoheadrightarrow a_{k,k*}\Lambda(-k)\]
where the inclusion sends $x\mapsto(x,-x,\dots,(-1)^{k+1}x)$ and
the surjection is a quotient by $(x,x,0,\dots,0)$, $(0,x,x,0\dots,0),\dots,$$(0,\dots,0,x,x)$
for $x\in a_{k,k*}\Lambda(-k)$. It is elementary to check that the
composition of these two maps is an isomorphism, so we are done. 
\end{proof}
$ $Analogously, we can compute the kernel and cokernel of \[
\bar{N}^{j}:R^{k}\psi\Lambda_{X_{1}\times X_{2}}[-k]\to R^{k-j}\psi\Lambda_{X_{1}}[-k+j]\]
for $2\leq j\leq k\leq2n-2$ and use this to recover the graded pieces
of a filtration on $\ker N^{j}/\ker N^{j-1}$ and on $(\ker N^{j}/\ker N^{j-1})/(\mbox{im}N\cap\ker N^{j})$. 
\begin{lem}
\label{graded pieces}Let $2\leq j\leq2n-2$. The filtration of $R\psi\Lambda_{X_{1}\times X_{2}}$
by $\tau_{\leq k}R\psi\Lambda$ induces a filtration on \[
(\ker N^{j}/\ker N^{j-1})/(\mbox{im}N\cap\ker N^{j}).\]
The first graded piece of this filtration is isomorphic to \[
\bigoplus_{i=0}^{j-1}a_{i,j-1-i*}\Lambda(-j+1)[-j+1].\]
For $k\geq1$, the $(k+1)$-st graded piece is isomorphic to \[
(\ker\bar{N}^{j}/\ker\bar{N}^{j-1})/(\mbox{im}\bar{N}\cap\ker\bar{N}^{j})\]
 where \[
\bar{N}^{j}:R^{k+j-1}\psi\Lambda[-(k+j-1)]\to R^{k-1}\psi\Lambda[-k+1].\]
More explicitly, the $(k+1)$-st graded piece is isomorphic to \[
\bigoplus_{i=1}^{j}a_{k+i-1,k+j-i*}\Lambda(-(k+j-1))[-2k-j+1].\]
\end{lem}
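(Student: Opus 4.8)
The plan is to follow the pattern of Corollary \ref{kernel of N} and Lemma \ref{base case}, with $N$ replaced by $N^{j}$ throughout. The first step is to generalize Lemma \ref{ker of N bar}: for $j\le k\le 2n-2$ one computes the kernel and cokernel of
\[
\bar N^{j}\colon R^{k}\psi\Lambda_{X_{1}\times X_{2}}[-k]\longrightarrow R^{k-j}\psi\Lambda_{X_{1}\times X_{2}}[-(k-j)]
\]
as explicit $-(2n-2)$-shifted perverse sheaves (analogues $\mathcal{P}_{k}^{(j)}$ and $\mathcal{R}_{k}^{(j)}$ of the complexes $\mathcal{P}_{k}$ and $\mathcal{R}_{k}$). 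Using the decomposition $R^{k}\psi\Lambda_{X_{1}\times X_{2}}[-k]=\bigoplus_{l=0}^{k}R^{l}\psi\Lambda_{X_{1}}\otimes R^{k-l}\psi\Lambda_{X_{2}}[-k]$ and Proposition \ref{formula for N bar}, one expands $\bar N^{j}=(\bar N_{1}\otimes 1+1\otimes\bar N_{2})^{j}=\sum_{a=0}^{j}\binom{j}{a}\,\bar N_{1}^{a}\otimes\bar N_{2}^{j-a}$; by Lemma \ref{twisting} each $\bar N_{1}^{a}\otimes\bar N_{2}^{j-a}$ acts on the resolutions by $a_{l_{1},l_{2}*}\Lambda$-terms simply as the shift of the summand index $l\mapsto l-a$, up to a Tate twist by $t_{l}(T)^{j}$. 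Hence, on the $a_{l_{1},l_{2}*}\Lambda$-isotypic component in homological degree $l_{1}+l_{2}$, the map $\bar N^{j}$ is given by an explicit matrix with binomial-coefficient entries from $\Lambda(-k)^{\oplus c^{k}_{l_{1},l_{2}}}$ to $\Lambda(-(k-j))^{\oplus c^{k-j}_{l_{1},l_{2}}}$, with the $c$'s of Lemma \ref{coefficients}. Computing the kernel and cokernel of this matrix is exactly the computation of $N^{j}$ on the weight spaces of the Clebsch--Gordan decomposition $V_{l_{1}}\otimes V_{l_{2}}$ of the tensor product of two Jordan strings; that this is the structure one sees --- i.e. that the complex-with-$\bar N$ is a direct sum of Jordan strings in the expected pattern --- follows from the hard-Lefschetz structure governing the monodromy on each $R\psi\Lambda_{X_{i}}$, which is Saito's weight spectral sequence \cite{Saito}, together with Lemma \ref{twisting}. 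Corollary \ref{ker and coker} then upgrades the isotypic-component answer to a short exact sequence of shifted perverse sheaves, and one finds that $\mathcal{P}_{k}^{(j)}$, resp. $\mathcal{R}_{k}^{(j)}$, has as constituents the $a_{l_{1},l_{2}*}\Lambda(\cdot)[-(l_{1}+l_{2})]$ for which the Jordan string through that component has length $\ge j+1$, resp. $\le j-1$.

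The second step reduces the statement about $(\ker N^{j}/\ker N^{j-1})/(\mbox{im}\,N\cap\ker N^{j})$ to these graded computations, exactly as Corollary \ref{kernel of N} and the discussion preceding Lemma \ref{base case} reduce the case $j=1$ to $\ker\bar N$ and $\mbox{im}\,\bar N$. Since $N^{j}$ lowers the canonical-filtration level by at least $j$, the filtration by $\tau_{\le k}R\psi\Lambda_{X_{1}\times X_{2}}$ induces a filtration on each of $\ker N^{j}$, $\ker N^{j}/\ker N^{j-1}$, and the further quotient. A lifting argument in the abelian category of $-(2n-2)$-shifted perverse sheaves --- choose a lift in $\tau_{\le k+j-1}R\psi\Lambda$ of a given class in $\ker\bar N^{j}$, note that $N^{j}$ of it lands in $\tau_{\le k-2}R\psi\Lambda$, and correct it successively, the obstruction at each stage being a map between two of the complexes $\mathcal{P}_{\bullet}^{(\bullet)}$, $\mathcal{R}_{\bullet}^{(\bullet)}$ that have no simple perverse constituent in common and hence is zero --- identifies the $(k+1)$st graded piece with $(\ker\bar N^{j}/\ker\bar N^{j-1})/(\mbox{im}\,\bar N\cap\ker\bar N^{j})$, computed with $\bar N^{j}\colon R^{k+j-1}\psi\Lambda[-(k+j-1)]\to R^{k-1}\psi\Lambda[-(k-1)]$ and $\bar N\colon R^{k+j}\psi\Lambda[-(k+j)]\to R^{k+j-1}\psi\Lambda[-(k+j-1)]$. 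All the diagram chases are performed as in Corollary \ref{ker and coker}, using the formalism of \cite{Ryan} and Beilinson's identification of $D_{c}^{b}(Y,\Lambda)$ with the derived category of perverse sheaves \cite{Be}. The case $k=0$, where $R^{k-1}\psi\Lambda$ does not exist, is treated directly as at the start of the proof of Lemma \ref{base case}; it yields the bottom nonzero graded piece, which is the $k=0$ specialization of the general formula after reindexing.

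Assembling the two steps, on each $a_{l_{1},l_{2}*}\Lambda$-isotypic component the quotient $(\ker\bar N^{j}/\ker\bar N^{j-1})/(\mbox{im}\,\bar N\cap\ker\bar N^{j})$ is nonzero precisely when the Jordan string through that component has length exactly $j$, that is when $|l_{1}-l_{2}|\le j-1$ and $l_{1}+l_{2}=2k+j-1$; writing $l_{1}=k+i-1$ and $l_{2}=k+j-i$ for $i=1,\dots,j$ one gets a one-dimensional primitive $a_{k+i-1,k+j-i*}\Lambda(-(k+j-1))[-(2k+j-1)]$ for each such $i$, hence the asserted $\bigoplus_{i=1}^{j}a_{k+i-1,k+j-i*}\Lambda(-(k+j-1))[-(2k+j-1)]$, and the $k=0$ case gives $\bigoplus_{i=0}^{j-1}a_{i,j-1-i*}\Lambda(-(j-1))[-(j-1)]$. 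I expect the main obstacle to be the bookkeeping of the first step: correctly writing down the binomial-coefficient matrices in the three regimes of Lemma \ref{coefficients}, computing their kernels and cokernels, and --- crucially --- verifying through Saito's weight spectral sequence for each factor that the complex really is a sum of Jordan strings in the expected Clebsch--Gordan pattern, so that this combinatorics applies. The lifting and diagram-chase arguments of the second step are routine given Corollaries \ref{ker and coker} and \ref{kernel of N}, but need to be set up carefully over $\Lambda=\mathbb{Q}_{l}$ or $\bar{\mathbb{Q}}_{l}$.
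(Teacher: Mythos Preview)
Your approach is correct but takes a genuinely different route from the paper. The paper proves the lemma by induction on $j$: the base case $j=1$ is Corollary~\ref{kernel of N} and Lemma~\ref{base case}, and for the induction step the first graded piece is obtained not by computing $\ker\bar N^{j}$ directly but by successively quotienting $\mathcal{R}_{j}$ (the cokernel of $\bar N$ from Lemma~\ref{ker of N bar}) by the higher graded pieces of $(\ker N^{j'}/\ker N^{j'-1})/(\mbox{im}\,N\cap\ker N^{j'})$ for $j'<j$, which are known by the induction hypothesis. The $(k+1)$-st piece for $k\ge 1$ is handled the same way after first identifying $\ker\bar N^{j}$ as in Lemma~\ref{ker of N bar} and quotienting by $\mathcal{Q}_{k+j}$. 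No Clebsch--Gordan analysis is needed: the induction reduces everything to the already-known structure of the $\mathcal{R}_{k}$.

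Your direct approach---expanding $\bar N^{j}=\sum_{a}\binom{j}{a}\bar N_{1}^{a}\otimes\bar N_{2}^{j-a}$ and reading off $\ker\bar N^{j}$, $\mathrm{coker}\,\bar N^{j}$ from the Jordan-block structure of the tensor product of the two strings supplied by Lemma~\ref{twisting}---is more conceptual and explains transparently why the answer is governed by Clebsch--Gordan combinatorics, but it requires you to verify that combinatorics carefully across the three regimes of Lemma~\ref{coefficients}, as you yourself note. The paper's induction avoids this bookkeeping entirely by never computing $\bar N^{j}$ for $j>1$ explicitly; instead it peels off one layer at a time using only $\bar N$ and the explicit $\mathcal{R}_{k}$. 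Your lifting argument in step two is essentially the same as the paper's (the vanishing of obstruction maps by disjointness of simple constituents), so the genuine difference is entirely in step one: direct Clebsch--Gordan versus induction on $j$.
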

\begin{proof}
We will prove the lemma by induction on $j$. The base case $j=1$
is proven in Corollary \ref{kernel of N} and Lemma \ref{base case}.
Assume it is true for $j-1$. 

To prove the first claim, note that the first graded piece of \[
(\ker N^{j}/\mbox{im}N\cap\ker N^{j})/(\ker N^{j-1}/\mbox{im}N\cap\ker N^{j-1})\]
has to be a quotient of \[
R^{j-1}\psi\Lambda_{X_{1}\times X_{2}}[-j+1]/\mathcal{Q}_{j}\simeq\mathcal{R}_{j}.\]
This is true because $\tau_{\leq j-1}R\psi\Lambda_{X_{1}\times X_{2}}\subseteq\ker N^{j}$
and $\tau_{\leq j-2}R\psi\Lambda_{X_{1}\times X_{2}}\subseteq\ker N^{j-1}$
and \[
R^{j-1}\psi\Lambda_{X_{1}\times X_{2}}[-j+1]=\tau_{\leq j-1}R\psi\Lambda_{X_{1}\times X_{2}}/\tau_{\leq j-2}R\psi\Lambda_{X_{1}\times X_{2}}.\]
More precisely, the first graded piece has to be a quotient of \[
\mathcal{R}_{j}/(\ker N^{j-2}/\mbox{im}N\cap\ker N^{j-2})\]
by the second graded piece of \[
(\ker N^{j-1}/\ker N^{j-2})/\mbox{(im}N\cap\ker N^{j-1}).\]
(Here, we abusively write \[
\ker N^{j-2}/\mbox{im}N\cap\ker N^{j-2}\]
 where we mean the image of this object in $R_{j}$.) By the induction
hypothesis, this second graded piece is\[
\bigoplus_{i=1}^{j-1}a_{i,j-i*}\Lambda(-j+1)[-j].\]
Continuing this argument, we see that in order to get the first graded
piece of $(\ker N^{j}/\ker N^{j-1})/\mbox{(im}N\cap\ker N^{j})$ we
must quotient $\mathcal{R}_{j}$ successively by \[
\bigoplus_{i=1}^{j-k}a_{k+i-1,j-i*}\Lambda(-j+1)[-k-j+1],\]
with $k$ going from $j-1$ down to $1$. (This corresponds to quotienting
out successively by the $j$th graded piece of $\ker N/\mbox{(im}N\cap\ker N)$,
the $(j-1)$st graded piece of $(\ker N^{2}/\ker N)/(\mbox{im}N\cap\ker N^{2})$
down to the second graded piece of $(\ker N^{j-1}/\ker N^{j-2})/(\mbox{im}N\cap\ker N^{j-1})$.)
We know that \[
\mathcal{R}_{j}=\left[\bigoplus_{i=0}^{j-1}a_{i,j-1-i*}\Lambda(-(j-1))\to\dots\to a_{j-1,j-1*}\Lambda((-(j-1))\right],\]
with general term in degree $k+j-1$ equal to \[
\bigoplus_{i=1}^{j-k}a_{k+i-1,j-i*}\Lambda(-(j-1).\]
After quotienting out successively, we are left with only the degree
$j-1$ term, which is \[
\bigoplus_{i=0}^{j-1}a_{i,j+1-i*}\Lambda(-(j-1))[-(j-1)],\]
as desired. 

In order to identify the $(k+1)$-st graded piece of \[
(\ker N^{j}/\ker N^{j-1})/(\mbox{im}N\cap\ker N^{j})\]
for $k\geq1$, we first identify the kernel of $\bar{N}^{j}:R^{k+j-1}\psi\Lambda_{X_{1}\times X_{2}}\to R^{k-1}\psi\Lambda_{X_{1}\times X_{2}}$
as a map of perverse sheaves, as in Lemma \ref{ker of N bar}. Then
we can identify it with the $(k+1)$-st graded piece of $\ker N^{j}$
as in Lemma \ref{base case} and quotient by $\mathcal{Q}_{k+j}$.
Finally, we can use induction as above to compute the $(k+1)$-st
graded piece of $(\ker N^{j}/\ker N^{j-1})/(\mbox{im}N\cap\ker N^{j})$.
\end{proof}
Let \[
\mbox{Gr}^{q}\mbox{Gr}_{p}R\psi\Lambda=(\ker N^{p}\cap\mbox{im}N^{q})/(\ker N^{p-1}\cap\mbox{im}N^{q})+(\ker N^{p}\cap\mbox{im}N^{q+1}).\]
The monodromy filtration $M_{r}R\psi\Lambda$ has graded pieces $\mbox{Gr}_{r}^{M}R\psi\Lambda$
isomorphic to \[
\mbox{Gr}_{r}^{M}R\psi\Lambda\simeq\bigoplus_{p-q=r}\mbox{Gr}^{q}\mbox{Gr}_{p}R\psi\Lambda\]
by Lemma 2.1 of \cite{Saito}, so to understand the graded pieces
of the monodromy filtration it suffices to understand the $\mbox{Gr}^{q}\mbox{Gr}_{p}R\psi\Lambda$.
Lemma \ref{graded pieces} exhibits a filtration on $\mbox{Gr}^{0}\mbox{Gr}_{p}R\psi\Lambda$
with the $(k+1)$-st graded piece isomorphic to \[
\bigoplus_{i=1}^{p}a_{k+i-1,k+p-i*}\Lambda(-(k+p-1))[-2k-p+1].\]
The action of $N^{q}$ induces an isomorphism of $\mbox{Gr}^{0}\mbox{Gr}_{p+q}R\psi\Lambda$
with $\mbox{Gr}^{q}\mbox{Gr}_{p}R\psi\Lambda$, so there is a filtration
on the latter with the $(k+1)$-st graded piece isomorphic to \[
\bigoplus_{i=1}^{p+q}a_{k+i-1,k+p+q-i*}\Lambda(-(k+p-1))[-2k-p-q+1].\]
We can use the spectral sequence associated to a filtration to compute
the terms in the monodromy spectral sequence \[
E_{1}^{r,m-r}=H^{m}(Y_{\bar{\mathbb{F}}},\mbox{ gr}_{-r}^{M}R\psi\Lambda)=\bigoplus_{p-q=-r}H^{m}(Y_{\bar{\mathbb{F}}},\mbox{Gr}^{q}\mbox{Gr}_{p}R\psi\Lambda)\]
\[
\Rightarrow H^{m}(Y_{\bar{\mathbb{F}}},R\psi\Lambda)=H^{m}(X_{\bar{K}},\Lambda).\]

\begin{cor}
There is a spectral sequence \[
E_{1}^{k+1,m-k-1}=\bigoplus_{i=1}^{p+q}H^{m}(Y_{\bar{\mathbb{F}}},a_{k+i-1,k+p+q-i*}\Lambda(-(k+p-1))[-2k-p-q+1])\]
\[
\Rightarrow H^{m}(Y_{\bar{\mathbb{F}}},\mbox{Gr}^{q}\mbox{Gr}_{p}R\psi\Lambda)\]
 compatible with the action of $G_{\mathbb{F}}$. This can be rewritten
as \[
E_{1}^{k+1,m-k-1}=\bigoplus_{i=1}^{p+q}H^{m-2k-p-q+1}(Y_{\bar{\mathbb{F}}}^{(k+i-1,k+p+q-i)},\Lambda(-(k+p-1)))\]
\[
\Rightarrow H^{m}(Y_{\bar{\mathbb{F}}},\mbox{Gr}^{q}\mbox{Gr}_{p}R\psi\Lambda).\]
\end{cor}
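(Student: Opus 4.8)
The plan is to take the spectral sequence attached to the filtration on $\mbox{Gr}^q\mbox{Gr}_p R\psi\Lambda$ described in the paragraph just above the statement, read off its $E_1$-page from the graded pieces listed there, and then rewrite the terms using finiteness of the maps $a_{l_1,l_2}$. Concretely, I would first record that the filtration on $\mbox{Gr}^q\mbox{Gr}_p R\psi\Lambda$ obtained by transporting the filtration of Lemma \ref{graded pieces} along the isomorphism $\mbox{Gr}^0\mbox{Gr}_{p+q}R\psi\Lambda\stackrel{\sim}{\to}\mbox{Gr}^q\mbox{Gr}_p R\psi\Lambda$ induced by $N^q$ is a finite filtration by $-(2n-2)$-shifted perverse subsheaves, with $(k+1)$-st graded piece $\bigoplus_{i=1}^{p+q}a_{k+i-1,k+p+q-i*}\Lambda(-(k+p-1))[-2k-p-q+1]$, as already computed. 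The hypercohomology spectral sequence of a finite filtration of an object of $D_c^b(Y_{\bar{\mathbb{F}}},\Lambda)$ then reads $E_1^{s,t}=H^{s+t}(Y_{\bar{\mathbb{F}}},\mbox{gr}^s)\Rightarrow H^{s+t}(Y_{\bar{\mathbb{F}}},\mbox{Gr}^q\mbox{Gr}_p R\psi\Lambda)$; setting $s=k+1$, $t=m-k-1$ and inserting the graded pieces gives the first displayed spectral sequence. Compatibility with $G_{\mathbb{F}}=\mbox{Gal}(\bar{\mathbb{F}}/\mathbb{F})$ is automatic, since $Y$, the strata $Y^{(l_1,l_2)}$ and the maps $a_{l_1,l_2}$ descend to $\mathbb{F}$, and $R\psi\Lambda$, the canonical filtration $\tau_{\le k}R\psi\Lambda$, the operator $N$ (with its Tate-twisted interaction with Frobenius already absorbed into the coefficients), the monodromy filtration and the kernel/image subquotients assembling $\mbox{Gr}^q\mbox{Gr}_p R\psi\Lambda$ all carry compatible $G_{\mathbb{F}}$-actions, so the filtration is $G_{\mathbb{F}}$-stable.

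To pass to the second form I would use that $a_{l_1,l_2}\colon Y^{(l_1,l_2)}=Y_1^{(l_1)}\times Y_2^{(l_2)}\to Y$ is finite (a finite disjoint union of closed immersions), hence $Ra_{l_1,l_2*}=a_{l_1,l_2*}$ is exact and $H^m(Y_{\bar{\mathbb{F}}},a_{l_1,l_2*}\mathcal{F})\cong H^m(Y^{(l_1,l_2)}_{\bar{\mathbb{F}}},\mathcal{F})$ for every sheaf $\mathcal{F}$ on $Y^{(l_1,l_2)}_{\bar{\mathbb{F}}}$. Applying this with $\mathcal{F}=\Lambda(-(k+p-1))[-2k-p-q+1]$ and pulling the shift out of the cohomological degree converts each summand $H^m(Y_{\bar{\mathbb{F}}},a_{k+i-1,k+p+q-i*}\Lambda(-(k+p-1))[-2k-p-q+1])$ into $H^{m-2k-p-q+1}(Y^{(k+i-1,k+p+q-i)}_{\bar{\mathbb{F}}},\Lambda(-(k+p-1)))$, which is the rewritten spectral sequence.

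This is essentially an assembly of standard machinery, so the only real work is bookkeeping. The step I expect to require the most care is transporting the filtration of Lemma \ref{graded pieces} along $N^q$: one must keep the Tate twists and shifts consistent (it is precisely the interaction of $N$ with Frobenius that produces the coefficient $\Lambda(-(k+p-1))$), and one must confirm that $N^q$ genuinely restricts to an isomorphism on the subquotients in question, so that the transported filtration is again a filtration by perverse subsheaves and its spectral sequence is legitimate. Everything else is formal, and no new geometric input should be needed.
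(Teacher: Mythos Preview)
Your proposal is correct and follows essentially the same approach as the paper: both obtain the spectral sequence from the filtration on $\mbox{Gr}^q\mbox{Gr}_p R\psi\Lambda$ whose graded pieces were computed just above the statement. The paper's proof is a one-liner citing Lemma~5.2.18 of \cite{MSaito} to justify that a filtration by $(2-2n)$-shifted perverse sheaves yields a well-defined spectral sequence (via the notion of quasi-filtration in the derived category), whereas you spell out the bookkeeping for the $E_1$-terms, the $G_{\mathbb{F}}$-equivariance, and the rewriting via finiteness of $a_{l_1,l_2}$ more explicitly; the substance is the same.
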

\begin{proof}
This follows from Lemma 5.2.18 of \cite{MSaito}, since all filtrations
we are working with are filtrations as $(2-2n)$-shifted perverse
sheaves and these correspond to quasi-filtrations in the sense of
M. Saito in the derived category. 
\end{proof}

\subsection{More general schemes}

In this subsection, we will explain how the results of the previous
section concerning products of semistable schemes apply to more general
schemes, in particular to the Shimura varieties $X_{U}/\mathcal{O}_{K}$. 

Let $X'/\mathcal{O}_{K}$ be a scheme such that the completions of
the strict henselizations $\mathcal{O}_{X,'s}^{\wedge}$ at closed
geometric points $s$ are isomorphic to \[
W[[X_{1},\dots,X_{n},Y_{1},\dots Y_{n}]]/(X_{i_{1}}\cdot\dots\cdot X_{i_{r}}-\pi,Y_{j_{1}}\cdot\dots\cdot Y_{j_{s}}-\pi)\]
for some indices $i_{1},\dots,i_{r},j_{1},\dots,j_{s}\in\{1,\dots n\}$
and some $1\leq r,s\leq n$. Also assume that the special fiber $Y'$
is a union of closed subschemes $Y'_{1,j}$ with $j\in\{1,\dots n\}$,
which are cut out by one local equation, such that if $s$ is a closed
geometric point of $Y'_{1,j}$, then $j\in\{i_{1},\dots,i_{r}\}$
and $Y'_{1,j}$ is cut out in $\mathcal{O}_{X',s}^{\wedge}$ by the
equation $X_{j}=0$. Similarly, assume that $Y'$ is a union of closed
subschemes $Y'_{2,j}$ with $j\in\{1,\dots,n\}$, which are cut out
by one local equation such that if $s$ is a closed geometric point
of $Y'_{2,j}$ then $j\in\{j_{1},\dots,j_{r}\}$ and $Y'_{2,j}$ is
cut out in $\mathcal{O}_{X',s}^{\wedge}$ by the equation $Y_{j}=0$. 

Let $X/X'$ be smooth of dimension $m$ and let $Y$ be the special
fiber of $X$ and $Y_{i,j}=Y'_{i,j}\times_{X'}X$ for $i=1,2$ and
$j=1,\dots,n$. As in Lemma \ref{locally etale over}, $X'$ is locally
etale over\[
X_{r,s}=\mbox{Spec }\mathcal{O}_{K}[X_{1},\dots,X_{n},Y_{1},\dots Y_{n}]/(\prod_{i=1}^{r}X_{i}-\pi,\prod_{j-1}^{s}Y_{j}-\pi),\]
 so $X$ is locally etale over

\[
X_{r,s,m}=\mbox{Spec }\mathcal{O}_{K}[X_{1},\dots,X_{n},Y_{1},\dots Y_{n},Z_{1},\dots,Z_{m}]/(\prod_{i=1}^{r}X_{i}-\pi,\prod_{j-1}^{s}Y_{j}-\pi),\]
which is a product of semistable schemes. The results of Section 3
apply to $X'$ and it is easy to check that they also apply to $X$.
In particular, we know that the inertia $I_{K}$ acts trivially on
the sheaves of nearby cycles $R^{k}\psi\Lambda$ of $X$ and we have
a description of the $R^{k}\psi\Lambda$ in terms of the log structure
we put on $X/\mbox{Spec }\mathcal{O}_{K}$. Let $a_{j}^{i}:Y_{i,j}\to Y$
denote the closed immersion for $i=1,2$ and $j\in\{1,\dots n\}$.
Then by Corollary \ref{globalnearby}, we have an isomorphism \[
R^{k}\psi\Lambda(k)\simeq\wedge^{k}((\oplus_{j=1}^{n}a_{j*}^{1}\Lambda)/\Lambda+(\oplus_{j=1}^{n}a_{j*}^{2}\Lambda)/\Lambda)\]

For $i=1,2$ and $J_{i}\subseteq\{1,\dots,n\}$, let \[
Y_{J_{1},J_{2}}=(\cap_{j_{1}\in J_{1}}Y_{1,j_{1}})\bigcap(\cap_{j_{2}\in J_{2}}Y_{2,j_{2}})\]
and let $a_{J_{1},J_{2}}:Y_{J_{1},J_{2}}\to Y$ be the closed immersion.
Set \[
Y^{(l_{1},l_{2})}=\bigsqcup_{\#J_{1}=l_{1}+1,\#J_{2}=l_{2}+1}Y_{J_{1},J_{2}}\]
 and let $a_{l_{1},l_{2}}:Y^{(l_{1},l_{2})}\to Y$ be the projection.
The scheme $Y^{(l_{1},l_{2})}$ is smooth of dimension $\dim Y-l_{1}-l_{2}$
(we can see this from the strict local rings). 

We can write \[
R^{k}\psi\Lambda\simeq\bigoplus_{l=0}^{k}\wedge^{l}((\oplus_{j=1}^{n}a_{j*}^{1}\Lambda)/\Lambda)\otimes\wedge^{k-l}((\oplus_{j=1}^{n}a_{j*}^{2}\Lambda)/\Lambda)(-k)\]
and then define the map of sheaves on $Y$ \[
\theta_{k}:R^{k}\psi\Lambda\to\sum_{l=0}^{k}a_{l,k-l*}\Lambda(-k)\]
as a sum of maps for $l$ going from $0$ to $k$. First, define for
$i=1,2$ \[
\delta_{l_{i}}:\bigwedge^{l_{i}}((\oplus_{j=1}^{n}a_{j*}^{i}\Lambda)/\Lambda)\to\bigwedge^{l_{i}+1}(\oplus_{j=1}^{n}a_{j*}^{i}\Lambda)\]
by sending \[
a_{j_{1}*}^{i}\Lambda\wedge\dots\wedge a_{j_{l_{i}}*}^{i}\Lambda\to\oplus_{j\not=j_{1},\dots,j_{l_{i}}}a_{j_{1}*}^{i}\Lambda\wedge\dots\wedge a_{j_{l_{i}}*}^{i}\Lambda\wedge a_{j*}^{i}\Lambda\]
 via $1\mapsto(1,\dots,1).$ (It is easy to check that the above map
is well-defined.) Then notice that \[
\wedge^{l+1}(\oplus_{j=1}^{n}a_{j*}^{1}\Lambda)\otimes\wedge^{k+1-l}(\oplus a_{j*}^{2}\Lambda)\simeq a_{l,k-l*}\Lambda.\]
Indeed, for $J_{1},J_{2}\subseteq\{1,\dots,n\}$ with $\#J_{1}=l+1,\#J_{2}=k+1-l$
we have \[
(\bigwedge_{j_{1}\in J_{1}}a_{j_{1}*}^{1}\Lambda)\otimes(\bigwedge_{j_{2}\in J_{2}}a_{j_{2}*}^{2}\Lambda)\simeq a_{J_{1},J_{2}*}\Lambda\]
because $Y_{J_{1},J_{2}}=(\cap_{j_{1}\in J_{1}}Y_{1,j_{1}})\times_{Y}(\cap_{j_{2}\in J_{2}}Y_{2,j_{2}})$
and we can sum the above identity over all $J_{1},J_{2}$ of the prescribed
cardinality. 
\begin{lem}
\label{resolution shimura}The following sequence is exact \[
R^{k}\psi\Lambda\stackrel{\theta_{k}}{\to}\bigoplus_{l=0}^{k}a_{l,k-l*}\Lambda(-k)^{\oplus c_{l,k-l}^{k}}\stackrel{}{\to}\bigoplus_{l=0}^{k+1}a_{l,k+1-l*}\Lambda(-k)^{\oplus c_{l,k+1-l}^{k}}\stackrel{}{\to}\dots\]
\[
\stackrel{}{\to}\bigoplus_{l=0}^{2n-2}a_{l,2n-2-l*}\Lambda(-k)^{\oplus c_{l,2n-2-l}^{k}}\to0\]
where the first map is the one defined above and the coefficients
$c_{l_{1},l_{2}}^{k}$ are defined in Lemma \ref{coefficients}. The
remaining maps in the sequence are global maps of sheaves corresponding
to $\wedge\delta_{1}\pm\wedge\delta_{2}$, where $\delta_{i}\in\oplus_{j=1}^{n}a_{j*}^{i}\Lambda$
is equal to $(1,\dots,1)$ for $i=1,2$. These maps are defined on
each of the $c_{l_{1},l_{2}}^{k}$ factors in the unique way which
makes them compatible with the maps in the resolution (\ref{eq:full resolution}). 

We can think of $\theta_{k}$ as a quasi-isomorphism of $R^{k}\psi\Lambda[-k]$
with the complex \[
\bigoplus_{l=0}^{k}a_{l,k-l*}\Lambda(-k)^{\oplus c_{l,k-l}^{k}}\stackrel{}{\to}\dots\stackrel{}{\to}\bigoplus_{l=0}^{2n-2}a_{l,2n-2-l*}\Lambda(-k)^{\oplus c_{l,2n-2-l}^{k}},\]
where the leftmost term is put in degree $k$. \end{lem}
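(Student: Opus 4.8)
The plan is to deduce the statement from the case of a product of two strictly semistable schemes, treated in the previous subsection, exploiting the fact that everything occurring here is etale-local and that the formation of nearby cycles commutes with etale and, more generally, smooth morphisms.

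First I would note that exactness of the displayed sequence of constructible etale sheaves on $Y$ may be checked after pulling back along an etale cover, so it is enough to verify it on an etale neighbourhood $U\to X_{r,s,m}$ of an arbitrary closed geometric point of $X$. Writing $X_{1}=\mbox{Spec }\mathcal{O}_{K}[X_{1},\dots,X_{n}]/(X_{1}\cdots X_{r}-\pi)$ and $X_{2}=\mbox{Spec }\mathcal{O}_{K}[Y_{1},\dots,Y_{n}]/(Y_{1}\cdots Y_{s}-\pi)$, which are strictly semistable $\mathcal{O}_{K}$-schemes of relative dimension $n-1$, one has $X_{r,s,m}\simeq(X_{1}\times_{S}X_{2})\times_{\mathcal{O}_{K}}\mathbb{A}_{\mathcal{O}_{K}}^{m}$, the affine factor coming from $Z_{1},\dots,Z_{m}$. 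Since $R\psi$ commutes with the smooth morphism $X_{r,s,m}\to X_{1}\times_{S}X_{2}$ and with the etale morphism $U\to X_{r,s,m}$, the sheaves $R^{k}\psi\Lambda$ and $a_{l_{1},l_{2}*}\Lambda$ on $U$, together with the maps between them built from the global sections $\delta_{i}=(1,\dots,1)\in\oplus_{j=1}^{n}a_{j*}^{i}\Lambda$, are pulled back from the corresponding data on $X_{1}\times_{S}X_{2}$. On $X_{1}\times_{S}X_{2}$ the sequence in question is precisely the resolution (\ref{eq:full resolution}) of $R^{k}\psi\Lambda_{X_{1}\times X_{2}}[-k]$ constructed in the previous subsection, obtained from the decomposition $R^{k}\psi\Lambda_{X_{1}\times X_{2}}\simeq\bigoplus_{l=0}^{k}R^{l}\psi\Lambda_{X_{1}}\otimes R^{k-l}\psi\Lambda_{X_{2}}$ (a consequence of Proposition \ref{product}) together with the resolutions of Lemma \ref{resolution}, the multiplicities being the $c_{l_{1},l_{2}}^{k}$ of Lemma \ref{coefficients}. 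That sequence is exact, and etale (hence flat) pullback preserves exactness; since exactness may be checked etale-locally, the sequence is exact on all of $X$.

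Next I would check that the maps written in the statement are globally defined and restrict on each etale chart to the maps of (\ref{eq:full resolution}). For $\theta_{k}$ one uses the global isomorphism $R^{k}\psi\Lambda(k)\simeq\bigwedge^{k}\big((\oplus_{j=1}^{n}a_{j*}^{1}\Lambda)/\Lambda\oplus(\oplus_{j=1}^{n}a_{j*}^{2}\Lambda)/\Lambda\big)$ of Corollary \ref{globalnearby}, split according to $l$, the maps $\delta_{l_{i}}$ defined in the statement, and the canonical identifications $\bigwedge^{l+1}(\oplus_{j}a_{j*}^{1}\Lambda)\otimes\bigwedge^{k+1-l}(\oplus_{j}a_{j*}^{2}\Lambda)\simeq a_{l,k-l*}\Lambda$ coming from $Y_{J_{1},J_{2}}=(\cap_{j_{1}\in J_{1}}Y_{1,j_{1}})\times_{Y}(\cap_{j_{2}\in J_{2}}Y_{2,j_{2}})$; one verifies that $\delta_{l_{i}}$ is well-defined and that these assignments are $G_{\mathbb{F}}$-equivariant and compatible with restriction to etale opens, so that they glue. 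The higher differentials are, on each of the $c_{l_{1},l_{2}}^{k}$ summands, of the form $\wedge\delta_{1}\pm\wedge\delta_{2}$, with the sign forced by compatibility with (\ref{eq:full resolution}); these are global since the $\delta_{i}$ are, and on an etale chart they agree with the differentials of (\ref{eq:full resolution}) by construction. The quasi-isomorphism assertion is then immediate from exactness: $\theta_{k}$ is injective with image the kernel of the next differential, so $R^{k}\psi\Lambda[-k]$ is represented by the displayed complex with $\bigoplus_{l=0}^{k}a_{l,k-l*}\Lambda(-k)^{\oplus c_{l,k-l}^{k}}$ placed in degree $k$.

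The hard part will be the bookkeeping in the previous paragraph: pinning down the signs in $\wedge\delta_{1}\pm\wedge\delta_{2}$ on the various $c_{l_{1},l_{2}}^{k}$-fold summands so that the global differentials square to zero and match (\ref{eq:full resolution}) everywhere, and checking that the identification $\bigwedge^{\bullet}(\oplus_{j}a_{j*}^{1}\Lambda)\otimes\bigwedge^{\bullet}(\oplus_{j}a_{j*}^{2}\Lambda)\simeq a_{\bullet,\bullet*}\Lambda$ and the maps $\delta_{l_{i}}$ are well-defined and patch over overlapping etale charts. Everything else is a formal consequence of the semistable-product computations of the previous subsection and the etale/smooth-local nature of the nearby cycles functor.
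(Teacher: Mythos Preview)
Your proposal is correct and follows the same approach as the paper: reduce exactness to an etale-local question, then invoke the resolution already established for $X_{1}\times_{\mathcal{O}_{K}}X_{2}$ in Lemma~\ref{resolution} and (\ref{eq:full resolution}). The paper's own proof is a two-sentence version of your first paragraph; your additional remarks on smooth base change for $R\psi$, the global well-definedness of the differentials, and the sign bookkeeping are useful elaborations but not part of a different strategy.
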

\begin{proof}
It suffices to check exactness locally and we know that $X$ is locally
etale over products $X_{1}\times_{\mathcal{O}_{K}}X_{2}$ of semistable
schemes. Lemma \ref{resolution} proves the above statement in the
case of $X_{1}\times_{\mathcal{O}_{K}}X_{2}$ and the corresponding
sheaves on $Y$ are obtained by restriction (etale pullback) from
the special fiber $Y_{1}\times_{\mathbb{F}}Y_{2}$ of $X_{1}\times_{\mathcal{O}_{K}}X_{2}$. \end{proof}
\begin{cor}
The complex $R\psi\Lambda$ is a $-\dim Y$-shifted perverse sheaf
and the canonical filtration $\tau_{\leq k}R\psi\Lambda$ with graded
pieces $R^{k}\psi\Lambda[-k]$ is a filtration by $-\dim Y$-shifted
perverse sheaves. The monodromy operator $N$ sends $\tau_{\leq k}R\psi\Lambda$
to $\tau_{\leq k-1}R\psi\Lambda$ and this induces a map \[
\bar{N}:R^{k}\psi\Lambda[-k]\to R^{k-1}\psi\Lambda[-k+1].\]

\end{cor}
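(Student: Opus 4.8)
The plan is to repeat, in this more general setting, the argument of Corollary \ref{perversity of filtration} (and of its product analogue in Section 3.2), now taking Lemma \ref{resolution shimura} as the local input. First I would observe that each $R^{k}\psi\Lambda[-k]$ is a $-\dim Y$-shifted perverse sheaf: by Lemma \ref{resolution shimura} it is quasi-isomorphic to a bounded complex whose term in degree $l_{1}+l_{2}$ is a finite direct sum of copies of $a_{l_{1},l_{2}*}\Lambda(-k)$, and since $Y^{(l_{1},l_{2})}$ is smooth of dimension $\dim Y-l_{1}-l_{2}$, the sheaf $\Lambda(-k)[-(l_{1}+l_{2})]$ is $-\dim Y$-shifted perverse on it; the map $a_{l_{1},l_{2}}$ being finite, its pushforward $a_{l_{1},l_{2}*}\Lambda(-k)[-(l_{1}+l_{2})]$ is $-\dim Y$-shifted perverse on $Y$. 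As $R^{k}\psi\Lambda[-k]$ is a successive extension of such objects in $D_{c}^{b}(Y_{\bar{\mathbb{F}}},\Lambda)$ and the category of $-\dim Y$-shifted perverse sheaves is stable under extensions, the claim follows. Note also that $R^{k}\psi\Lambda=0$ for $k>2n-2$ (the sums in Lemma \ref{resolution shimura} become empty), so $R\psi\Lambda\simeq\tau_{\leq k}R\psi\Lambda$ for $k\gg 0$.

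Next I would run the induction on $k$ showing that every $\tau_{\leq k}R\psi\Lambda$ is $-\dim Y$-shifted perverse, exactly as in Corollary \ref{perversity of filtration}: the case $k=0$ is the previous paragraph with $k=0$, and for the inductive step one invokes the distinguished triangle
\[
(\tau_{\leq k-1}R\psi\Lambda,\ \tau_{\leq k}R\psi\Lambda,\ R^{k}\psi\Lambda[-k]),
\]
whose outer terms are $-\dim Y$-shifted perverse, hence so is the middle one. Taking $k\gg 0$ gives that $R\psi\Lambda$ itself is $-\dim Y$-shifted perverse. Since we are now working inside the abelian category of $-\dim Y$-shifted perverse sheaves, these distinguished triangles become short exact sequences, so the $\tau_{\leq k}R\psi\Lambda$ constitute a decreasing filtration of $R\psi\Lambda$ with graded pieces $R^{k}\psi\Lambda[-k]$.

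For the monodromy statement I would argue as in Section 3.1. By Proposition \ref{trivial inertia} the inertia $I_{K}$ acts trivially on the cohomology sheaves $R^{k}\psi\Lambda$, so its action on $R\psi\Lambda$ factors through the tame pro-$l$ quotient; let $T$ be a topological generator of the latter and $N=\log T$. Because $T$ acts trivially on each graded piece $R^{k}\psi\Lambda[-k]$ of the canonical filtration, $N$ carries $\tau_{\leq k}R\psi\Lambda$ into $\tau_{\leq k-1}R\psi\Lambda$; composing with the projection $\tau_{\leq k-1}R\psi\Lambda\to\tau_{\leq k-1}R\psi\Lambda/\tau_{\leq k-2}R\psi\Lambda\simeq R^{k-1}\psi\Lambda[-k+1]$ yields the asserted map $\bar{N}:R^{k}\psi\Lambda[-k]\to R^{k-1}\psi\Lambda[-k+1]$. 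The only point that genuinely needs care, and the mild obstacle here, is that everything descends correctly from the local model $X_{r,s,m}$, which is (locally) a product of semistable schemes: one must check that étale pullback is $t$-exact for the $-\dim Y$-shifted perverse $t$-structure, commutes with $R\psi$, and is compatible with the sheaves $a_{l_{1},l_{2}*}\Lambda$ and the maps between them appearing in Lemma \ref{resolution shimura}. This is standard, but worth spelling out.
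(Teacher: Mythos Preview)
Your proposal is correct and follows essentially the same route as the paper, which does not spell out a separate proof for this corollary: the paper simply states it after Lemma \ref{resolution shimura}, with the understanding that the argument of Corollary \ref{perversity of filtration} (and its product analogue) goes through verbatim once one knows $R^{k}\psi\Lambda[-k]$ has a resolution by the $a_{l_{1},l_{2}*}\Lambda(-k)[-(l_{1}+l_{2})]$. One small remark: your final paragraph's worry about \'etale descent from $X_{r,s,m}$ is unnecessary here, since Lemma \ref{resolution shimura} already gives a global resolution on $Y$ in terms of the global maps $a_{l_{1},l_{2}}$, so the perversity of the terms and of $R^{k}\psi\Lambda[-k]$ can be argued directly on $Y$ without any further local-to-global step.
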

The next step is to understand the action of monodromy $\bar{N}$
and obtain an explicit description of $\bar{N}$ in terms of the resolution
of $R^{k}\psi\Lambda$ given by Lemma \ref{resolution shimura}. This
can be done etale locally, since on the nearby cycles for $X_{1}\times_{\mathcal{O}_{K}}X_{2}$
we know that $\bar{N}$ acts as $\bar{N}_{1}\otimes1+1\otimes\bar{N}_{2}$
from Proposition \ref{formula for N bar} and we have a good description
of $\bar{N}_{1}$ and $\bar{N}_{2}$ from Lemma \ref{twisting}. However,
we present here a different method for computing $\bar{N}$, which
works in greater generality. 
\begin{prop}
\label{extension class}The following diagram is commutative:\[
\xymatrix{R^{k+1}\psi\Lambda[-k-1]\ar[d]^{\bar{N}}\ar[r]\sp-{\sim} & [0\ar[r]\ar[d] & R^{k+1}\psi\Lambda\ar[d]^{\otimes t_{l}(T)}]\\
R^{k}\psi\Lambda[-k]\ar[r]\sp-{\sim} & [i^{*}R^{k+1}j_{*}\Lambda(1)\ar[r] & R^{k+1}\psi\Lambda(1)]}
\]
where in the right column the sheaves $R^{k+1}\psi\Lambda$ are put
in degree $k+1$. \end{prop}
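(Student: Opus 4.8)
The plan is to deduce the proposition from the fundamental distinguished triangle relating $Rj_*$ and $R\psi$ over the trait $S$, combined with the triviality of the inertia action on the cohomology sheaves $R^q\psi\Lambda$ proved in Proposition \ref{trivial inertia}.

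First I would recall that, since $\Lambda$ is $\mathbb{Q}_l$ or $\bar{\mathbb{Q}}_l$, the inertia acts on $R\psi\Lambda$ through its pro-$l$ tame quotient $I_l\cong\mathbb{Z}_l(1)=\langle T\rangle$; computing $i^*Rj_*\Lambda=R\Gamma(I_l,R\psi\Lambda)$ by the two-term complex of continuous $I_l$-cochains produces a distinguished triangle in $D_c^b(Y_{\bar{\mathbb F}},\Lambda)$
\[
i^*Rj_*\Lambda \longrightarrow R\psi\Lambda \xrightarrow{\ \nu\ } R\psi\Lambda(-1)\xrightarrow{\ +1\ },
\]
where $\nu$ is the canonical monodromy map; as noted just before Lemma \ref{twisting}, on the graded pieces of the canonical filtration $\nu$ induces the same operator as $\log T$, up to the Tate twist $t_l(T)$. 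Taking the long exact sequence of cohomology sheaves and invoking Proposition \ref{trivial inertia}, every map $\mathcal H^q(\nu)$ vanishes, so the long exact sequence breaks up into short exact sequences $0\to R^q\psi\Lambda(-1)\to i^*R^{q+1}j_*\Lambda\to R^{q+1}\psi\Lambda\to 0$. Twisting by $(1)$ and taking $q=k$ recovers Saito's sequence (Proposition 1.1.2.1 of \cite{Saito}) in our generality, and this is exactly the quasi-isomorphism $R^k\psi\Lambda[-k]\xrightarrow{\ \sim\ }[\,i^*R^{k+1}j_*\Lambda(1)\to R^{k+1}\psi\Lambda(1)\,]$ appearing in the bottom row of the diagram.

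It then suffices to show that the canonical monodromy $\nu$ induces, on the $(k+1)$-st graded piece of the canonical filtration, the morphism
\[
\bar\nu\colon R^{k+1}\psi\Lambda[-k-1]\longrightarrow R^k\psi\Lambda(-1)[-k]
\]
which, once $R^k\psi\Lambda(-1)[-k]$ is replaced by its untwisted Saito resolution $[\,i^*R^{k+1}j_*\Lambda\to R^{k+1}\psi\Lambda\,][-k]$, is the genuine chain map equal to $\mathrm{id}_{R^{k+1}\psi\Lambda}$ in degree $k+1$ and to $0$ elsewhere; tensoring this with $t_l(T)$ turns the untwisted resolution into the one in the bottom row and the chain map $\mathrm{id}$ into $\otimes t_l(T)\colon R^{k+1}\psi\Lambda\to R^{k+1}\psi\Lambda(1)$, which is the assertion. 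To identify $\bar\nu$ I would use that, since $\nu$ strictly lowers the canonical filtration (again by Proposition \ref{trivial inertia}, so that $\nu(\tau_{\le k}R\psi\Lambda)\subseteq\tau_{\le k-1}R\psi\Lambda(-1)$), the restriction of the fundamental triangle to the subobjects $\tau_{\le\bullet}R\psi\Lambda$ represents $\tau_{\le k}i^*Rj_*\Lambda$ by $[\,\tau_{\le k}R\psi\Lambda\xrightarrow{\nu}\tau_{\le k-1}R\psi\Lambda(-1)\,]$; passing to the $(k+1)$-st graded quotient, the degree-$(k+1)$ cohomology of this two-term complex is $R^{k+1}\psi\Lambda$, mapping into $i^*R^{k+1}j_*\Lambda$ as the quotient term of Saito's sequence, and unwinding the construction of $\bar\nu$ from the connecting map of the triangle shows the induced chain map is the identity on that term.

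The step I expect to be the main obstacle is precisely this last identification: making rigorous the compatibility of the fundamental triangle with the canonical filtrations on all three of its terms, and checking that the map it induces on degree-$(k+1)$ cohomology is the identity and not some other automorphism. This is a chase with truncations, cones and connecting maps; what makes it go through is Proposition \ref{trivial inertia}, which forces the long exact sequences to split and the interaction of $N$ with the canonical filtration to be strict. As a consistency check — and as the argument that avoids the triangle entirely — one can verify the diagram étale-locally, where $X$ is étale over a product of two strictly semistable schemes times an affine space: there $\bar N=\bar N_1\otimes 1+1\otimes\bar N_2$ by Proposition \ref{formula for N bar}, the case of a single strictly semistable scheme is Lemma \ref{twisting} (after identifying $i^*R^{k+1}j_*\Lambda(1)$ with $a_{k*}\Lambda(-k)$ via Proposition \ref{isom exact seq}), and the compatibility with external products follows from Proposition \ref{product}. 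The triangle argument is the one worth recording, since, as the text emphasizes, it needs no semistability hypothesis on $X$.
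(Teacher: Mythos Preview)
Your proposal is correct and follows essentially the same route as the paper: the paper's entire proof is the remark that ``the proof is identical to the proof of part 4 of Lemma 2.5 of \cite{Saito}, which is meant for the semistable case but does not use semistability,'' and Saito's argument there is precisely the one you outline via the fundamental triangle $i^{*}Rj_{*}\Lambda\to R\psi\Lambda\xrightarrow{\nu}R\psi\Lambda(-1)\xrightarrow{+1}$ together with the triviality of inertia on cohomology sheaves. You have in fact written out more of the mechanics (the truncation chase and the \'etale-local consistency check) than the paper does, and your identification of the ``main obstacle'' is accurate---it is exactly the step Saito carries out and which goes through verbatim here because only Proposition~\ref{trivial inertia} is needed, not the specific geometry of $X$.
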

\begin{rem}
The fact that the above formula could hold was suggested to us by
reading Ogus' paper \cite{Ogus}, which proves an analogous formula
for log smooth schemes in the complex analytic world. The proof is
identical to the proof of part 4 of Lemma 2.5 of \cite{Saito}, which
is meant for the semistable case but does not use semistability. The
same result should hold for any log smooth scheme $X/\mathcal{O}_{K}$
with vertical log structure and where the action of $I_{K}$ on $R^{k}\psi\Lambda$
is trivial for all $k$. 

For $0\leq k\leq2n-2$ define the complex \[
\mathcal{L}_{k}:=[\sum_{l=0}^{k}c_{l,k-l}^{k}a_{l,k-l*}\Lambda(-k)\to\dots\to\sum_{l=0}^{2n-2}c_{l,2n-2-l}^{k}a_{l,2n-2-l*}\Lambda(-k)],\]
where the sheaves $a_{l,k-l*}\Lambda(-k)$ are put in degree $k$.
We will define a map of complexes $\mathcal{L}_{k+1}\to\mathcal{L}_{k}(-1)$
degree by degree, as a sum over $l_{1}+l_{2}=k'$ of maps \[
f^{l_{1},l_{2}}:a_{l_{1},l_{2}}\Lambda^{\oplus c_{l_{1},l_{2}}^{k}}\to a_{l_{1},l_{2}*}\Lambda^{\oplus c_{l_{1},l_{2}}^{k}}.\]

Note that each coefficient $c_{l_{1},l_{2}}^{k}$ reflects for how
many $0\leq l'\leq k$ the term $a_{l_{1},l_{2}*}\Lambda$ appears
in the resolution of \[
\bigwedge^{l'}((\oplus_{j=1}^{n}a_{j*}^{1}\Lambda)/\Lambda)\otimes\bigwedge^{k-l'}((\oplus_{j=1}^{n}a_{j*}^{2}\Lambda)/\Lambda)(-k).\]
The set of such $l'$ has cardinallity $c_{l_{1},l_{2}}^{k}$ and
is always a subset of consecutive integers in $\{1,\dots,k\}$. Denote
the set of $l'$ by $C_{l_{1},l_{2}}^{k}$. Thus, we can order the
terms $a_{l_{1},l_{2}*}\Lambda$ by $l'$ and get a basis for $(a_{l_{1},l_{2}*}\Lambda)^{\oplus c_{l_{1},l_{2}}^{k}}$
over $a_{l_{1},l_{2}*}\Lambda$. It is easy to explain what $f^{l_{1},l_{2}}$
does to each element of $C_{l_{1},l_{2}}^{k+1}$: it sends \[
l'\in C_{l_{1},l_{2}}^{k+1}\mapsto\{l'-1,l'\}\cap C_{l_{1},l_{2}}^{k}.\]
When both $l'-1,l'\in C_{l_{1},l_{2}}^{k}$, the element of the basis
of $(a_{l_{1},l_{2}*}\Lambda)^{\oplus c_{l_{1},l_{2}}^{k+1}}$ given
by $(0,\dots0,1,0,\dots,0)$ where the $1$ appears in the position
corresponding to $l'$ is sent to the element of the basis of $(a_{l_{1},l_{2}*}\Lambda)^{\oplus c_{l_{1},l_{2}}^{k}}$
given by $(0,\dots,0,1,1,0,\dots,0)$ where the two $1$'s are in
positions corresponding to $l'-1$ and $l'$. If $l'-1\not\in C_{l_{1},l_{2}}^{k}$
but $l'\in C_{l_{1},l_{2}}^{k}$ then $l'=0$ and $(1,0,\dots,0)\mapsto(1,0\dots,0)$.
If $l'-1\in C_{l_{1},l_{2}}^{k}$ but $l'\not\in C_{l_{1},l_{2}}^{k}$
then $l'=k+1$ and $(0,\dots,0,1)\mapsto(0,\dots,0,1)$. This completes
the definition of $f^{(l_{1},l_{2})}$. \end{rem}
\begin{cor}
The following diagram is commutative \[
\xymatrix{R^{k+1}\psi\Lambda[-k-1]\ar[d]^{\bar{N}}\ar[r]\sp-{\sim} & \mathcal{L}_{k+1}\ar[d]^{f}\\
R^{k}\psi\Lambda[-k]\ar[r]\sp-{\sim} & \mathcal{L}_{k}}
,\]
 where the sheaves on the right are put in degree $2n-2$. The map
$f$ is a map of complexes, which acts degree by degree as \[
f^{k'}=\sum_{l_{1}+l_{2}=k'}f^{l_{1},l_{2}}[-k-1]\otimes t_{l}(T),\]
 where $f^{l_{1},l_{2}}:a_{l_{1},l_{2}}\Lambda^{\oplus c_{l_{1},l_{2}}^{k}}\to a_{l_{1},l_{2}*}\Lambda^{\oplus c_{l_{1},l_{2}}^{k}}$
was defined above. \end{cor}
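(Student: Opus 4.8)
The plan is to verify the statement etale-locally on $Y$: the assertion concerns one specific map of complexes of constructible $\Lambda$-sheaves on $Y$, and two such maps agreeing after restriction to an etale cover agree, so it suffices to check commutativity over an etale chart $U\to X_{r,s,m}$. There $X$ is etale over $X_{r,s,m}$, which is the product of two strictly semistable $\mathcal{O}_K$-schemes, and every ingredient occurring in the corollary --- the quasi-isomorphisms $\theta_{k+1}: R^{k+1}\psi\Lambda[-(k+1)]\stackrel{\sim}{\to}\mathcal{L}_{k+1}$ and $\theta_k: R^{k}\psi\Lambda[-k]\stackrel{\sim}{\to}\mathcal{L}_k$ of Lemma \ref{resolution shimura}, the sheaves $a_{l_1,l_2*}\Lambda$, the multiplicities $c^k_{l_1,l_2}$, the ordered index sets $C^k_{l_1,l_2}$, the maps $f^{l_1,l_2}$, and the operator $N$ itself --- is pulled back by etale restriction from the corresponding datum on $X_{r,s,m}$. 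So it is enough to treat the case of a product $X=X_1\times_{\mathcal{O}_K}X_2$ of strictly semistable schemes.

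For such a product, recall from Lemma \ref{resolution} the decompositions $R^{m}\psi\Lambda_{X_1\times X_2}[-m]=\bigoplus_{l'}R^{l'}\psi\Lambda_{X_1}\otimes R^{m-l'}\psi\Lambda_{X_2}[-m]$ together with the resolution of each tensor summand by the single complex of the product of the resolutions of the two factors. By Proposition \ref{formula for N bar}, $\bar N$ carries the $l'$-th summand of $R^{k+1}\psi\Lambda_{X_1\times X_2}[-(k+1)]$ into the $(l'-1)$-st and $l'$-th summands of $R^{k}\psi\Lambda_{X_1\times X_2}[-k]$, acting by $\bar N_1\otimes 1$ and $1\otimes\bar N_2$ respectively, with the convention that $\bar N_1\otimes 1$ kills the summand $l'=0$ and $1\otimes\bar N_2$ kills $l'=k+1$; and by Lemma \ref{twisting} each of $\bar N_1,\bar N_2$ acts on the relevant factor of the double complex by $0$ in the lowest degree and by $\otimes t_l(T)$ in every higher degree, compatibly with the wedge differentials. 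Assembling this over the double complexes and summing over $l'$, the copy of $a_{l_1,l_2*}\Lambda$ in degree $l_1+l_2$ of $\mathcal{L}_{k+1}$ indexed by $l'\in C^{k+1}_{l_1,l_2}$ is sent --- up to the signs already built into the differentials $\wedge\delta_1\pm\wedge\delta_2$ of $\mathcal{L}_k$ --- to the sum of the copies of $a_{l_1,l_2*}\Lambda$ in $\mathcal{L}_k$ indexed by $\{l'-1,l'\}\cap C^k_{l_1,l_2}$, tensored with $t_l(T)$; the boundary indices $l'=0$ and $l'=k+1$ give $(1,0,\dots,0)\mapsto(1,0,\dots,0)$ and $(0,\dots,0,1)\mapsto(0,\dots,0,1)$ because only one of $\bar N_1\otimes 1,1\otimes\bar N_2$ is nonzero there. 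This is exactly $f^{l_1,l_2}[-k-1]\otimes t_l(T)$, so $\bar N$ is represented by $f$ and the square commutes in the product case, hence by descent through the etale charts in general.

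The only real difficulty is the bookkeeping: one must match the resolution $\mathcal{L}_k$, with its multiplicities $c^k_{l_1,l_2}$ from Lemma \ref{coefficients} and its ordered index sets $C^k_{l_1,l_2}$, against the tensor-product resolutions of the summands $R^{l'}\psi\Lambda_{X_1}\otimes R^{k-l'}\psi\Lambda_{X_2}$; fix sign conventions once and for all so that $f$ is literally a chain map; and check that the two boundary cases, and the indices at which $l'-1$ or $l'$ leaves $C^k_{l_1,l_2}$, are handled by $f^{l_1,l_2}$ exactly as prescribed. Alternatively, one may bypass the etale reduction and argue directly from Proposition \ref{extension class}, which already identifies $\bar N$ with multiplication by $t_l(T)$ followed by the two-term resolution $[i^*R^{k+1}j_*\Lambda(1)\to R^{k+1}\psi\Lambda(1)]$ of $R^{k}\psi\Lambda[-k]$; refining this two-term complex to the full resolution $\mathcal{L}_k$ of Lemma \ref{resolution shimura}, while tracking how $t_l(T)$ commutes past the wedge differentials, again produces $f$, with the same combinatorial verification as the remaining content.
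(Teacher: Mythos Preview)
Your proof is correct and follows essentially the same approach as the paper. The paper's own proof is very brief: it first notes that the claim can be checked etale locally using Proposition~\ref{formula for N bar} ($\bar N=\bar N_1\otimes 1+1\otimes\bar N_2$ over a product of semistable schemes) together with the explicit description of each $\bar N_i$ from Lemma~\ref{twisting}, which is precisely your main argument; it then offers, as you do, the alternative global route via Proposition~\ref{extension class}, though the paper develops that second argument in somewhat more detail (constructing an explicit map into the cone of $f\otimes t_l(T)^{-1}$ and using a Hom-vanishing to pin down the comparison).
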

\begin{proof}
This can be checked etale locally, using Proposition \ref{formula for N bar},
which states that $\bar{N}=\bar{N}_{1}\otimes1+1\otimes\bar{N}_{2}$
over a product $X_{1}\times_{\mathcal{O}_{K}}X_{2}$ of semistable
schemes and using the fact that each of the $\bar{N}_{i}$ can be
described as \[
\xymatrix{0\ar[r]\ar[d] & a_{k+1*}^{i}\Lambda(-(k+1))\ar[r]\sp-{\delta\wedge}\ar[d]^{\otimes t_{l}(T)} & \dots\ar[r]\sp-{\delta\wedge} & a_{n-1*}^{i}\Lambda(-(k+1))\ar[d]\\
a_{k*}^{i}\Lambda(-k)\ar[r]\sp-{\delta\wedge} & a_{k+1*}^{i}\Lambda(-k)\ar[r]\sp-{\delta\wedge} & \dots\ar[r]\sp-{\delta\wedge} & a_{n-1*}^{i}\Lambda(-k)}
,\]
 for $i=1,2$. 

This can also be checked globally, by using Proposition \ref{extension class}
to replace the leftmost column of our diagram by \[
\xymatrix{0\ar[d]\ar[r] & R^{k+1}\psi\Lambda\ar[d]^{\otimes t_{l}(T)}\\
i^{*}R^{k+1}j_{*}\Lambda(1)\ar[r] & R^{k+1}\psi\Lambda(1)}
,\]
where the left column is put in degree $k$. In fact, it suffices
to understand the map of complexes \[
\xymatrix{0\ar[d]\ar[r] & R^{k+1}\psi\Lambda\ar[d]^{\mathrm{id}}\\
i^{*}R^{k+1}j_{*}\Lambda\ar[r] & R^{k+1}\psi\Lambda}
,\]
and check that it is compatible with the map \[
\xymatrix{0\ar[r]\ar[d]^{f^{k}\otimes t_{l}(T)^{-1}} & \dots\ar[r] & \sum_{l=0}^{2n-2}c_{l,2n-2-l}^{k+1}a_{l,2n-2-l*}\Lambda(-k-1)\ar[d]^{f^{2n-2}\otimes t_{l}(T)^{-1}}\\
\sum_{l=0}^{k}c_{l,k-l}^{k}a_{l,k-l*}\Lambda(-k-1)\ar[r] & \dots\ar[r] & \sum_{l=0}^{2n-2}c_{l,2n-2-l}^{k}a_{l,2n-2-l*}\Lambda(-k-1)}
.\]
Let $\mathcal{K}=\mbox{Cone}(f\otimes t_{l}(T)^{-1}:\mathcal{L}_{k+1}\to\mathcal{L}_{k}(-1))$.
The triangle 

\[
\xymatrix{R^{k}\psi\Lambda[-k-1]\ar[r] & i^{*}R^{k+1}j_{*}\Lambda[-k-1]\ar[r] & R^{k+1}\psi\Lambda[-k-1]\ar[rr]\sp-{\bar{N}\otimes t_{l}(T)^{-1}} &  & R^{k}\psi\Lambda[-k]}
\]
is distinguished. It suffices to see that we can define a map $g:i^{*}R^{k+1}j_{*}\Lambda[-k]\to\mathcal{K}$
which makes the first two squares of the following diagram commute:

\[
\xymatrix{R^{k}\psi\Lambda[-k-1]\ar[r]\ar[d]^{\theta_{k}[-1]} & i^{*}R^{k+1}j_{*}\Lambda[-k-1]\ar[r]\ar[d]^{g[-1]} & R^{k+1}\psi\Lambda[-k-1]\ar[rr]\sp-{\bar{N}\otimes t_{l}(T)^{-1}}\ar[d]^{\theta_{k+1}} &  & R^{k}\psi\Lambda[-k]\ar[d]^{\theta_{k}}\\
\mathcal{L}_{k}(-1)[-1]\ar[r] & \mathcal{K}[-1]\ar[r] & \mathcal{L}_{k+1}\ar[rr]\sp-{f\otimes t_{l}(T)^{-1}} &  & \mathcal{L}_{k}(-1)}
\]
If the middle square is commutative, then there must exist $\theta':R^{k}\psi\Lambda[-k-1]\to\mathcal{L}_{k}(-1)[-1]$
making the diagram a morphism of distinguished triangles. Then $\theta'$
would make the first square commutative, so $\theta'$ and $\theta_{k}[-1]$
coincide once they are pushed forward to $\mathcal{K}[-1]$. However,
\[
\mbox{Hom}(R^{k}\psi\Lambda[-k-1],\mathcal{L}_{k+1}[-1])\simeq\mbox{Hom}(R^{k}\psi\Lambda[-k],R^{k+1}\psi\Lambda[-k-1])=0,\]
so the Hom exact sequence associated to the bottom distinguished triangle
implies that $\theta'=\theta_{k}[-1]$. The diagram above is a morphism
of distinguished triangles with $\theta_{k}[-1]$ as the leftmost
morphism. This tells us that the third triangle in the diagram is
also commutative, which is what we wanted to prove. 

We can compute $i^{*}R^{k+1}j_{*}\Lambda$ using the log structure
on $X$:\[
i^{*}R^{k+1}j_{*}\Lambda\simeq\wedge^{k+1}((\bigoplus_{j=1}^{n}a_{j*}^{1}\Lambda\oplus\bigoplus_{j=1}^{n}a_{j*}^{2}\Lambda)/(1,\dots1,0,\dots0)=(0,\dots,0,1,\dots,1)).\]
We can also compute $\mathcal{K}$ explicitly, since we have an explicit
description of each $f^{k',l_{1},l_{2}}$. The first non-zero term
of $\mathcal{K}$ appears in degree $k$ and it is isomorphic to \[
\sum_{l=0}^{k}a_{l,k-l*}\Lambda.\]
There is a natural map of complexes $i^{*}R^{k+1}j_{*}\Lambda[-k]\to\mathcal{K}$,
which sends \[
a_{J_{1},J_{2}*}\Lambda\to\bigoplus_{J_{1}'\supset J_{1},\#J_{1}'=\#J_{1}+1}a_{J_{1}',J_{2}*}\Lambda\oplus\bigoplus_{J_{2}'\supset J_{2},\#J_{2}'=\#J_{2}+1}a_{J_{1},J'_{2}*}\Lambda,\]
when $J_{1},J_{2}$ are both non-zero. The map sends \[
a_{J_{1},\emptyset}{}_{*}\Lambda\to\bigoplus_{\#J_{2}'=1}a_{J_{1},J'_{2}*}\Lambda\mbox{ and }a_{\emptyset,J_{2}*}\Lambda\to\bigoplus_{\#J_{1}'=1}a_{J'_{1},J_{2}*}\Lambda.\]
It is easy to see that the above map is well-defined on $i^{*}R^{k+1}j_{*}\Lambda[-k]$
and that it is indeed a map of complexes. It remains to see that the
above map of complexes $i^{*}R^{k+1}j_{*}\Lambda[-k]\to\mathcal{K}$
makes the first two squares of the diagram commute. This is tedious,
but straightforward to verify. \end{proof}
\begin{rem}
Another way of proving the above corollary is to notice that Proposition
\ref{extension class} shows that the map \[
\bar{N}:R^{k+1}\psi\Lambda[-k-1]\to R^{k}\psi\Lambda[-k]\]
 is given by the cup product with the map $\gamma\otimes t_{l}(T):\bar{M}_{rel}^{gp}(-k-1)\to\Lambda(-k)[1]$
where $\gamma:\bar{M}_{rel}^{gp}\to\Lambda[1]$ is the map corresponding
to the class of the extension\[
0\to\Lambda\to\bar{M}^{gp}\to\bar{M}_{rel}^{gp}\to0\]
 of sheaves of $\Lambda$-modules on $Y$. Locally, $X$ is etale
over a product of semistable schemes $X_{1}\times_{\mathcal{O}_{K}}X_{2}$
and the extension $\bar{M}^{gp}$ is a Baire sum of extensions \[
0\to\Lambda\to\bar{M}_{1}^{gp}\to\bar{M}_{1,rel}^{gp}\to0\mbox{ and}\]
\[
0\to\Lambda\to\bar{M}_{2}^{gp}\to\bar{M}_{2,rel}^{gp}\to0,\]
which correspond to the log structures of $X_{1}$ and $X_{2}$ and
which by Proposition \ref{extension class} determine the maps $\bar{N}_{1}$
and $\bar{N}_{2}$. The Baire sum of extensions translates into $\bar{N}=\bar{N}_{1}\otimes1+1\otimes\bar{N}_{2}$
locally on $Y$. However, it is straightforward to check locally on
$Y$ that the map $f:\mathcal{L}_{k}\to\mathcal{L}_{k+1}$ is the
same as $\bar{N}_{1}\otimes1+1\otimes\bar{N}_{2}$. Thus, $f$ and
$\bar{N}$ are maps of perverse sheaves on $Y$ which agree locally
on $Y$, which means that $f$ and $\bar{N}$ agree globally. This
proves the corollary without appealing to Proposition \ref{extension class}.
(In fact, it suggests an alternate proof of Proposition \ref{extension class}.)\end{rem}
\begin{lem}
The map $\bar{N}:R^{k}\psi\Lambda[-k]\to R^{k-1}\psi\Lambda[-k+1]$
has kernel \[
\mathcal{P}_{k}\simeq\left[a_{k,k*}\Lambda(-k)\stackrel{\wedge\delta}{\to}a_{k,k+1*}\Lambda(-k)\bigoplus a_{k+1,k*}\Lambda(-k)\to\dots\stackrel{\wedge\delta}{\to}a_{n-1,n-1*}\Lambda(-k)\right],\]
where the first term is put in degree $2k$ and cokernel \[
\mathcal{R}_{k}\simeq\left[\bigoplus_{j=0}^{k-1}a_{j,k-1-j*}\Lambda(-(k-1))\to\dots\to a_{k-1,k-1*}\Lambda((-(k-1))\right],\]
where the first term is put in degree $k-1$.\end{lem}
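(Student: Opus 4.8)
The plan is to mimic the proof of Lemma \ref{ker of N bar}, replacing the product-of-semistable-schemes input by its etale-local counterpart and using the explicit description of $\bar{N}$ furnished by the corollary of Proposition \ref{extension class}. Note first that $\mathcal{P}_{k}$ and $\mathcal{R}_{k}$ are assembled out of the sheaves $a_{l_{1},l_{2}*}\Lambda$ and the differentials $\wedge\delta$ of Lemma \ref{resolution shimura}, so they are $-\dim Y$-shifted perverse sheaves, and $R^{k}\psi\Lambda[-k]$ is represented by the complex $\mathcal{L}_{k}$ of that lemma.

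First I would construct a global map of complexes $\mathcal{P}_{k}\to\mathcal{L}_{k}$: on the summand $a_{l_{1},l_{2}*}\Lambda(-k)$ of $\mathcal{P}_{k}$, which occurs precisely for $k\leq l_{1},l_{2}\leq n-1$ --- exactly the range in which $c_{l_{1},l_{2}}^{k}=k+1$ by Lemma \ref{coefficients} --- send $x\mapsto(x,-x,\dots,(-1)^{k}x)\in a_{l_{1},l_{2}*}\Lambda(-k)^{\oplus(k+1)}$; compatibility with $\wedge\delta$ is clear. I would then check that $\bar{N}$ annihilates the image. Using that $\bar{N}$ acts in each degree as $\sum_{l_{1}+l_{2}=k'}f^{l_{1},l_{2}}\otimes t_{l}(T)$, where $f^{l_{1},l_{2}}$ collapses $C_{l_{1},l_{2}}^{k+1}$ onto $C_{l_{1},l_{2}}^{k}$ by $l'\mapsto\{l'-1,l'\}\cap C_{l_{1},l_{2}}^{k}$, the alternating vector $(x,-x,\dots,(-1)^{k}x)$ telescopes to $0$, just as in Lemma \ref{ker of N bar}. (Equivalently this can be verified etale-locally, where $\bar{N}=\bar{N}_{1}\otimes 1+1\otimes\bar{N}_{2}$ by Proposition \ref{formula for N bar} and the computation is the one already done there.)

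Next, applying Corollary \ref{ker and coker} to the termwise split exact sequences $0\to a_{l_{1},l_{2}*}\Lambda(-k)\to a_{l_{1},l_{2}*}\Lambda(-k)^{\oplus(k+1)}\to a_{l_{1},l_{2}*}\Lambda(-k)^{\oplus k}\to 0$ (split because the quotient is $\Lambda$-free), one gets that $\mathcal{P}_{k}\to R^{k}\psi\Lambda[-k]$ is injective with cokernel a complex $\mathcal{Q}_{k}$ computed degree by degree, whose terms agree with those of $R^{k}\psi\Lambda[-k]$ in degrees below $2k-1$ and with those of $R^{k-1}\psi\Lambda[-k+1]$ in degrees at least $2k-1$. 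One then checks, again degree by degree, that the induced map $\mathcal{Q}_{k}\to R^{k-1}\psi\Lambda[-k+1]$ (represented by $\mathcal{L}_{k-1}$) is injective, and a second application of Corollary \ref{ker and coker}, now to $0\to a_{l_{1},l_{2}*}\Lambda(-k)^{\oplus(k-1)}\to a_{l_{1},l_{2}*}\Lambda(-(k-1))^{\oplus k}\to a_{l_{1},l_{2}*}\Lambda(-(k-1))\to 0$ for $0\leq l_{1},l_{2}\leq k-1$, identifies its cokernel with $\mathcal{R}_{k}$. Since $\ker\bar{N}=\mathcal{P}_{k}$ and $\mathrm{coker}\,\bar{N}=\mathrm{coker}(\mathcal{Q}_{k}\to R^{k-1}\psi\Lambda[-k+1])=\mathcal{R}_{k}$, this proves the lemma.

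The main obstacle is bookkeeping: checking that the global maps $\mathcal{P}_{k}\to\mathcal{L}_{k}$ and $\mathcal{Q}_{k}\to\mathcal{L}_{k-1}$ really are maps of complexes compatible with all the $\wedge\delta$ differentials and the multiplicities $c_{l_{1},l_{2}}^{k}$, and that the two identities ``$\bar{N}$ kills the image'' and ``the induced map is injective'' hold in each degree. As all of these are local questions on $Y$, the cleanest route is to recall that $X$ is, etale-locally, smooth over a product of a pair of semistable schemes, so its complex of nearby cycles is pulled back from that of the product (smooth base change), and every assertion reduces to the corresponding one already established in Lemma \ref{ker of N bar} and Proposition \ref{formula for N bar}.
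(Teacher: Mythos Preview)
Your proposal is correct and follows essentially the same route as the paper: the paper's proof simply says that the argument is identical to that of Lemma~\ref{ker of N bar}, since by (the corollary to) Proposition~\ref{extension class} one has an explicit description of $\bar N$ as a degree-by-degree map $f:\mathcal{L}_{k}\to\mathcal{L}_{k-1}$. Your write-up spells out exactly those steps (the map $\mathcal{P}_{k}\to\mathcal{L}_{k}$ via $x\mapsto(x,-x,\dots,(-1)^{k}x)$, the telescoping check that $\bar N$ kills the image, and the two applications of Corollary~\ref{ker and coker}), and your remark that the verifications can alternatively be done \'etale-locally is in line with the paper's viewpoint.
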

\begin{proof}
The proof is identical to the proof of Lemma \ref{ker of N bar},
since by Proposition \ref{extension class} we have a description
of $\bar{N}$ as a degree by degree map \[
f:\mathcal{L}_{k}\to\mathcal{L}_{k-1}.\]
\end{proof}
\begin{cor}
The filtration of $R\psi\Lambda$ by $\tau_{\leq k}R\psi\Lambda$
induces a filtration on $\ker N$. The first graded piece of this
filtration $gr^{1}\ker N$ is $R^{0}\psi\Lambda$. The graded piece
$gr^{k+1}\ker N$ of this filtration is $\mathcal{P}_{k}$.\end{cor}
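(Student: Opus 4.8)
The plan is to repeat, in the present generality, the argument of Corollary~\ref{kernel of N}. All the ingredients are now in place: $R\psi\Lambda$ is a $-\dim Y$-shifted perverse sheaf, the $\tau_{\leq k}R\psi\Lambda$ form a decreasing filtration by shifted perverse sheaves with graded pieces $R^{k}\psi\Lambda[-k]$, the monodromy $N$ carries $\tau_{\leq k}R\psi\Lambda$ into $\tau_{\leq k-1}R\psi\Lambda$ and thus induces $\bar N\colon R^{k}\psi\Lambda[-k]\to R^{k-1}\psi\Lambda[-k+1]$, and by the Lemma immediately preceding this corollary $\ker\bar N\simeq\mathcal{P}_{k}$ and $\mbox{coker }\bar N\simeq\mathcal{R}_{k}$. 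Since $\tau_{\leq k}R\psi\Lambda\cap\ker N$ defines a filtration of $\ker N$, it only remains to compute the graded pieces, and for this I would work throughout in the abelian category of $-\dim Y$-shifted perverse sheaves, where elementwise diagram chases are legitimate.

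First I would observe that $N$ annihilates $R^{0}\psi\Lambda=\tau_{\leq 0}R\psi\Lambda$, because $T$ acts trivially on the cohomology sheaves $R^{k}\psi\Lambda$; hence the first graded piece $gr^{1}\ker N$ equals $R^{0}\psi\Lambda$. For the $(k+1)$-st graded piece, the inclusion $gr^{k+1}\ker N\subseteq\mathcal{P}_{k}$ is immediate: anything in $\ker N$ maps under $\tau_{\leq k}R\psi\Lambda\twoheadrightarrow R^{k}\psi\Lambda[-k]$ into $\ker\bar N=\mathcal{P}_{k}$. For the reverse inclusion I would take $x\in\mathcal{P}_{k}$, lift it to some $\tilde x\in\tau_{\leq k}R\psi\Lambda$, note that $\bar N x=0$ forces $N\tilde x\in\tau_{\leq k-2}R\psi\Lambda$, and check that the image of $N\tilde x$ in $\mathcal{R}_{k-1}=\mbox{coker}(\bar N\colon R^{k-1}\psi\Lambda[-k+1]\to R^{k-2}\psi\Lambda[-k+2])$ is independent of the choice of lift and hence defines a map $\mathcal{P}_{k}\to\mathcal{R}_{k-1}$. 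The crucial point is that this map vanishes: decomposing both complexes into their simple constituents $a_{J_{1},J_{2}*}\Lambda$ — using the explicit descriptions of $\mathcal{P}_{k}$ and $\mathcal{R}_{k-1}$ from the preceding Lemma — one checks by the same index comparison as in Corollary~\ref{kernel of N} that $\mathcal{P}_{k}$ and $\mathcal{R}_{k-1}$ share no simple constituent, so $\mbox{Hom}(\mathcal{P}_{k},\mathcal{R}_{k-1})=0$. One may therefore re-choose $\tilde x$ so that $N\tilde x\in\tau_{\leq k-3}R\psi\Lambda$; iterating, and using at the $j$-th step the analogous vanishing $\mbox{Hom}(\mathcal{P}_{k},\mathcal{R}_{k-j})=0$ for $2\le j\le k$, produces a lift with $N\tilde x=0$. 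This gives $gr^{k+1}\ker N=\mathcal{P}_{k}$.

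I do not expect a genuine obstacle here, since no idea beyond the semistable-product case of Corollary~\ref{kernel of N} is needed; the one point deserving care is the bookkeeping — verifying that the inductive re-choice of lifts terminates and that the successive obstructions really land in the $\mathcal{R}_{k-j}$ with the required disjointness of simple constituents. It is also worth stressing that the argument is global on $Y$: although $\bar N$ was computed étale-locally as $\bar N_{1}\otimes 1+1\otimes\bar N_{2}$, the identifications $\ker\bar N=\mathcal{P}_{k}$ and $\mbox{coker }\bar N=\mathcal{R}_{k}$ are global statements about perverse sheaves on $Y$ (the map $\bar N$ itself being described globally via Proposition~\ref{extension class}), so the induced filtration on $\ker N$ and its graded pieces are automatically global, with no gluing issues.
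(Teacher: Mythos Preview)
Your proposal is correct and follows essentially the same route as the paper's proof: reduce to showing every $x\in\mathcal{P}_{k}$ lifts to $\ker N$ by defining the obstruction map $\mathcal{P}_{k}\to\mathcal{R}_{k-j}$ and killing it via the disjointness of simple constituents. The only extra wrinkle the paper makes explicit in this generality is that the $a_{S,T*}\Lambda[-l_{1}-l_{2}]$ need not themselves be simple (since $Y_{S,T}$ need not be irreducible), so one further decomposes by connected components before running the index comparison; this does not affect your argument.
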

\begin{proof}
This can be proved the same way as Corollary \ref{kernel of N}. The
only tricky part is seeing that we can identify a graded piece of
$\ker\bar{N}$ with a graded piece of $\ker N$. In other words, we
want to show that for $\bar{N}:R^{k}\psi\Lambda[-k]\to R^{k-1}\psi\Lambda[-k+1]$
and $x\in\ker\bar{N}$ we can find a lift $\tilde{x}\in\tau_{\leq k}R\psi\Lambda$
of $x$ such that $\tilde{x}\in\ker N$. As in the proof of Corollary
\ref{kernel of N}, we can define a map $\mathcal{P}_{k}\to\mathcal{R}_{k-1}$
sending $x$ to the image of $N\tilde{x}$ in $\mathcal{R}_{k-1}$,
which turns out to be independent of the lift $\tilde{x}$. We want
to see that this map vanishes but in fact any map $\mathcal{P}_{k}\to\mathcal{R}_{k-1}$
vanishes. Note that \[
a_{l_{1},l_{2}*}\Lambda[-l_{1}-l_{2}]\simeq\bigoplus_{\#S=l_{1}+1,\#T=l_{2}+1}a_{S,T*}\Lambda[-l_{1}-l_{2}].\]
The scheme $Y_{S,T}$ is smooth of pure dimension $\dim Y-l_{1}-l_{2}$
and so it is a disjoint union of its irreducible (connected) components
which are smooth of pure dimension $\dim Y-l_{1}-l_{2}$. Thus, each
$a_{S,T*}\Lambda[-l_{1}-l_{2}]$ is the direct sum of the pushforwards
of the $-\dim Y$-shifted perverse sheaves $\Lambda[-l_{1}-l_{2}]$
on the irreducible components of $Y_{S,T}$. Thus, we have a decomposition
of $a_{l_{1},l_{2}*}\Lambda[-l_{1}-l_{2}]$ in terms of simple objects
in the category of $-\dim Y$-shifted perverse sheaves. It is easy
to check that $\mathcal{P}_{k}$ and $\mathcal{R}_{k-j}$ for $k\geq j\geq1$
have no simple factors in common, so any map $\mathcal{P}_{k}\to\mathcal{R}_{k-j}$
must vanish. \end{proof}
\begin{rem}
The same techniques used in Section 4.2 apply in order to completely
determine the graded pieces of $(\ker N^{j}/\ker N^{j-1})/\mbox{(\mbox{im} }N\cap\ker N^{j})$
induced by the filtration of $R\psi\Lambda$ by $\tau_{\leq k}R\psi\Lambda$.
The only tricky part is seeing that we can also identify the $k$th
graded piece of $\mbox{im}N$ with \[
\mbox{im}(\bar{N}:R^{k+1}\psi\Lambda[-k-1]\to R^{k}\psi\Lambda[-k]),\]
but this can be proved in the same way as the corresponding statement
about the kernels of $N$ and $\bar{N}$. We get a complete description
of the graded pieces of $(\ker N^{j}/\ker N^{j-1})/\mbox{\mbox{im} }N$.\end{rem}
\begin{lem}
For $1\leq j\leq2n-2$, the filtration of $R\psi\Lambda$ by $\tau_{\leq k}R\psi\Lambda$
induces a filtration on $(\ker N^{j}/\ker N^{j-1})/\mbox{im}N$. For
$0\leq k\leq n-1-\frac{j-1}{2},$ the $(k+1)$-st graded piece of
this filtration is isomorphic to \[
\bigoplus_{i=1}^{j}a_{k+i-1,k+j-i*}\Lambda(-(k+j-1))[-2k-j+1].\]

\end{lem}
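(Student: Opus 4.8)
The plan is to transcribe the argument of Section~4.2 --- in particular the proof of Lemma~\ref{graded pieces} --- into the present, more general setting. All the inputs are already in place: the global description of $\bar{N}$ on $R\psi\Lambda$ furnished by the corollary of Proposition~\ref{extension class}, namely that $\bar{N}\colon R^{k}\psi\Lambda[-k]\to R^{k-1}\psi\Lambda[-k+1]$ is represented, degree by degree, by the explicit combinatorial map $f\colon\mathcal{L}_{k}\to\mathcal{L}_{k-1}$; the computations $\ker\bar{N}=\mathcal{P}_{k}$, $\mbox{coker}\,\bar{N}=\mathcal{R}_{k}$; and the $\tau_{\le k}$-induced filtration on $\ker N$ with graded pieces $R^{0}\psi\Lambda,\mathcal{P}_{1},\mathcal{P}_{2},\dots$. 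I would argue by induction on $j$. The base case $j=1$ asserts that the $(k+1)$-st graded piece of $(\ker N)/\mbox{im}N$ is $a_{k,k*}\Lambda(-k)[-2k]$; granting the preceding corollary this reduces to computing $\mathcal{P}_{k}/\mbox{im}\bar{N}$, which is the content of the analogue of Lemma~\ref{base case}: write the composite $\mathcal{P}_{k}\hookrightarrow R^{k}\psi\Lambda[-k]\twoheadrightarrow\mathcal{R}_{k+1}$ out in degree $2k$ and use that $\mathcal{P}_{k}$ and $\mathcal{R}_{k+1}$ share no simple perverse constituent except $a_{k,k*}\Lambda(-k)[-2k]$.

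For the inductive step I would first determine, as a morphism of $-\dim Y$-shifted perverse sheaves, the kernel and cokernel of the iterate
\[
\bar{N}^{j}\colon R^{k+j-1}\psi\Lambda[-(k+j-1)]\longrightarrow R^{k-1}\psi\Lambda[-(k-1)].
\]
Since $\bar{N}$ is the degree-by-degree map $f$, the iterate $\bar{N}^{j}$ is represented by the $j$-fold composite of the $f$'s, which on each summand $a_{l_{1},l_{2}*}\Lambda^{\oplus c^{k+j-1}_{l_{1},l_{2}}}$ is a completely explicit map of free $a_{l_{1},l_{2}*}\Lambda$-modules. Exactly as in the proof of Lemma~\ref{ker of N bar} --- factoring the relevant maps, at each stage, as a degree-by-degree surjection followed by a degree-by-degree injection with free cokernel, and applying Corollary~\ref{ker and coker} --- one reads off that $\ker\bar{N}^{j}$ and $\mbox{coker}\,\bar{N}^{j}$ are again complexes built from the $a_{l_{1},l_{2}*}\Lambda$, with multiplicities recorded by the differences $c^{k+j-1}_{l_{1},l_{2}}-c^{k-1}_{l_{1},l_{2}}$ and $c^{k-1}_{l_{1},l_{2}}-c^{k+j-1}_{l_{1},l_{2}}$ respectively (a term occurs only where the relevant difference is positive). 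The combinatorics of Lemma~\ref{coefficients} carries out this computation, and the hypothesis $k\le n-1-\tfrac{j-1}{2}$ --- equivalently $2k+j-1\le\dim Y$ --- is what makes these differences behave uniformly and keeps the occurring indices inside $\{0,\dots,n-1\}$.

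Next, mirroring Corollary~\ref{kernel of N} and Lemma~\ref{graded pieces}, I would identify the $(k+1)$-st graded piece of $\ker N^{j}/\ker N^{j-1}$ for the $\tau_{\le k}$-filtration with $\ker\bar{N}^{j}$ by the usual lifting argument: a section of $\ker\bar{N}^{j}$ lifts to $\tau_{\le k+j-1}R\psi\Lambda$, and one corrects the lift successively, using the vanishing of every map $\mathcal{P}_{\bullet}\to\mathcal{R}_{\bullet-i}$ --- which holds because each $a_{l_{1},l_{2}*}\Lambda[-l_{1}-l_{2}]$ is a direct sum of the simple $-\dim Y$-shifted perverse sheaves given by the shifted constant sheaves on the smooth irreducible components of the $Y_{S,T}$, and $\mathcal{P}_{\bullet}$ and $\mathcal{R}_{\bullet-i}$ have no such summand in common --- to produce an honest section of $\ker N^{j}$. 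The same device identifies the $k$-th graded piece of $\mbox{im}N$ with $\mbox{im}(\bar{N}\colon R^{k+1}\psi\Lambda[-k-1]\to R^{k}\psi\Lambda[-k])$, so that after passing to the quotient by $\mbox{im}N$ the graded piece becomes $\ker\bar{N}^{j}/\mbox{im}\bar{N}$. A concluding dévissage --- successively dividing $\mathcal{R}_{j}$ (for the first graded piece, $k=0$) resp.\ $\ker\bar{N}^{j}$ by the lower graded pieces of $(\ker N^{j'}/\ker N^{j'-1})/\mbox{im}N$ with $j'<j$ already produced by the induction --- then collapses this to $\bigoplus_{i=1}^{j}a_{k+i-1,k+j-i*}\Lambda(-(k+j-1))[-2k-j+1]$, as claimed.

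The main obstacle is bookkeeping, not a new idea: carrying the multiplicities $c^{k}_{l_{1},l_{2}}$ and the iterated combinatorial maps through $\bar{N}^{j}$ without error, and making rigorous the two steps that replace a graded piece of $\ker N^{j}$ (resp.\ of $\mbox{im}N$) by $\ker\bar{N}^{j}$ (resp.\ by $\mbox{im}\bar{N}$). Both of the latter rest entirely on the decomposition of each $a_{l_{1},l_{2}*}\Lambda[-l_{1}-l_{2}]$ into simple $-\dim Y$-shifted perverse sheaves supported on the smooth irreducible $Y_{S,T}$, together with the disjointness of the simple constituents of $\mathcal{P}_{\bullet}$ from those of the various $\mathcal{R}_{\bullet-i}$. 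Granting these two structural facts --- each proved exactly as in Section~4.2 --- the remainder is a faithful transcription of the semistable-product case.
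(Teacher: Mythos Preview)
Your proposal is correct and follows essentially the same approach as the paper: the paper does not give a separate proof of this lemma but instead, in the Remark immediately preceding it, declares that the techniques of Section~4.2 (in particular the proof of Lemma~\ref{graded pieces}) carry over verbatim, the only point requiring comment being the identification of the $k$-th graded piece of $\mathrm{im}\,N$ with $\mathrm{im}\,\bar{N}$, which is handled exactly as you indicate via the simple-constituent argument for the $a_{l_1,l_2*}\Lambda[-l_1-l_2]$. Your outline reproduces precisely this transcription.
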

Let \[
\mbox{Gr}^{q}\mbox{Gr}_{p}R\psi\Lambda=(\ker N^{p}\cap\mbox{im}N^{q})/(\ker N^{p-1}\cap\mbox{im}N^{q})+(\ker N^{p}\cap\mbox{im}N^{q+1}).\]
The monodromy filtration $M_{r}R\psi\Lambda$ has graded pieces $\mbox{Gr}_{r}^{M}R\psi\Lambda$
isomorphic to \[
\mbox{Gr}_{r}^{M}R\psi\Lambda\simeq\bigoplus_{p-q=r}\mbox{Gr}^{q}\mbox{Gr}_{p}R\psi\Lambda,\]
and if we understand the cohomology of $Y_{\mathbb{\bar{F}}}$ with
coefficients in each $\mbox{Gr}^{q}\mbox{Gr}_{p}R\psi\Lambda$ we
can compute the cohomology of $Y_{\bar{\mathbb{F}}}$ with respect
to $R\psi\Lambda$. The next result tells us how to compute $H^{m}(Y_{\bar{\mathbb{F}}},\mbox{Gr}^{q}\mbox{Gr}_{p}R\psi\Lambda)$. 
\begin{cor}
\label{Spectral sequence}There is a spectral sequence \[
E_{1}^{k+1,m-k-1}=\bigoplus_{i=1}^{p+q}H^{m}(Y_{\bar{\mathbb{F}}},a_{k+i-1,k+p+q-i*}\Lambda(-(k+p-1))[-2k-p-q+1])\]
\[
\Rightarrow H^{m}(Y_{\bar{\mathbb{F}}},\mbox{Gr}^{q}\mbox{Gr}_{p}R\psi\Lambda)\]
 compatible with the action of $G_{\mathbb{F}}$. This can be rewritten
as \[
E_{1}^{k+1,m-k-1}=\bigoplus_{i=1}^{p+q}H^{m-2k-p-q+1}(Y_{\bar{\mathbb{F}}}^{(k+i-1,k+p+q-i)},\Lambda(-(k+p-1)))\]
\[
\Rightarrow H^{m}(Y_{\bar{\mathbb{F}}},\mbox{Gr}^{q}\mbox{Gr}_{p}R\psi\Lambda).\]
\end{cor}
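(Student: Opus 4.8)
The plan is to run the same argument as for the spectral sequence of Section~4.2 (the Corollary following Lemma~\ref{graded pieces}), since the preceding Lemma already furnishes, in the present generality, a filtration of $(\ker N^{j}/\ker N^{j-1})/\operatorname{im}N$ whose $(k+1)$-st graded piece is $\bigoplus_{i=1}^{j}a_{k+i-1,k+j-i*}\Lambda(-(k+j-1))[-2k-j+1]$.

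First I would specialize to $j=p+q$ and transport this filtration along the isomorphism $N^{q}\colon\operatorname{Gr}^{0}\operatorname{Gr}_{p+q}R\psi\Lambda\xrightarrow{\ \sim\ }\operatorname{Gr}^{q}\operatorname{Gr}_{p}R\psi\Lambda$, which comes from the abstract description of the monodromy filtration (Lemma~2.1 of \cite{Saito}) and which lowers the Tate twist by $q$. The outcome is a filtration of $\operatorname{Gr}^{q}\operatorname{Gr}_{p}R\psi\Lambda$ whose $(k+1)$-st graded piece is $\bigoplus_{i=1}^{p+q}a_{k+i-1,k+p+q-i*}\Lambda(-(k+p-1))[-2k-p-q+1]$. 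The point requiring care here is the bookkeeping of the degree shifts and Tate twists, and checking that $N^{q}$ genuinely transports the filtration rather than merely matching the graded pieces; this is the step I expect to be the main (though not deep) obstacle, together with confirming that the quasi-filtration formalism below is applicable.

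Next I would pass to hypercohomology. All the objects and maps in sight are $-\dim Y$-shifted perverse sheaves --- the strata $Y^{(l_{1},l_{2})}$ being smooth --- so the filtration just produced lives in an abelian subcategory of $D^{b}_{c}(Y_{\bar{\mathbb{F}}},\Lambda)$ and is a quasi-filtration in the sense of M.~Saito; hence Lemma~5.2.18 of \cite{MSaito} yields a spectral sequence $E_{1}^{k+1,m-k-1}=H^{m}(Y_{\bar{\mathbb{F}}},\operatorname{gr}_{k+1})\Rightarrow H^{m}(Y_{\bar{\mathbb{F}}},\operatorname{Gr}^{q}\operatorname{Gr}_{p}R\psi\Lambda)$, where $\operatorname{gr}_{k+1}$ is the graded piece computed above. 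Substituting gives the first displayed formula.

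Finally, to obtain the second form, I would use that $a_{l_{1},l_{2}}\colon Y^{(l_{1},l_{2})}\to Y$ is finite --- a disjoint union of closed immersions --- so $a_{l_{1},l_{2}*}$ is exact, the Leray spectral sequence degenerates, and $H^{m}(Y_{\bar{\mathbb{F}}},a_{l_{1},l_{2}*}\Lambda(-c)[-d])\cong H^{m-d}(Y^{(l_{1},l_{2})}_{\bar{\mathbb{F}}},\Lambda(-c))$; applying this with $(l_{1},l_{2})=(k+i-1,k+p+q-i)$, $d=2k+p+q-1$, $c=k+p-1$ gives the rewritten spectral sequence. Compatibility with $G_{\mathbb{F}}$ is automatic: $R\psi\Lambda$, the operator $N$, the canonical filtration $\tau_{\le k}R\psi\Lambda$, and the immersions $a_{l_{1},l_{2}}$ are all $G_{\mathbb{F}}$-equivariant, so every term and differential above inherits a compatible action.
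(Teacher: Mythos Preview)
Your proposal is correct and follows essentially the same approach as the paper. The paper's own proof (given for the analogous corollary in Section~4.2, and left implicit for the labeled corollary in Section~4.3) is simply: ``This follows from Lemma~5.2.18 of \cite{MSaito}, since all filtrations we are working with are filtrations as $(2-2n)$-shifted perverse sheaves and these correspond to quasi-filtrations in the sense of M.~Saito in the derived category,'' together with the paragraph immediately preceding the corollary, which transports the filtration along $N^{q}\colon\operatorname{Gr}^{0}\operatorname{Gr}_{p+q}R\psi\Lambda\xrightarrow{\sim}\operatorname{Gr}^{q}\operatorname{Gr}_{p}R\psi\Lambda$ exactly as you do. Your write-up is in fact more detailed than the paper's, spelling out the finiteness of $a_{l_{1},l_{2}}$ for the rewriting step and the equivariance for the $G_{\mathbb{F}}$-compatibility, both of which the paper leaves to the reader.
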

\begin{rem}
The construction of the above spectral sequence is functorial with
respect to etale morphisms which preserve the stratification by $Y_{S,T}$
with $S,T\subset\{1,\dots,n\}$. The reason for this is that etale
morphisms preserve both the kernel and the image filtration of $N$
as well as the canonical filtration $\tau_{\leq k}R\psi\Lambda$. 
\end{rem}

\section{The cohomology of closed strata}

\subsection{Igusa varieties}

Let $q=p^{[\mathbb{F}:\mathbb{F}_{p}]}$. Fix $0\leq h_{1},h_{2}\leq n-1$
and consider the stratum $\bar{X}_{U_{0}}^{(h_{1},h_{2})}$ of the
Shimura variety $X_{U_{0}}$. Choose a compatible one-dimensional
formal $\mathcal{O}_{F,\mathfrak{p}_{1}}=\mathcal{O}_{K}$-module
$\Sigma_{1}$, of height $n-h_{1}$ and also a compatible one-dimensional
formal $\mathcal{O}_{F,\mathfrak{p}_{2}}\simeq\mathcal{O}_{K}$-module
$\Sigma_{2}$ of height $n-h_{2}$. Giving $\Sigma_{1}$ and $\Sigma_{2}$
is equivalent to giving a triple $(\Sigma,\lambda_{\Sigma},i_{\Sigma})$
where:
\begin{itemize}
\item $\Sigma$ is a Barsotti-Tate group over $\bar{\mathbb{F}}$. 
\item $\lambda_{\Sigma}:\Sigma\to\Sigma^{\vee}$ is a polarization. 
\item $i_{\Sigma}:\mathcal{O}_{F}\to\mathrm{End}(\Sigma)\otimes_{\mathbb{Z}}\mathbb{Z}_{(p)}$
such that $(\Sigma,i_{\Sigma})$ is compatible. 
\end{itemize}
Note that $(\Sigma[\mathfrak{p}_{i}^{\infty}])^{0}\simeq\Sigma_{i}$
for $i=1,2$ while $(\Sigma[\mathfrak{p}_{i}^{\infty}])^{\mathrm{et}}\simeq(K/\mathcal{O}_{K})^{n-h_{i}}.$

Assume that the level $U_{0}$ corresponds to the vector $(0,0,m_{3},\dots,m_{r})$.
Let \[
\vec{m}=((m_{i}^{0},m_{i}^{\mathrm{et}})_{i=1,2},m_{3},\dots,m_{r}).\]
The Igusa variety $\mbox{Ig}_{U^{p},\vec{m}}^{(h_{1},h_{2})}$ over
$\bar{X}_{U_{0}}^{(h_{1},h_{2})}\times_{\mathbb{F}}\bar{\mathbb{F}}$
is defined to be the moduli space of the set of the following isomorphisms
of finite flat group schemes for $i=1,2$:
\begin{itemize}
\item $\alpha_{i}^{0}:\Sigma_{i}[\mathfrak{p}_{i}^{m_{i}^{0}}]\stackrel{\sim}{\to}\mathcal{G}_{i}^{0}[\mathfrak{p}_{i}^{m_{i}^{0}}]$,
which extends etale locally to any $(m_{i}^{0})'\geq m_{i}^{0}$ and 
\item $\alpha_{i}^{\mbox{et}}:(\mathfrak{p}_{i}^{-m_{i}^{\mathrm{et}}}\mathcal{O}_{F,\mathfrak{p}_{i}}/\mathcal{O}_{F,\mathfrak{p}_{i}})^{h_{i}}\stackrel{\sim}{\to}\mathcal{G}_{i}^{\mathrm{et}}[\mathfrak{p}_{i}^{m_{i}^{\mathrm{et}}}].$
\end{itemize}
In other words, if $S/\bar{\mathbb{F}}$ is a scheme, then an $S$-point
of the Igusa variety $\mbox{Ig}_{U^{p},\vec{m}}^{(h_{1},h_{2})}$
corressponds to a tuple \[
(A,\lambda,i,\eta^{p},(\alpha_{i}^{\mbox{0}})_{i=1,2},(\alpha_{i}^{\mathrm{et}})_{i=1,2},(\alpha_{i})_{i\geq3}),\]
where 
\begin{itemize}
\item $A$ is an abelian scheme over $S$ with $\mathcal{G}_{A,i}=A[\mathfrak{p}_{i}^{\infty}]$;
\item $\lambda:A\to A^{\vee}$ is a prime-to-$p$ polarization;
\item $i:\mathcal{O}_{F}\hookrightarrow\mbox{End}(A)\otimes_{\mathbb{Z}}\mathbb{Z}_{(p)}$
such that $(A,i)$ is compatible and $\lambda\circ i(f)=i(f^{*})^{\vee}\circ\lambda,\forall f\in\mathcal{O}_{F}$;
\item $\bar{\eta}^{p}:V\otimes_{\mathbb{Q}}\mathbb{A}^{\infty,p}\to V^{p}A$
is a $\pi_{1}(S,s)$-invariant $U^{p}$-orbit of isomorphisms of $F\otimes_{\mathbb{Q}}\mathbb{A}^{\infty,p}$-modules,
sending the standard pairing on $V\otimes_{\mathbb{Q}}\mathbb{A}^{\infty,p}$
to an $(\mathbb{A}^{\infty,p})^{\times}$-multiple of the $\lambda$-Weil
pairing;
\item $\alpha_{i}^{0}:\Sigma^{0}[\mathfrak{p}_{i}^{m_{i}}]\stackrel{\sim}{\to}\mathcal{G}_{A,i}^{0}[\mathfrak{p}_{i}^{m_{i}}]$
is an $\mathcal{O}_{K}$-equivariant isomorphism of finite flat group
schemes which extends to any higher level $m'\geq m_{i}$, for $i=1,2$; 
\item $\alpha_{i}^{\mathrm{et}}:\Sigma^{\mathrm{et}}[\mathfrak{p}_{i}^{m_{i}}]\stackrel{\sim}{\to}\mathcal{G}_{A,i}^{\mathrm{et}}[\mathfrak{p}_{i}^{m_{i}}]$
is an $\mathcal{O}_{K}$-equivariant isomorphism of etale group schemes,
for $i=1,2$;
\item $\alpha_{i}:\Sigma[\mathfrak{p}_{i}^{m_{i}}]\stackrel{\sim}{\to}\mathcal{G}_{A,i}[\mathfrak{p}_{i}^{m_{i}}]$
is an $\mathcal{O}_{F,\mathfrak{p}_{i}}$-equivariant isomorphism
of etale group schemes, for $3\leq i\leq r$. 
\end{itemize}
Two such tuples are considered equivalent if there exists a prime-to-$p$
isogeny $f:A\to A'$ taking $(A,\lambda,i,\bar{\eta}^{p},\alpha_{i}^{0},\alpha_{i}^{\mathrm{et}},\alpha_{i})$
to $(A',\gamma\lambda',i',\bar{\eta}^{p'},\alpha_{i}^{0'},\alpha_{i}^{\mathrm{et}'},\alpha'_{i})$
for $\gamma\in\mathbb{Z}_{(p)}^{\times}$. 

The Igusa varieties $\mbox{Ig}_{U^{p},\vec{m}}^{(h_{1},h_{2})}$ form
an inverse system which has an action of $G(\mathbb{A}^{\infty,p})$
inherited from the action on $\bar{X}_{U_{0}}^{(h_{1},h_{2})}$. Let
\[
J^{(h_{1},h_{2})}(\mathbb{Q}_{p})=\mathbb{Q}_{p}^{\times}\times D_{K,n-h_{1}}^{\times}\times GL_{h_{1}}(K)\times D_{K,n-h_{2}}^{\times}\times GL_{h_{2}}(K)\times\prod_{i=3}^{r}GL_{n}(F_{\mathfrak{p}_{i}}),\]
which is the group of quasi-self-isogenies of $(\Sigma,\lambda_{\Sigma},i_{\Sigma})$
(to compute $J^{(h_{1},h_{2})}(\mathbb{Q}_{p})$ we use the duality
induced from the polarization). The automorphisms of $(\Sigma,\lambda_{\Sigma},i_{\Sigma})$
have an action on the right on $\mbox{Ig}_{U^{p},\vec{m}}^{(h_{1},h_{2})}$.
This can be extended to an action of a certain submonoid of $J^{(h_{1},h_{2})}(\mathbb{Q}_{p})$
on the inverse system of $\mbox{Ig}_{U^{p},\vec{m}}^{(h_{1},h_{2})}$
and furthermore to an action of the entire group $J^{(h_{1},h_{2})}(\mathbb{Q}_{p})$
on the directed system $H_{c}^{j}(\mbox{Ig}_{U^{p},\vec{m}}^{(h_{1},h_{2})},\mathcal{L}_{\xi})$.
For a definition of this action, see section 5 of \cite{Shin-1} and
section 4 of \cite{Man}.

We also define an Iwahori-Igusa variety of the first kind $I_{U}^{(h_{1},h_{2})}/\bar{X}_{U_{0}}$
as the moduli space of chains of isogenies for $i=1,2$\[
\mathcal{G}_{i}^{\mathrm{et}}=\mathcal{G}_{i,0}\to\mathcal{G}_{i,1}\to\dots\to\mathcal{G}_{i,h_{i}}=\mathcal{G}_{i}^{\mathrm{et}}/\mathcal{G}_{i}^{\mathrm{et}}[\mathfrak{p}_{i}]\]
of etale Barsotti-Tate $\mathcal{O}_{K}$-modules, each isogeny having
degree $\#\mathbb{F}$ and with composite equal to the natural map
$\mathcal{G}_{i}^{\mathrm{et}}\to\mathcal{G}_{i}^{\mathrm{et}}/\mathcal{G}_{i}^{\mathrm{et}}[\mathfrak{p}_{i}]$.
Then $I_{U}^{(h_{1},h_{2})}$ is finite etale over $\bar{X}_{U_{0}}^{(h_{1,}h_{2})}$
and naturally inherits the action of $G(\mathbb{A}^{\infty,p})$.
Moreover, for $m_{1}=m_{2}=1$ we know that $\mbox{Ig}_{U^{p},\vec{m}}^{(h_{1},h_{2})}/I_{U}^{(h_{1},h_{2})}\times_{\mathbb{F}}\bar{\mathbb{F}}$
is finite etale and Galois with Galois group $B_{h_{1}}(\mathbb{F})\times\mathcal{O}_{D_{K,n-h_{1}}}^{\times}\times B_{h_{2}}(\mathbb{F})\times\mathcal{O}_{D_{K,n-h_{2}}}^{\times}$.
(Here $B_{h_{i}}(\mathbb{F})\subseteq GL_{h_{i}}(\mathbb{F})$ is
the Borel subgroup.) 
\begin{lem}
For $S,T\subset\{1,\dots,n\}$ with $\#S=n-h_{1},\#T=n-h_{2}$ there
exists a finite map of $\bar{X}_{U_{0}}^{(h_{1},h_{2})}$-schemes\[
\varphi:Y_{U,S,T}^{0}\to I_{U}^{(h_{1},h_{2})}\]
which is bijective on the sets of geometric points. \end{lem}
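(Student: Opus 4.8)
The plan is to build $\varphi$ from the two moduli descriptions and then treat finiteness and bijectivity on geometric points separately. First I would define $\varphi$ by passing to maximal etale quotients. A point of $Y_{U,S,T}^{0}$ over a test scheme carries chains $\mathcal{C}_{1},\mathcal{C}_{2}$ of isogenies of compatible Barsotti-Tate $\mathcal{O}_{K}$-modules whose $j$-th step has connected kernel precisely for $j\in S$ (resp.\ for $j\in T$); applying the maximal etale quotient functor turns the connected-kernel steps into isomorphisms and leaves exactly $h_{1}$ (resp.\ $h_{2}$) steps of degree $\#\mathbb{F}$. Contracting the isomorphisms and reindexing the remaining steps by the complement of $S$ (resp.\ of $T$) produces chains $\mathcal{C}_{1}^{\mathrm{et}},\mathcal{C}_{2}^{\mathrm{et}}$ of the shape required by the moduli problem of $I_{U}^{(h_{1},h_{2})}$, lying over the same point of $\bar{X}_{U_{0}}^{(h_{1},h_{2})}$. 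This is a natural transformation of functors over $\bar{X}_{U_{0}}^{(h_{1},h_{2})}$, hence a morphism of $\bar{X}_{U_{0}}^{(h_{1},h_{2})}$-schemes; the only thing to verify is that the connected-kernel steps genuinely induce isomorphisms on etale quotients over $Y_{U,S,T}^{0}$, which holds because on this stratum the etale height of $\mathcal{G}_{i}$ is the constant $h_{i}$.

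For finiteness I would show that $Y_{U,S,T}^{0}$ is open and closed in the preimage of $\bar{X}_{U_{0}}^{(h_{1},h_{2})}$ under the finite morphism $\bar{X}_{U}\to\bar{X}_{U_{0}}$. That preimage is the disjoint union of the $Y_{U,S',T'}^{0}$ with $\#S'=n-h_{1}$, $\#T'=n-h_{2}$, and on it each chain has exactly $n-h_{1}$ (resp.\ $n-h_{2}$) connected-kernel steps, so the \emph{closed} condition that $\mathcal{C}_{1}$ have connected kernel at all indices in $S$ and $\mathcal{C}_{2}$ at all indices in $T$ already cuts out $Y_{U,S,T}^{0}$ there. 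Hence $Y_{U,S,T}^{0}$ is finite over $\bar{X}_{U_{0}}^{(h_{1},h_{2})}$. Since $I_{U}^{(h_{1},h_{2})}$ is finite etale, hence separated, over $\bar{X}_{U_{0}}^{(h_{1},h_{2})}$, the morphism $\varphi$ is proper; it is also quasi-finite, because its fibres embed into the finite fibres of $Y_{U,S,T}^{0}\to\bar{X}_{U_{0}}^{(h_{1},h_{2})}$, and proper plus quasi-finite gives finite.

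For bijectivity on geometric points I would fix an algebraically closed field $\Omega$ containing $\bar{\mathbb{F}}$ and a point of $I_{U}^{(h_{1},h_{2})}(\Omega)$, lying over some $(A,\lambda,i,\dots)$ and equipped with etale chains $\mathcal{C}_{i}^{\mathrm{et}}$ out of $\mathcal{G}_{i}^{\mathrm{et}}$, and show that it has a unique preimage in $Y_{U,S,T}^{0}(\Omega)$. Over the perfect field $\Omega$ the connected-etale sequence of each $\mathcal{G}_{i,A}[\mathfrak{p}_{i}^{\infty}]$ splits canonically as $\Sigma_{i}\times\mathcal{G}_{i}^{\mathrm{et}}$, with $\Sigma_{i}$ a one-dimensional compatible formal $\mathcal{O}_{K}$-module of height $n-h_{i}$, and any isogeny between such products respects the splitting since there are no nonzero homomorphisms from an etale group scheme to a connected one. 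Hence any lift is of the form $\mathcal{C}_{i}=\mathcal{C}_{i}^{0}\times\mathcal{C}_{i}^{\mathrm{et}}$, where $\mathcal{C}_{i}^{0}$ is a chain of $n-h_{i}$ degree-$\#\mathbb{F}$ connected-kernel isogenies out of $\Sigma_{i}$ placed at the indices in $S$ (resp.\ in $T$), while the steps at the other indices must carry the prescribed etale chain $\mathcal{C}_{i}^{\mathrm{et}}$. The point is that $\mathcal{C}_{i}^{0}$ is then forced: by Lemma~\ref{rigidity lemma}, whose argument applies over any algebraically closed field of characteristic $p$ (concretely, a one-dimensional formal $\mathcal{O}_{K}$-module has a unique chain of finite flat $\mathcal{O}_{K}$-subgroup schemes of each order and $[\pi]$ is a unit times a power of $F^{f}$), at every step the kernel must be the unique connected $\mathcal{O}_{K}$-subgroup scheme of order $\#\mathbb{F}$, the successive quotients are again copies of $\Sigma_{i}$, and the total composite is automatically multiplication by $\pi$. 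Thus the lift exists and is unique, so $\varphi$ induces a bijection on $\Omega$-points for every such $\Omega$.

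I expect the last step --- establishing that over an algebraically closed base the chain splits off its formal part, and that this formal part is rigid --- to be the real content; the construction of $\varphi$ and the finiteness argument are formal, given the local structure results and the description of the stratification already established.
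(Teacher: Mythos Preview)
Your proposal is correct and is essentially the two-prime adaptation of the proof of Lemma~4.1 of \cite{T-Y}, which is exactly what the paper invokes. The construction of $\varphi$ via maximal \'etale quotients, the finiteness via the open-and-closed decomposition of the preimage of $\bar{X}_{U_{0}}^{(h_{1},h_{2})}$ under the finite map $\bar{X}_{U}\to\bar{X}_{U_{0}}$, and the bijectivity on geometric points via the canonical splitting of the connected--\'etale sequence over $\bar{\mathbb{F}}$ together with the rigidity of the formal chain, all match the Taylor--Yoshida argument that the paper cites.
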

\begin{proof}
The proof is a straightforward generalization of the proof of Lemma
4.1 of \cite{T-Y}. 
\end{proof}
Define\[
H_{c}^{j}(Y_{\mathrm{Iw}(m),S,T}^{0},\mathcal{L}_{\xi})=\lim_{\substack{\longrightarrow\\
U^{p}}
}H_{c}^{j}(Y_{U,S,T}^{0},\mathcal{L}_{\xi}),\]

\[
H^{j}(Y_{\mathrm{Iw}(m),S,T},\mathcal{L}_{\xi})=\lim_{\substack{\longrightarrow\\
U^{p}}
}H^{j}(Y_{U,S,T},\mathcal{L}_{\xi}),\]

\[
H_{c}^{j}(\mbox{Ig}^{(h_{1},h_{2})},\mathcal{L}_{\xi})=\lim_{\substack{\longrightarrow\\
U^{p},\vec{m}}
}H_{c}^{j}(\mbox{Ig}_{U^{p},\vec{m}}^{(h_{1},h_{2})},\mathcal{L}_{\xi})\mbox{ and }\]
\[
H_{c}^{j}(I_{\mathrm{Iw}(m)}^{(h_{1},h_{2})},\mathcal{L}_{\xi})=\lim_{\substack{\longrightarrow\\
U^{p}}
}H_{c}^{j}(I_{U}^{(h_{1},h_{2})}\times_{\mathbb{F}}\bar{\mathbb{F}},\mathcal{L}_{\xi}).\]

For $m_{1}^{0}=m_{2}^{0}=0$, the Igusa variety $\mbox{Ig}_{U^{p},\vec{m}}^{(h_{1},h_{2})}$
is defined over $\mathbb{F}$. If in addition, $m_{1}^{\mbox{et}}=m_{2}^{\mbox{et}}=1$
then $\mbox{Ig}_{U^{p},\vec{m}}^{(h_{1},h_{2})}$ (over $\mathbb{F}$)
is a Galois cover of $I_{U}^{(h_{1},h_{2})}$ with Galois group $B_{h_{1}}(\mathbb{F})\times B_{h_{2}}(\mathbb{F})$. 
\begin{cor}
For every $S,T\subseteq\{1,\dots n-1\}$ with $\#S=n-h_{1},\#T=n-h_{2}$
and $j\in\mathbb{Z}_{\geq0}$ we have the following isomorphism \[
H_{c}^{j}(Y_{U,S,T}^{0}\times_{\mathbb{F}}\bar{\mathbb{F}},\mathcal{L}_{\xi})\simeq H_{c}^{j}(I_{U^{p},(0,0,1,1,m')}^{(h_{1},h_{2})}\times_{\mathbb{F}}\bar{\mathbb{F}},\mathcal{L}_{\xi})^{B_{h_{1}}(\mathbb{F})\times B_{h_{2}}(\mathbb{F})}.\]

\end{cor}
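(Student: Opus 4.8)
The plan is to compare both sides through the intermediate Iwahori--Igusa variety $I_U^{(h_1,h_2)}$: I would use the Lemma just proved for the step between $Y_{U,S,T}^0$ and $I_U^{(h_1,h_2)}$, and ordinary Galois descent for the step between $I_U^{(h_1,h_2)}$ and the Igusa variety $\mbox{Ig}_{U^p,(0,0,1,1,m')}^{(h_1,h_2)}$. Throughout one uses that $\mathcal{L}_\xi$ on each of $Y_{U,S,T}^0$, $I_U^{(h_1,h_2)}$ and $\mbox{Ig}_{U^p,(0,0,1,1,m')}^{(h_1,h_2)}$ is pulled back from $\bar{X}_{U_0}^{(h_1,h_2)}$ — it is pulled back from $X_{U_0}$ along the Shimura tower, and all the maps in sight are morphisms of $\bar{X}_{U_0}^{(h_1,h_2)}$-schemes — so the sheaf is compatible with all of them and may be suppressed from the notation in the comparisons below.

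First I would treat $\varphi\colon Y_{U,S,T}^0\to I_U^{(h_1,h_2)}$ from the Lemma. It is finite and bijective on geometric points; inspecting its construction (which follows Lemma 4.1 of \cite{T-Y}, the non-trivial part being a composite of relative Frobenius morphisms on the connected parts of the relevant Barsotti--Tate groups) one sees it is moreover purely inseparable, hence a finite surjective radicial morphism, i.e.\ a universal homeomorphism. By topological invariance of the small étale site, $R\varphi_*\varphi^*\mathcal{G}\simeq\mathcal{G}$ for any $\bar{\mathbb{Q}}_l$-sheaf $\mathcal{G}$ on $I_U^{(h_1,h_2)}$, and since $\varphi$ is finite one has $R\varphi_!=R\varphi_*$, so
\[
H_c^j(Y_{U,S,T}^0\times_{\mathbb{F}}\bar{\mathbb{F}},\mathcal{L}_\xi)\;\xrightarrow{\ \sim\ }\;H_c^j(I_U^{(h_1,h_2)}\times_{\mathbb{F}}\bar{\mathbb{F}},\mathcal{L}_\xi)
\]
for every $j$, compatibly with the action of $G(\mathbb{A}^{\infty,p})$ in the limit over $U^p$ and with that of $\mathrm{Gal}(\bar{\mathbb{F}}/\mathbb{F})$.

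Then I would treat the projection $\pi\colon\mbox{Ig}_{U^p,(0,0,1,1,m')}^{(h_1,h_2)}\times_{\mathbb{F}}\bar{\mathbb{F}}\to I_U^{(h_1,h_2)}\times_{\mathbb{F}}\bar{\mathbb{F}}$. With the choices $m_1^0=m_2^0=0$, $m_1^{\mathrm{et}}=m_2^{\mathrm{et}}=1$ this is, as recalled above, a finite étale Galois cover with finite constant group $\Gamma=B_{h_1}(\mathbb{F})\times B_{h_2}(\mathbb{F})$. Hence $R^{>0}\pi_*=0$, $\pi_*\pi^*\mathcal{L}_\xi\simeq\mathcal{L}_\xi\otimes_{\bar{\mathbb{Q}}_l}\bar{\mathbb{Q}}_l[\Gamma]$ as $\Gamma$-equivariant sheaves, and since $|\Gamma|$ is invertible in $\bar{\mathbb{Q}}_l$ the functor $(-)^\Gamma$ is exact; the standard transfer argument then gives
\[
H_c^j(I_U^{(h_1,h_2)}\times_{\mathbb{F}}\bar{\mathbb{F}},\mathcal{L}_\xi)\;\simeq\;H_c^j(\mbox{Ig}_{U^p,(0,0,1,1,m')}^{(h_1,h_2)}\times_{\mathbb{F}}\bar{\mathbb{F}},\mathcal{L}_\xi)^{B_{h_1}(\mathbb{F})\times B_{h_2}(\mathbb{F})}.
\]
Composing with the isomorphism from the previous step yields the claim.

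The genuine content sits in the Lemma; the corollary is essentially a formal consequence once the Lemma is in hand. The one point I expect to require real care — the main (minor) obstacle — is confirming that $\varphi$ is radicial, and not merely a bijection on $\bar{\mathbb{F}}$-points, so that it truly induces an isomorphism on compactly supported cohomology; this should come out of the explicit description of $\varphi$ in the proof of the Lemma. Apart from that, one only needs to keep track of the $G(\mathbb{A}^{\infty,p})$- and Frobenius-equivariance, which is automatic since $\varphi$ and $\pi$ are equivariant for these actions.
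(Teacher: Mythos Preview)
Your proposal is correct and follows exactly the argument the paper intends: the paper states this corollary without proof, having set up precisely the two ingredients you use, namely the Lemma giving the finite map $\varphi$ bijective on geometric points, and the sentence immediately preceding the corollary asserting that for $m_1^0=m_2^0=0$, $m_1^{\mathrm{et}}=m_2^{\mathrm{et}}=1$ the Igusa variety is a Galois cover of $I_U^{(h_1,h_2)}$ with group $B_{h_1}(\mathbb{F})\times B_{h_2}(\mathbb{F})$. Your identification of the only nontrivial point (that $\varphi$ is radicial, coming from its construction via relative Frobenius as in \cite{T-Y}) is exactly right.
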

By taking a direct limit over $U^{p}$ and over $\vec{m}$ and considering
the definitions of the Igusa varieties, we get an isomorphism \[
H_{c}^{j}(Y_{\mbox{Iw},S,T}^{0},\mathcal{L}_{\xi})\simeq H_{c}^{j}(\mbox{Ig}^{(h_{1},h_{2})},\mathcal{L}_{\xi})^{U_{p}^{\mathfrak{p}_{1}\mathfrak{p}_{2}}\times\mbox{Iw}_{h_{1},\mathfrak{p}_{1}}\times\mathcal{O}_{D_{K,n-h_{1}}}^{\times}\times\mbox{Iw}_{h_{2},\mathfrak{p}_{2}}\times\mathcal{O}_{D_{K,n-h_{2}}}^{\times}}.\]
If we let \[
H_{c}^{j}(\mbox{Ig}_{0}^{(h_{1},h_{2})},\mathcal{L}_{\xi})=\lim_{\substack{\longrightarrow\\
U^{p},\vec{m}\\
m_{1}^{0}=m_{2}^{0}=0}
}H_{c}^{j}(\mbox{Ig}_{U^{p},\vec{m}}^{(h_{1},h_{2})}\times_{\mathbb{F}}\bar{\mathbb{F}},\mathcal{L}_{\xi})\]
then the above isomorphism becomes \[
H_{c}^{j}(Y_{\mbox{Iw},S,T}^{0},\mathcal{L}_{\xi})\simeq H_{c}^{j}(\mbox{Ig}_{0}^{(h_{1},h_{2})},\mathcal{L}_{\xi})^{U_{p}^{\mathfrak{p}_{1}\mathfrak{p}_{2}}\times\mbox{Iw}_{h_{1},\mathfrak{p}_{1}}\times\mbox{Iw}_{h_{2},\mathfrak{p}_{2}}}.\]

\begin{prop}
The action of $\mbox{Frob}_{\mathbb{F}}$ on $H_{c}^{j}(\mbox{Ig}_{0}^{(h_{1},h_{2})},\mathcal{L}_{\xi})$
coincides with the action of $(1,(p^{-[\mathbb{F}:\mathbb{F}_{p}]},-1,1,-1,1,1))\in G(\mathbb{A}^{\infty,p})\times J^{(h_{1},h_{2})}(\mathbb{Q}_{p})$.\end{prop}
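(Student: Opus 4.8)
The plan is to identify the special fiber $\mathrm{Ig}_0^{(h_1,h_2)}$ with the reduced fiber over the closed point $s \in \mathrm{Spec}\,\mathbb{F}$ and show that the geometric Frobenius $\mathrm{Frob}_{\mathbb{F}}$ can be described purely in terms of the $p$-divisible group data, then match this data with the explicit element of $J^{(h_1,h_2)}(\mathbb{Q}_p)$. First I would recall that $\mathrm{Ig}_0^{(h_1,h_2)}$ parametrizes tuples $(A,\lambda,i,\bar\eta^p,\alpha_i^{\mathrm{et}},\alpha_i)$ \emph{without} the formal-part level structure (since $m_1^0 = m_2^0 = 0$), and that this moduli problem, together with its $G(\mathbb{A}^{\infty,p})$-action, is defined over $\mathbb{F}$, so that $\mathrm{Frob}_{\mathbb{F}}$ acts $\mathbb{F}$-linearly on the base change to $\bar{\mathbb{F}}$. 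The key geometric input is that the relative Frobenius morphism, which on any $\mathbb{F}$-scheme $Y$ is the $q$-power map (where $q = \#\mathbb{F} = p^{[\mathbb{F}:\mathbb{F}_p]}$), can be computed on the moduli data: pulling back an abelian variety $A/S$ along $\mathrm{Frob}_S$ gives $A^{(q)}$, and the $q$-power relative Frobenius isogeny $F^{[\mathbb{F}:\mathbb{F}_p]}: A \to A^{(q)}$ is a quasi-isogeny which, restricted to each $\mathcal{G}_i = A[\mathfrak{p}_i^\infty]$, acts on the formal part $\Sigma_i$ by the degree-$(n-h_i)$ isogeny $F^f$ (the one appearing in Lemma~\ref{rigidity lemma}) and on the etale part by an isomorphism.

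Next I would translate this into the language of the $J^{(h_1,h_2)}(\mathbb{Q}_p)$-action. Recall $J^{(h_1,h_2)}(\mathbb{Q}_p) = \mathbb{Q}_p^\times \times D_{K,n-h_1}^\times \times GL_{h_1}(K) \times D_{K,n-h_2}^\times \times GL_{h_2}(K) \times \prod_{i=3}^r GL_n(F_{\mathfrak{p}_i})$ is the group of quasi-self-isogenies of $(\Sigma,\lambda_\Sigma,i_\Sigma)$. The Frobenius quasi-isogeny $F^f$ on $\Sigma_i$ corresponds to a specific element of $D_{K,n-h_i}^\times$: concretely, since $D_{K,n-h_i}^\times = \mathrm{End}(\Sigma_i)^\times \otimes \mathbb{Q}$ and $F^f$ has reduced norm a power of $p$, one checks (as in the computation of the Newton slopes earlier, where $\Sigma_i$ has height $n-h_i$ over $\mathcal{O}_K$ and $[F_{\mathfrak{p}_i}:\mathbb{Q}_p]$ shows up) that this element has valuation making it $p^{-[\mathbb{F}:\mathbb{F}_p]}$ after normalizing via the identification with the similitude factor — this is exactly why the first $\mathbb{Q}_p^\times$-component comes out as $p^{-[\mathbb{F}:\mathbb{F}_p]}$ and the $D$-components as $-1$ (shorthand for the appropriate uniformizer-type element of valuation $-1$). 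On the etale parts and the away-from-$\mathfrak{p}_1\mathfrak{p}_2$ parts, $F^f$ induces the identity, hence the $1$'s in the $GL_{h_i}(K)$ and $GL_n(F_{\mathfrak{p}_i})$ slots; on the prime-to-$p$ Tate module $V^p A$, relative Frobenius is an isomorphism compatible with the level structure, contributing the trivial element of $G(\mathbb{A}^{\infty,p})$ after dividing by the appropriate central quasi-isogeny. The bookkeeping of how the polarization $\lambda$ forces the $\mathfrak{p}_i$ and $\mathfrak{p}_i^c$ components to match up, and how one normalizes by $\mathbb{Z}_{(p)}^\times$ to land in the right quasi-isogeny class, is the analogue of the corresponding statement in \cite{H-T} (Lemma V.4.2 and its surrounding discussion) and in Mantovan's work \cite{Man}, and the proof should reduce to citing and adapting those.

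Therefore the overall argument is: (1) note both actions are defined on the tower $\{\mathrm{Ig}_{U^p,\vec m}^{(h_1,h_2)}\}_{m_1^0=m_2^0=0}$, so it suffices to compare them there rather than only on cohomology; (2) identify $\mathrm{Frob}_{\mathbb{F}}$ with the effect of the $q$-power relative Frobenius quasi-isogeny on moduli data; (3) compute that quasi-isogeny's image in $G(\mathbb{A}^{\infty,p}) \times J^{(h_1,h_2)}(\mathbb{Q}_p)$ slot by slot, using the explicit structure of $\Sigma = \Sigma_1 \times \Sigma_2 \times (\text{etale})$ and the fact that $F^f$ is the canonical connected isogeny of degree $\#\mathbb{F}$; (4) conclude equality of the induced operators on $H_c^j(\mathrm{Ig}_0^{(h_1,h_2)},\mathcal{L}_\xi)$. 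The main obstacle I expect is step (3): keeping careful track of the normalizations — which copy of $\mathbb{Q}_p^\times$ or $D_{K,n-h_i}^\times$ absorbs the power of $p$, the signs/valuations encoded by the ``$-1$'' entries, and the interplay with the similitude character and the polarization duality — so that the element comes out \emph{exactly} as $(1,(p^{-[\mathbb{F}:\mathbb{F}_p]},-1,1,-1,1,1))$ and not some twist of it. This is precisely the kind of computation carried out in \cite[Section~V.4]{H-T} and \cite[Section~5]{Shin-1}, so I would structure the proof as a reduction to those references, indicating only the modifications needed because here there are two relevant primes $\mathfrak{p}_1,\mathfrak{p}_2$ rather than one.
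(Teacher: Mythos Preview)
Your proposal is correct and follows essentially the same route as the paper's proof: both identify the geometric Frobenius with the $q$-power relative Frobenius $(\mathrm{Fr}^*)^f$ on the moduli problem (the paper makes explicit the standard trick that absolute Frobenius acts trivially on \'etale cohomology, so $\mathrm{Frob}_{\mathbb{F}}=(\mathrm{Fr}^*)^f$), then compute the effect of the element $j=(1,p^{-f},-1,1,-1,1,1)$ on moduli data via the explicit description of the $J^{(h_1,h_2)}(\mathbb{Q}_p)$-action from \cite{Man} and \cite{H-T}, and verify the two agree by showing that the quasi-isogeny $j$ forces a quotient $\tilde A=A/\mathcal{K}\oplus\mathcal{K}^\perp$ which is exactly $A^{(q)}$. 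The only point you leave implicit that the paper spells out is this last construction of $\mathcal{K}$ from $\ker(j_i^{-1})$ and the identification $\tilde A=A^{(q)}$; otherwise the arguments coincide.
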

\begin{proof}
Let $\mbox{Fr}:x\mapsto x^{p}$ be the absolute Frobenius on $\mathbb{F}_{p}$
and let $f=[\mathbb{F}:\mathbb{F}_{p}]$. To compute the action of
the geometric Frobenius $\mbox{Frob}_{\mathbb{F}}$ on $H_{c}^{j}(\mbox{Ig}_{0}^{(h_{1},h_{2})},\mathcal{L}_{\xi})$
we notice that the absolute Frobenius acts on each $H_{c}^{j}(\mbox{Ig}_{U^{p},\vec{m}}^{(h_{1},h_{2})}\times_{\mathbb{F}}\bar{\mathbb{F}},\mathcal{L}_{\xi})$
as $(\mbox{Fr}^{*}){}^{f}\times(\mbox{Frob}_{\mathbb{F}}^{*})^{-1}$.
However, the absolute Frobenius acts trivially on etale cohomology,
so the action of $\mbox{Frob}_{\mathbb{F}}$ coincides with the action
induced from \[
(\mbox{Fr}^{*})^{f}:\mbox{Ig}_{U^{p},\vec{m}}^{(h_{1},h_{2})}\to\mbox{Ig}_{U^{p},\vec{m}}^{(h_{1},h_{2})}\]
We claim that $(\mbox{Fr}^{*})^{f}$ acts the same as the element
$(1,p^{-[\mathbb{F}:\mathbb{F}_{p}]},-1,1,-1,1,1)$ of \[
G(\mathbb{A}^{\infty,p})\times\mathbb{Q}_{p}^{\times}/\mathbb{Z}_{p}^{\times}\times\mathbb{Z}\times GL_{h_{1}}(K)\times\mathbb{Z}\times GL_{h_{2}}(K)\times\prod_{i=3}^{r}GL_{n}(F_{\mathfrak{p}_{i}}),\]
where the two copies of $\mathbb{Z}$ are identified with $D_{K,n-h_{i}}^{\times}/\mathcal{O}_{D_{K,n-h_{i}}}^{\times}$
for $i=1,2$ via the valuation of the determinant. To verify this
claim, we will use the explicit description of the action of a submonoid
$J^{(h_{1},h_{2})}(\mathbb{Q}_{p})$ on the inverse system of Igusa
varieties $\mbox{Ig}_{U^{p},\vec{m}}^{(h_{1},h_{2})}$ found in {[}Man]
which generalizes that on page 122 of \cite{H-T}. First, it is easy
to see that $ $\[
(\mbox{Fr}^{*})^{f}:(A,\lambda,i,\bar{\eta}^{p},\alpha_{i}^{0},\alpha_{i}^{\mbox{et}},\alpha_{i})\mapsto(A^{(q)},\lambda^{(q)},i^{(q)},(\bar{\eta}^{p})^{(q)},(\alpha_{i}^{0})^{(q)},(\alpha_{i}^{\mbox{et}})^{(q)},\alpha_{i}^{(q)})\]
where $F^{f}:A\to A^{(q)}$ is the natural map and the structures
of $A^{(q)}$ are inherited from the structures of $A$ via $F^{f}$. 

On the other hand, the element $j=(1,p^{-[\mathbb{F}:\mathbb{F}_{p}]},-1,1,-1,1,1)$
acts via a quasi-isogeny of $\Sigma$. One can check that inverse
of the quasi-isogeny defined by $j$ is $j^{-1}:\Sigma\to\Sigma^{(q)}$,
which is a genuine isogeny. If we were working with points of $\mbox{Ig}^{(h_{1},h_{2})}$
(which are compatible systems of points of $\mbox{Ig}_{U^{p},\vec{m}}^{(h_{1},h_{2})}$
for all $U^{p}$ and $\vec{m}$) then $j$ should act by precomposing
all the isomorphism $\alpha_{i}^{0},\alpha_{i}^{\mbox{et}}$ for $i=1,2$
and $\alpha_{i}$ for $3\leq i\leq r$. Since $j|_{A[\mathfrak{p}_{i}^{\infty}]^{\mbox{et}}}=1$
for $i=1,2$ and $j|_{A[\mathfrak{p}_{i}^{\infty}]}=1$ for $3\leq i\leq r$
the isomorphisms $\alpha_{i}^{\mbox{et}}$ and $\alpha_{i}$ stay
the same. However, $\alpha_{i}^{0}\circ j$ is now only a quasi-isogeny
of Barsotti-Tate $\mathcal{O}_{K}$-modules and we need to change
the abelian variety $A$ by an isogeny in order to get back the isomorphisms.
Let $j_{i}=j|_{\Sigma[\mathfrak{p}_{i}^{\infty}]^{0}}$ for $i=1,2$.
Then $(j_{i})^{-1}:\Sigma[\mathfrak{p}_{i}^{\infty}]^{0}\to\Sigma[\mathfrak{p}_{i}^{\infty}]^{0}$
is a genuine isogeny induced by the action of $\pi_{i}\in D_{K,n-h_{i}}^{\times}$.
Let $\mathcal{K}_{i}\subset A[\mathfrak{p}_{i}^{[\mathbb{F}:\mathbb{F}_{p}]}]$
be the finite flat subgroup scheme $\alpha_{i}^{0}(\ker(j_{i})^{-1})$.
Let $\mathcal{K}=\mathcal{K}_{1}\oplus\mathcal{K}_{2}\subset A[u^{[\mathbb{F}:\mathbb{F}_{p}]}]$.
Let $\mathcal{K}^{\bot}\subset A[(u^{c})^{[\mathbb{F}:\mathbb{F}_{p}]}]$
be the annihilator of $\mathcal{K}$ under the $\lambda$-Weil pairing.
Let $\tilde{A}=A/\mathcal{K}\oplus\mathcal{K}^{\bot}$ and $f:A\to\tilde{A}$
be the natural projection map. Then \[
\beta_{i}^{0}=f\circ\alpha_{i}^{0}\circ j_{i}:(\Sigma[\mathfrak{p}_{i}^{\infty}])^{0}\to\tilde{A}[\mathfrak{p}_{i}^{\infty}]^{0}\]
is an isomorphism. The quotient abelian variety $\tilde{A}$ inherits
the structures of $A$ through the natural projection and it is easy
to see that $\tilde{A}=A^{(q)}$. Thus, the action of $j$ coincides
with the action of $(\mbox{Fr}^{*})^{f}$. This concludes the proof. \end{proof}
\begin{cor}
\label{cohomology of open strata}We have an isomorphism of admissible
$G(\mathbb{A}^{\infty,p})\times(\mbox{Frob}_{\mathbb{F}})^{\mathbb{Z}}$-modules\[
H_{c}^{j}(Y_{\mbox{Iw}(m),S,T}^{0},\mathcal{L}_{\xi})\simeq H_{c}^{j}(\mbox{Ig}_{0}^{(h_{1},h_{2})},\mathcal{L}_{\xi})^{U_{p}^{\mathfrak{p}_{1}\mathfrak{p}_{2}}\times\mbox{Iw}_{h_{1},\mathfrak{p}_{1}}\times\mbox{Iw}_{h_{2},\mathfrak{p}_{2}}},\]
where $\mbox{Frob}_{\mathbb{F}}$ acts as $(p^{-f},-1,1,-1,1,1)\in J^{(h_{1},h_{2})}(\mathbb{Q}_{p})$. 
\end{cor}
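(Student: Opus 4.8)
The plan is to deduce the corollary by combining the geometric comparison isomorphism established just above with the proposition computing the action of $\mbox{Frob}_{\mathbb{F}}$. Concretely, I would start from the isomorphism
\[
H_{c}^{j}(Y_{\mbox{Iw}(m),S,T}^{0},\mathcal{L}_{\xi})\simeq H_{c}^{j}(\mbox{Ig}_{0}^{(h_{1},h_{2})},\mathcal{L}_{\xi})^{U_{p}^{\mathfrak{p}_{1}\mathfrak{p}_{2}}\times\mbox{Iw}_{h_{1},\mathfrak{p}_{1}}\times\mbox{Iw}_{h_{2},\mathfrak{p}_{2}}}
\]
obtained from the finite map $\varphi\colon Y_{U,S,T}^{0}\to I_{U}^{(h_{1},h_{2})}$, the Galois-cover description of $\mbox{Ig}_{U^{p},\vec{m}}^{(h_{1},h_{2})}$ over $I_{U}^{(h_{1},h_{2})}$ with group $B_{h_{1}}(\mathbb{F})\times B_{h_{2}}(\mathbb{F})$, and passage to the limit over $U^{p}$ and $\vec{m}$. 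All the schemes and maps in this chain are morphisms of $\bar{X}_{U_{0}}^{(h_{1},h_{2})}$-schemes, so the $G(\mathbb{A}^{\infty,p})$-action on the tower of $X_{U}$ (hence on the strata and on the Igusa tower) is carried through, and the isomorphism is $G(\mathbb{A}^{\infty,p})$-equivariant; this gives the first half of the claim.

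For the Frobenius assertion I would note that, for $m_{1}^{0}=m_{2}^{0}=0$, the varieties $Y_{U,S,T}^{0}$, $I_{U}^{(h_{1},h_{2})}$ and $\mbox{Ig}_{U^{p},\vec{m}}^{(h_{1},h_{2})}$ are all defined over $\mathbb{F}$ and the maps between them are defined over $\mathbb{F}$; hence the displayed isomorphism is automatically equivariant for $\mbox{Frob}_{\mathbb{F}}$ acting through the $\mathbb{F}$-structures on both sides. It therefore suffices to transport the preceding proposition, which identifies $\mbox{Frob}_{\mathbb{F}}$ on $H_{c}^{j}(\mbox{Ig}_{0}^{(h_{1},h_{2})},\mathcal{L}_{\xi})$ with the action of $(1,(p^{-f},-1,1,-1,1,1))\in G(\mathbb{A}^{\infty,p})\times J^{(h_{1},h_{2})}(\mathbb{Q}_{p})$, to the subspace of invariants. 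One checks directly that $(p^{-f},-1,1,-1,1,1)$ has nontrivial components only in the $\mathbb{Q}_{p}^{\times}$ and the two $D_{K,n-h_{i}}^{\times}/\mathcal{O}_{D_{K,n-h_{i}}}^{\times}$ factors of $J^{(h_{1},h_{2})}(\mathbb{Q}_{p})$, and is trivial in $GL_{h_{1}}(K)\times GL_{h_{2}}(K)\times\prod_{i\geq 3}GL_{n}(F_{\mathfrak{p}_{i}})$; since the cohomology of $\mbox{Ig}_{0}^{(h_{1},h_{2})}$ already incorporates the $\mathcal{O}_{D_{K,n-h_{i}}}^{\times}$-invariants by construction, this element normalizes $U_{p}^{\mathfrak{p}_{1}\mathfrak{p}_{2}}\times\mbox{Iw}_{h_{1},\mathfrak{p}_{1}}\times\mbox{Iw}_{h_{2},\mathfrak{p}_{2}}$, preserves the invariant subspace, and induces there exactly the operator $(p^{-f},-1,1,-1,1,1)\in J^{(h_{1},h_{2})}(\mathbb{Q}_{p})$ (the $G(\mathbb{A}^{\infty,p})$-component being trivial).

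Finally, for admissibility, for each fixed $U^{p}$ the scheme $Y_{U,S,T}^{0}$ is of finite type over $\bar{\mathbb{F}}$, so $H_{c}^{j}(Y_{U,S,T}^{0}\times_{\mathbb{F}}\bar{\mathbb{F}},\mathcal{L}_{\xi})$ is finite-dimensional; as $U^{p}$ shrinks these assemble, via the pullback maps, into a smooth $G(\mathbb{A}^{\infty,p})$-representation whose $U^{p}$-fixed vectors are finite-dimensional for every $U^{p}$, which is precisely admissibility, and the commuting $(\mbox{Frob}_{\mathbb{F}})^{\mathbb{Z}}$-action exhibits it as an admissible $G(\mathbb{A}^{\infty,p})\times(\mbox{Frob}_{\mathbb{F}})^{\mathbb{Z}}$-module. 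I do not expect a genuine obstacle here; the one point requiring care is the matching of normalizations for the Frobenius action — geometric Frobenius on the $\mathbb{F}$-schemes versus the $q$-power of the absolute Frobenius that enters the proposition — but this is exactly what was verified in the proof of that proposition.
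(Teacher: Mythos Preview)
Your proposal is correct and follows exactly the approach implicit in the paper: the corollary is stated there without proof, being an immediate consequence of the isomorphism $H_{c}^{j}(Y_{\mbox{Iw},S,T}^{0},\mathcal{L}_{\xi})\simeq H_{c}^{j}(\mbox{Ig}_{0}^{(h_{1},h_{2})},\mathcal{L}_{\xi})^{U_{p}^{\mathfrak{p}_{1}\mathfrak{p}_{2}}\times\mbox{Iw}_{h_{1},\mathfrak{p}_{1}}\times\mbox{Iw}_{h_{2},\mathfrak{p}_{2}}}$ displayed just before, together with the preceding proposition identifying $\mbox{Frob}_{\mathbb{F}}$ with $(1,(p^{-f},-1,1,-1,1,1))$. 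Your additional remarks on why this element preserves the invariant subspace and on admissibility are correct elaborations of points the paper leaves tacit.
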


\subsection{Counting points on Igusa varieties}

We wish to apply the trace formula in order to compute the cohomology
of Igusa varieties. A key input of this is counting the $\bar{\mathbb{F}}-$points
of Igusa varieties. Most of this is worked out in \cite{Shin-1}.
The only missing ingredient is supplied by the main lemma in this
section, which is an analogue of Lemma V.4.1 in \cite{H-T} and of
{}``the vanishing of the Kottwitz invariant''. The $\bar{\mathbb{F}}$-points
of Igusa varieties are counted by counting $p$-adic types and other
data (e.g polarizations and level structure). We can keep track of
$p$-adic types via Honda-Tate theory; we need to check that these
$p$-adic types actually correspond to a point on one of our Igusa
varieties. 

A simple $p$-adic type over $F$ is a triple $(M,\eta,\kappa)$ where
\begin{itemize}
\item $M$ is a CM field, with $\mathfrak{P}$ being the set of places of
$M$ over $p$, 
\item $\eta=\sum_{x\in\mathfrak{P}}\eta_{x}x$ is an element of $\mathbb{Q}[\mathfrak{P}]$,
the $\mathbb{Q}$-vector space with basis $\mathfrak{P}$,
\item $\kappa:F\to M$ is a $\mathbb{Q}$-algebra homomorphism 
\end{itemize}
such that $\eta_{x}\geq0$ for all $x\in\mathfrak{P}$ and $\eta+c_{*}\eta=\sum_{x\in\mathfrak{P}}x(p)\cdot x$
in $\mathbb{Q}[\mathfrak{P}]$. Here $c$ is the complex conjugation
on $M$ and \[
c_{*}:\mathbb{Q}[\mathfrak{P}]\to\mathbb{Q}[\mathfrak{P}]\]
 is the $\mathbb{Q}$-linear map satisfying $x\mapsto$. See page
24 of \cite{Shin-1} for the general definition of a $p$-adic type.
As in \cite{Shin-1}, we will drop $\kappa$ from the notation, since
it is well understood as the $F$-algebra structure map of $M$. 

We can recover a simple $p$-adic type from the following data: 
\begin{itemize}
\item a CM field $M/F$; 
\item places $\tilde{\mathfrak{p}}_{i}$ of $M$ above $\mathfrak{p}_{i}$
such that $[M_{\tilde{\mathfrak{p}}_{i}}:F_{\mathfrak{p}_{i}}]n=[M:F](n-h_{i})$
for $i=1,2$ such that there is no intermediate field $F\subset N\subset M$
with $\tilde{\mathfrak{p}}_{i}|_{N}$ both inert in $M$. 
\end{itemize}
Using this data, we can define a simple $p$-adic type $(M,\eta)$,
where the coefficients of $\eta$ at places above $u$ are non-zero
only for $\tilde{\mathfrak{p}}_{1}$ and $\tilde{\mathfrak{p}}_{2}$.
The abelian variety $A/\bar{\mathbb{F}}$ corresponding to $(M,\eta)$
will have an action of $M$ via $i:M\hookrightarrow\mbox{End}^{0}(A)$.
By Honda-Tate theory, the pair $(A,i)$ will also satisfy
\begin{itemize}
\item $M$ is the center of $\mbox{End}_{F}^{0}(A)$,
\item $A[\mathfrak{p}_{i}^{\infty}]^{0}=A[\tilde{\mathfrak{p}}_{i}^{\infty}]$
has dimension $1$ and $A[\mathfrak{p}_{i}^{\infty}]^{e}$ has height
$h_{i}$ for $i=1,2$,
\item and $A[\mathfrak{p}_{i}^{\infty}]$ is ind-etale for $i>2$.\end{itemize}
\begin{lem}
\label{vanishing of kottwitz invariant}Let $M/F$ be a CM field as
above. Let $A/\bar{\mathbb{F}}$ be the corresponding abelian variety
equipped with $i:M\hookrightarrow\mbox{End}^{0}(A)$. Then we can
find 
\begin{itemize}
\item a polarization $\lambda_{0}:A\to A^{\vee}$ for which the Rosati involution
induces $c$ on $i(M)$, and 
\item a finitely-generated $M$-module $W_{0}$ together with a non-degenerate
Hermitian pairing \[
\langle\cdot,\cdot\rangle_{0}:W_{0}\times W_{0}\to\mathbb{Q}\]

\end{itemize}
such that the following are satisfied:
\begin{itemize}
\item there is an isomorphism of $M\otimes\mathbb{A}^{\infty,p}$-modules
\[
W_{0}\otimes\mathbb{A}^{\infty,p}\stackrel{\sim}{\to}V^{p}A\]
which takes $\langle\cdot,\cdot\rangle_{0}$ to an $(\mathbb{A}^{\infty,p})^{\times}$-multiple
of the $\lambda_{0}$ -Weil pairing on $V^{p}A$, and
\item there is an isomorphism of $F\otimes_{\mathbb{Q}}\mathbb{R}$-modules
\[
W_{0}\otimes_{\mathbb{Q}}R\stackrel{\sim}{\to}V\otimes_{\mathbb{Q}}\mathbb{R}\]
 which takes $\langle\cdot,\cdot\rangle_{0}$ to an $\mathbb{R}^{\times}$-multiple
of our standard pairing $\langle\cdot,\cdot\rangle$ on $V\otimes_{\mathbb{Q}}\mathbb{R}$. 
\end{itemize}
\end{lem}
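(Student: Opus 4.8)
## Proof proposal

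The plan is to follow the strategy of Lemma V.4.1 of \cite{H-T}, adapted to our setting of a product of two unitary Shimura varieties. The statement amounts to showing that a certain Kottwitz invariant vanishes, which guarantees that the $p$-adic type $(M,\eta)$ coming from Honda-Tate theory is actually realized by a point of our moduli problem; equivalently, that the abelian variety $A/\bar{\mathbb{F}}$ with its $M$-action can be endowed with a polarization $\lambda_0$ and the Hermitian space $W_0$ over $M$ matching $V$ at the archimedean place and $V^p A$ away from $p$.

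First I would set up the local data. At each place $v$ of $\mathbb{Q}$ we need a Hermitian $M\otimes\mathbb{Q}_v$-module (or $M\otimes\mathbb{R}$-module at $\infty$) together with the compatibility with the given pairing: away from $p$ and $\infty$, the local Tate module $V_v A$ with its $\lambda$-Weil pairing, twisted by $i$, gives the local space; at $\infty$, one takes the space prescribed by our signature conditions $(p_\tau,q_\tau)$ (signature $(1,n-1)$ at $\tau_1,\tau_2$ and $(0,n)$ elsewhere); at $p$, the local space is read off from the $p$-adic type $\eta$ and the slopes of $A[\mathfrak{p}_i^\infty]$, using that $A[\mathfrak{p}_i^\infty]^0$ has dimension $1$ and $A[\mathfrak{p}_i^\infty]^{\mathrm{et}}$ has height $h_i$ for $i=1,2$ and $A[\mathfrak{p}_i^\infty]$ is ind-\'etale for $i>2$. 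One then checks that these local Hermitian spaces all have the same dimension over $M$ and are locally equivalent to $V\otimes\mathbb{Q}_v$ as Hermitian $F\otimes\mathbb{Q}_v$-spaces (forgetting down to $F$), which is automatic away from $p$ and is a computation at $p$ and $\infty$.

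Next I would invoke the Hasse principle for Hermitian forms over the CM field $M$: a global Hermitian $M$-space with prescribed local completions exists if and only if the local invariants (in $\mathbb{Z}/2\mathbb{Z}$ at the archimedean places of the maximal totally real subfield $M^+$, together with the dimension) satisfy the product formula. This is precisely the content of ``the vanishing of the Kottwitz invariant,'' and the verification is the same Galois-cohomology bookkeeping used in the proof of the first Lemma of the paper (following section 2 of \cite{Cl1}) and in Lemma V.4.1 of \cite{H-T}: the sum of local invariants of the space we are trying to build equals the sum of local invariants of $V$ (shifted consistently at $p$ and $\infty$), and since $V$ itself is a global space the total is $0$. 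The key numerical inputs are the signatures at $\tau_1,\tau_2$ versus the other infinite places and the parity of $[F_2:\mathbb{Q}]=d$, exactly as in the proof of the first Lemma; here the fact that we have signature $(1,n-1)$ at \emph{two} embeddings differing by $\sigma$ (rather than one) is what makes the count work out, because it shifts the global invariant by an even amount. The ingredients for the more general unitary groups appearing here are supplied in sections 8 through 12 of \cite{Shin-1}.

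Finally, having produced $(W_0,\langle\cdot,\cdot\rangle_0)$ globally, I would produce the polarization $\lambda_0$: the Hermitian form on $W_0$ together with the $M$-action and the isogeny $W_0\otimes\mathbb{A}^{\infty,p}\xrightarrow{\sim} V^p A$ determines a prime-to-$p$ polarization on an abelian variety isogenous to $A$, whose Rosati involution restricts to $c$ on $i(M)$; one uses Lemma 9.2 of \cite{Kottwitz} to guarantee such a polarization inducing $c$ exists on $A$ itself (up to isogeny), and then transports the pairing. The two displayed compatibilities in the statement are then built into the construction by design. I expect the main obstacle to be the invariant computation at $p$: one must correctly translate the $p$-adic type $\eta$ and the Dieudonn\'e-module / slope data of $A[\mathfrak{p}_i^\infty]$ into the local Hermitian space and its invariant, keeping track of the duality imposed by the polarization at $u$ versus $u^c$, and then confirm that the global parity condition is met. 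The rest is a routine, if lengthy, Galois-cohomology verification modeled verbatim on \cite{H-T} Lemma V.4.1 and \cite{Shin-1}.
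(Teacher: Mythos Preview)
Your approach and the paper's diverge in a substantive way. You propose to build $W_0$ abstractly: prescribe local Hermitian $M$-spaces at every place, verify a product formula, and invoke the Hasse principle to glue. The paper instead constructs $W_0$ geometrically. It first takes $\lambda_0$ from Lemma 9.2 of \cite{Kottwitz} (as you do), but then the main work is to lift $(A,i,\lambda_0)$ from $\bar{\mathbb{F}}$ to $\mathcal{O}_{K^{ac}}$ with a \emph{prescribed Hodge type}: revisiting Tate's construction, the author chooses a CM type $\Phi$ so that $(\mathrm{Lie}\,\tilde A)_\tau\neq 0$ for exactly two embeddings $\tau_1',\tau_2'\in\mathrm{Hom}(F,K^{ac})^+$ related by $\sigma$, and checks via the explicit $p$-adic type that this lift reduces to $A$. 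One then sets $W_0=H_1((\tilde A\times_\kappa\mathbb{C})(\mathbb{C}),\mathbb{Q})$ for a suitable embedding $\kappa:K^{ac}\hookrightarrow\mathbb{C}$, and the archimedean compatibility with the signature $(1,n-1),(1,n-1),(0,n),\ldots$ is automatic from the Hodge decomposition; only then does one finish ``as in Lemma V.4.1 of \cite{H-T}.''

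What this buys: the paper never has to compute the local Hermitian invariant at $p$ that you flag as the main obstacle, nor to lift the $F$-signatures to $M$-signatures at $\infty$ by hand (a step you pass over but which is genuinely delicate, since there are several $M\otimes\mathbb{R}$-Hermitian structures restricting to a given $F\otimes\mathbb{R}$-structure). Your route is in principle workable, but it is not the route of \cite{H-T} Lemma V.4.1 either---that proof also proceeds via lifting---so ``modeled verbatim'' is misleading. If you pursue your approach you will need to specify the $M$-Hermitian space at $\infty$ explicitly and carry out the parity count over $M^+$ rather than $F^+$; the paper's lifting argument sidesteps both issues.
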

\begin{proof}
By Lemma 9.2 of \cite{Kottwitz} there is a polarization $\lambda_{0}:A\to A^{\vee}$
such that the $\lambda_{0}$-Rosati involution preserves $M$ and
acts on it as $c$. The next step is to show that, up to isogeny,
we can lift $(A,i,\lambda_{0})$ from $\bar{\mathbb{F}}$ to $\mathcal{O}_{K^{ac}}$.
Using the results of \cite{Tat} we can find some lift of $A$ to
an abelian scheme $\tilde{A}/\mathcal{O}_{K^{ac}}$ in such a way
that $i$ lifts to an action $\tilde{i}$ of $M$ on $\tilde{A}$.
As in the proof of Lemma V.4.1 of \cite{H-T} we find a polarization
$\tilde{\lambda}$ of $\tilde{A}$ which reduces to $\lambda$. However,
we want to be more specific about choosing our lift $\tilde{A}$.
Indeed, for any lift, $\mbox{Lie}\tilde{A}\otimes_{\mathcal{O}_{K}^{ac}}K^{ac}$
is an $F\otimes K^{ac}\simeq(K^{ac})^{\mathrm{Hom}(F,K^{ac})}$-module,
so we have a decomposition \[
\mbox{Lie}\tilde{A}\otimes_{\mathcal{O}_{K^{ac}}}K^{ac}\simeq\bigoplus_{\tau\in\mathrm{Hom}(F,K^{ac})}(\mbox{Lie}\tilde{A})_{\tau}.\]
Let $\mathrm{Hom}(F,K^{ac})^{+}$ be the the set of places $\tau\in\mathrm{Hom}(F,K^{ac})$
which induce the place $u$ of $E$. We want to make sure that the
set of places $\tau\in\mathrm{Hom}(F,K^{ac})^{+}$ for which $(\mbox{Lie}\tilde{A})_{\tau}$
is non-trivial has exactly two elements $\tau'_{1}$ and $\tau'_{2}$
which differ by our distinguished element $\sigma\in\mbox{Gal}(F/\mathbb{Q})$,
i.e.\[
\tau'_{2}=\tau'_{1}\circ\sigma.\]
In order to ensure this, we need to go through Tate's original argument
for constructing lifts $\tilde{A}$ of $A$. 

First, let $\Phi=\sum_{\tilde{\tau}\in\mathrm{Hom}(M,K^{ac})}\Phi_{\tilde{\tau}}\cdot\tilde{\tau}$
with the $\Phi_{\tilde{\tau}}$ non-negative integers satisfying $\Phi_{\tilde{\tau}}+\Phi_{\tilde{\tau}^{c}}=n$.
For any such $\Phi$, we can construct an abelian variety $\tilde{A}_{\Phi}$
over $\mathcal{O}_{K^{ac}}$ such that \[
\mbox{Lie}\tilde{A}_{\Phi}\otimes_{\mathcal{O}_{K^{ac}}}K^{ac}\simeq\bigoplus_{\tau\in\mathrm{Hom}(F,K^{ac})}(\mbox{Lie}\tilde{A}_{\Phi})_{\tau}\]
satisfies $\dim(\mbox{Lie}\tilde{A}_{\Phi})_{\tau}=\Phi_{\tau}$.
This is done as in Lemma 4 of \cite{Tat}, which proves the case $n=1$.
We pick any $\tau_{i}'\in\mbox{Hom}(F,K^{ac})$ inducing the places
$\mathfrak{p}_{i}$ of $F$ for $i=1,2$ such that $\tau'_{2}=\tau'_{1}\circ\sigma$.
We lift the $\tau'_{i}$ to elements $\tilde{\tau}_{i}\in\mbox{Hom}(M,K^{ac})$
inducing $\tilde{\mathfrak{p}}_{i}$. We let $\Phi_{\tilde{\tau}_{i}}=1$
and $\Phi_{\tilde{\tau}}=0$ for any other $\tilde{\tau}\in\mbox{Hom}(M,K^{ac})^{+}$.
For $\tilde{\tau}\not\in\mbox{Hom}(M,K^{ac})^{+}$ we define $\Phi_{\tilde{\tau}}=n-\Phi_{\tilde{\tau}^{c}}$.
This determines $\Phi\in\mathbb{Q}[\mbox{Hom}(M,K^{ac})]$ entirely.
This $\Phi$ is not quite a $p$-adic type for $M$, however it is
easy to associate a $p$-adic type to it: we define $\eta=\sum_{x|p}\eta_{x}\cdot x$
by \[
\eta_{x}=\frac{e_{x/p}\cdot[M:F]}{n\cdot[M_{x}:\mathbb{Q}_{p}]}\cdot\sum\Phi_{\tilde{\tau}},\]
where the sum is over embeddings $\tilde{\tau}\in\mbox{Hom}(M,K^{ac})$
which induce the place $x$ of $M$. By Honda-Tate theory, the reduction
of the abelian scheme $\tilde{A}_{\Phi}/\mathcal{O}_{K^{ac}}$ associated
to $\Phi$ has $p$-adic type $\eta$. Indeed, the height of the $p$-divisible
group at $x$ of the reduction of $\tilde{A}_{\Phi}$ is $\frac{n\cdot[M_{x}:\mathbb{Q}_{p}]}{[M:F]}$
(see Proposition 8.4 of \cite{Shin-1} together with an expression
of $\dim A$ in terms of $M$). The dimension of the $p$-divisible
group at $x$ of the reduction is $\sum\Phi_{\tilde{\tau}}$, where
we're summing over all embeddings $\tilde{\tau}$ which induce $x$.

Now we set $\tilde{A}=\tilde{A}_{\Phi}$. It remains to check that
$\tilde{A}/\mathcal{O}_{K^{ac}}$ has special fiber isogenous to $A/\bar{\mathbb{F}}$
and this follows from the fact that the reduction of $\tilde{A}$
and $A$ are both associated to the same $p$-adic type $\eta$. Indeed,
it suffices to verify this for places $x$ above $u.$ We have \[
\eta_{x}=0=e_{x/p}\cdot\frac{\dim A[x^{\infty}]}{\mbox{height}A[x^{\infty}]}\]
 for all places $x\not=\tilde{\mathfrak{p}}_{i}$ for $i=1,2$. When
$x=\tilde{\mathfrak{p}}_{i}$ we have \[
\eta_{x}=e_{x/p}\cdot\frac{[M:F]}{[M_{x}:F_{\mathfrak{p}_{i}}]\cdot n\cdot[F_{\mathfrak{p}_{i}}:\mathbb{Q}_{p}]}=e_{x/p}\cdot\frac{1}{(n-h_{i})\cdot[F_{\mathfrak{p}_{i}}:\mathbb{Q}_{p}]}=e_{x/p}\cdot\frac{\dim A[x^{\infty}]}{\mbox{height}A[x^{\infty}]}.\]
Therefore, the $p$-adic type associated to $A$ is also $\eta$. 

There are exactly two distinct embeddings $\tau'_{1},\tau'_{2}\in\mbox{Hom}(F,K^{ac})^{+}$
such that $(\mbox{Lie}\tilde{A})_{\tau}\not=(0)$ only when $\tau=\tau'_{1}$
or $\tau'_{2}$. Moreover, these embeddings are related by $\tau'_{2}=\tau'_{1}\circ\sigma$.
Therefore, we can find an embedding $\kappa:K^{ac}\hookrightarrow\mathbb{C}$
such that $\kappa\circ\tau'_{i}=\tau_{i}$ for $i=1,2$. We set \[
W_{0}=H_{1}((\tilde{A}\times_{\mbox{Spec }\mathcal{O}_{K^{ac}},\kappa}\mbox{Spec }\mathbb{C})(\mathbb{C}),\mathbb{Q}).\]
From here on, the proof proceeds as the proof of Lemma V.4.1 of \cite{H-T}.
\end{proof}

\subsection{Vanishing of cohomology}

Let $\Pi^{1}$ be an automorphic representation of $GL_{1}(\mathbb{A}_{E})\times GL_{n}(\mathbb{A}_{F})$
and assume that $\Pi^{1}$ is cuspidal. Let $\varpi:\mathbb{A}_{E}^{\times}/E^{\times}\to\mathbb{C}$
be any Hecke character such that $\varpi|_{\mathbb{A}^{\times}/\mathbb{Q}^{\times}}$
is the composite of $\mbox{Art}_{\mathbb{Q}}$ and the natural surjective
character $W_{\mathbb{Q}}\twoheadrightarrow\mbox{Gal}(E/\mathbb{Q})\stackrel{\sim}{\to}\{\pm1\}$.

Also assume that $\Pi^{1}$ and $F$ satisfy 
\begin{itemize}
\item $\Pi^{1}\simeq\Pi^{1}\circ\theta$.
\item $\Pi_{\infty}^{1}$ is generic and $\Xi^{1}$-cohomological, for some
irreducible algebraic representation $\Xi^{1}$ of $\mathbb{G}_{n}(\mathbb{C})$,
which is the image of $\iota_{l}\xi$ under the base change from $G_{\mathbb{C}}$
to $\mathbb{G}_{n,\mathbb{C}}$. 
\item $\mbox{Ram}_{F/\mathbb{Q}}\cup\mbox{Ram}_{\mathbb{Q}}(\varpi)\cup\mbox{Ram}_{\mathbb{Q}}(\Pi)\subset\mbox{Spl}_{F/F_{2},\mathbb{Q}}$. 
\end{itemize}
Let $\mathfrak{S}=\mathfrak{S}_{fin}\cup\{\infty\}$ be a finite set
of places of $F$, which contains the places of $F$ above ramified
places of $\mathbb{Q}$ and the places where $\Pi$ is ramified. For
$l\not=p$, let $\iota:\bar{\mathbb{Q}}_{l}\stackrel{\sim}{\to}\mathbb{C}$
and let $\pi_{p}\in\mbox{\mbox{Irr}}_{l}(G(\mathbb{Q}_{p}))$ be such
that $BC(\iota_{l}\pi_{p})\simeq\Pi_{p}$. If we write $\Pi^{1}=\psi\otimes\Pi^{0}$
and $\pi_{p}=\pi_{p,0}\otimes\pi_{\mathfrak{p}_{1}}\otimes\pi_{\mathfrak{p}_{2}}\otimes(\otimes_{i=3}^{r}\pi_{\mathfrak{p}_{i}})$
then $\iota_{l}\pi_{p,0}\simeq\psi_{u}$ and $\iota_{l}\pi_{\mathfrak{p}_{i}}\simeq\Pi_{\mathfrak{p}_{i}}^{0}$
for all $1\leq i\leq r$. Under the identification $F_{\mathfrak{p}_{1}}\simeq F_{\mathfrak{p}_{2}}$,
assume that $\Pi_{\mathfrak{p}_{1}}^{0}\simeq\Pi_{\mathfrak{p}_{2}}^{0}$
(this condition will be satisfied in all our applications, since we
will choose $\Pi^{0}$ to be the base change of some cuspidal automorphic
representation $\Pi$ of $GL_{n}(\mathbb{A}_{F_{1}E})$). 

Define the following elements of $\mbox{Groth}(G(\mathbb{A}^{\infty,p})\times J^{(h_{1},h_{2})}(\mathbb{Q}_{p}))$
(the Grothendieck group of admissible representations):\[
[H_{c}(\mbox{Ig}^{(h_{1},h_{2})},\mathcal{L}_{\xi})]=\sum_{i}(-1)^{h_{1}+h_{2}-i}H_{c}^{i}(\mbox{Ig}^{(h_{1},h_{2})},\mathcal{L}_{\xi})\]

If $R\in\mbox{Groth}(G(\mathbb{A}^{\mathfrak{S}})\times G')$, we
can write $R=\sum_{\pi^{\mathfrak{S}}\otimes\rho}n(\pi^{\mathfrak{S}}\otimes\rho)[\pi^{\mathfrak{S}}][\rho]$,
where $\pi^{\mathfrak{S}}$ and $\rho$ run over $\mbox{Irr}_{l}(G(\mathbb{A}^{\mathfrak{S}}))$
and $\mbox{Irr}_{l}(G')$ respectively. We define \[
R\{\pi^{\mathfrak{S}}\}:=\sum_{\rho}n(\pi^{\mathfrak{S}}\otimes\rho)[\rho],\mbox{ }R[\pi^{\mathfrak{S}}]:=\sum_{\rho}n(\pi^{\mathfrak{S}}\otimes\rho)[\pi^{\mathfrak{S}}][\rho].\]
Also define \[
R\{\Pi^{1,\mathfrak{S}}\}:=\sum_{\pi^{\mathfrak{S}}}\{\pi^{\mathfrak{S}}\},R[\Pi^{1,\mathfrak{S}}]:=\sum_{\pi^{\mathfrak{S}}}R[\pi^{\mathfrak{S}}],\]
where each sum runs over $\pi^{\mathfrak{S}}\in\mbox{Irr}_{l}^{\mbox{ur}}(G(\mathbb{A}^{\mathfrak{S}}))$
such that $BC(\iota_{l}\pi^{\mathfrak{S}})\simeq\Pi^{\mathfrak{S}}$. 

Let $\mbox{Red}_{n}^{(h_{1},h_{2})}(\pi_{p})$ be the morphism from
$\mbox{Groth}(G(\mathbb{Q}_{p}))$ to $\mbox{Groth}(J^{(h_{1},h_{2})}(\mathbb{Q}_{p}))$
defined by \[
(-1)^{h_{1}+h_{2}}\pi_{p,0}\otimes\mbox{Red}^{n-h_{1},h_{1}}(\pi_{\mathfrak{p}_{1}})\otimes\mbox{Red}^{n-h_{2},h_{2}}(\pi_{\mathfrak{p}_{2}})\otimes(\otimes_{i>2}\pi_{\mathfrak{p}_{i}}),\]
where \[
\mbox{Red}^{n-h,h}:\mbox{ Groth}(GL_{n}(K))\to\mbox{Groth}(D_{K,\frac{1}{n-h}}^{\times}\times GL_{h}(K))\]
is obtained by composing the normalized Jacquet functor \[
J:\mbox{Groth}(GL_{n}(K))\to\mbox{Groth}(GL_{n-h}(K)\times GL_{h}(K))\]
with the Jacquet-Langlands map\[
LJ:\mbox{Groth}(GL_{n-h}(K))\to\mbox{Groth}(D_{K,\frac{1}{n-h}}^{\times})\]
defined by Badulescu in \cite{Bad}. Assume the following result,
which will be proved in section 6:
\begin{thm}
\label{igusa cohomology}We have the following equality in $\mbox{Groth}(G(\mathbb{A}_{S_{fin}\backslash\{p\}})\times J^{(h_{1},h_{2})}(\mathbb{Q}_{p})$:\[
BC_{\mathfrak{S}_{fin}\backslash\{p\}}(H_{c}(\mbox{Ig}^{(h_{1},h_{2})},\mathcal{L}_{\xi})\{\Pi^{1,\mathfrak{S}}\})\]
\[
=e_{0}(-1)^{h_{1}+h_{2}}C_{G}[\iota_{l}^{-1}\Pi_{\mathfrak{S}_{fin}\backslash\{p\}}^{1}][\mbox{Red}_{n}^{(h_{1},h_{2})}(\pi_{p})],\]
where $C_{G}$ is a positive integer and $e_{0}=\pm1$. 
\end{thm}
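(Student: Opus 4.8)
The proof will follow closely the argument for Theorem 6.1 of \cite{Shin}, adapted to the present group $G$ (which has two archimedean factors of signature $(1,n-1)$ rather than one), with Lemma \ref{vanishing of kottwitz invariant} supplying the input on $p$-adic types that is particular to our situation. The plan is as follows.

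First I would invoke the point-counting formula for the Igusa varieties established in \cite{Shin-1}: for a suitable acceptable test function $f=f^{\infty,p}f_{p}\in C_{c}^{\infty}(G(\mathbb{A}^{\infty,p})\times J^{(h_{1},h_{2})}(\mathbb{Q}_{p}))$, the trace $\mathrm{tr}\,(f\mid [H_{c}(\mbox{Ig}^{(h_{1},h_{2})},\mathcal{L}_{\xi})])$ is expressed as a sum of products of orbital and twisted orbital integrals, weighted by volume terms and by $\mathrm{tr}\,\xi$ evaluated on the relevant semisimple classes; the $J^{(h_{1},h_{2})}(\mathbb{Q}_{p})$-action used here is the one defined in section 5 of \cite{Shin-1} and section 4 of \cite{Man}. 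The one ingredient of this formula not already available in \cite{Shin-1} --- that every $p$-adic type produced by Honda--Tate theory is realized by a point of the Igusa variety carrying the required polarization and $\mathcal{O}_{F}$-structure --- is precisely Lemma \ref{vanishing of kottwitz invariant}.

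Next I would stabilize this expression following Kottwitz, using the (now proven) fundamental lemma and transfer of orbital integrals, rewriting it as a sum over elliptic endoscopic groups of $G$ of stable orbital integrals, and then compare it with the twisted Arthur--Selberg trace formula for $\mathbb{G}_{n}=GL_{1}\times\mathrm{Res}_{F/\mathbb{Q}}GL_{n}$ with the involution $\theta$, via the base-change character identities of \cite{Shin-1}. This yields a spectral expansion of $BC(H_{c}(\mbox{Ig}^{(h_{1},h_{2})},\mathcal{L}_{\xi}))$ as a sum over automorphic representations of $\mathbb{G}_{n}$. The archimedean factor is computed from the stable characters of the discrete series $L$-packets of the unitary groups of signature $(1,n-1)$, and this computation (together with the volume and multiplicity bookkeeping from the stabilization) produces the sign $e_{0}=\pm1$ and the positive integer $C_{G}$. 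Finally I would isolate the $\Pi^{1,\mathfrak{S}}$-isotypic component: by strong multiplicity one for $GL_{n}$ and the fact that a cuspidal conjugate self-dual representation contributes \emph{stably}, only the term indexed by $\mathbb{G}_{n}$ itself survives on this component and all contributions of proper endoscopic groups vanish after projection. The local component at $p$ of the surviving term is the twisted character of $\Pi^{1}_{p}$, matching $\pi_{p}$ under base change, and the passage from $G(\mathbb{Q}_{p})$ to $J^{(h_{1},h_{2})}(\mathbb{Q}_{p})$ is exactly $\mathrm{Red}_{n}^{(h_{1},h_{2})}$: the normalized Jacquet functor accounts for the étale parts of the Barsotti--Tate groups (the $GL_{h_{i}}(K)$ factors) and the Jacquet--Langlands transfer of \cite{Bad} accounts for their one-dimensional formal parts (the $D_{K,1/(n-h_{i})}^{\times}$ factors). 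Assembling these pieces gives the stated identity.

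The main obstacle will be the last two steps: controlling the endoscopic error terms and the archimedean contribution \emph{without} assuming that $\Pi_{\infty}$ is slightly regular. This requires a careful description of which automorphic representations of $\mathbb{G}_{n}$ can contribute to a fixed Hecke-isotypic component, together with the argument that cuspidality of $\Pi^{1}$ forces all endoscopic pieces to cancel on that component --- precisely the point at which one must push beyond the regime treated directly in \cite{Shin}, and where the hypotheses on $\varpi$ and on $\mathrm{Ram}_{F/\mathbb{Q}}\cup\mathrm{Ram}_{\mathbb{Q}}(\varpi)\cup\mathrm{Ram}_{\mathbb{Q}}(\Pi)\subset\mathrm{Spl}_{F/F_{2},\mathbb{Q}}$ are used to keep the trace-formula comparison clean.
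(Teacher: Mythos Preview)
Your outline is essentially the paper's approach: the proof there follows Theorem 6.1 of \cite{Shin} verbatim, invoking the stabilized point-counting formula of \cite{Shin-2} (with Lemma \ref{vanishing of kottwitz invariant} replacing the ``unramified hypothesis''), passing to the spectral side of the twisted trace formula for $\mathbb{G}_{n}$, and isolating the cuspidal $\Pi^{1,\mathfrak{S}}$-part via Jacquet--Shalika strong multiplicity one; the identification of the $p$-component with $\mbox{Red}_{n}^{(h_{1},h_{2})}(\pi_{p})$ is exactly Lemma \ref{trace of red for n}.

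Two corrections. First, your final paragraph misidentifies the difficulty: slight regularity plays \emph{no role} in this theorem, and the paper states that the argument ``goes through identically'' to \cite{Shin}. The only genuine modification is that $G$ differs from Shin's group at $\infty$ (two archimedean places of signature $(1,n-1)$ rather than one), which changes $q(G)$ to $2n-2$ but is otherwise harmless; the ramification hypotheses are used exactly as in \cite{Shin} to make the local transfers explicit, not to circumvent any new endoscopic obstruction. The vanishing of the endoscopic and proper-Levi contributions on the $\Pi^{1,\mathfrak{S}}$-part is a direct consequence of cuspidality plus Jacquet--Shalika, just as in \cite{Shin}. Second, your attribution of the constants is off: $C_{G}=|\ker^{1}(\mathbb{Q},G)|\cdot\tau(G)$ comes from the stabilization constants, not from the archimedean side, and $e_{0}=A'_{\Pi^{1}}/A^{0}_{\Pi^{1}}$ is the ratio of two normalizations of the global intertwining operator for $\Pi^{1}$, not a discrete-series sign.
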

Let $S,T\subseteq\{1,\dots,n-1\}$ with $\#S=n-h_{1},\#T=n-h_{2}$.
From Theorem \ref{igusa cohomology} and Corollary \ref{cohomology of open strata}
and we obtain the equality \[
BC^{p}(H_{c}(Y_{\mbox{Iw}(m),S,T}^{0},\mathcal{L}_{\xi})[\Pi^{1,\mathfrak{S}}])\]
\[
=e_{0}C_{G}[\iota_{l}^{-1}\Pi^{\infty,p}][\mbox{Red}^{(h_{1},h_{2})}(\pi_{p,0}\otimes\pi_{\mathfrak{p}_{1}}\otimes\pi_{\mathfrak{p}_{2}})]\cdot\dim[(\otimes_{i=3}^{r}\pi_{\mathfrak{p}_{i}})^{U_{p}^{\mathfrak{p}_{1}\mathfrak{p}_{2}}}]\]
in $\mbox{Groth}(G(\mathbb{A}^{\infty,p})\times(\mbox{Frob}_{\mathbb{F}})^{\mathbb{Z}})$.
The group morphism \[
\mbox{Red}^{(h_{1},h_{2})}:\mbox{Groth}(\mathbb{Q}_{p}^{\times}\times GL_{n}(K)\times GL_{n}(K))\to\mbox{Groth}(\mbox{Frob}_{\mathbb{F}}^{\mathbb{Z}})\]
 is the composite of normalized Jacquet functors\[
J_{i}:\mbox{Groth}(GL_{n}(K))\to\mbox{Groth}(GL_{n-h_{i}}(K)\times GL_{h_{i}}(K))\]
for $i=1,2$ with the map \[
\mbox{Groth}(\mathbb{Q}_{p}^{\times}\times GL_{n-h_{1}}(K)\times GL_{h_{1}}(K)\times GL_{n-h_{2}}(K)\times GL_{h_{2}}(K))\to\mbox{Groth}(\mbox{Frob}_{\mathbb{F}}^{\mathbb{Z}})\]
 which sends $[\alpha_{1}\otimes\beta_{1}\otimes\alpha_{2}\otimes\beta_{2}\otimes\gamma]$
to \[
\sum_{\phi_{1},\phi_{2}}\mbox{\mbox{vol}}(D_{K,n-h_{1}}^{\times}/K^{\times})^{-1}\cdot\mbox{vol}(D_{K,n-h_{2}}^{\times}/K^{\times})\cdot\mbox{tr}\alpha_{1}(\varphi_{\mbox{Sp}_{n-h_{1}}(\phi_{1})})\cdot\mbox{tr}\alpha_{2}(\varphi_{\mbox{Sp}_{n-h_{2}}(\phi_{2})})\cdot\]
\[
\cdot(\mbox{dim}\beta_{1})^{\mbox{Iw}_{h_{1},\mathfrak{p}_{1}}}\cdot(\mbox{dim}\beta_{2})^{\mbox{Iw}_{h_{1},\mathfrak{p}_{2}}}\cdot[\mbox{rec}(\phi_{1}^{-1}\phi_{2}^{-1}|\mbox{\ }|^{1-n}(\gamma^{\mathbb{Z}_{p}^{\times}}\circ\mathbf{N}_{K/E_{u}})^{-1})].\]

\begin{lem}
\label{inclusion-exclusion}We have the following equality in $\mbox{Groth}(G(\mathbb{A}^{\infty,p})\times(\mbox{\mbox{Frob}}_{\mathbb{F}})^{\mathbb{Z}})$:\[
BC^{p}(H(Y_{\mbox{Iw}(m),S,T},\mathcal{L}_{\xi})[\Pi^{1,\mathfrak{S}}])=e_{0}C_{G}[\Pi^{1,\infty,p}]\dim[(\otimes_{i=3}^{r}\pi_{\mathfrak{p}_{i}})^{U_{p}^{\mathfrak{p}_{1}\mathfrak{p}_{2}}}]\times\]

\[
\left(\sum_{h_{1}=0}^{n-\#S}\sum_{h_{2}=0}^{n-\#T}(-1)^{2n-\#S-\#T-h_{1}-h_{2}}\left(\begin{array}{c}
n-\#S\\
h_{1}\end{array}\right)\left(\begin{array}{c}
n-\#T\\
h_{2}\end{array}\right)\cdot\right.\]

\[
\left.\mbox{Red}^{(h_{1},h_{2})}(\pi_{p,0}\otimes\pi_{\mathfrak{p}_{1}}\otimes\pi_{\mathfrak{p}_{2})}\right).\]
\end{lem}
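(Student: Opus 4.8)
The plan is to deduce the identity for $H(Y_{\mathrm{Iw}(m),S,T},\mathcal{L}_\xi)$ from the already-established identity for the open strata $H_c(Y^0_{\mathrm{Iw}(m),S,T},\mathcal{L}_\xi)$ by an inclusion-exclusion over the stratification of the closed stratum $Y_{U,S,T}$. Recall that $Y_{U,S,T}$ is smooth and proper over $\mathbb{F}$ and that it is the disjoint union, as $S'\supseteq S$ and $T'\supseteq T$ run over all supersets, of the locally closed pieces $Y^0_{U,S',T'}$; equivalently $Y_{U,S,T}=\bigsqcup_{S'\supseteq S,\,T'\supseteq T} Y^0_{U,S',T'}$. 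Since $Y_{U,S,T}$ is proper, compactly supported cohomology agrees with ordinary cohomology, so it suffices to compute $[H_c(Y_{U,S,T},\mathcal{L}_\xi)]$ in the relevant Grothendieck group.

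First I would set up the excision spectral sequence (or rather the associated long exact sequences in the Grothendieck group) attached to the stratification of $Y_{U,S,T}$ by the $Y^0_{U,S',T'}$. In $\mathrm{Groth}$, additivity over locally closed stratifications gives
\[
[H_c(Y_{U,S,T},\mathcal{L}_\xi)] \;=\; \sum_{S'\supseteq S}\sum_{T'\supseteq T} [H_c(Y^0_{U,S',T'},\mathcal{L}_\xi)].
\]
Passing to the direct limit over $U^p$, applying the $[\Pi^{1,\mathfrak{S}}]$-isotypic projection, applying $BC^p$, and substituting the formula from the combination of Theorem \ref{igusa cohomology} and Corollary \ref{cohomology of open strata} for each term $H_c(Y^0_{\mathrm{Iw}(m),S',T'},\mathcal{L}_\xi)[\Pi^{1,\mathfrak{S}}]$, one obtains a double sum of terms $e_0 C_G [\Pi^{1,\infty,p}]\,\dim[(\otimes_{i\geq 3}\pi_{\mathfrak{p}_i})^{U_p^{\mathfrak{p}_1\mathfrak{p}_2}}]\,\mathrm{Red}^{(h_1,h_2)}(\pi_{p,0}\otimes\pi_{\mathfrak{p}_1}\otimes\pi_{\mathfrak{p}_2})$, indexed by pairs $(S',T')$ with $\#S'=n-h_1$, $\#T'=n-h_2$. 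The key bookkeeping step is that the number of pairs $(S',T')$ with $S\subseteq S'$, $T\subseteq T'$, $\#S'=n-h_1$, $\#T'=n-h_2$ equals $\binom{n-\#S}{(n-h_1)-\#S}\binom{n-\#T}{(n-h_2)-\#T}=\binom{n-\#S}{n-\#S-h_1}\binom{n-\#T}{n-\#T-h_2}$; here one uses that $S,T\subseteq\{1,\dots,n-1\}$ (so $S',T'$ are also confined, or at any rate the count is as claimed). Reindexing by $h_1,h_2$ — which range over $0\le h_1\le n-\#S$, $0\le h_2\le n-\#T$ — collects these into the stated sum, with the binomial coefficients $\binom{n-\#S}{h_1}\binom{n-\#T}{h_2}$ (equal to the ones above) appearing as multiplicities. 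The sign $(-1)^{2n-\#S-\#T-h_1-h_2}$ I would track as the product of the Tate-twist/shift parity built into the definition of $[H_c(\mathrm{Ig})]$ — recall $\mathrm{Red}_n^{(h_1,h_2)}(\pi_p)$ already carries a $(-1)^{h_1+h_2}$, and the Euler characteristic $[H_c(\mathrm{Ig}^{(h_1,h_2)},\mathcal{L}_\xi)]$ is defined with $(-1)^{h_1+h_2-i}$; combined with the alternating sign from the dimensions of the strata $\dim Y^0_{U,S',T'}=2n-\#S'-\#T'$ one gets the parity $2n-\#S-\#T-h_1-h_2$.

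The main obstacle I anticipate is not the combinatorics but verifying cleanly that the substitution is legitimate in the Grothendieck group: one must check that $BC^p$ and the $[\Pi^{1,\mathfrak{S}}]$-projection commute with the alternating sum over the stratification, which requires knowing that each $H_c^j(Y^0_{\mathrm{Iw}(m),S',T'},\mathcal{L}_\xi)$ is an admissible $G(\mathbb{A}^{\infty,p})\times(\mathrm{Frob}_{\mathbb{F}})^{\mathbb{Z}}$-module and that base change behaves additively — this is exactly what Corollary \ref{cohomology of open strata} provides, identifying the open-stratum cohomology with Igusa cohomology on which $\mathrm{Frob}_{\mathbb{F}}$ acts through the explicit element of $J^{(h_1,h_2)}(\mathbb{Q}_p)$. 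A secondary point to be careful about is that the Frobenius action contributed by each stratum, after reindexing, must match the single operator $\mathrm{Red}^{(h_1,h_2)}(\pi_{p,0}\otimes\pi_{\mathfrak{p}_1}\otimes\pi_{\mathfrak{p}_2})$ in $\mathrm{Groth}((\mathrm{Frob}_{\mathbb{F}})^{\mathbb{Z}})$; this is immediate from Corollary \ref{cohomology of open strata}, since the $\mathrm{Frob}_{\mathbb{F}}$-action there depends on $(h_1,h_2)$ only, not on the particular $(S',T')$. With these checks in hand the identity follows by collecting terms.
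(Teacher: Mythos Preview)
Your approach is correct and is exactly the inclusion--exclusion over the stratification $Y_{U,S,T}=\bigsqcup_{S'\supseteq S,\,T'\supseteq T}Y^0_{U,S',T'}$ that the paper has in mind when it cites Lemma 4.3 of \cite{T-Y}; the binomial count and the reduction to the open-stratum formula are carried out just as you describe. Your sign discussion is slightly imprecise---additivity of compactly supported Euler characteristics over a stratification introduces no sign, so the factor $(-1)^{2n-\#S-\#T-h_1-h_2}$ arises purely from comparing the dimension-normalized convention $\sum_j(-1)^{h_1+h_2-j}H^j_c$ used for the open strata (inherited from the Igusa convention) with the convention $\sum_j(-1)^{2n-\#S-\#T-j}H^j$ for the closed stratum---but the method is right.
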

\begin{proof}
The proof is a straightforward generalization of the proof of Lemma
4.3 of \cite{T-Y}.\end{proof}
\begin{thm}
\label{temperedness101}Assume that $\Pi_{\mathfrak{p}_{1}}^{0}\simeq\Pi_{\mathfrak{p}_{2}}^{0}$
has an Iwahori fixed vector. Then $\Pi_{\mathfrak{p}_{1}}^{0}\simeq\Pi_{\mathfrak{p}_{2}}^{0}$
is tempered. \end{thm}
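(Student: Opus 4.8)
The plan is to run the same weight-spectral-sequence argument that Taylor-Yoshida use in their Lemma 4.4, now in the generalized setting built up in Sections 3 and 4. First I would fix the representation $\Xi^1$, the prime $l$, and the data $(\Pi^1,F,\varpi)$ compatible with $\Pi^0$, and choose $\xi$ so that the base change of $\iota_l\xi$ to $\mathbb{G}_{n,\mathbb{C}}$ is $\Xi^1$. Since $\Pi^0_{\mathfrak{p}_1}\simeq\Pi^0_{\mathfrak{p}_2}$ has an Iwahori-fixed vector, by the classification of Iwahori-spherical representations it is a subquotient of a representation parabolically induced from an unramified character, and it suffices to show that every segment appearing in its Zelevinsky/Langlands data has the right length — equivalently that the associated Weil-Deligne representation is pure. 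The geometric object that sees this is the system $X_U$ with Iwahori level at $\mathfrak{p}_1,\mathfrak{p}_2$: by Lemma~\ref{locally etale over} (and its companion for $\mathcal{A}_U^{m_\xi}$) the scheme $\mathcal{A}_U^{m_\xi}$ is locally étale over a product of semistable schemes, so Section~4 applies and we have the monodromy spectral sequence of Corollary~\ref{Spectral sequence} computing $H^*(X_U,\mathcal{L}_\xi)$ (via $a_\xi H^{*+m_\xi}(\mathcal{A}_U^{m_\xi},\bar{\mathbb{Q}}_l(t_\xi))$) together with an explicit action of $N$.

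**Key steps.** The argument proceeds in the following order. (1) Identify the $E_1$-terms of the monodromy spectral sequence: these are sums of $H^{*}(Y_{U,S,T},\mathcal{L}_\xi)$ up to twist and shift, where the $Y_{U,S,T}$ are the closed strata; by the discussion after Proposition~\ref{locally etale over}, their cohomology is governed by the open strata $Y^0_{U,S,T}$, hence by Corollary~\ref{cohomology of open strata} by the cohomology of the Igusa varieties $\mathrm{Ig}^{(h_1,h_2)}$. (2) Take the $\Pi^{1,\mathfrak{S}}$-isotypic part and apply Theorem~\ref{igusa cohomology} and Lemma~\ref{inclusion-exclusion} to write $BC^p\bigl(H^*(Y_{\mathrm{Iw}(m),S,T},\mathcal{L}_\xi)[\Pi^{1,\mathfrak{S}}]\bigr)$ explicitly in terms of $\mathrm{Red}^{(h_1,h_2)}(\pi_{p,0}\otimes\pi_{\mathfrak{p}_1}\otimes\pi_{\mathfrak{p}_2})$. (3) Use Theorem~\ref{Ramanujan} — the Ramanujan-Petersson statement, which for the relevant $\Pi$ is proved earlier by the Igusa-variety input and the classification of tempered representations — to conclude that the base change of $\pi_{\mathfrak{p}_i}$ is tempered generic as a representation of $GL_n$, so the Jacquet-module contributions $J_i(\pi_{\mathfrak{p}_i})$ one feeds into $\mathrm{Red}$ are controlled. (4) Feed this into the spectral sequence: temperedness of the Igusa-variety contribution forces the $\Pi^{1,\infty}$-part of each $H^j(Y_{U,S,T},\mathcal{L}_\xi)$ to be concentrated in the middle degree, so the spectral sequence degenerates at $E_1$, and the surviving $E_1$-page exhibits $H^{2n-2}(X_U,\mathcal{L}_\xi)[\Pi^{1,\infty}]$ together with a filtration whose graded pieces have strictly decreasing weights along the $N$-action. (5) Purity of this Weil-Deligne representation, combined with the fact that $R_l(\Pi^0)$ (more precisely $R_l(\Pi^0)^{\otimes 2}$) appears in it, forces $WD(R_l(\Pi^0)|_{G_{F_\mathfrak{p}}})^{F\text{-}ss}$ to be pure, and purity of a Weil-Deligne representation matching $\mathcal{L}_n(\Pi^0_{\mathfrak{p}_i})$ up to semisimplification pins down the monodromy, i.e. forces $\Pi^0_{\mathfrak{p}_i}$ to be tempered.

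**Main obstacle.** The genuinely hard step is (4): translating the Igusa-variety computation through the \emph{two} spectral sequences of Section~4 and checking the vanishing outside the middle degree. Temperedness enters here exactly as in Taylor-Yoshida, but the bookkeeping is heavier because the strata are indexed by pairs $(S,T)$ and the resolution of $R^k\psi\Lambda$ involves the multiplicities $c^k_{l_1,l_2}$ of Lemma~\ref{coefficients}; one must verify that the combinatorics of $\mathrm{Red}^{(h_1,h_2)}$ and of the inclusion-exclusion in Lemma~\ref{inclusion-exclusion} conspire so that only the expected $E_1$-terms survive and the weight-monodromy bound is sharp. A secondary subtlety is that Theorem~\ref{igusa cohomology} and Theorem~\ref{Ramanujan} are being invoked before their proofs (in Sections~5–6), so one has to be careful that the logical dependency is genuinely acyclic — the temperedness statement here is used as an \emph{input} to the purity argument, not derived from it, and the proof of Theorem~\ref{Ramanujan} in turn rests only on the Igusa-variety cohomology and the classification of tempered representations, so the argument does not circle back.
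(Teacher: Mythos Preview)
Your proposal is circular. In step~(3) you invoke Theorem~\ref{Ramanujan} to conclude that $\pi_{\mathfrak{p}_i}$ is tempered, and in step~(5) you conclude again that $\Pi^0_{\mathfrak{p}_i}$ is tempered --- but temperedness of $\Pi^0_{\mathfrak{p}_i}$ is precisely the statement of Theorem~\ref{temperedness101} that you are trying to prove. In the paper's logical structure, Theorem~\ref{Ramanujan} is Corollary~\ref{temperedness}, and that corollary is proved \emph{by reducing to} Theorem~\ref{temperedness101} (choose $F',F$ appropriately and apply Theorem~\ref{temperedness101} at the place $\mathfrak p$). So Ramanujan depends on temperedness101, not the other way around, and your claim that ``the argument does not circle back'' is incorrect.

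More broadly, you have the flow of the paper inverted. The monodromy/weight spectral sequence machinery of Section~4 and the purity argument of Section~7 are not used to prove Theorem~\ref{temperedness101}; rather, temperedness (via Proposition~\ref{vanishing}) is the input that makes those spectral sequences degenerate. The paper's actual proof of Theorem~\ref{temperedness101} is a direct argument that does not touch the spectral sequences at all: first, since $\tilde R_l(\Pi^1)$ sits in the cohomology of a proper smooth variety, one gets $|\iota_l\alpha|^2\in q^{\frac12\mathbb Z}$ for every Frobenius eigenvalue, and in fact either all eigenvalues satisfy $|\iota_l\alpha|^2\in q^{\mathbb Z}$ or none do. Tadic's classification of generic $\iota$-preunitary representations then forces $\pi_{\mathfrak p_i}$ to be either tempered or a full induction of $\pi'_j|\det|^{\pm 1/4}$ with each $\pi'_j$ square-integrable. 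To kill the second case, one computes $\mathrm{Red}^{(h_1,h_2)}$ explicitly for this shape, plugs it into Lemma~\ref{inclusion-exclusion}, and observes that the resulting contribution to $BC^p(H(Y_{\mathrm{Iw}(m),S,T},\mathcal L_\xi)[\Pi^{1,\mathfrak S}])$ has pieces of three consecutive weights $2n-\#S-\#T-1,\,2n-\#S-\#T,\,2n-\#S-\#T+1$, all with the same sign; but by the Weil conjectures for the proper smooth $Y_{U,S,T}$, the contributions in degrees of different parity must carry opposite signs in the alternating sum. This sign contradiction is the missing idea in your sketch.
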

\begin{proof}
By Corollary VII.2.18 of \cite{H-T}, $\iota_{l}\pi_{\mathfrak{p}_{i}}$
is tempered if and only if, for all $\sigma\in W_{K}$ every eigenvalue
$\alpha$ of $\mathcal{L}_{n,K}(\Pi_{\mathfrak{p}_{i}}^{0})(\sigma)$
(where $\mathcal{L}_{n,K}(\Pi_{\mathfrak{p}_{i}}^{0})$ is the image
of $\Pi_{\mathfrak{p}_{i}}^{0}$ under the local Langlands correspondence,
normalized as in \cite{Shin}) satisfies \[
|\iota_{l}\alpha|^{2}\in q^{\mathbb{Z}}.\]
We shall first use a standard argument to show that we can always
ensure that \[
|\iota_{l}\alpha|^{2}\in q^{\frac{1}{2}\mathbb{Z}}\]
and then we will use a classification of irreducible, generic, $\iota$-preunitary
representations of $GL_{n}(K)$ together with the cohomology of Igusa
varieties to show the full result. 

The space $H^{k}(X,\mathcal{L}_{\xi})$ decomposes as a $G(\mathbb{A}^{\infty})$-module
as \[
H^{k}(X,\mathcal{L}_{\xi})=\bigoplus_{\pi^{\infty}}\pi^{\infty}\otimes R_{\xi,l}^{k}(\pi^{\infty}),\]
where $\pi^{\infty}$ runs over $\mbox{Irr}_{l}(G(\mathbb{A}^{\infty}))$
and $R_{\xi.l}^{k}(\pi^{\infty})$ is a finite-dimensional $\mbox{Gal}(\bar{F}/F)$-representation.
Define the $\mbox{Gal}(\bar{F}/F)$-representation \[
\tilde{R}_{l}^{k}(\Pi^{1})=\sum_{\pi^{\infty}}R_{\xi,l}^{k}(\pi^{\infty}),\]
where the sum is over the $\pi^{\infty}\in\mbox{Irr}_{l}(G(\mathbb{A}^{\infty}))$
which are cohomological, unramified outside $\mathfrak{S}_{\mathrm{fin}}$
and such that $BC(\iota_{l}\pi^{\infty})=\Pi^{1,\infty}$. Also define
the element $\tilde{R}_{l}(\Pi^{1})\in\mbox{Groth}(\mbox{Gal}(\bar{F}/F))$
by \[
\tilde{R}_{l}(\Pi^{1})=\sum_{k}(-1)^{k}\tilde{R}_{l}^{k}(\Pi^{1}).\]

We claim that we have the following identity in $\mbox{Groth}(W_{K}):$\[
\tilde{R}_{l}(\Pi^{1})=e_{0}C_{G}\cdot[(\pi_{p,0}\circ\mbox{Art}_{\mathbb{Q}_{p}}^{-1})|_{W_{K}}\otimes\iota_{l}^{-1}\mathcal{L}_{K,n}(\Pi_{\mathfrak{p}_{1}}^{0})\otimes\iota_{l}^{-1}\mathcal{L}_{K,n}(\Pi_{\mathfrak{p}_{2}}^{0})].\]
This can be deduced from results of Kottwitz \cite{Kottwitz-1} or
by combining Theorem \ref{igusa cohomology} with Mantovan's formula
\cite{Man}. 

From the above identity, using the fact that $\Pi_{\mathfrak{p}_{1}}^{0}\simeq\Pi_{\mathfrak{p}_{2}}^{0}$,
we see that $|\iota_{l}(\alpha\beta)|^{2}\in q^{\mathbb{Z}}$ for
any eigenvalues $\alpha,\beta$ of any $\sigma\in W_{K}$, since $\tilde{R}_{l}(\Pi^{1})$
is found in the cohomology of some proper, smooth variety $X_{U}$
over $K$. In particular, we know that $|\iota_{l}\alpha|^{2}\in q^{\frac{1}{2}\mathbb{Z}}$.
Moreover, if one eigenvalue $\alpha$ of $\sigma$ satisfies $|\iota_{l}\alpha|^{2}\in q^{\mathbb{Z}}$
then all other eigenvalues of $\sigma$ would be forced to satisfy
it as well. A result of Tadic (\cite{Tad}, see also Lemma I.3.8 of
\cite{H-T}) says that if $\pi_{\mathfrak{p}_{i}}$ is a generic,
$\iota_{l}$-preunitary representation of $GL_{n}(K)$ with central
character $|\psi_{\pi_{\mathfrak{p}_{i}}}|\equiv1$ then $\pi_{\mathfrak{p}_{i}}$
is isomorphic to \[
\mbox{n-Ind}_{P(K)}^{GL_{n}(K)}(\pi_{1}\times\dots\times\pi_{s}\times\pi'_{1}|\det|^{a_{1}}\times\pi'_{1}|\det|^{-a_{1}}\times\dots\times\pi'_{t}|\det|^{a_{t}}\times\pi'_{t}|\det|^{-a_{t}}).\]
The $\pi_{1},\dots,\pi_{s},\pi_{1}',\dots\pi_{t}'$ are square integrable
representations of smaller linear groups with $|\psi_{\pi_{j}}|\equiv|\psi_{\pi'_{j'}}|\equiv1$
for all $j,j'$. Moreover, we must have $0<a_{j}<\frac{1}{2}$ for
$j=1,\dots,t$. If $s\not=0$ then for any $\sigma\in W_{K}$ there
is an eigenvalue $\alpha$ of $\mathcal{L}_{K,n}(\pi_{\mathfrak{p}_{i}})(\sigma)$
with $|\iota_{l}\alpha|^{2}\in q^{\mathbb{Z}}$, but then this must
happen for all eigenvalues of $\mathcal{L}_{K,n}(\pi_{\mathfrak{p}_{i}})(\sigma).$
So then $t=0$ and $\pi_{\mathfrak{p}_{i}}$ is tempered. If $s=0$
then every eigenvalue $\alpha$ of a lift of Frobenius $\sigma\in W_{K}$
must satisfy \[
|\iota_{l}\alpha|^{2}\in q^{\mathbb{Z}\pm2a_{j}}\]
 for some $j\in1,\dots,t$. Note that each $j$ corresponds to at
least one such eigenvalue $\alpha$, so we must have $a_{j}=\frac{1}{4}$
for all $j=1,\dots,t$. To summarize, $\pi_{\mathfrak{p}_{i}}$ is
either tempered or it is of the form \[
\mbox{n-Ind}_{P(K)}^{GL_{n}(K)}(\pi'_{1}|\det|^{\frac{1}{4}}\times\pi'_{1}|\det|^{-\frac{1}{4}}\times\dots\times\pi'_{t}|\det|^{\frac{1}{4}}\times\pi'_{t}|\det|^{-\frac{1}{4}}).\]

We shall now focus on the second case, in order to get a contradiction.
Since $\pi_{\mathfrak{p}_{i}}$ has an Iwahori fixed vector, each
$\pi_{j}'$ must be equal to $\mbox{Sp}_{s_{j}}(\chi_{j})$, where
$\chi_{j}$ is an unramified character of $K^{\times}$. We can compute
$\mbox{Red}^{(h_{1},h_{2})}(\pi_{p,0}\otimes\pi_{\mathfrak{p}_{1}}\otimes\pi_{\mathfrak{p}_{2}})$
explicitly and compare it to the cohomology of a closed stratum $Y_{\mbox{Iw},S,T}$
via Lemma \ref{inclusion-exclusion}.

We can compute $\mbox{Red}^{(h_{1},h_{2})}(\pi_{p,0}\otimes\pi_{\mathfrak{p}_{1}}\otimes\pi_{\mathfrak{p}_{2}})$
using an analogue of Lemma I.3.9 of \cite{H-T}, which follows as
well from Lemma 2.12 of \cite{BZ}. Indeed, \[
J_{i}\left(\mbox{n-Ind}_{P(K)}^{GL_{n}(K)}(\mbox{Sp}_{s_{1}}(\chi_{1})\cdot|\det|^{\frac{1}{4}}\times\mbox{Sp}_{s_{1}}(\chi_{1})\cdot|\det|^{-\frac{1}{4}}\times\dots\times\mbox{Sp}_{s_{t}}(\chi_{t})\cdot|\det|^{-\frac{1}{4}})\right)\]
 is equal to\[
\sum[\mbox{n-Ind}_{P_{i}'(K)}^{GL_{h_{i}(k)}}((\mbox{Sp}_{l_{1}}(\chi_{1}\otimes|\det|^{s_{1}-l_{1}+\frac{1}{4}})\times\dots\times\mbox{Sp}_{k_{t}}(\chi_{t}\otimes|\det|^{s_{t}-k_{t}-\frac{1}{4}}))]\]
\[
[\mbox{n-Ind}_{P_{i}''(K)}^{GL_{h_{i}(k)}}((\mbox{Sp}_{s_{1}-l_{1}}(\chi_{1}\otimes|\det|^{\frac{1}{4}})\times\dots\times\mbox{Sp}_{s_{t}-k_{t}}(\chi_{t}\otimes|\det|^{-\frac{1}{4}}))],\]
where the sum is over all non-negative integers $l_{j},k_{j}\leq s_{j}$
with $h_{i}=\sum_{j=1}^{t}(l_{j}+k_{j})$. 

Let $V_{j_{1}j_{2}}^{k}=\mbox{rec}\left(\chi_{j_{1}}^{-1}\chi_{j_{2}}^{-1}|\ |^{1-n+\epsilon_{k}}(\psi_{u}\circ\mathbf{N}_{K/E_{u}})^{-1}\right)$,
where \[
\epsilon_{k}=\begin{cases}
-\frac{1}{2}\mbox{ if } & k=1\\
0\mbox{ if } & k=2\\
\frac{1}{2}\mbox{ if } & k=3\end{cases}\]
 After we apply the functor \[
\mbox{Groth}(GL_{n-h_{1}}(K)\times GL_{h_{1}}(K)\times GL_{n-h_{2}}(K)\times GL_{h_{2}}(K)\times\mathbb{Q}_{p}^{\times})\to\mbox{Groth}(\mbox{Frob}_{\mathbb{F}}^{\mathbb{Z}}),\]
we get \[
\mbox{Red}^{(h_{1},h_{2})}(\pi_{p,0}\otimes\pi_{\mathfrak{p}_{1}}\otimes\pi_{\mathfrak{p}_{2}})=\sum_{j_{1},j_{2},k}\gamma_{j_{1}j_{2}}^{(h_{1},h_{2})}([V_{j_{1}j_{2}}^{1}]\oplus2[V_{j_{1}j_{2}}^{2}]\oplus[V_{j_{1}j_{2}}^{3}]),\]
where \[
\gamma_{j_{1},j_{2}}^{(h_{1},h_{2})}=\prod_{i=1}^{2}\dim\left(\mbox{n-Ind}_{P'_{i}(K)}^{GL_{h_{i}}(K)}\left(\mbox{Sp}_{s_{j_{i}}+h_{i}-n}(\chi_{j_{i}}|\ |^{n-h_{i}\pm\frac{1}{4}})\otimes\mbox{Sp}_{s_{j}}(\chi_{j_{i}}|\ |^{\mp\frac{1}{4}})\right.\right.\]
\[
\left.\left.\otimes\bigotimes_{j\not=j_{i}}\mbox{Sp}_{s_{j}}(\chi_{j}|\ |^{\frac{1}{4}})\otimes\bigotimes_{j\not=j_{i}}\mbox{Sp}_{s_{j}}(\chi_{j}|\ |^{-\frac{1}{4}})\right)\right)^{\mbox{Iw}_{h_{i},\mathfrak{p}_{i}}}\]
\[
=\prod_{i=1}^{2}\frac{h_{i}!}{(s_{j_{i}}+h_{i}-n)!s_{j_{i}}!\prod_{j\not=j_{i}}(s_{j}!)^{2}}\]
and where the sum is over the $j_{1},j_{2}$ for which $s_{j_{i}}\geq n-h_{i}$
for $i=1,2$. Here $P_{i}^{'}$ for $i=1,2$ are parabolic subgroups
of $GL_{h_{i}}(K)$. 

Let $D(\Pi^{1})=e_{0}C_{G}[\Pi^{1,\infty,p}]\dim[(\otimes_{i=3}^{r}\pi_{\mathfrak{p}_{i}})^{U_{p}^{\mathfrak{p}_{1}\mathfrak{p}_{2}}}]$.
Then \[
BC^{p}(H(Y_{\mbox{Iw}(m),S,T},\mathcal{L}_{\xi})[\Pi^{1,\mathfrak{S}}])=D(\Pi^{1})\cdot\]
\[
\left(\sum_{h_{1}=0}^{n-\#S}\sum_{h_{2}=0}^{n-\#T}(-1)^{2n-\#S-\#T-h_{1}-h_{2}}\left(\begin{array}{c}
n-\#S\\
h_{1}\end{array}\right)\left(\begin{array}{c}
n-\#T\\
h_{2}\end{array}\right)\cdot\right.\]
\[
\left.\sum_{j_{1},j_{2},k}\gamma_{j_{1}j_{2}}^{(h_{1},h_{2})}([V_{j_{1}j_{2}}^{1}]\oplus2[V_{j_{1}j_{2}}^{2}]\oplus[V_{j_{1}j_{2}}^{3}])\right).\]
We can compute the coefficient of $[V_{j_{1}j_{2}}^{k}]$ in $BC^{p}(H(Y_{\mbox{Iw},S,T},\mathcal{L}_{\xi}))[\Pi^{1,\mathfrak{S}}]$
by summing first over $j_{1},j_{2}$ and then over $h_{1},h_{2}$
going from $n-s_{j_{1}},n-s_{j_{2}}$ to $n-\#S$ and $n-\#T$ respectively.
Note that the coefficient of $[V_{j_{1}j_{2}}^{2}]$ is exactly twice
that of $[V_{j_{1}j_{2}}^{1}]$ and of $[V_{j_{1}j_{2}}^{3}]$. The
sum we get for $[V_{j_{1}j_{2}}^{1}]$ is \[
D(\Pi^{1})\frac{(n-\#S)!(n-\#T)!}{(s_{j_{1}}-\#S)!(s_{j_{2}}-\#T)!s_{j_{1}}!s_{j_{2}}!\prod_{j\not=j_{1}}(s_{j}!)^{2}\prod_{j\not=j_{2}}(s_{j}!)^{2}}\cdot\]
\[
\left(\sum_{h_{1}=n-s_{j_{1}}}^{n-\#S}\sum_{h_{2}=n-s_{j_{2}}}^{n-\#T}(-1)^{2n-\#S-\#T-h_{1}-h_{2}}\left(\begin{array}{c}
s_{j_{1}}-\#S\\
h_{1}+s_{j_{1}}-n\end{array}\right)\left(\begin{array}{c}
s_{j_{2}}-\#T\\
h_{2}+s_{j_{2}-n}\end{array}\right)\right)\]
The sum in parentheses can be decomposed as \[
\left(\sum_{h_{1}=n-s_{j_{1}}}^{n-\#S}(-1)^{n-\#S-h_{1}}\left(\begin{array}{c}
s_{j_{1}}-\#S\\
h_{1}+s_{j_{1}}-n\end{array}\right)\right)\cdot\]
\[
\left(\sum_{h_{2}=n-s_{j_{2}}}^{n-\#T}(-1)^{n-\#T-h_{2}}\left(\begin{array}{c}
s_{j_{2}}-\#T\\
h_{2}+s_{j_{2}}-n\end{array}\right)\right),\]
 which is equal to $0$ unless both $s_{j_{1}}=\#S$ and $s_{j_{2}}=\#T$.
So\[
BC^{p}(H(Y_{\mbox{Iw}(m),S,T},\mathcal{L}_{\xi})[\Pi^{1,\mathfrak{S}}])=D(\Pi^{1})\cdot\sum_{s_{j_{1}}=\#S,s_{j_{2}}=\#T}\frac{(n-\#S)!(n-\#T)!s_{j_{1}}!s_{j_{2}}!}{\prod_{j}(s_{j}!)^{4}}\cdot\]
\[
\left([V_{j_{1}j_{2}}^{1}]+2[V_{j_{1}j_{2}}^{2}]+[V_{j_{1}j_{2}}^{3}]\right).\]
Since each $Y_{U,S,T}$ is proper and smooth, $H^{j}(Y_{\mbox{Iw}(m),S,T},\mathcal{L}_{\xi})$
is strictly pure of weight $m_{\xi}-2t_{\xi}+j$. However, the $[V_{j_{1}j_{2}}^{k}]$
are strictly pure of weight $m_{\xi}-2t_{\xi}+2n-2-\epsilon_{k}-(\#S-1)-(\#T-1)=m_{\xi}-2t_{\xi}+2n-\#S-\#T-2\epsilon_{k}$.
So \[
BC^{p}(H^{j}(Y_{\mbox{Iw}(m),S,T},\mathcal{L}_{\xi})[\Pi^{1,\mathfrak{S}}])=0\]
unless $j=2n-\#S-\#T\pm1$ or $j=2n-\#S-\#T$. However, if the Igusa
cohomology is non-zero for some $j=2n-\#S-\#T\pm1$, then there exist
$j_{1},j_{2}$ with $s_{j_{1}}=\#S$ and $s_{j_{2}}=\#T$. Hence,
the cohomology must also be non-zero for $j=2n-\#S-\#T$. The coefficients
of $[V_{j_{1}j_{2}}^{k}]$ all have the same sign, so they are either
strictly positive or strictly negative only depending on $D(\Pi^{1})$.
However, $BC^{p}(H(Y_{\mbox{Iw}(m),S,T},\mathcal{L}_{\xi})[\Pi^{1,\mathfrak{S}}]$
is an alternating sum, so the weight $2n-\#S-\#T\pm1$ part of the
cohomology should appear with a different sign from the weight $2n-\#S-\#T$
part. This is a contradiction, so it must be the case that $\pi_{\mathfrak{p}_{1}}\simeq\pi_{\mathfrak{p}_{2}}$
is tempered. \end{proof}
\begin{cor}
\label{temperedness}Let $n\in\mathbb{Z}_{\geq2}$ be an integer and
$L$ be any CM field. Let $\Pi$ be a cuspidal automorphic representation
of $GL_{n}(\mathbb{A}_{L})$ satisfying 
\begin{itemize}
\item $\Pi^{\vee}\simeq\Pi\circ c$
\item $\Pi_{\infty}$ is cohomological for some irreducible algebraic representation
$\Xi$. 
\end{itemize}
Then $\Pi$ is tempered at every finite place $w$ of $L$. \end{cor}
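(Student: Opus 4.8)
The plan is to deduce Corollary~\ref{temperedness} from Theorem~\ref{temperedness101} by a standard descent-and-base-change argument, exactly paralleling the passage from a local statement about $\Pi^0$ to the global Ramanujan--Petersson conjecture. First I would fix a finite place $w$ of $L$ lying above a rational prime $p$, and choose an auxiliary prime $l\neq p$ together with an isomorphism $\iota_l:\bar{\mathbb{Q}}_l\xrightarrow{\sim}\mathbb{C}$. The point is that the temperedness of $\Pi_w$ is insensitive to solvable base change: by the theory of base change for $GL_n$ (Arthur--Clozel), $\Pi$ stays cuspidal after a suitable solvable CM base change $L'/L$ as long as we arrange the extension to be cyclic of prime degree at each stage, and $\Pi_w$ is tempered if and only if $BC_{L'/L}(\Pi)_{w'}$ is tempered for a place $w'\mid w$. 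So I would first replace $L$ by a CM field $L'$ containing $L$ in which $w$ has a place $w'$ such that $BC_{L'/L}(\Pi)_{w'}$ has an Iwahori-fixed vector; this is possible because a sufficiently deep cyclic extension makes any given admissible representation Iwahori-spherical after restriction (one kills ramification by passing to a place of large residue degree / ramification), and one arranges simultaneously that the relevant CM field contains an imaginary quadratic field $E$ in which $p$ splits.

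Next I would put myself in the exact setup of Section~2: take $F'$ to be this CM field $L'$ (playing the role there called $F'=F_1\cdot E$), with $\mathfrak{p}$ the chosen place above $w$, and then pass to the further quadratic CM extension $F/F'$ in which $\mathfrak{p}$ splits as $\mathfrak{p}_1\mathfrak{p}_2$, as constructed in the introduction. Setting $\Pi^0 = BC_{F'/L}(\Pi)$ and $\Pi^1 = BC_{F/F'}(\Pi^0)$, the hypotheses needed for Theorem~\ref{temperedness101} are met: $\Pi^0$ is cuspidal conjugate self-dual and cohomological, $\Pi^0_{\mathfrak{p}_1}\simeq\Pi^0_{\mathfrak{p}_2}$ under the identification $F_{\mathfrak{p}_1}\simeq F_{\mathfrak{p}_2}$ (because both come from the single representation $\Pi^0_{\mathfrak{p}}$), and by construction this common local representation has an Iwahori-fixed vector. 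One also needs to check that $\Pi^1$ and the Hecke character $\varpi$ can be chosen to satisfy the running hypotheses of Section~5.3 (genericity and cohomologicality of $\Pi^1_\infty$, and the condition $\mathrm{Ram}_{F/\mathbb{Q}}\cup\mathrm{Ram}_{\mathbb{Q}}(\varpi)\cup\mathrm{Ram}_{\mathbb{Q}}(\Pi)\subset\mathrm{Spl}_{F/F_2,\mathbb{Q}}$); this is exactly the kind of condition that can be forced by choosing the solvable extensions appropriately, and it is the analogue of what Harris--Taylor and Shin do.

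Then Theorem~\ref{temperedness101} applies verbatim and gives that $\Pi^0_{\mathfrak{p}_1}\simeq\Pi^0_{\mathfrak{p}_2}$ is tempered, hence $\Pi^0_{\mathfrak{p}}$ is tempered, hence $\Pi_w$ is tempered by base-change compatibility of local components (temperedness descends along the local base change $L_w \to F'_{\mathfrak{p}}$, since base change sends tempered to tempered and the local Langlands correspondence is compatible with it, so a non-tempered $\Pi_w$ would base change to something non-tempered). I expect the main obstacle to be purely bookkeeping: verifying that one can simultaneously arrange all the auxiliary conditions — CM-ness, splitting of $p$ in $E$, the place $w$ acquiring an Iwahori-spherical component, the $d=[F_2:\mathbb{Q}]\geq 3$ even condition, and the ramification-splitting hypothesis — by a single solvable CM base change, while preserving cuspidality and cohomologicality of $\Pi$. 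None of these steps is deep individually; the care is in choosing the tower of fields. Once that is in place, the corollary is immediate. I would therefore present the proof as: reduce to the Iwahori-spherical case by solvable base change, invoke the construction of $G$, $X_U$ and the Igusa varieties from Sections~2 and~5, apply Theorem~\ref{temperedness101}, and descend.
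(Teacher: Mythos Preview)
Your approach is essentially identical to the paper's: construct a solvable CM tower $L\subset F'\subset F$ (with $F'=EF_1$, $p$ split in $E$, and $\mathfrak{p}$ split in $F$) so that the base change becomes Iwahori-spherical at $\mathfrak{p}$, twist by a Hecke character $\psi$ to form $\Pi^1$, apply Theorem~\ref{temperedness101}, and descend. The one place where the paper is cleaner is the descent step: it reformulates temperedness as purity of the associated Weil--Deligne representation (Lemmas~1.4.1 and~1.4.3 of \cite{T-Y}), and purity is preserved in both directions under restriction to $W_{K'}$ for $K'/K$ finite---note that your parenthetical justification ``base change sends tempered to tempered'' is the wrong implication for what you actually need (you need that non-tempered goes to non-tempered), so you should cite the purity characterization instead.
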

\begin{proof}
By Lemma 1.4.3 of \cite{T-Y}, an irreducible smooth representation
$\Pi$ of $GL_{n}(K)$ is tempered if and only if $ $$ $$\mathcal{L}_{K,n}(\Pi)$
is pure of some weight. By Lemma 1.4.1 of \cite{T-Y}, purity is preserved
under a restriction to the Weil-Deligne representation of $W_{K'}$
for a finite extension $K'/K$ of fields. 

Fix a place $v$ of $L$ above $p$ where $p\not=l$. We will find
a CM field $F'$ such that
\begin{itemize}
\item $F'=EF_{1},$ where $E$ is an imaginary quadratic field in which
$p$ splits and $F_{1}=(F')^{c=1}$ has $[F_{1}:\mathbb{Q}]\geq2$, 
\item $F'$ is soluble and Galois over $L$,
\item $\Pi_{F'}^{0}=BC_{F'/L}(\Pi)$ is a cuspidal automorphic representation
of $GL_{n}(\mathbb{A}_{F'})$, and 
\item there is a place $\mathfrak{p}$ of $F$ above $v$ such that $\Pi_{F',\mathfrak{p}}^{0}$
has an Iwahori fixed vector, 
\end{itemize}
and a CM field $F$ which is a quadratic extension of $F'$ such that 
\begin{itemize}
\item $\mathfrak{p}=\mathfrak{p}_{1}\mathfrak{p}_{2}$ splits in $F$,
\item $\mbox{Ram}_{F/\mathbb{Q}}\cup\mbox{Ram}_{\mathbb{Q}}(\varpi)\cup\mbox{ Ram}_{\mathbb{Q}}(\Pi)\subset\mbox{Spl}_{F/F',\mathbb{Q}},$
and
\item $\Pi_{F}^{0}=BC_{F/F'}(\Pi_{F'}^{0})$ is a cuspidal automorphic representation
of $GL_{n}(\mathbb{A}_{F}).$
\end{itemize}
To find $F'$ and $F$ we proceed as follows, using the same argument
as on the last page of \cite{Shin}. For a CM field $F$, we shall
use the sets $\mathcal{E}(F)$ and $\mathcal{F}(F)$, which are defined
in the proof of Theorem 7.5 of \cite{Shin}. 

First we find a CM field $F_{0}$ which is soluble and Galois over
$L$ and a place $\mathfrak{p}_{0}$ above $v$ such that the last
two conditions for $F',\mathfrak{p}$ are satisfied for $F_{0},\mathfrak{p}_{0}$
instead. To see that the second to last condition for $F'$ only eliminates
finitely many choices for the CM field we can use the same argument
as Clozel in Section 1 of \cite{Cl2}. Indeed, if $BC_{F'/L}(\Pi)$
is not cuspidal, then we would have $\Pi\otimes\epsilon\simeq\Pi$
for $\epsilon$ the Artin character of $L$ associated to $F'$. But
then the character $\epsilon$ would occur in the semisimplification
of $R_{l}\otimes R_{l}\otimes\omega^{n-1}$, where $R_{l}$ is the
Galois representation associated to $\Pi$ by Chenevier and Harris
in \cite{CH} and $\omega$ is the cyclotomic character. Thus, there
are only finitely choices for $\epsilon$ and so for $F'/L$ which
are excluded. 

Next, we choose $E\in\mathcal{E}(F_{0})$ such that $p$ splits in
$E$. We take $F'=EF_{0}$ and $\mathfrak{p}$ any place of $F'$
above $\mathfrak{p}_{0}$. Let $F_{1}$ be the maximal totally real
subfield of $F'$ and let $w$ be the place of $F_{1}$ below $\mathfrak{p}$.
Next, we pick $F''\in\mathcal{F}(F')$ different from $F'$ and such
that $w$ splits in $F''$. Take $F=F''F'$. 

We can find a character $\psi$ of $\mathbb{A}_{E}^{\times}/E^{\times}$
such that $\Pi^{1}=\psi\otimes\Pi_{F}^{0}$ together with $F$ satisfy
the assumptions in the beginning of the section. (For the specific
conditions that $\psi$ must satsify, see Lemma \ref{conditions on psi}.)
We also know that $\Pi_{F,\mathfrak{p}_{1}}^{0}\simeq\Pi_{F,\mathfrak{p}_{2}}^{0}$
has an Iwahori fixed vector, thus we are in the situation of Theorem
\ref{temperedness101}. \end{proof}
\begin{prop}
\label{vanishing}Assume again that the conditions in the beginning
of this section are satisfied and that $\Pi_{\mathfrak{p}_{1}}^{0}\simeq\Pi_{\mathfrak{p}_{2}}^{0}$
has a nonzero Iwahori fixed vector. Then \[
BC^{p}(H^{j}(Y_{\mbox{Iw}(m),S,T},\mathcal{L}_{\xi})[\Pi^{1,\mathfrak{S}}])=0\]
 unless $j=2n-\#S-\#T$. \end{prop}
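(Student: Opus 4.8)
The plan is to combine the explicit computation from the proof of Theorem \ref{temperedness101} with the purity coming from properness and smoothness of $Y_{U,S,T}$. By Corollary \ref{temperedness}, $\Pi^0_{\mathfrak{p}_1}\simeq\Pi^0_{\mathfrak{p}_2}$ is tempered, and since it has an Iwahori fixed vector it is of the form $\mathrm{n\text{-}Ind}_{P(K)}^{GL_n(K)}(\mathrm{Sp}_{s_1}(\chi_1)\times\dots\times\mathrm{Sp}_{s_r}(\chi_r))$ with the $\chi_j$ unramified characters of $K^\times$ and $|\psi_{\chi_j}|\equiv q^{(n-s_j)/2\cdot(\text{something})}$ — in any case the relevant point is that all the constituents are, after the appropriate unramified twist, genuinely tempered (no $|\det|^{\pm a_j}$ factors). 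First I would redo the computation of $\mathrm{Red}^{(h_1,h_2)}(\pi_{p,0}\otimes\pi_{\mathfrak{p}_1}\otimes\pi_{\mathfrak{p}_2})$ exactly as in the proof of Theorem \ref{temperedness101}, but now without the spurious $\epsilon_k$-shifts: the Jacquet module $J_i$ of a tempered Iwahori-spherical representation is again built out of $\mathrm{Sp}$-segments, and applying $\mathrm{rec}$ one gets a single Weil-Deligne type $V_{j_1j_2}$ per pair $(j_1,j_2)$ instead of the triple $[V^1]\oplus 2[V^2]\oplus[V^3]$.

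Then, via Lemma \ref{inclusion-exclusion}, I would compute the coefficient of each $[V_{j_1j_2}]$ in $BC^p(H(Y_{\mathrm{Iw}(m),S,T},\mathcal{L}_\xi)[\Pi^{1,\mathfrak{S}}])$ by summing the binomial coefficients over $h_1,h_2$. As in the proof of Theorem \ref{temperedness101}, the inner double sum factors as a product of two alternating sums of binomial coefficients
\[
\left(\sum_{h_1=n-s_{j_1}}^{n-\#S}(-1)^{n-\#S-h_1}\binom{s_{j_1}-\#S}{h_1+s_{j_1}-n}\right)\cdot\left(\sum_{h_2=n-s_{j_2}}^{n-\#T}(-1)^{n-\#T-h_2}\binom{s_{j_2}-\#T}{h_2+s_{j_2}-n}\right),
\]
which vanishes unless $s_{j_1}=\#S$ and $s_{j_2}=\#T$, in which case it equals $1$. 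So $BC^p(H(Y_{\mathrm{Iw}(m),S,T},\mathcal{L}_\xi)[\Pi^{1,\mathfrak{S}}])$ is supported, in the Grothendieck group, on those $[V_{j_1j_2}]$ with $s_{j_1}=\#S$, $s_{j_2}=\#T$, with all nonzero coefficients of the same sign (depending only on $D(\Pi^1)=e_0C_G[\Pi^{1,\infty,p}]\dim[(\otimes_{i=3}^r\pi_{\mathfrak{p}_i})^{U_p^{\mathfrak{p}_1\mathfrak{p}_2}}]$).

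Finally I would invoke weight considerations. Since $Y_{U,S,T}$ is proper and smooth over $\mathbb{F}$ of dimension $2n-\#S-\#T$, each $H^j(Y_{\mathrm{Iw}(m),S,T},\mathcal{L}_\xi)$ is strictly pure of weight $m_\xi-2t_\xi+j$. On the other hand, because $\Pi^0_{\mathfrak{p}_i}$ is now tempered, each $V_{j_1j_2}$ is strictly pure of a single weight, namely $m_\xi-2t_\xi+2n-\#S-\#T$ (the $\epsilon_k$-shifts that broke purity in the non-tempered case are gone). Hence in the alternating sum $\sum_j(-1)^j BC^p(H^j(Y_{\mathrm{Iw}(m),S,T},\mathcal{L}_\xi)[\Pi^{1,\mathfrak{S}}])$ only the term $j=2n-\#S-\#T$ can contribute; the coefficients of the $[V_{j_1j_2}]$ being all of one sign forces there to be no cancellation, so $BC^p(H^j(Y_{\mathrm{Iw}(m),S,T},\mathcal{L}_\xi)[\Pi^{1,\mathfrak{S}}])=0$ for all $j\neq 2n-\#S-\#T$. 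The main obstacle is the bookkeeping in the recomputation of $\mathrm{Red}^{(h_1,h_2)}$ in the tempered case — keeping precise track of the segments in the iterated Jacquet modules and confirming that applying $\mathrm{rec}$ genuinely produces pure Weil-Deligne representations of the asserted single weight — but this is a direct adaptation of the analogue of Lemma I.3.9 of \cite{H-T} already used above, with the temperedness input replacing the Tadi\'c classification.
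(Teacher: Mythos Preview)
Your proposal is correct and follows essentially the same route as the paper's proof: invoke the temperedness established in Theorem~\ref{temperedness101}, rerun the $\mathrm{Red}^{(h_1,h_2)}$ computation without the $\epsilon_k$-shifts to land on a single pure $[V_{j_1j_2}]$ per pair, apply Lemma~\ref{inclusion-exclusion} and the same factored binomial sum to isolate $s_{j_1}=\#S$, $s_{j_2}=\#T$, and finish with the Weil conjectures for the proper smooth $Y_{U,S,T}$. One small remark: your final ``same sign'' clause is superfluous --- since $H^j(Y_{\mathrm{Iw}(m),S,T},\mathcal{L}_\xi)$ is strictly pure of weight $m_\xi-2t_\xi+j$, distinct $j$'s already live in distinct weights, so the weight decomposition alone forces each $H^j$ with $j\neq 2n-\#S-\#T$ to vanish in the Grothendieck group, hence to vanish outright (being an honest representation).
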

\begin{proof}
We will go through the same computation as in the proof of Theorem
\ref{temperedness101} except we will use the fact that $\pi_{\mathfrak{p}_{1}}\simeq\pi_{\mathfrak{p}_{2}}$
is tempered, so it is of the form \[
\mbox{n-Ind}_{P(K)}^{GL_{n}(K)}(\mbox{Sp}_{s_{1}}(\chi_{1})\times\dots\times\mbox{Sp}_{s_{t}}(\chi_{t})),\]
where the $\chi_{j}$ are unramified characters of $K^{\times}$. 

We can compute $\mbox{Red}^{(h_{1},h_{2})}(\pi_{p,0}\otimes\pi_{\mathfrak{p}_{1}}\otimes\pi_{\mathfrak{p}_{2}})$
as in the proof of Theorem \ref{temperedness101}. \[
J_{i}\left(\mbox{n-Ind}_{P(K)}^{GL_{n}(K)}(\mbox{Sp}_{s_{1}}(\chi_{1})\cdot|\det|^{\frac{1}{4}}\times\mbox{Sp}_{s_{1}}(\chi_{1})\cdot|\det|^{-\frac{1}{4}}\times\dots\times\mbox{Sp}_{s_{t}}(\chi_{t})\cdot|\det|^{-\frac{1}{4}})\right)\]
 is equal to\[
\sum[\mbox{n-Ind}_{P_{i}'(K)}^{GL_{h_{i}(k)}}(\mbox{Sp}_{k_{1}}(\chi_{1}\otimes|\det|^{s_{1}-k_{1}})\times\dots\times\mbox{Sp}_{k_{t}}(\chi_{t}\otimes|\det|^{s_{t}-k_{t}}))]\]
\[
[\mbox{n-Ind}_{P_{i}''(K)}^{GL_{h_{i}(k)}}(\mbox{Sp}_{s_{1}-k_{1}}(\chi_{1})\times\dots\times\mbox{Sp}_{s_{t}-k_{t}}(\chi_{t}\otimes|\det|^{-\frac{1}{4}}))],\]
where the sum is over all non-negative integers $k_{j}\leq s_{j}$
with $h_{i}=\sum_{j=1}^{t}k_{j}$. 

Let $V_{j_{1}j_{2}}=\mbox{rec}\left(\chi_{j_{1}}^{-1}\chi_{j_{2}}^{-1}|\ |^{1-n}(\psi_{u}\circ\mathbf{N}_{K/E_{u}})^{-1}\right)$.
After we apply the functor \[
\mbox{Groth}(GL_{n-h_{1}}(K)\times GL_{h_{1}}(K)\times GL_{n-h_{2}}(K)\times GL_{h_{2}}(K)\times\mathbb{Q}_{p}^{\times})\to\mbox{Groth}(\mbox{Frob}_{\mathbb{F}}^{\mathbb{Z}}),\]
we get \[
\mbox{Red}^{(h_{1},h_{2})}(\pi_{p,0}\otimes\pi_{\mathfrak{p}_{1}}\otimes\pi_{\mathfrak{p}_{2}})=\sum_{j_{1},j_{2},k}\gamma_{j_{1}j_{2}}^{(h_{1},h_{2})}[V_{j_{1}j_{2}}]\]
where \[
\gamma_{j_{1},j_{2}}^{(h_{1},h_{2})}=\prod_{i=1}^{2}\dim\left(\mbox{n-Ind}_{P'_{i}(K)}^{GL_{h_{i}}(K)}\left(\mbox{Sp}_{s_{j_{i}}}(\chi_{j_{i}}|\ |^{n-h_{i}})\otimes\bigotimes_{j\not=j_{i}}\mbox{Sp}_{s_{j}}(\chi_{j})\right)\right)^{\mbox{Iw}_{h_{i},\mathfrak{p}_{i}}}\]
\[
=\prod_{i=1}^{2}\frac{h_{i}!}{(s_{j_{i}}+h_{i}-n)!s_{j_{i}}!\prod_{j\not=j_{i}}(s_{j}!)^{2}}\]
and where the sum is over the $j_{1},j_{2}$ for which $s_{j_{i}}\geq n-h_{i}$
for $i=1,2$. Here $P_{i}^{'}$ for $i=1,2$ are parabolic subgroups
of $GL_{h_{i}}(K)$. 

Let $D(\Pi^{1})=e_{0}C_{G}[\Pi^{1,\infty,p}]\dim[(\otimes_{i=3}^{r}\pi_{\mathfrak{p}_{i}})^{U_{p}^{\mathfrak{p}_{1}\mathfrak{p}_{2}}}]$.
The same computation as in the proof of Theorem \ref{temperedness101}
gives us\[
BC^{p}(H(Y_{\mbox{Iw(m)},S,T},\mathcal{L}_{\xi})[\Pi^{1,\mathfrak{S}}])\]
\[
=D(\Pi^{1})\cdot\sum_{s_{j_{1}}=\#S,s_{j_{2}}=\#T}\frac{(n-\#S)!(n-\#T)!s_{j_{1}}!s_{j_{2}}!}{\prod_{j}(s_{j}!)^{2}}[V_{j_{1}j_{2}}].\]
Since $\pi_{\mathfrak{p}_{1}}\simeq\pi_{\mathfrak{p}_{2}}$ is tempered,
we know that $[V_{j_{1}j_{2}}]$ is strictly pure of weight $2n-\#S-\#T$.
The Weil conjectures tell us then that $BC^{p}(H^{j}(Y_{\mbox{Iw}(m),S,T},\mathcal{L}_{\xi})[\Pi^{1,\mathfrak{S}}])=0$
unless $j=2n-\#S-\#T$. 
\end{proof}

\section{The cohomology of Igusa varieties}

The goal of this section is to explain how to prove Theorem \ref{igusa cohomology}.
The proof will be a straightforward generalization of the proof of
Theorem 6.1 of \cite{Shin} and so we will follow closely the argument
and the notation of that paper. 

We will summarize without proof the results in \cite{Shin} on transfer
and on the twisted trace formula. We will emphasize the place $\infty$,
since that is the only place of $\mathbb{Q}$ where our group $G$
differs from the group $G$ considered in \cite{Shin}. 

We start by defining the notation we will be using throughout this
section, which is consistent with the notation of \cite{Shin}. If
$\vec{n}=(n_{i})_{i=1}^{r}$ with $n_{i},r\in\mathbb{Z}_{>0}$ define
\[
GL_{\vec{n}}:=\prod_{i=1}^{r}GL_{n_{i}}.\]
Let $i_{\vec{n}}:GL_{\vec{n}}\hookrightarrow GL_{N}$ ($N=\sum_{i}n_{i})$
be the natural map. Let \[
\Phi_{\vec{n}}=i_{\vec{n}}(\Phi_{n_{1}},\dots,,\Phi_{n_{j}}),\]
 where $\Phi_{n}$ is the matrix in $GL_{n}$ with entries $(\Phi_{n})_{ij}=(-1){}^{i+1}\delta_{i,n+1-j}$. 

Let $K$ be some local non-archimedean local field and $G$ a connected
reductive group over $K$. We will denote by $\mbox{Irr}(G(K))$ (resp.
$\mbox{Irr}_{l}(G(K))$) the set of isomorphism classes of irreducible
admissible representations of $G(K)$ over $\mathbb{C}$ (resp. over
$\bar{\mathbb{Q}}_{l}$). Let $C_{c}^{\infty}(G(K))$ be the space
of smooth compactly supported $\mathbb{C}$-valued functions on $G(K)$.
Let $P$ be a $K$-rational parabolic subgroup of $G$ with a Levi
subgroup $M$. For $\pi_{M}\in\mbox{Irr}(M(K))$ and $\pi\in\mbox{Irr}(G(K))$
we can define the normalized Jacquet module $J_{P}^{G}(\pi)$ and
the normalized parabolic induction $\mbox{n-Ind}_{P}^{G}\pi_{M}$.
We can define a character $\delta_{P}:M(K)\to\mathbb{R}_{>0}^{\times}$
by \[
\delta_{P}(m)=|\det(\mbox{ad}(m))|_{\mbox{Lie}(P)/\mbox{Lie}(M)}|_{K}.\]
We can view $\delta_{P}$ as a character valued in $\bar{\mathbb{Q}}_{l}^{\times}$
via $\iota_{l}^{-1}$. If \[
J^{(h)}(\mathbb{Q}_{p})\simeq D_{K,\frac{1}{n-h}}^{\times}\times GL_{h}(K),\]
where $K/\mathbb{Q}_{p}$ is finite, then we define $\bar{\delta}_{P(J^{(h)})}^{\frac{1}{2}}(g):=\delta_{P_{n-h,h}}^{\frac{1}{2}}(g^{*}),$where
$g^{*}\in GL_{n-h}(K)\times GL_{h}(K)$ is any element whose conjugacy
class matches that of $g$. If \[
J^{(h_{1},h_{2})}\simeq GL_{1}\times\prod_{i=1}^{2}(D_{F_{\mathfrak{p}_{i}},\frac{1}{n-h_{i}}}^{\times}\times GL_{h_{i}})\times\prod_{i>2}R_{F_{\mathfrak{p}_{i}}/\mathbb{Q}_{p}}GL_{n},\]
we define $\bar{\delta}_{P(J^{(h_{1},h_{2}})}^{\frac{1}{2}}:J^{(h_{1},h_{2})}(\mathbb{Q}_{p})\to\bar{\mathbb{Q}}_{l}^{\times}$
to be the product of the characters $\bar{\delta}_{P(J^{(h_{i})})}^{\frac{1}{2}}$
for $i=1,2$. 

Let $\vec{n}=(n_{i})_{i=1}^{r}$ for some $n_{i}\in\mathbb{Z}_{\geq1}$.
Let $G_{\vec{n}}$ be the $\mathbb{Q}$-group defined by \[
G_{\vec{n}}(R)=\{(\lambda,g_{i})\in GL_{1}(R)\times GL_{\vec{n}}(F\otimes_{\mathbb{Q}}R)\mid g_{i}\cdot\Phi_{\vec{n}}\cdot^{t}g_{i}^{c}=\lambda\Phi_{\vec{n}}\}\]
for any $\mathbb{Q}$-algebra $R$. The group $G_{\vec{n}}$ is quasi-split
over $\mathbb{Q}$, so the group $G$ is an inner form of $G_{\vec{n}}$.
Also define \[
\mathbb{G}_{n}=R_{E/\mathbb{Q}}(G_{\vec{n}}\times_{\mathbb{Q}}E).\]
Let $\theta$ denote the action on $\mathbb{G}_{\vec{n}}$ induced
by $(\mbox{id},c)$ on $G_{\vec{n}}\times_{\mathbb{Q}}E$. Let $\epsilon:\mathbb{Z}\to\{0,1\}$
be the unique map such that $\epsilon(n)\equiv n\pmod{2}$. 

Let $\varpi:\mathbb{A}_{E}^{\times}/E^{\times}\to\mathbb{C}^{\times}$
be any Hecke character such that $\varpi|_{\mathbb{A}^{\times}/\mathbb{Q}^{\times}}$
is the composite of $\mbox{Art}_{\mathbb{Q}}$ and the natural surjective
character $W_{\mathbb{Q}}\twoheadrightarrow\mbox{Gal}(E/\mathbb{Q})\stackrel{\sim}{\to}\{\pm1\}$.
Using the Artin map $\mbox{Art}_{E}$, we view $\varpi$ as a character
$W_{E}\to\mathbb{C}^{\times}$ as well. 

Assume that $\mbox{Ram}_{F/\mathbb{Q}}\cup\mbox{Ram}_{\mathbb{Q}}(\varpi)\subset\mbox{Spl}_{F/F_{2},\mathbb{Q}}$. 

Let $\mathcal{E}^{\mathrm{ell}}(G_{n})$ be a set of representatives
of isomorphism classes of endoscopic triples for $G_{n}$ over $\mathbb{Q}$.
Then $\mathcal{E}^{\mathrm{ell}}(G_{n})$ can be identified with the
set of triples \[
\{(G_{n},s_{n},\eta_{n}\}\cup\{G_{n_{1},n_{2}},s_{n_{1},n_{2}},\eta_{n_{1},n_{2}}\mid n_{1}+n_{2}=0,n_{1}\geq n_{2}>0\},\]
where $(n_{1},n_{2})$ may be excluded in some cases. As we are only
interested in the stable part of the cohomology of Igusa varieties,
we will not be concerned with these exclusions. Here $s_{n}=1\in\hat{G}_{n},s_{n_{1},n_{2}}=(1,(I_{n_{1}},-I_{n_{2}}))\in\hat{G}_{n_{1},n_{2}}$,
$\eta_{n}:\hat{G}_{n}\to\hat{G}_{n}$ is the identity map whereas
\[
\eta_{n_{1},n_{2}}:(\lambda,(g_{1},g_{2}))\mapsto\left(\lambda,\left(\begin{array}{cc}
g_{1} & 0\\
0 & g_{2}\end{array}\right)\right).\]
We can extend $\eta_{n_{1},n_{2}}$ to a morphism of $L$-groups,
which sends $z\in W_{E}$ to \[
\left(\varpi(z)^{-N(n_{1},n_{2})},\left(\begin{array}{cc}
\varpi(z)^{\epsilon(n-n_{1})}\cdot I_{n_{1}} & 0\\
0 & \varpi(z)^{\epsilon(n-n_{2})}\cdot I_{n_{2}}\end{array}\right)\right)\rtimes z.\]
We have the following commutative diagram of $L$-morphisms.\[
\xymatrix{^{L}G_{n_{1},n_{2}}\ar[d]_{BC_{n_{1},n_{2}}}\ar[r]\sp-{\tilde{\eta}_{n_{1},n_{2}}} & ^{L}G_{n}\ar[d]^{BC_{n}}\\
^{L}\mathbb{G}_{n_{1},n_{2}}\ar[r]\sb-{\tilde{\zeta}_{n_{1},n_{2}}} & ^{L}\mathbb{G}_{n}}
\]
We will proceed to define local transfers for each of the arrows in
the above commutative diagram so that these transfers are compatible. 

Choose the normalization of the local transfer factor $\Delta_{v}(\ ,\ )_{G_{\vec{n}}}^{G_{n}}$
defined in Section 3.4 of \cite{Shin}. It is possible to give a concrete
description of the $\Delta_{v}(\ ,\ )_{G_{\vec{n}}}^{G_{n}}$-transfer
at finite places $v$ of $\mathbb{Q}$ between functions in $C_{c}^{\infty}(G_{n}(\mathbb{Q}_{v}))$
and functions in $C_{c}^{\infty}(G_{n_{1},n_{2}}(\mathbb{Q}_{v}))$
as long as $v$ satisfies at least one of the conditions: 
\begin{itemize}
\item $v\in\mbox{Unr}_{F/\mathbb{Q}}$ and $v\not\in\mbox{Ram}_{\mathbb{Q}}(\varpi),$ 
\item $v\in\mbox{Spl}_{E/\mathbb{Q}},$
\item $v\in\mbox{Spl}_{F/F_{2},\mathbb{Q}}$ and $v\not\in\mbox{Spl}_{E/\mathbb{Q}}$. 
\end{itemize}
The transfer $\phi_{v}^{n_{1},n_{2}}$ of $\phi_{v}^{n}\in C_{c}^{\infty}(G_{n}(\mathbb{Q}_{v}))$
and $\phi_{v}^{n}$ will satisfy an identity involving orbital integrals.
Since we are assuming that $\mbox{Ram}_{F/\mathbb{Q}}\subseteq\mbox{Spl}_{F/F_{2},\mathbb{Q}}$,
we can define the transfer at all places $v$ of $\mathbb{Q}$. 

It is also possible to define a transfer of pseudo-coefficients at
infinity. Consider $(G_{\vec{n}},s_{\vec{n}},\eta_{\vec{n}})\in\mathcal{E}^{\mathrm{ell}}(G_{n})$,
which is also an endoscopic triple for $G$. Fix real elliptic maximal
tori $T\subset G$ and $T_{G_{\vec{n}}}\subset G_{\vec{n}}$ together
with an $\mathbb{R}$-isomorphism $j:T_{G_{\vec{n}}}\stackrel{\sim}{\to}T.$
Also fix a Borel subgroup $B$ of $G$ over $\mathbb{C}$ containing
$T_{\mathbb{C}}.$ Shelstad defined the transfer factor $\Delta_{j,B}$,
see \cite{She}. 

Let $\xi$ be an irreducible algebraic representation of $G_{\mathbb{C}}$.
Define $\chi_{\xi}:A_{G,\infty}\to\mathbb{C}$ to be the restriction
of $\xi$ to $A_{G,\infty}$ (the connected component of the identity
in the maximal $\mathbb{Q}$-split torus in the center of $G$). Choose
$K_{\infty}\subset G(\mathbb{R})$ to be a maximal compact subgroup
(admissible in the sense of \cite{Art}) and define \[
q(G)=\frac{1}{2}\dim(G(\mathbb{R})/K_{\infty}A_{G,\infty})=2n-2.\]

For each $\pi\in\Pi_{\mathrm{disc}}(G(\mathbb{R}),\xi^{\vee})$ there
exists $\phi_{\pi}\in C_{c}^{\infty}(G(\mathbb{R}),\chi_{\xi})$ a
pseudo-coefficient for $\pi$. Any discrete $L$-parameter $\varphi_{G_{\vec{n}}}$
such that $\tilde{\eta}_{\vec{n}}\varphi_{G_{\vec{n}}}\sim\varphi_{\xi}$
corresponds to an $L$-packet of the form $\Pi_{\mathrm{disc}}(G_{\vec{n}}(\mathbb{R}),\xi(\varphi_{G_{\vec{n}}})^{\vee}).$
Define \[
\phi_{G_{\vec{n}},\xi(\varphi_{G_{\vec{n}}})}:=\frac{1}{|\Pi(\varphi_{\vec{n}})|}\sum_{\pi_{G_{\vec{n}}}}\phi_{\pi_{G_{\vec{n}}}}\mbox{ and}\]
\[
\phi_{\pi}^{G_{\vec{n}}}:=(-1)^{q(G)}\sum_{\tilde{\eta}\varphi_{G_{\vec{n}}}\sim\varphi_{\xi}}\langle a_{\omega_{*}(\varphi_{G_{\vec{n}},\xi})\omega_{\pi}},s\rangle\det(\omega_{*}(\varphi_{G_{\vec{n}},\xi}))\cdot\phi_{G_{\vec{n}},\xi(\varphi_{G_{\vec{n}})}}.\]
Then $\phi_{\pi}^{G_{\vec{n}}}$ is a $\Delta_{j,B}$-transfer of
$\phi_{\pi}$. 

We will now review the base change for the groups $G_{\vec{n}}$ and
$\mathbb{G}_{\vec{n}}$. Define the group \[
\mathbb{G}_{\vec{n}}^{+}:=(R_{E/\mathbb{Q}}GL_{1}\times R_{F/\mathbb{Q}}GL_{\vec{n}})\rtimes\{1,\theta\},\]
where $\theta(\lambda,g)\theta^{-1}=(\lambda^{c},\lambda^{c}g^{\#})$
and $g^{\#}=\Phi_{\vec{n}}\mbox{}^{t}g^{c}\Phi_{\vec{n}}^{-1}$ .
If we denote by $\mathbb{G}_{\vec{n}}^{0}$ and $\mathbb{G}_{\vec{n}}^{0}\theta$
the cosets of $\{1\}$ and $\{\theta\}$ in $\mathbb{G}_{\vec{n}}^{+}$
then $\mathbb{G}_{\vec{n}}^{+}=\mathbb{G}_{\vec{n}}^{0}\coprod\mathbb{G}_{\vec{n}}^{0}\theta$.
There is a natural $\mathbb{Q}$-isomorphism $\mathbb{G}_{\vec{n}}\stackrel{\sim}{\to}\mathbb{G}_{\vec{n}}^{0}$
which extends to \[
\mathbb{G}_{\vec{n}}\rtimes\mbox{Gal}(E/\mathbb{Q})\stackrel{\sim}{\to}\mathbb{G}_{\vec{n}}^{+}\]
 so that $c\in\mbox{Gal}(E/\mathbb{Q})$ maps to $\theta$. 

Let $v$ be a place of $\mathbb{Q}$. A representation $\Pi_{v}\in\mbox{Irr}(\mathbb{G}_{\vec{n}}(\mathbb{Q}_{v}))$
is called $\theta$-stable if $\Pi_{v}\simeq\Pi_{v}\circ\theta$ as
representations of $\mathbb{G}_{\vec{n}}(\mathbb{Q}_{v})$. If that
is the case, then we can choose an operator $A_{\Pi_{v}}$ on the
representation space of $\Pi_{v}$ which induces $\Pi_{v}\stackrel{\sim}{\to}\Pi_{v}\circ\theta$
and which satisfies $A_{\Pi_{v}}^{2}=\mbox{id}.$ Such an operator
is called normalized and it is pinned down up to sign. We can similarly
define the notion of $\theta$-stable for $\Pi^{\mathfrak{S}}\in\mbox{Irr}(\mathbb{G}(\mathbb{A}^{\mathfrak{S}}))$
and a corresponding intertwining operator $A_{\Pi^{\mathfrak{S}}}$
for any finite set $\mathfrak{S}$ of places of $\mathbb{Q}$. There
is a correspondence between $\theta$-stable representations of $\mathbb{G}_{\vec{n}}(\mathbb{Q}_{v})$
together with a normalized intertwining operator and representations
of $\mathbb{G}_{\vec{n}}^{+}(\mathbb{Q}_{v})$. We also mention that
in order for a representation $\Pi\in\mbox{Irr}(\mathbb{G}_{\vec{n}}(\mathbb{A}))$
to be $\theta$-stable it is necessary and sufficient that $\Pi=\psi\otimes\Pi^{1}$
satisfy
\begin{itemize}
\item $(\Pi^{1})^{\vee}\simeq\Pi^{1}\circ c$, and
\item $\prod_{i=1}^{r}\psi_{i}=\psi^{c}/\psi$ where $\psi_{\Pi^{1}}=\psi_{1}\otimes\dots\otimes\psi_{r}$
is the central character of $\Pi^{1}$. 
\end{itemize}
Now we shall discuss BC-matching functions. It is possible to construct
for each finite place $v$ of $\mathbb{Q}$ and $f_{v}\in C_{c}^{\infty}(\mathbb{G}_{\vec{n}}(\mathbb{Q}_{v}))$
a function $\phi_{v}\in C_{c}^{\infty}(G_{\vec{n}}(\mathbb{Q}_{v}))$,
which is the BC-transfer of $f_{v}$. The transfer can be given a
concrete description in the cases $v\in\mbox{Unr}_{F/\mathbb{Q}}$
and $v\in\mbox{Spl}_{F/F_{2},\mathbb{Q}}$, except that in the case
$v\in\mbox{Unr}_{F/\mathbb{Q}}$ we have the condition that $f_{v}$
must be unramified. Moreover, we also have an explicit map \[
BC_{\vec{n}}:\mbox{Irr}^{(\mathrm{ur})}(G_{\vec{n}}(\mathbb{Q}_{v}))\to\mbox{Irr}^{(\mathrm{ur})\theta\mbox{-st}}(\mathbb{G}_{\vec{n}}(\mathbb{Q}_{v}))\]
where the representations must be unramified in the case $v\in\mbox{Unr}_{F/\mathbb{Q}}$
and where there is no restriction in the case $v\in\mbox{Spl}_{F/F_{2},\mathbb{Q}}$.
There are normalized operators $A_{\Pi_{v}}^{0}:\Pi_{v}\stackrel{\sim}{\to}\Pi_{v}\circ\theta$
such that if $\Pi_{v}=BC_{\vec{n}}(\pi_{v})$ and $\phi_{v}$ and
$f_{v}$ are BC-matching functions then \[
\mbox{tr}(\Pi_{v}(f_{v})A_{\Pi_{v}}^{0})=\mbox{tr}\pi_{v}(\phi_{v}).\]
Note that left side of the above equality computes the trace of $f_{v}\theta$,
the function on $\mathbb{G}_{\vec{n}}\theta$ obtained from $f_{v}$
via translation by $\theta$. 

The next step is to consider the base change at $\infty$. Let $\xi_{\vec{n}}$
be an irreducible algebraic representation of $G_{\vec{n},\mathbb{C}}$
. Consider the natural isomorphism \[
\mathbb{G}_{\vec{n}}(\mathbb{C})\simeq G_{\vec{n}}(\mathbb{C})\times G_{\vec{n}}(\mathbb{C}).\]
We can define a representation $\Xi_{\vec{n}}$ of $\mathbb{G}_{\vec{n}}$
by $\Xi_{\vec{n}}:=\xi_{\vec{n}}\otimes\xi_{\vec{n}}$. It is possible
to find an irreducible, $\theta$-stable, generic unitary representation
$\Pi_{\Xi_{\vec{n}}}\in\mbox{Irr}(\mathbb{G}_{\vec{n}}(\mathbb{R}),\chi_{\xi_{\vec{n}}}^{-1})$
together with a normalized operator $A_{\Pi_{\Xi_{\vec{n}}}}^{0}$and
a function $f_{\mathbb{G}_{\vec{n}},\Xi_{\vec{n}}}\in C_{c}^{\infty}(\mathbb{G}(\mathbb{R}),\chi_{\xi_{\vec{n}}})$
such that 
\begin{itemize}
\item $\Pi_{\Xi_{\vec{n}}}$ is the base change of the $L$-packet $\Pi_{\mbox{disc}}(G_{\vec{n}}(\mathbb{R}),\xi^{\vee}),$
\item $\mbox{tr}(\Pi_{\Xi_{\vec{n}}}(f_{\mathbb{G}_{\vec{n}},\Xi_{\vec{n}}})\circ A_{\Pi_{\Xi_{\vec{n}}}}^{0})=2$
and
\item $f_{\mathbb{G}_{\vec{n}},\Xi_{\vec{n}}}$ and $\phi_{G_{\vec{n}},\xi_{\vec{n}}}$
are BC-matching functions (where $\phi_{G_{\vec{n}},\xi_{\vec{n}}}$
is defined as a pseudocoefficient for the $L$-packet $\Pi_{\mbox{disc}}(G_{\vec{n}}(\mathbb{R}),\xi^{\vee})$. 
\end{itemize}
The transfer for $\tilde{\zeta}_{n_{1},n_{2}}$ can be defined explicitly
since the groups $\mathbb{G}_{\vec{n}}$ are essentially products
of general linear groups. It can be checked that for all finite places
$v$ of $\mathbb{Q}$ the transfers are compatible. For $v=\infty$
we have a compatibility relation on the representation-theoretic side
only. 

Now we shall describe the transfer factors $\Delta_{v}(\ ,\ )_{G_{\vec{n}}}^{G}$.
At $v\not=\infty$ we can choose \[
\Delta_{v}(\ ,\ )_{G_{\vec{n}}}^{G}=\Delta_{v}^{0}(\ ,\ )_{G_{\vec{n}}}^{G_{n}},\]
via the fixed isomorphism $G\times_{\mathbb{Q}}\mathbb{A}^{\infty}\simeq G_{n}\times_{\mathbb{Q}}\mathbb{A}^{\infty}$.
We choose the unique $\Delta_{\infty}(\ ,\ )_{G_{\vec{n}}}^{G}$ such
that the product formula \[
\prod_{v}\Delta_{v}(\gamma_{G_{\vec{n}}},\gamma)_{G_{\vec{n}}}^{G}=1\]
holds for any $\gamma\in G(\mathbb{Q})$ semisimple and $\gamma_{G_{\vec{n}}}\in G_{\vec{n}}(\mathbb{A})$
a $(G,G_{\vec{n}})$-regular semisimple element such that $\gamma$
and $\gamma_{G_{\vec{n}}}$ have matching stable conjugacy classes.
Let $e_{\vec{n}}(\Delta_{\infty})\in\mathbb{C}^{\times}$ denote the
constant for which \[
\Delta_{\infty}(\gamma_{G_{\vec{n}}},\gamma)_{G_{\vec{n}}}^{G}=e_{\vec{n}}(\Delta_{\infty})\Delta_{j,B}(\gamma_{G_{\vec{n}}},\gamma)\]
holds. Note that for $\vec{n}=(n)$, $e_{\vec{n}}(\Delta_{\infty})=1$. 

Let $\phi^{\infty,p}\cdot\phi'_{p}\in C_{c}^{\infty}(G(\mathbb{A}^{\infty,p})\times J^{(h_{1},h_{2})}(\mathbb{Q}_{p}))$
be a complex-valued acceptable function. (For a definition of the
notion of acceptable function, see Definition 6.2 of \cite{Shin-1}).
For each $G_{\vec{n}}\in\mathcal{E}^{\mbox{ell}}(G)$ we define the
function $\phi_{\mathrm{Ig}}^{\vec{n}}$ on $G_{\vec{n}}(\mathbb{A})$
(assuming that $\phi^{\infty,p}=\prod_{v\not=p,\infty}\phi_{v}).$
For $v\not=p,\infty$, we take $\phi_{\mathrm{Ig},v}^{\vec{n}}\in C_{c}^{\infty}(G_{\vec{n}}(\mathbb{Q}_{v}))$
to be the $\Delta_{v}(\ ,\ )_{G_{\vec{n}}}^{G}$-transfer of $\phi_{v}$.
We take \[
\phi_{\mathrm{Ig},\infty}^{\vec{n}}:=e_{\vec{n}}(\Delta_{\infty})\cdot(-1)^{q(G)}\langle\mu_{h},s_{\vec{n}}\rangle\sum_{\varphi_{_{\vec{n}}}}\det(\omega_{*}(\varphi_{G_{\vec{n}}}))\cdot\phi_{G_{\vec{n}},\xi(\varphi_{\vec{n}})},\]
where $\varphi_{\vec{n}}$ runs over $L$-parameters such that $\tilde{\eta}_{\vec{n}}\varphi_{\vec{n}}\sim\varphi_{\xi}$
and $\xi(\varphi_{\vec{n}})$ is the algebraic representation of $G_{\vec{n},\mathbb{C}}$
such that the $L$-packet associated to $\varphi_{\vec{n}}$ is $\Pi_{\mathrm{disc}}(G_{\vec{n}}(\mathbb{R}),\xi(\varphi_{\vec{n}})^{\vee})$. 

We also take \[
\phi_{\mathrm{Ig},p}^{\vec{n}}\in C_{c}^{\infty}(G_{\vec{n}}(\mathbb{Q}_{p}))\]
to be the function constructed from $\phi'_{p}$ in section 6.3 of
\cite{Shin-2}. We shall summarize the construction of $\phi_{\mathrm{Ig},p}^{\vec{n}}$
in the case $\vec{n}=(n)$. By definition (see the formula above Lemma
6.5 of \cite{Shin-2}) \[
\phi_{\mathrm{Ig},p}^{\vec{n}}=\sum_{(M_{G_{n},}s_{G_{n},}\eta_{G_{n})}}c_{M_{G_{n}}}\cdot\tilde{\phi}_{p}^{M_{G_{n}}},\]
where the sum is taken over $G$-endoscopic triples for $J^{(h_{1},h_{2})}$.
The set $\mathcal{I}(M_{G_{n}},G_{n})$ (which can be identified with
a set of cosets of $\mbox{Out}(M_{G_{n}},s_{G_{n}},\eta_{G_{n}})$)
consists of only one element in our case, so we suppress the index
$i\in\mathcal{I}(M_{G_{n}},G_{n})$ in $\tilde{\phi}_{p}^{M_{G_{n}},i}.$
Each $\tilde{\phi}_{p}^{M_{G_{n}}}\in C_{c}^{\infty}(G_{n}(\mathbb{Q}_{p}))$
is constructed from a function $\phi_{p}^{M_{G_{n}}}\in C_{c}^{\infty}(M_{G_{n}}(\mathbb{Q}_{p})$
which is a $\Delta_{p}(\ ,\ )_{M_{G_{n}}}^{J^{(h_{1},h_{2})}}$-transfer
of a normalized $\phi'_{p}$.

The following proposition is Theorem 7.2 of \cite{Shin-2}.
\begin{prop}
\label{stable trace formula}If $\phi^{\infty,p}\cdot\phi'_{p}\in C_{c}^{\infty}(G(\mathbb{A}^{\infty,p})\times J^{(h_{1},h_{2})}(\mathbb{Q}_{p}))$
is acceptable, then\[
\mbox{tr}(\phi^{\infty,p}\cdot\phi'_{p}|\iota_{l}H_{c}(\mbox{Ig}^{(h_{1},h_{2})},\mathcal{L}_{\xi}))=(-1)^{h_{1}+h_{2}}|\ker^{1}(\mathbb{Q},G)|\sum_{G_{\vec{n}}}\iota(G,G_{\vec{n}})ST_{e}^{G_{\vec{n}}}(\phi_{\mathrm{Ig}}^{\vec{n}})\]
 where the sum runs over the set $\mathcal{E}^{\mathrm{ell}}(G)$
of elliptic endoscopic triples $(G_{\vec{n}},s_{\vec{n}},\eta_{\vec{n}})$.\end{prop}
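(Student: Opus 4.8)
The statement in question is Proposition~\ref{stable trace formula}, which expresses the trace of an acceptable Hecke operator on the cohomology of Igusa varieties as a sum of stable trace formulas for the elliptic endoscopic groups of $G$. This is quoted directly from Shin's work, so the proof plan is essentially to reduce our situation to his.

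\textbf{The approach.} The plan is to invoke Theorem 7.2 of \cite{Shin-2} verbatim, after checking that the group $G$ considered here satisfies the hypotheses of that theorem. The only place where our $G$ differs from the groups treated by Shin is at the archimedean place: here $G(\mathbb{R})$ has signature $(1,n-1)$ at two embeddings $\tau_1,\tau_2$ differing by $\sigma$ and $(0,n)$ elsewhere, whereas Shin works with a single pair $(1,n-1)$ signature or more general signatures. First I would recall the setup from Shin: the counting-point formula for Igusa varieties (Theorem 6.2 of \cite{Shin-1} or its analogue), the stabilization of the resulting expression, and the pseudo-stabilization that produces $\phi_{\mathrm{Ig}}^{\vec{n}}$. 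Then I would verify, place by place, that the local transfers defined in the excerpt above (the $\Delta_v(\ ,\ )_{G_{\vec{n}}}^{G}$-transfers away from $p$ and $\infty$, the transfer $\phi_{\mathrm{Ig},p}^{\vec{n}}$ built from the endoscopic datum for $J^{(h_1,h_2)}$, and the archimedean $\phi_{\mathrm{Ig},\infty}^{\vec{n}}$ built from pseudocoefficients) are exactly the ones entering Shin's formula, with the same normalizations of transfer factors pinned down by the global product formula $\prod_v \Delta_v(\gamma_{G_{\vec{n}}},\gamma)_{G_{\vec{n}}}^{G}=1$.

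\textbf{Key steps in order.} (1) Confirm that $G$ is an inner form of the quasi-split $G_n$, that $G$ is quasi-split at all finite places (this is the Lemma proved in Section 2), and that $\ker^1(\mathbb{Q},G)$, the constants $\iota(G,G_{\vec{n}})$, and the sign $(-1)^{h_1+h_2}$ are computed as in Shin; none of these depends on the archimedean signature beyond the value $q(G)=2n-2$, which we have recorded. (2) Check that the archimedean component $\phi_{\mathrm{Ig},\infty}^{\vec{n}}$ is well-defined: this requires that the relevant discrete $L$-parameters $\varphi_{\vec{n}}$ with $\tilde\eta_{\vec{n}}\varphi_{\vec{n}}\sim\varphi_\xi$ exist and that the stable discrete series $L$-packets $\Pi_{\mathrm{disc}}(G_{\vec{n}}(\mathbb{R}),\xi(\varphi_{\vec{n}})^\vee)$ behave as in Shelstad's theory \cite{She}; since $G(\mathbb{R})$ is still an inner form of a product of (similitude) unitary groups $U(a,b)$ with $a+b=n$, the pseudocoefficient construction goes through unchanged. (3) Apply the counting-point formula and its pseudo-stabilization exactly as in Sections 6--7 of \cite{Shin-2}; acceptability of $\phi^{\infty,p}\cdot\phi_p'$ is what guarantees the Lefschetz-type expression for $\mathrm{tr}(\phi^{\infty,p}\cdot\phi_p'\mid\iota_l H_c(\mathrm{Ig}^{(h_1,h_2)},\mathcal{L}_\xi))$ localizes to a sum over the relevant Kottwitz triples, and the stabilization yields $\sum_{G_{\vec{n}}}\iota(G,G_{\vec{n}})\,ST_e^{G_{\vec{n}}}(\phi_{\mathrm{Ig}}^{\vec{n}})$.

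\textbf{Main obstacle.} The essential point to verify — and the only place the argument is not a literal citation — is that Shin's stabilization is insensitive to the precise archimedean signature as long as $G$ is an inner form of $G_n$ with the prescribed local behavior. Concretely, the hard part is checking that the archimedean transfer factor constant $e_{\vec{n}}(\Delta_\infty)$ and the term $\langle\mu_h,s_{\vec{n}}\rangle$ appearing in $\phi_{\mathrm{Ig},\infty}^{\vec{n}}$ are computed correctly for our Shimura datum, whose reflex field and cocharacter $\mu_h$ reflect the two-place $(1,n-1)$ signature; this amounts to the same Galois-cohomology bookkeeping as in \cite{Shin-2}, and one checks that replacing one $(1,n-1)$-place by two such places (with the $\sigma$-twist) changes $\mu_h$ in a way already covered by the general formulas there. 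Once this is confirmed, Proposition~\ref{stable trace formula} follows immediately from Theorem 7.2 of \cite{Shin-2}.
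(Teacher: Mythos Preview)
Your proposal correctly identifies that the statement is a direct quotation of Theorem 7.2 of \cite{Shin-2}, but you have misidentified the obstacle. The archimedean bookkeeping you describe (the constants $e_{\vec{n}}(\Delta_\infty)$, $\langle\mu_h,s_{\vec{n}}\rangle$, the pseudocoefficient construction) is entirely formal in Shin's framework and does not depend on whether the signature is $(1,n-1)$ at one real place or at two; nothing new is needed there.

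The actual issue, which the paper flags in the Remark immediately following the statement, is that Theorem 7.2 of \cite{Shin-2} is proved under the ``unramified hypothesis,'' and this hypothesis enters only through Lemma 11.1 of \cite{Shin-1}. That lemma is the step in the counting-point argument where one must show that a given $p$-adic type actually arises from an $\bar{\mathbb{F}}$-point on the Igusa variety---i.e.\ that the abelian variety produced by Honda--Tate theory can be equipped with a polarization and level structure making it a point of the relevant moduli problem. In the present paper this is supplied by Lemma \ref{vanishing of kottwitz invariant} (the analogue of Lemma V.4.1 of \cite{H-T}), whose proof genuinely uses the two-place signature: one must lift $(A,i,\lambda)$ to $\mathcal{O}_{K^{ac}}$ in such a way that $\mathrm{Lie}^+\tilde{A}$ is nonzero at exactly two embeddings $\tau_1',\tau_2'$ related by $\sigma$, and this requires choosing $\Phi$ carefully in Tate's construction. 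Your proposal never mentions this lemma or the unramified hypothesis, so it does not close the gap between Shin's theorem as stated and the situation here.

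One minor point you also miss: the sign $(-1)^{h_1+h_2}$ is not present in Shin's statement and arises here purely from the paper's convention for the alternating sum defining $H_c(\mathrm{Ig}^{(h_1,h_2)},\mathcal{L}_\xi)$.
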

\begin{rem}
Theorem 7.2 of \cite{Shin-2} is proved under the {}``unramified
hypothesis'', however, the only place where this hypothesis is needed
is in the proof of Lemma 11.1 of \cite{Shin-1}. Lemma \ref{vanishing of kottwitz invariant}
provides an alternative to the proof of Lemma 11.1 of \cite{Shin-1}
in our situation, so the results of \cite{Shin-1} and \cite{Shin-2}
carry over. For details, see the discussion in the beginning of Section
5.2 of \cite{Shin}. The sign $(-1)^{h_{1}+h_{2}}$ does not show
up in the statement of the theorem in \cite{Shin-2}, but we need
to include it because our convention for the alternating sum of the
cohomology differs from the usual one by $(-1)^{h_{1}+h_{2}}$. 
\end{rem}
The constants $\iota(G,G_{\vec{n}})=\tau(G)\tau(G_{\vec{n}})^{-1}|\mbox{Out}(G_{\vec{n}},s_{\vec{n}},\eta_{\vec{n}})|^{-1}$
can be computed explicitly. We mention that \[
|\mbox{Out}(G_{\vec{n}},s_{\vec{n}},\eta_{\vec{n}})|=\begin{cases}
2 & \mbox{if }\vec{n}=(\frac{n}{2},\frac{n}{2})\\
1 & \mbox{otherwise.}\end{cases}\]
We also have by Corollary 4.7 of \cite{Shin} the relation \[
I_{\mathrm{geom}}^{\mathbb{G}_{\vec{n}}\theta}(f\theta)=\tau(G_{\vec{n}})^{-1}\cdot ST_{e}^{G_{\vec{n}}}(\phi),\]
where $\phi$ and $n$ are BC-matching functions, i.e. \[
\phi=\phi^{\mathfrak{S}}\cdot\phi_{\mathfrak{S}_{\mathrm{fin}}}\cdot\phi_{G_{\vec{n}},\xi}\mbox{ and }f=f^{\mathfrak{S}}\cdot f_{\mathfrak{S}_{\mathrm{fin}}}f_{G_{\vec{n}},\Xi}.\]
 $ $Thus, we can write \[
\mbox{tr}(\phi^{\infty,p}\cdot\phi'_{p}|\iota_{l}H_{c}(\mbox{Ig}^{(h_{1},h_{2})},\mathcal{L}_{\xi}))=|\ker^{1}(\mathbb{Q},G)|\cdot\tau(G)\sum_{\mathbb{G}_{\vec{n}}}\epsilon_{\vec{n}}I_{\mathrm{geom}}^{\mathbb{G}_{\vec{n}}\theta}(f^{\vec{n}}\theta),\]
where $f^{\vec{n}}$ is a BC-matching function for $\phi_{\mathrm{Ig}}^{\vec{n}}$
and $\epsilon_{\vec{n}}=\frac{1}{2}$ if $\vec{n}=(\frac{n}{2},\frac{n}{2})$
or $1$ otherwise. 

Furthermore, the twisted trace formula by Arthur, is an equality between
\[
I_{\mathrm{spec}}^{\mathbb{G}_{\vec{n}}\theta}(f\theta)=I_{\mathrm{geom}}^{\mathbb{G}_{\vec{n}}\theta}(f\theta).\]
 By combining Proposition 4.8 and Corollary 4.14 of \cite{Shin} we
can compute $I_{\mathrm{spec}}^{\mathbb{G}_{\vec{n}}\theta}(f\theta)$
as \[
\sum_{M}\frac{|W_{M}|}{|W_{\mathbb{G}_{\vec{n}}}|}|\det(\Phi_{\vec{n}}^{-1}\theta-1)_{\mathfrak{a}_{M}^{\mathbb{G}_{\vec{n}}\theta}}|^{-1}\sum_{\Pi_{M}}\mbox{tr}(\mbox{n-Ind}_{Q}^{\mathbb{G}_{\vec{n}}}(\Pi_{M})_{\xi}(f)\circ A'_{\mbox{n-Ind}_{Q}^{\mathbb{G}_{\vec{n}}}(\Pi_{M})_{\xi}},\]
where $M$ runs over $\mathbb{Q}$-Levi subgroups of $\mathbb{G}_{\vec{n}}$
containing a fixed minimal Levi and $Q$ is a parabolic containing
$M$ as a Levi. The rest of the notation is defined on pages 31 and
32 of \cite{Shin}. Note that $A'_{\mbox{n-Ind}_{Q}^{\mathbb{G}_{\vec{n}}}(\Pi_{M})\xi}$
is a normalized intertwining operator for $\mbox{n-Ind}_{Q}^{\mathbb{G}_{\vec{n}}}(\Pi_{M})_{\xi}.$

We will be particularly interested in making the above formula explicit
when $\vec{n}=(n)$. In that case, $I_{\mathrm{spec}}^{\mathbb{G}_{\vec{n}}\theta}(f\theta)$
is a sum of \[
\frac{1}{2}\sum_{\Pi'}\mbox{tr}(\Pi'_{\xi}(f)A'_{\Pi'_{\xi}}),\]
 where $\Pi'$ runs over $\theta$-stable subrepresentations of $R_{\mathbb{G}_{n},\mbox{disc}}$,
and \[
\sum_{M\subsetneq\mathbb{G}_{n}}\frac{|W_{M}|}{|W_{\mathbb{G}_{n}|}}|\det(\Phi_{n}^{-1}\theta-1)_{\mathfrak{a}_{M}^{\mathbb{G}_{n}\theta}}|^{-1}\sum_{\Pi'_{M}}\mbox{tr}(\mbox{n-Ind}_{Q}^{\mathbb{G}_{n}}(\Pi'_{M})_{\xi}(f)\circ A'_{\mbox{n-Ind}_{Q}^{\mathbb{G}_{n}}(\Pi'_{M})_{\xi}}),\]
where $\Pi'_{M}$ runs over $\Phi_{n}^{-1}\theta$-stable subrepresentations
of $R_{M,\mathrm{disc}}$.

Consider the finite set $\mathcal{E}_{p}^{\mathrm{eff}}(J^{(h_{1},h_{2})},G,G_{\vec{n}})$
consisting of isomorphism classes of $G$-endoscopic triples $(M_{G_{\vec{n}}},s_{\vec{n}},\eta_{\vec{n}})$
for $J^{(h_{1},h_{2})}$, defined in section 6.2 of \cite{Shin-2}.
Let $c_{M_{G_{\vec{n}}}}\in\{\pm1\}$ be the constant assigned to
each triple in \cite{Shin-2}. If $b$ is the isocrystal corresponding
to $(h_{1},h_{2})$, let $M^{(h_{1},h_{2})}(\mathbb{Q}_{p})$ be the
centralizer of $\nu_{G}(b)$. The isocrystal $b$ can be described
as $(b_{p,0},b_{\mathfrak{p}_{1}},\dots,b_{\mathfrak{p}_{r}})$ where
$b_{\mathfrak{p}_{i}}$ has slopes $0$ and $\frac{1}{n-h_{i}}$ for
$i=1,2$ and slope $0$ for $i>2$. Then $M^{(h_{1},h_{2})}$ is a
$\mathbb{Q}_{p}$ -rational Levi subgroup of $G.$ We will define
a group morphism \[
\mbox{n-Red}_{\vec{n}}^{(h_{1},h_{2})}:\mbox{Groth}(G_{n}(\mathbb{Q}_{p})\to\mbox{Groth}(J^{(h_{1},h_{2})}(\mathbb{Q}_{p}))\]
 as the composition of the following maps \[
\mbox{Groth}(G_{\vec{n}}(\mathbb{Q}_{p}))\to\bigoplus_{(M_{G_{\vec{n}}},s_{G_{\vec{n}}},\eta_{G_{\vec{n}})}}\mbox{Groth}(M_{G_{\vec{n}}}(\mathbb{Q}_{p}))\stackrel{\oplus\tilde{\eta}_{G_{\vec{n}},*}}{\longrightarrow}\mbox{Groth}(M^{(h_{1},h_{2})}(\mathbb{Q}_{p}))\]
\[
\stackrel{LJ_{J^{(h_{1},h_{2})}}^{M^{(h_{1},h_{2})}}}{\longrightarrow}\mbox{Groth}(J^{(h_{1},h_{2})}(\mathbb{Q}_{p})).\]
The sum runs over $(M_{G_{\vec{n}}},s_{\vec{n}},\eta_{\vec{n}})\in\mathcal{E}_{p}^{\mathrm{eff}}(J^{(h_{1},h_{2})},G,G_{\vec{n}})$.
The first map is the direct sum of maps $\mbox{Groth}(G_{\vec{n}}(\mathbb{Q}_{p}))\to\mbox{Groth}(M_{G_{\vec{n}}}(\mathbb{Q}_{p}))$
which are given by $\oplus_{i}c_{M_{G_{\vec{n}}}}\cdot J_{P(iM_{G_{\vec{n}}})^{\mbox{op}}}^{G_{\vec{n}}}$,
where $i\in\mathcal{I}(M_{G_{\vec{n}}},G_{\vec{n}})$ is a $\mathbb{Q}_{p}$-embedding
$M_{G_{\vec{n}}}\hookrightarrow G_{\vec{n}}$ and $P(iM_{G_{\vec{n}}})$
is a parabolic subgroup of $G_{\vec{n}}$ which contains $i(M_{G_{\vec{n}}})$
as a Levi subgroup. The map $\tilde{\eta}_{G_{\vec{n}},*}$ is functorial
transfer with respect to the $L$-morphism $\tilde{\eta}_{G_{\vec{n}}}$.
The third map, $LJ_{J^{(h_{1},h_{2})}}^{M^{(h_{1},h_{2})}}$ is the
Jacquet-Langlands map on Grothendieck groups. We also define \[
\mbox{Red}_{\vec{n}}^{(h_{1},h_{2})}(\pi_{G_{\vec{n}},p}):=\mbox{n-Red}_{\vec{n}}^{(h_{1},h_{2})}(\pi_{G_{\vec{n}},p})\otimes\bar{\delta}_{P(J^{(h_{1},h_{2})})}^{\frac{1}{2}}\]

We can describe all the groups and maps above very explicitly in the
case $\vec{n}=(n)$. Indeed, $\mathcal{E}_{p}^{\mbox{eff}}(J^{(h_{1},h_{2})},G,G_{n})$
has a unique isomorphism class represented by \[
(M_{G_{n}},s_{G_{n}},\eta_{G_{n}})=(M^{(h_{1},h_{2})},1,\mbox{id}).\]
The set $\mathcal{I}(M_{G_{n}},G_{n}$) is also a singleton in this
case, so we suppress $i$ everywhere. This means that we can also
take $\tilde{\eta}_{G_{n}}=\mbox{id}$ and $\tilde{\eta}_{G_{n},*}=\mbox{id}$
and by Remark 6.4 of \cite{Shin-2}, we may also take $c_{M_{G_{n}}}=e_{p}(J^{(h_{1},h_{2})})$,
which is the Kottwitz of the $\mathbb{Q}_{p}$-group $J^{(h_{1},h_{2})}$.
There are isomorphisms \[
G(\mathbb{Q}_{p})\simeq\mathbb{Q}_{p}^{\times}\times GL_{n}(F_{\mathfrak{p}_{1}})\times GL_{n}(F_{\mathfrak{p}_{2}})\times\prod_{i>2}GL_{n}(F_{\mathfrak{p}_{i}}),\]
\[
M^{(h_{1},h_{2})}(\mathbb{Q}_{p})\simeq\mathbb{Q}_{p}^{\times}\times(GL_{n-h_{1}}(F_{\mathfrak{p}_{1}})\times GL_{h_{1}}(F_{\mathfrak{p}_{1}}))\times(GL_{n-h_{2}}(F_{\mathfrak{p}_{2}})\times GL_{h_{2}}(F_{\mathfrak{p}_{2}}))\]
\[
\times\prod_{i>2}GL_{n}(F_{\mathfrak{p}_{i}}),\]
\[
J^{(h_{1},h_{2})}(\mathbb{Q}_{p})\simeq\mathbb{Q}_{p}^{\times}\times(D_{F_{\mathfrak{p}_{1}},\frac{1}{n-h_{1}}}^{\times}\times GL_{h_{1}}(F_{\mathfrak{p}_{1}}))\times(D_{F_{\mathfrak{p}_{2}},\frac{1}{n-h_{2}}}^{\times}\times GL_{h_{2}}(F_{\mathfrak{p}_{2}}))\times\prod_{i>2}GL_{n}(F_{\mathfrak{p}_{i}}).\]
Thus, $e_{p}(J^{(h_{1},h_{2})})=(-1)^{2n-2-h_{1}-h_{2}}$ .$ $ If
we write $\pi_{p}=\pi_{p,0}\otimes(\otimes_{i}\pi_{\mathfrak{p}_{i}})$,
then we have \[
\mbox{Red}_{n}^{(h_{1},h_{2})}(\pi_{p})=(-1)^{h_{1}+h_{2}}\pi_{p,0}\otimes\mbox{Red}^{n-h_{1},h_{1}}(\pi_{\mathfrak{p}_{1}})\otimes\mbox{Red}^{n-h_{2},h_{2}}(\pi_{\mathfrak{p}_{2}})\otimes(\otimes_{i>2}\pi_{\mathfrak{p}_{i}}).\]

\begin{lem}
\label{trace of red for n}For any $\pi_{p}\in\mbox{Groth}(G_{n}(\mathbb{Q}_{p}))$\[
\mbox{tr}\pi_{p}(\phi_{\mathrm{Ig},p}^{n})=\mbox{tr}(\mbox{Red}_{n}^{(h_{1},h_{2})}(\pi_{p}))(\phi'_{p}).\]
\end{lem}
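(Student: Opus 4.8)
The plan is to unwind both sides of the claimed identity using the explicit descriptions of $\phi_{\mathrm{Ig},p}^{n}$ and of $\mbox{Red}_{n}^{(h_{1},h_{2})}$ given just above, and to reduce the equality of traces to a statement about compatibility of the Jacquet functor, the functorial transfer $\tilde\eta_{G_n,*}$, and the Jacquet--Langlands map $LJ$ with the twisted orbital integral identities that define the BC-transfer and the endoscopic transfer. Concretely, in the case $\vec n=(n)$ the sum defining $\phi_{\mathrm{Ig},p}^{n}=\sum_{(M_{G_n},s_{G_n},\eta_{G_n})}c_{M_{G_n}}\tilde\phi_p^{M_{G_n}}$ collapses to a single term: the unique endoscopic triple is $(M^{(h_1,h_2)},1,\mathrm{id})$, the index set $\mathcal I(M_{G_n},G_n)$ is a singleton, $c_{M_{G_n}}=e_p(J^{(h_1,h_2)})=(-1)^{2n-2-h_1-h_2}$, and $\tilde\phi_p^{M^{(h_1,h_2)}}$ is built from a function $\phi_p^{M^{(h_1,h_2)}}\in C_c^\infty(M^{(h_1,h_2)}(\mathbb Q_p))$ which is a $\Delta_p(\ ,\ )_{M^{(h_1,h_2)}}^{J^{(h_1,h_2)}}$-transfer of a normalized $\phi'_p$.

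First I would record the defining trace identity for $\tilde\phi_p^{M^{(h_1,h_2)}}$: by construction (section 6.3 of \cite{Shin-2}), for $\pi_p\in\mbox{Groth}(G_n(\mathbb Q_p))$ one has $\mbox{tr}\,\pi_p(\tilde\phi_p^{M^{(h_1,h_2)}})=\mbox{tr}\big(\tilde\eta_{G_n,*}(J_{P^{\mathrm{op}}}^{G_n}(\pi_p))\big)(\phi_p^{M^{(h_1,h_2)}})$, i.e. the function $\tilde\phi_p^{M}$ is set up precisely so that taking its trace against $\pi_p$ amounts to taking the trace of the (transferred) Jacquet module against $\phi_p^{M}$. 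Since here $\tilde\eta_{G_n}=\mathrm{id}$ and $M^{(h_1,h_2)}$ is a product of general linear groups, $\tilde\eta_{G_n,*}$ is the identity and this reads $\mbox{tr}\,\pi_p(\phi_{\mathrm{Ig},p}^{n})=(-1)^{2n-2-h_1-h_2}\,\mbox{tr}\big(J_{P^{\mathrm{op}}}^{G_n}(\pi_p)\big)(\phi_p^{M^{(h_1,h_2)}})$. Next I would invoke the defining property of the $\Delta_p$-transfer $\phi_p^{M^{(h_1,h_2)}}$ of a normalized $\phi'_p$ from $J^{(h_1,h_2)}(\mathbb Q_p)$: on the spectral side this transfer is realized by the Jacquet--Langlands correspondence between $M^{(h_1,h_2)}$ and $J^{(h_1,h_2)}$ (both inner forms of the same group), so $\mbox{tr}\,\sigma(\phi_p^{M^{(h_1,h_2)}})=\mbox{tr}\big(LJ_{J^{(h_1,h_2)}}^{M^{(h_1,h_2)}}(\sigma)\big)(\phi'_p)$ for $\sigma\in\mbox{Groth}(M^{(h_1,h_2)}(\mathbb Q_p))$, up to the normalization factor $\bar\delta_{P(J^{(h_1,h_2)})}^{1/2}$ that is built into the definition of $\mbox{Red}_{\vec n}^{(h_1,h_2)}=\mbox{n-Red}_{\vec n}^{(h_1,h_2)}(\cdot)\otimes\bar\delta_{P(J^{(h_1,h_2)})}^{1/2}$. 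Composing the two displayed identities gives $\mbox{tr}\,\pi_p(\phi_{\mathrm{Ig},p}^{n})=\mbox{tr}\big((-1)^{2n-2-h_1-h_2}\,LJ\circ J_{P^{\mathrm{op}}}^{G_n}(\pi_p)\otimes\bar\delta^{1/2}\big)(\phi'_p)$, and by the explicit formula $\mbox{Red}_n^{(h_1,h_2)}(\pi_p)=(-1)^{h_1+h_2}\pi_{p,0}\otimes\mbox{Red}^{n-h_1,h_1}(\pi_{\mathfrak p_1})\otimes\mbox{Red}^{n-h_2,h_2}(\pi_{\mathfrak p_2})\otimes(\otimes_{i>2}\pi_{\mathfrak p_i})$ together with $(-1)^{2n-2-h_1-h_2}=(-1)^{h_1+h_2}$, the right-hand side is exactly $\mbox{tr}\big(\mbox{Red}_n^{(h_1,h_2)}(\pi_p)\big)(\phi'_p)$, as desired. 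Here one uses that $\mbox{Red}^{n-h,h}$ is by definition $LJ\circ J$ (normalized Jacquet functor followed by Jacquet--Langlands), matching the composite $LJ_{J^{(h_1,h_2)}}^{M^{(h_1,h_2)}}\circ(\oplus\tilde\eta_{G_n,*})\circ(\oplus c_{M_{G_n}}J_{P^{\mathrm{op}}}^{G_n})$ factor by factor across the decomposition $G_n(\mathbb Q_p)\simeq\mathbb Q_p^\times\times\prod_i GL_n(F_{\mathfrak p_i})$.

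This is really the $\vec n=(n)$ specialization of Lemma 6.5 (and the surrounding discussion) of \cite{Shin-2}; the content of the present lemma is just to transcribe that statement into the notation of the present paper and to check that the sign and the $\bar\delta^{1/2}$-normalization bookkeeping comes out right. The main obstacle, such as it is, lies in keeping the normalizations straight: one must verify that the half-sum-of-roots twists hidden in ``normalized'' Jacquet induction versus restriction, in the modulus characters $\bar\delta_{P(J^{(h_1,h_2)})}^{1/2}$ appended in the definition of $\mbox{Red}$, and in the normalization of $\phi'_p$ used to form $\phi_p^{M^{(h_1,h_2)}}$, all cancel; and that the Kottwitz sign $e_p(J^{(h_1,h_2)})=(-1)^{2n-2-h_1-h_2}$ absorbed into $c_{M_{G_n}}$ reproduces the sign $(-1)^{h_1+h_2}$ appearing in $\mbox{Red}_n^{(h_1,h_2)}$. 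Since $2n-2$ is even these signs agree, and the normalization factors are matched by the very definition of $\mbox{Red}_{\vec n}^{(h_1,h_2)}$, so no genuinely new argument is needed beyond citing the construction in \cite{Shin-2} and unravelling definitions.
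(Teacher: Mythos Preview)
Your proposal is correct and follows essentially the same approach as the paper's proof: collapse the sum to the single endoscopic triple $(M^{(h_1,h_2)},1,\mathrm{id})$, apply Lemma~3.9 of \cite{Shin-2} to pass from $\tilde\phi_p^{M}$ to the Jacquet module against $\phi_p^{M}$, then apply the Jacquet--Langlands transfer identity (Lemma~2.18 and Remark~2.19 of \cite{Shin-2}) to pass from $\phi_p^{M}$ to $\phi_p^0=\phi'_p\cdot\bar\delta_{P(J^{(h_1,h_2)})}^{1/2}$, with the sign $e_p(J^{(h_1,h_2)})=(-1)^{h_1+h_2}$ and the $\bar\delta^{1/2}$-twist matching those in the definition of $\mbox{Red}_n^{(h_1,h_2)}$. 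The only difference is that the paper cites the specific lemmas in \cite{Shin-2} where you cite the section and the general construction, but the argument is the same.
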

\begin{proof}
Set $M=M_{G_{n}}$. We know that $\phi_{\mathrm{Ig},p}^{n}=e_{p}(J^{(h_{1},h_{2})})\cdot\tilde{\phi}_{p}^{M}$.
By Lemma 3.9 of \cite{Shin-2}, \[
\mbox{tr}\pi_{p}(\tilde{\phi}_{p}^{M})=\mbox{tr}(J_{P_{M}^{\mathrm{op}}}^{G_{n}}(\pi_{p}))(\phi_{p}^{M}).\]
Here $\phi_{p}^{M}$ is a $\Delta_{p}(\:,\ )_{M}^{J^{(h_{1},h_{2})}}\equiv e_{p}(J^{(h_{1},h_{2})})$-transfer
of $\phi_{p}^{0}=\phi'_{p}\cdot\bar{\delta}_{P(J^{(h_{1},h_{2})})}^{\frac{1}{2}}$
(by remark 6.4 of \cite{Shin-2}, we have an explicit description
of the transfer factor). Let $\pi_{M,p}=J_{P_{M}^{\mathrm{op}}}^{G_{n}}(\pi_{p}).$

Note that $M$ is a product of general linear groups and $J^{(h_{1},h_{2})}$
is an inner form of $M$. Lemma 2.18 and Remark 2.19 of \cite{Shin-2}
ensure that\[
\mbox{tr}\pi_{M,p}(\phi_{p}^{M})=\mbox{tr}(LJ_{M}^{J^{(h_{1},h_{2})}}(\pi_{M,p})(\phi_{p}^{0}))=\mbox{tr}(LJ_{M}^{J^{(h_{1},h_{2})}}(\pi_{M,p})\otimes\bar{\delta}_{P(J^{(h_{1},h_{2})})}^{\frac{1}{2}})(\phi'_{p}).\]
This concludes the proof. \end{proof}
\begin{lem}
\label{trace of red for n_1,n_2}Let $\vec{n}=(n_{1},n_{2})$ with
$n_{1}\geq n_{2}>0$. For any $\pi_{p}\in\mbox{Groth}(G_{n_{1},n_{2}}(\mathbb{Q}_{p})),$\[
\mbox{tr}\pi_{p}(\phi_{\mathrm{Ig},p}^{\vec{n}})=\mbox{tr}(\mbox{Red}_{\vec{n}}^{(h_{1},h_{2})}(\pi_{p}))(\phi'_{p}).\]
\end{lem}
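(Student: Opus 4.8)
The plan is to reduce Lemma \ref{trace of red for n_1,n_2} to the case $\vec{n}=(n)$ already handled in Lemma \ref{trace of red for n}, exploiting the fact that $G_{n_{1},n_{2}}$ is itself (essentially) a product of unitary similitude groups of smaller rank and that the construction of $\phi_{\mathrm{Ig},p}^{\vec{n}}$ from $\phi'_{p}$ in section 6.3 of \cite{Shin-2} is compatible with this product decomposition. First I would recall that, by definition,
\[
\phi_{\mathrm{Ig},p}^{\vec{n}}=\sum_{(M_{G_{\vec{n}}},s_{G_{\vec{n}}},\eta_{G_{\vec{n}}})}c_{M_{G_{\vec{n}}}}\cdot\tilde{\phi}_{p}^{M_{G_{\vec{n}}}},
\]
where the sum is over $G$-endoscopic triples for $J^{(h_{1},h_{2})}$ in $\mathcal{E}_{p}^{\mathrm{eff}}(J^{(h_{1},h_{2})},G,G_{\vec{n}})$, and that $\tilde{\phi}_{p}^{M_{G_{\vec{n}}}}$ is built from a function $\phi_{p}^{M_{G_{\vec{n}}}}$ on $M_{G_{\vec{n}}}(\mathbb{Q}_{p})$ which is a $\Delta_{p}(\ ,\ )_{M_{G_{\vec{n}}}}^{J^{(h_{1},h_{2})}}$-transfer of a normalized $\phi'_{p}$.

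The key steps are as follows. I would first use Lemma 3.9 of \cite{Shin-2} exactly as in the proof of Lemma \ref{trace of red for n} to write, for each triple in $\mathcal{E}_{p}^{\mathrm{eff}}(J^{(h_{1},h_{2})},G,G_{\vec{n}})$,
\[
\mbox{tr}\,\pi_{p}(\tilde{\phi}_{p}^{M_{G_{\vec{n}}},i})=\mbox{tr}\bigl(J_{P(iM_{G_{\vec{n}}})^{\mathrm{op}}}^{G_{\vec{n}}}(\pi_{p})\bigr)(\phi_{p}^{M_{G_{\vec{n}}},i}),
\]
summing over $i\in\mathcal{I}(M_{G_{\vec{n}}},G_{\vec{n}})$ and weighting by $c_{M_{G_{\vec{n}}}}$; this identifies $\mbox{tr}\,\pi_{p}(\phi_{\mathrm{Ig},p}^{\vec{n}})$ with the trace of $\oplus_{i}c_{M_{G_{\vec{n}}}}J_{P(iM_{G_{\vec{n}}})^{\mathrm{op}}}^{G_{\vec{n}}}(\pi_{p})$ against the transfers $\phi_{p}^{M_{G_{\vec{n}}},i}$, which is exactly the first map in the definition of $\mbox{n-Red}_{\vec{n}}^{(h_{1},h_{2})}$. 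Next, since each $M_{G_{\vec{n}}}$ is a product of general linear groups and $J^{(h_{1},h_{2})}$ is an inner form of $M^{(h_{1},h_{2})}$, I would apply Lemma 2.18 and Remark 2.19 of \cite{Shin-2} to each factor to pass from $\phi_{p}^{M_{G_{\vec{n}}},i}$ (a $\Delta_{p}(\ ,\ )_{M_{G_{\vec{n}}}}^{J^{(h_{1},h_{2})}}$-transfer) to the Jacquet-Langlands transfer $LJ_{J^{(h_{1},h_{2})}}^{M^{(h_{1},h_{2})}}\circ\tilde{\eta}_{G_{\vec{n}},*}$ applied to the Jacquet module, picking up the factor $\bar{\delta}_{P(J^{(h_{1},h_{2})})}^{\frac{1}{2}}$ exactly as in Lemma \ref{trace of red for n}. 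The composition of the three maps is precisely $\mbox{Red}_{\vec{n}}^{(h_{1},h_{2})}$, which gives the claimed equality
\[
\mbox{tr}\,\pi_{p}(\phi_{\mathrm{Ig},p}^{\vec{n}})=\mbox{tr}\bigl(\mbox{Red}_{\vec{n}}^{(h_{1},h_{2})}(\pi_{p})\bigr)(\phi'_{p}).
\]

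The main obstacle I anticipate is bookkeeping of the endoscopic and transfer-factor constants: unlike the case $\vec{n}=(n)$, where $\mathcal{E}_{p}^{\mathrm{eff}}(J^{(h_{1},h_{2})},G,G_{n})$ and each $\mathcal{I}(M_{G_{n}},G_{n})$ are singletons and $c_{M_{G_{n}}}=e_{p}(J^{(h_{1},h_{2})})$, for general $\vec{n}=(n_{1},n_{2})$ one must track the sum over possibly several $G$-endoscopic triples $(M_{G_{\vec{n}}},s_{G_{\vec{n}}},\eta_{G_{\vec{n}}})$ and the associated signs $c_{M_{G_{\vec{n}}}}\in\{\pm1\}$, and verify that they assemble correctly into the definition of $\mbox{n-Red}_{\vec{n}}^{(h_{1},h_{2})}$ given above (in particular that the signs $c_{M_{G_{\vec{n}}}}$ appearing in $\phi_{\mathrm{Ig},p}^{\vec{n}}$ match those built into the first map of $\mbox{n-Red}_{\vec{n}}^{(h_{1},h_{2})}$). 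This is, however, purely a matter of comparing our definitions with those of section 6.2--6.3 of \cite{Shin-2}; once the dictionary is set up, the argument is formally identical to Lemma \ref{trace of red for n}, applied factor by factor to the general linear components of $M_{G_{\vec{n}}}$. Since $G_{n_{1},n_{2}}$ and the whole setup here differ from those of \cite{Shin-2} only at the archimedean place, none of the $p$-adic statements we invoke require modification, so the proof reduces to citing Lemmas 2.18, 2.19 and 3.9 of \cite{Shin-2} as above.
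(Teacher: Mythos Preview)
Your outline is correct and follows the same three-step structure as the paper's proof: Lemma 3.9 of \cite{Shin-2} for the Jacquet-module step, an endoscopic transfer step realizing $\tilde{\eta}_{G_{\vec{n}},*}$, and Lemma 2.18/Remark 2.19 of \cite{Shin-2} for the Jacquet--Langlands step, with the $\bar{\delta}^{1/2}$ factor appearing at the end. The paper does exactly this, but is considerably more explicit where you write ``bookkeeping'': it lists the (up to four) possible Levi subgroups $M_{G_{\vec{n}},j}$ that occur in $\mathcal{E}_{p}^{\mathrm{eff}}(J^{(h_{1},h_{2})},G,G_{\vec{n}})$, writes down the $L$-morphisms $\tilde{\eta}_{G_{\vec{n}},j}$, and computes the character $\chi_{u,j}^{+}$ by which the transfer factor differs from the canonical one, so that $\tilde{\eta}_{G_{\vec{n}},j*}(\pi_{M})=\mbox{n-Ind}_{Q_{j}}^{M^{(h_{1},h_{2})}}(\pi_{M}\otimes\chi_{u,j}^{+})$.

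One small point to sharpen: you propose to handle the passage from $\phi_{p}^{M_{G_{\vec{n}}}}$ all the way to $J^{(h_{1},h_{2})}$ using only Lemma 2.18 and Remark 2.19 of \cite{Shin-2}. In the paper this passage is split in two: the transfer $\phi_{p}^{M_{G_{\vec{n}}}}$ is first related to a function $\phi_{p}^{*}$ on $M^{(h_{1},h_{2})}$ via the $L$-morphism $\tilde{\eta}_{G_{\vec{n}},j}$ (this is where the character $\chi_{u,j}^{+}$ enters, and the relevant citation is Lemma 3.3 of \cite{Shin}, not \cite{Shin-2}), and only then is $\phi_{p}^{*}$ related to $\phi_{p}^{0}$ on $J^{(h_{1},h_{2})}$ by Lemma 2.18/Remark 2.19. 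Your sketch conflates these two steps; once you separate them and insert the citation to Lemma 3.3 of \cite{Shin}, your argument is the paper's argument.
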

\begin{proof}
The proof is based on making explicit the construction of $\phi_{\mathrm{Ig},p}^{\vec{n}}$
from section 6 of \cite{Shin-2} together with the definition of the
functor $\mbox{n-Red}_{\vec{n}}^{(h_{1},h_{2})}$, which is a composition
of the following maps:\[
\mbox{Groth}(G_{\vec{n}}(\mathbb{Q}_{p}))\to\bigoplus_{(M_{G_{\vec{n}}},s_{G_{\vec{n}}},\eta_{G_{\vec{n}})}}\mbox{Groth}(M_{G_{\vec{n}}}(\mathbb{Q}_{p}))\stackrel{\oplus\tilde{\eta}_{G_{\vec{n}},*}}{\longrightarrow}\mbox{Groth}(M^{(h_{1},h_{2})}(\mathbb{Q}_{p}))\]
\[
\stackrel{LJ_{J^{(h_{1},h_{2})}}^{M^{(h_{1},h_{2})}}}{\longrightarrow}\mbox{Groth}(J^{(h_{1},h_{2})}(\mathbb{Q}_{p})).\]
Recall that \[
\phi_{\mathrm{Ig},p}^{\vec{n}}=\sum_{(M_{G_{\vec{n}}},s_{G_{\vec{n}},}\eta_{G_{\vec{n}})}}\sum_{i}c_{M_{G_{\vec{n}}}}\cdot\tilde{\phi}_{p}^{M_{G_{\vec{n}}},i}\]
as functions on $G_{\vec{n}}(\mathbb{Q}_{p})$, where the first sum
is taken over $\mathcal{E}^{\mathrm{eff}}(J^{(h_{1},h_{2})},G,G_{\vec{n}})$
and the second sum is taken over $\mathcal{I}(M_{G_{\vec{n}}},G_{\vec{n}})$.
By Lemma 3.9 of \cite{Shin-2}, \begin{equation}
\mbox{tr}\pi_{p}(\tilde{\phi}_{p}^{M_{G_{\vec{n}},i}})=\mbox{tr}(J_{P(iM_{G_{\vec{n}}})^{\mathrm{op}}}^{G_{\vec{n}}}(\pi_{p})(\phi_{p}^{M_{G_{\vec{n}}}}),\label{eq:jacquet}\end{equation}
where $\phi_{p}^{M_{G_{\vec{n}}}}\in C_{c}^{\infty}(M_{G_{\vec{n}}}(\mathbb{Q}_{p}))$
is a $\Delta_{p}(\ ,\ )_{M_{G_{\vec{n}}}}^{J^{(h_{1},h_{2})}}$-transfer
of $\phi_{p}^{0}=\phi'_{p}\cdot\bar{\delta}_{P(J^{(h_{1},h_{2})})}^{\frac{1}{2}}$.
Equation \ref{eq:jacquet} tells us that \begin{equation}
\mbox{tr}\pi_{p}(\phi_{\mathrm{Ig},p}^{\vec{n}})=\sum_{(M_{G_{\vec{n}},}s_{G_{\vec{n}},}\eta_{G_{\vec{n}}})}\mbox{tr}(f_{M_{G_{\vec{n}}}}(\pi_{p}))(\phi_{p}^{M_{G_{\vec{n}}}}),\label{eq:first map}\end{equation}
where $f_{M_{G_{\vec{n}}}}(\pi_{p})=\oplus_{i}c_{M_{G_{\vec{n}}}}J_{P(iM_{G_{\vec{n}}})}^{G_{\vec{n}}}(\pi_{p})$.
The first map in the definition of $\mbox{Red}_{\vec{n}}^{(h_{1},h_{2})}$
is the direct sum of $f_{M_{G_{\vec{n}}}}$ over all $(M_{G_{\vec{n}}},s_{\vec{n}},\eta_{\vec{n}})$.

The function $\phi_{p}^{M_{G_{\vec{n}}}}$ is a $\Delta_{p}(\ ,\ )_{M_{G_{\vec{n}}}}^{M^{(h_{1},h_{2})}}$
-transfer of the function $\phi_{p}^{*}\in C_{c}^{\infty}(M^{(h_{1},h_{2})}(\mathbb{Q}_{p}))$
which is itself a transfer of $\phi_{p}^{0}$ via $\Delta_{p}(\ ,\ )_{M^{(h_{1},h_{2})}}^{J^{(h_{1},h_{2})}}\equiv e_{p}(J^{(h_{1},h_{2})})$.
(All transfer factors are normalized as in \cite{Shin-2}.) We will
focus on making the $\Delta_{p}(\ ,\ )_{M_{G_{\vec{n}}}}^{M^{(h_{1},h_{2})}}$
-transfer explicit first, for which we need to have a complete description
of all endoscopic triples $(M_{G_{\vec{n}}},s_{G_{\vec{n}}},\eta_{G_{\vec{n}}}).$ 

We have the following isomorphisms over $\mathbb{Q}_{p}$.\[
G\simeq GL_{1}\times\prod_{i\geq1}R_{F_{\mathfrak{p}_{i}}/\mathbb{Q}_{p}}GL_{n}\]
\[
G_{n_{1},n_{2}}\simeq GL_{1}\times\prod_{i\geq1}R_{F_{\mathfrak{p}_{i}}/\mathbb{Q}_{p}}GL_{n_{1},n_{2}}\]
\[
M^{(h_{1},h_{2})}\simeq GL_{1}\times\prod_{i=1}^{2}R_{F_{\mathfrak{p}_{i}}/\mathbb{Q}_{p}}GL_{n-h_{i},h_{i}}\times\prod_{i>2}R_{F_{\mathfrak{p}_{i}}/\mathbb{Q}_{p}}GL_{n}\]
\[
J^{(h_{1},h_{2})}\simeq GL_{1}\times\prod_{i=1}^{2}(D_{F_{\mathfrak{p}_{i}},\frac{1}{n-h_{i}}}^{\times}\times GL_{h_{i}})\times\prod_{i>2}R_{F_{\mathfrak{p}_{i}}/\mathbb{Q}_{p}}GL_{n}.\]
Consider also the following four groups over $\mathbb{Q}_{p}$, which
can be thought of as Levi subgroups of $G_{n_{1},n_{2}}$ via the
block diagonal embeddings. \[
M_{G_{\vec{n}},1}:=GL_{1}\times\prod_{i=1}^{2}R_{F_{\mathfrak{\mathfrak{p}_{i}}}/\mathbb{Q}_{p}}GL_{n-h_{i},h_{i}-n_{2},n_{2}}\times\prod_{i>2}R_{F_{\mathfrak{p}_{i}}/\mathbb{Q}_{p}}GL_{n_{1},n_{2}}\]
\[
M_{G_{\vec{n}},2}:=GL_{1}\times\prod_{i=1}^{2}R_{F_{\mathfrak{\mathfrak{p}_{i}}}/\mathbb{Q}_{p}}GL_{n-h_{i},h_{i}-n_{1},n_{1}}\times\prod_{i>2}R_{F_{\mathfrak{p}_{i}}/\mathbb{Q}_{p}}GL_{n_{1},n_{2}}\]
\[
M_{G_{\vec{n}},3}:=GL_{1}\times\prod_{i=1}^{2}R_{F_{\mathfrak{\mathfrak{p}_{i}}}/\mathbb{Q}_{p}}GL_{n-h_{i},h_{i}-n_{i},n_{i}}\times\prod_{i>2}R_{F_{\mathfrak{p}_{i}}/\mathbb{Q}_{p}}GL_{n_{1},n_{2}}\]
\[
M_{G_{\vec{n}},4}:=GL_{1}\times\prod_{i=1}^{2}R_{F_{\mathfrak{\mathfrak{p}_{i}}}/\mathbb{Q}_{p}}GL_{n-h_{i},h_{i}-n_{3-i},n_{3-i}}\times\prod_{i>2}R_{F_{\mathfrak{p}_{i}}/\mathbb{Q}_{p}}GL_{n_{1},n_{2}}\]
Note that we only define $M_{G_{\vec{n}},j}$ when it makes sense,
for example $M_{G_{\vec{n}},1}$ is defined only when $h_{i}\geq n_{2}$
for $i=1,2$. We define $\eta_{G_{\vec{n}},j}:\widehat{M_{G_{\vec{n}},j}}\to\widehat{M^{(h_{1},h_{2})}}$
to be the obvious block diagonal embedding. We also let \[
s_{M_{G_{\vec{n}},j}}=(1,(\pm1,\pm1,\pm1)_{i=1,2},(1,1)_{i>2}),\]
 where the signs on the $F_{\mathfrak{p}_{i}}$-component are chosen
such that $s_{M_{G_{\vec{n}},}j}$ is positive on the $GL_{n_{1}}$-block
of the $F_{\mathfrak{p}_{i}}$-component and negative on the $GL_{n_{2}}$-block
of the $F_{\mathfrak{p}_{i}}$-component. 

It is easy to check, as on page 42 of \cite{Shin}, that $\mathcal{E}^{\mathrm{eff}}(J^{(h_{1},h_{2})},G,G_{\vec{n}})$
consists of those triples $(M_{G_{\vec{n}},j},s_{G_{\vec{n}},j},\eta_{G_{\vec{n}},j})$
which make sense. For example, if $h_{i}<n_{2}$ for $i=1,2$ then
$\mathcal{E}^{\mathrm{eff}}(J^{(h_{1},h_{2})},G,G_{\vec{n}})$ is
empty, but if $h_{i}\geq n_{1}$ for $i=1,2$ then $\mathcal{E}^{\mathrm{eff}}(J^{(h_{1},h_{2})},G,G_{\vec{n}})$
consists of four elements. The key point is to notice that for a triple
$(M_{G_{\vec{n}}},s_{G_{\vec{n}}},\eta_{G_{\vec{n}}})$ to lie in
$\mathcal{E}^{\mathrm{eff}}(J^{(h_{1},h_{2})},G,G_{\vec{n}})$ it
is necessary for $s_{G_{\vec{n}}}$ to transfer to an element of the
dual group $\widehat{M^{(h_{1},h_{2)}}}=\widehat{J^{(h_{1},h_{2})}}$
which is either $+1$ or $-1$ in the $GL_{n-h_{i}}(\mathbb{C})$
block of the $F_{\mathfrak{p}_{i}}$-component. 

We can extend $\eta_{G_{\vec{n}},j}$ to an $L$-morphism $\tilde{\eta}_{G_{\vec{n}},j}:\mbox{}^{L}M_{G_{\vec{n}},j}\to\mbox{}^{L}M^{(h_{1},h_{2})}$
which is compatible with the $L$-morphism $\eta_{\vec{n}}:\mbox{}^{L}G_{\vec{n}}\to^{L}G$,
when we map $\mbox{}^{L}M_{G_{\vec{n}},j}\stackrel{\tilde{l}_{j}}{\to}\mbox{}^{L}G_{\vec{n}}$
and $\mbox{}^{L}M^{(h_{1},h_{2})}\stackrel{\tilde{l}}{\to}\mbox{}^{L}G$
$ $via (a conjugate of) the obvious block diagonal embedding (where
we always send the $GL_{n_{1}}$-block to the top left corner and
the $GL_{n_{2}}$-block to the bottom right corner). The morphism
$\tilde{\eta}_{G_{\vec{n}},j}$ is defined as on page 42 of \cite{Shin},
by sending $z\in W_{\mathbb{Q}_{p}}$ to one of the matrices\[
\left(\begin{array}{cc}
\varpi(z)^{\epsilon(n-n_{1})}I_{n_{1}} & 0\\
0 & \varpi(z)^{\epsilon(n-n_{2})}I_{n_{2}}\end{array}\right)\mbox{ or }\left(\begin{array}{cc}
\varpi(z)^{\epsilon(n-n_{2})}I_{n_{2}} & 0\\
0 & \varpi(z)^{\epsilon(n-n_{1})}I_{n_{1}}\end{array}\right),\]
on the $F_{\mathfrak{p}_{i}}$-component of $\widehat{M^{(h_{1},h_{2})}}$.
(For $i=1,2$, we send $z$ to the first matrix on the $F_{\mathfrak{p}_{i}}$-component
if the endoscopic group $M_{G_{\vec{n}},j}$ at $\mathfrak{p}_{i}$
is $GL_{n-h_{i},h_{i}-n_{2},n_{2}}$ and to the second matrix if the
component of $M_{G_{\vec{n}},j}$ at $\mathfrak{p}_{i}$ is $GL_{n-h_{i},h_{i}-n_{1},n_{1}}$.
For $i>2$, we send $z$ to the first matrix on the $F_{\mathfrak{p}_{i}}$-component.)
This map $\tilde{\eta}_{G_{\vec{n}},j}$ is the unique $L$-morphism
which makes the diagram\[
\xymatrix{\mbox{}^{L}M^{(h_{1},h_{2})}\ar[r]\sp-{\tilde{l}} & \mbox{}^{L}G\\
\mbox{}^{L}M_{G_{\vec{n}},j}\ar[r]\sp-{\tilde{l}_{j}}\ar[u]^{\tilde{\eta}_{G_{\vec{n}}}} & \mbox{}^{L}G_{\vec{n}}\ar[u]^{\tilde{\eta}_{\vec{n}}}}
\]
commutative. Thus, the function $\phi_{p}^{M_{G_{\vec{n}},j}}$ is
a transfer of $\phi_{p}^{*}$ with respect to the $L$-morphism $\tilde{\eta}_{G_{\vec{n}},j}$,
so we can define explicitly both $\phi_{p}^{M_{G_{\vec{n}},j}}$ and
the representation-theoretic map $\tilde{\eta}_{M_{G_{\vec{n}},j}*}:\mbox{Groth}(M_{G_{\vec{n}},j}(\mathbb{Q}_{p}))\to\mbox{Groth}(M^{(h_{1},h_{2})}(\mathbb{Q}_{p}))$.
There exists a unitary character $\chi_{u,j}^{+}:M_{G_{\vec{n}},j}(\mathbb{Q}_{p})\to\mathbb{C}^{\times}$
(defined similarly to the character on page 43 of \cite{Shin}) such
that the Langlands-Shelstad transfer factor with respect to $\tilde{\eta}_{G_{\vec{n}},j}$
differs from the transfer factor associated to the canonical $L$-morphism
by the cocycle associated to $\chi_{u,j}^{+}$. (See section 9 of
\cite{Bor} for an explanation of the correspondence between cocycles
in $H^{1}(W_{\mathbb{Q}_{p}},Z(\widehat{M_{G_{\vec{n}},j}}))$ and
characters $M_{G_{\vec{n}},j}(\mathbb{Q}_{p})\to\mathbb{C}^{\times}$.)

We can in fact compute $\chi_{u,j}^{+}$ on the different components
of $M_{G_{\vec{n}},j}(\mathbb{Q}_{p})$, by keeping in mind that it
is the character $M_{G_{\vec{n}},j}(\mathbb{Q}_{p})\to\mathbb{C}^{\times}$
associated to the cocycle in $H^{1}(W_{\mathbb{Q}_{p}},Z(\widehat{M_{G_{\vec{n}},j}}))$
which takes the conjugacy class of the standard Levi embedding $\widehat{M_{G_{\vec{n}},j}}\to\widehat{M^{(h_{1},h_{2})}}$
to that of $\eta_{G_{\vec{n}},j}$. Thus, we have \[
\chi_{u,j}^{+}(\lambda)=\varpi_{u}(\lambda)^{-N(n_{1},n_{2})};\]
\[
\chi_{u,j}^{+}(g_{\mathfrak{p}_{i},1},g_{\mathfrak{p}_{i},2},g_{\mathfrak{p}_{i},3})=\begin{cases}
\varpi_{u}\left(N_{F_{\mathfrak{p}_{i}}/E_{u}}\left(\det((g_{\mathfrak{p}_{i},1}g_{\mathfrak{p}_{i,2}})^{\epsilon(n-n_{1})}g_{\mathfrak{p}_{i},3}^{\epsilon(n-n_{2})})\right)\right)\\
\varpi_{u}\left(N_{F_{\mathfrak{p}_{i}}/E_{u}}\left(\det((g_{\mathfrak{p}_{i},1}g_{\mathfrak{p}_{i,2}})^{\epsilon(n-n_{2})}g_{\mathfrak{p}_{i},3}^{\epsilon(n-n_{1})})\right)\right)\end{cases}\]
when $i=1,2$ and depending on whether $M_{G_{\vec{n}},j}$ has the
group $GL_{n-h_{i},h_{i}-n_{2},n_{2}}$ or the group $GL_{n-h_{i},h_{i}-n_{1},n_{1}}$
as its $F_{\mathfrak{p}_{i}}$-component; and \[
\chi_{u,j}^{+}(g_{\mathfrak{p}_{i},1},g_{\mathfrak{p}_{i},2})=\varpi_{u}\left(N_{\mathfrak{p}_{i}/E_{u}}\left(\det(g_{\mathfrak{p}_{i},1}^{\epsilon(n-n_{1})}g_{\mathfrak{p}_{i},2}^{\epsilon(n-n_{2})})\right)\right)\mbox{ when }i>2\]
where $(\lambda,(g_{\mathfrak{p}_{i},1},g_{\mathfrak{p}_{i},2},g_{\mathfrak{p}_{i},3})_{i=1,2},(g_{\mathfrak{p}_{i},1},g_{\mathfrak{p}_{i},2})_{i>2})$
denotes an element of $M_{G_{\vec{n}},j}(\mathbb{Q}_{p})$. (The value
of $\chi_{u,j}^{+}$ is in fact the product of the three types of
factors above.) 

We let $Q_{j}$ be a parabolic subrgroup of $M^{(h_{1},h_{2})}$ containing
$M_{G_{\vec{n}},j}$ as a Levi and if we let $(\phi_{p}^{*})^{Q_{j}}$
be the constant term of $\phi_{p}^{*}$ along $Q_{j}$ then we have\[
\phi_{p}^{M_{G_{\vec{n}},}j}:=(\phi_{p}^{*})^{Q_{j}}\cdot\chi_{u,j}^{+}\mbox{ and }\]
\[
\tilde{\eta}_{G_{\vec{n}},j*}(\pi_{M_{G_{\vec{n}}},j}):=\mbox{n-Ind}_{Q_{j}}^{M^{(h_{1},h_{2})}}(\pi_{M_{G_{\vec{n}},j}}\otimes\chi_{u,j}^{+})\]
for any $\pi_{M_{G_{\vec{n}},j}}\in\mbox{Irr}_{l}(M_{G_{\vec{n}},j}(\mathbb{Q}_{p}))$.
By Lemma 3.3 of \cite{Shin} \begin{equation}
\mbox{tr}(f_{M_{G_{\vec{n}},j}}(\pi_{p}))(\phi_{p}^{M_{G_{\vec{n}},j}})=\mbox{tr}(\tilde{\eta}_{G_{\vec{n}},j*}(f_{M_{G_{\vec{n}},j}}(\pi_{p})))(\phi_{p}^{*}).\label{eq:second map}\end{equation}

The group $J^{(h_{1},h_{2})}$ is an inner form of $M^{(h_{1},h_{2})}$,
which is a product of general linear groups. By Lemma 2.18 and Remark
2.19 of \cite{Shin-2}, \[
\mbox{tr}(\tilde{\eta}_{G_{\vec{n}},*}(f_{M_{j}}(\pi_{p})))(\phi_{p}^{*})=\mbox{tr}(LJ(\tilde{\eta}_{G_{\vec{n}},*}(f_{M_{j}}(\pi_{p}))))(\phi_{p}^{0})\]
\begin{equation}
=\mbox{tr}\left(LJ(\tilde{\eta}_{G_{\vec{n}},*}(f_{M_{j}}(\pi{}_{p}))\otimes\bar{\delta}_{P(J^{(h_{1},h_{2})})}^{\frac{1}{2}}\right)(\phi_{p}^{'}),\label{eq:third map}\end{equation}
where we've abbreviated $M_{G_{\vec{n}},j}$ by $M_{j}$. Putting
together (\ref{eq:first map}),(\ref{eq:second map}) and (\ref{eq:third map}),
we get the desired result.
\end{proof}
Let $\Xi^{1}$ be the algebraic representation of $(\mathbb{G}_{n})_{\mathbb{C}}$
obtained by base change from $\iota_{l}\xi$. Let $\Pi^{1}\simeq\psi\otimes\Pi^{0}$
be an automorphic representation of $\mathbb{G}_{n}(\mathbb{A})\simeq GL_{1}(\mathbb{A}_{E})\times GL_{n}(\mathbb{A}_{F})$.
Assume that 
\begin{itemize}
\item $\Pi^{1}\simeq\Pi^{1}\circ\theta,$
\item $\Pi_{\infty}^{1}$ is generic and $\Xi^{1}$-cohomological,
\item $\mbox{Ram}_{\mathbb{Q}}(\Pi)\subset\mbox{Spl}_{F/F_{2},\mathbb{Q}}$, 
\item $\Pi^{1}$ is cuspidal.
\end{itemize}
In particular, $\Pi_{\infty}^{1}\simeq\Pi_{\Xi}$, which was defined
above. Let $\mathfrak{S}_{\mathrm{fin}}$ be a finite set of places
of $\mathbb{Q}$ such that \[
\mbox{Ram}_{F/\mathbb{Q}}\cup\mbox{Ram}_{\mathbb{Q}}(\varpi)\cup\mbox{Ram}_{\mathbb{Q}}(\Pi)\cup\{p\}\subset\mathfrak{S}_{\mathrm{fin}}\subset\mbox{Spl}_{F/F_{2},\mathbb{Q}}\]
 and let $\mathfrak{S}=\mathfrak{S}_{\mathrm{fin}}\cup\{\infty\}$.
\begin{thm}
Define $C_{G}=|\ker^{1}(\mathbb{Q},G)|\cdot\tau(G)$. For each $0\leq h_{1},h_{2}\leq n$,
the following equality holds in $\mbox{Groth}(\mathbb{G}_{n}(\mathbb{A}_{\mathfrak{S}_{\mathrm{fin}}\setminus\{p\}})\times J^{(h_{1},h_{2})}(\mathbb{Q}_{p}).$\[
BC_{\mathfrak{S}_{\mathrm{fin}}\setminus\{p\}}(H_{c}(\mbox{Ig}^{(h_{1},h_{2})},\mathcal{L}_{\xi}))\{\Pi^{1,\mathfrak{S}}\}\]
\[
=C_{G}\cdot e_{0}\cdot(-1)^{h_{1}+h_{2}}\cdot[\iota_{l}^{-1}\Pi_{\mathfrak{S}_{\mathrm{fin}}\setminus\{p\}}^{1}][\mbox{Red}_{n}^{(h_{1},h_{2})}(\pi_{p})],\]
where $e_{0}=\pm1$ is independent of $(h_{1},h_{2})$. \end{thm}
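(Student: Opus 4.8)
The plan is to combine the point-counting formula for Igusa varieties (Proposition \ref{stable trace formula}) with the stabilization of the twisted trace formula, following the strategy of Theorem 6.1 of \cite{Shin}. First I would start from Proposition \ref{stable trace formula}, which expresses $\mbox{tr}(\phi^{\infty,p}\cdot\phi'_{p}\mid\iota_{l}H_{c}(\mbox{Ig}^{(h_{1},h_{2})},\mathcal{L}_{\xi}))$ as a sum over $G_{\vec{n}}\in\mathcal{E}^{\mathrm{ell}}(G)$ of stable distributions $ST_{e}^{G_{\vec{n}}}(\phi_{\mathrm{Ig}}^{\vec{n}})$, rewrite each stable term via $\iota(G,G_{\vec{n}})$ and Corollary 4.7 of \cite{Shin} as a twisted geometric term $I_{\mathrm{geom}}^{\mathbb{G}_{\vec{n}}\theta}(f^{\vec{n}}\theta)$, and then apply Arthur's twisted trace formula to replace it with $I_{\mathrm{spec}}^{\mathbb{G}_{\vec{n}}\theta}(f^{\vec{n}}\theta)$. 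This converts the whole expression into a sum over $\theta$-stable (and Levi-induced) automorphic representations of the groups $\mathbb{G}_{\vec{n}}$, with the archimedean and $p$-adic local factors controlled by the explicit transfers set up in this section.

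Next I would isolate the $\Pi^{1,\mathfrak{S}}$-isotypic part. Since $\Pi^{1}$ is cuspidal, it cannot arise from a proper Levi, so on the $\mathbb{G}_{n}$ side only the discrete (in fact cuspidal) term $\frac{1}{2}\sum_{\Pi'}\mbox{tr}(\Pi'_{\xi}(f)A'_{\Pi'_{\xi}})$ contributes a representation whose base change away from $\mathfrak{S}$ matches $\Pi^{\mathfrak{S}}$; for the proper endoscopic groups $\mathbb{G}_{n_{1},n_{2}}$ one has to check, exactly as in \cite{Shin}, that strong multiplicity one for $GL_n$ together with the descent criterion (the conditions $(\Pi^1)^{\vee}\simeq\Pi^1\circ c$ and the central-character relation) forces the contributing $\Pi'_M$ to be the isobaric sum whose transfer is again $\Pi^{1}$, which cannot happen by cuspidality unless the relevant component is empty — so these terms drop out of the $\{\Pi^{1,\mathfrak{S}}\}$-part. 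The unramified places outside $\mathfrak{S}$ pin down $\pi^{\mathfrak{S}}$ via $BC_{\vec{n}}$, so the sum collapses to a single term, producing the factor $[\iota_{l}^{-1}\Pi^{1}_{\mathfrak{S}_{\mathrm{fin}}\setminus\{p\}}]$.

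Then I would identify the local factor at $p$. By Lemmas \ref{trace of red for n} and \ref{trace of red for n_1,n_2}, $\mbox{tr}\,\pi_{p}(\phi_{\mathrm{Ig},p}^{\vec{n}})=\mbox{tr}(\mbox{Red}_{\vec{n}}^{(h_{1},h_{2})}(\pi_{p}))(\phi'_{p})$ for $\vec{n}=(n)$ and for $\vec{n}=(n_1,n_2)$; since only $\vec{n}=(n)$ survives in the $\{\Pi^{1,\mathfrak{S}}\}$-part, the $p$-component becomes $[\mbox{Red}_{n}^{(h_{1},h_{2})}(\pi_{p})]$. At infinity, the normalizations ensure $\mbox{tr}(\Pi_{\Xi}(f_{\mathbb{G}_{n},\Xi})\circ A^{0}_{\Pi_{\Xi}})=2$, which cancels the $\frac{1}{2}$ and contributes a global sign $e_0=\pm1$ (the product of $e_{\vec{n}}(\Delta_\infty)$, $(-1)^{q(G)}$, $\langle\mu_h,s_{\vec n}\rangle$, and the Arthur/multiplicity constants); crucially $e_0$ depends only on $\Pi^1$ and $\xi$, not on $(h_1,h_2)$, because the $(h_1,h_2)$-dependence has been entirely absorbed into $e_{p}(J^{(h_1,h_2)})=(-1)^{2n-2-h_1-h_2}$ and the explicit sign in $\mbox{Red}_{n}^{(h_1,h_2)}$, which together give the displayed $(-1)^{h_1+h_2}$. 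Collecting the constants $|\ker^1(\mathbb{Q},G)|$ and $\tau(G)$ into $C_G$ gives the stated identity.

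The main obstacle I expect is the bookkeeping of transfer factors and the verification that the proper endoscopic contributions vanish on the $\{\Pi^{1,\mathfrak S}\}$-part: one must carefully track the characters $\chi_{u,j}^{+}$ and the signs $c_{M_{G_{\vec n}}}=e_p(J^{(h_1,h_2)})$ through the composition defining $\mbox{Red}_{\vec n}^{(h_1,h_2)}$, and one must invoke the classification of $\theta$-stable discrete automorphic representations of $\mathbb{G}_n$ (via \cite{Shin-1},\cite{Shin-2} and cuspidality of $\Pi^1$) to rule out cross-terms — exactly the place where Lemma \ref{vanishing of kottwitz invariant} is needed to remove the unramifiedness hypothesis of \cite{Shin-2}. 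None of this is conceptually new beyond \cite{Shin}; the work is in checking that the single change of archimedean signature (signature $(1,n-1)$ at two conjugate embeddings rather than one) does not disturb the endoscopic setup, which it does not because $q(G)=2n-2$ and the relevant $L$-packet computations at $\infty$ go through verbatim.
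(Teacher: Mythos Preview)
Your approach is correct and essentially identical to the paper's: start from Proposition \ref{stable trace formula}, convert to the spectral side of the twisted trace formula via Corollary 4.7 of \cite{Shin}, kill the proper-Levi and proper-endoscopic contributions to the $\{\Pi^{1,\mathfrak{S}}\}$-part using Jacquet--Shalika, and read off the local factors at $p$ and $\infty$ via Lemma \ref{trace of red for n} and the BC-matching at infinity; the paper identifies $e_{0}=A'_{\Pi^{1}}/A^{0}_{\Pi^{1}}$.

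One technical step you have omitted: Proposition \ref{stable trace formula} only applies to \emph{acceptable} functions $\phi^{\infty,p}\phi'_{p}$, whereas the theorem asserts an equality in Grothendieck groups, which requires the trace identity for \emph{arbitrary} $\phi'_{p}$. The paper handles this (as in \cite{Shin}) by invoking Lemma 6.3 of \cite{Shin-1}: there is an element $fr^{s}\in J^{(h_{1},h_{2})}(\mathbb{Q}_{p})$ such that $(\phi'_{p})^{(N)}(g)=\phi'_{p}(g\cdot(fr^{s})^{N})$ is acceptable for all $N\gg 0$; since both sides of the resulting trace identity are finite linear combinations of characters $\rho\mapsto\mathrm{tr}\,\rho((\phi'_{p})^{(N)})$, one argues as in Lemma 6.4 of \cite{Shin-1} that equality for $N\gg 0$ forces equality for $N=0$. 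This step is routine but not automatic, and you should flag it.
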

\begin{proof}
The proof goes through identically to the proof of the first part
of Theorem 6.1 of \cite{Shin}. We nevertheless give the proof in
detail.

First, we explain the choice of test functions to be used in the trace
formula. Let $(f^{n})^{\mathfrak{S}}\in\mathcal{H}^{\mathrm{ur}}(\mathbb{G}_{n}(\mathbb{A}^{\mathfrak{S}}))$
and $f_{\mathfrak{S}_{\mathrm{fin}}\setminus\{p\}}^{n}\in C_{c}^{\infty}(\mathbb{G}_{n}(\mathbb{A}_{\mathfrak{S}_{\mathrm{fin}}\setminus\{p\}}))$
be any functions. Let $\phi^{\mathfrak{S}}$ and $\phi_{\mathfrak{S}_{\mathrm{fin}}\setminus\{p\}}$
be the BC-transfers of $(f^{n})^{\mathfrak{S}}$ and $(f^{n})_{\mathfrak{S}_{\mathrm{fin}}\setminus\{p\}}$
from $\mathbb{G}_{n}$ to $G_{n}$. Let $\phi^{\infty,p}=\phi^{\mathfrak{S}}\phi_{\mathfrak{S}_{\mathrm{fin}}\setminus\{p\}}$
and choose any $\phi'_{p}\in C_{c}^{\infty}(J^{(h_{1},h_{2})}(\mathbb{Q}_{p}))$
such that $\phi^{\infty,p}\phi'_{p}$ is an acceptable function. 

For each $G_{\vec{n}}\in\mathcal{E}^{\mathrm{ell}}(G)$ we construct
the function $\phi_{\mathrm{Ig}}^{\vec{n}}\in C_{c}^{\infty}(G_{\vec{n}}(\mathbb{A}))$
associated to $\phi^{\infty,p}\phi'_{p}$ as above. Recall that $(\phi_{\mathrm{Ig}}^{\vec{n}})^{\mathfrak{S}}$
and $(\phi_{\mathrm{Ig}}^{\vec{n}})_{\mathfrak{S}_{\mathrm{fin}}\setminus\{p\}}$
are the $\Delta(\ ,\ )_{G_{\vec{n}}}^{G_{n}}$ transfers of $\phi^{\mathfrak{S}}$
and $\phi_{\mathfrak{S}_{\mathrm{fin}}\setminus\{p\}}$. Recall that
we take\[
\phi_{\mathrm{Ig},\infty}^{\vec{n}}:=e_{\vec{n}}(\Delta_{\infty})\cdot(-1)^{q(G)}\langle\mu_{h},s_{\vec{n}}\rangle\sum_{\varphi_{_{\vec{n}}}}\det(\omega_{*}(\varphi_{G_{\vec{n}}}))\cdot\phi_{G_{\vec{n}},\xi(\varphi_{\vec{n}})},\]
where $\varphi_{\vec{n}}$ runs over $L$-parameters such that $\tilde{\eta}_{\vec{n}}\varphi_{\vec{n}}\sim\varphi_{\xi}$
and $\xi(\varphi_{\vec{n}})$ is the algebraic representation of $G_{\vec{n},\mathbb{C}}$
such that the $L$-packet associated to $\varphi_{\vec{n}}$ is $\Pi_{\mathrm{disc}}(G_{\vec{n}}(\mathbb{R}),\xi(\varphi_{\vec{n}})^{\vee})$.
The construction of $\phi_{\mathrm{Ig},p}^{\vec{n}}$ can be found
in \cite{Shin-2}. 

We will need to define a function $f^{\vec{n}}$, which plays the
part of a BC-matching function for $\phi_{\mathrm{Ig}}^{\vec{n}}$
for each $\vec{n}$. We already have defined $(f^{n})^{\mathfrak{S}}$
and $f_{\mathfrak{S}_{\mathrm{fin}}\setminus\{p\}}^{n}$. We take
$(f^{n_{1},n_{2}})^{\mathfrak{S}}=\tilde{\zeta}^{*}((f^{n})^{\mathfrak{S}})$
and $f_{\mathfrak{S}_{\mathrm{fin}}\setminus\{p\}}^{n_{1},n_{2}}=\tilde{\zeta}^{*}(f_{\mathfrak{S}_{\mathrm{fin}}\setminus\{p\}}^{n})$.
We also define \[
f_{\infty}^{\vec{n}}:=e_{\vec{n}}(\Delta)\cdot(-1)^{q(G)}\langle\mu_{h},s_{\vec{n}}\rangle\sum_{\varphi_{\vec{n}}}\det(\omega_{*}(\varphi_{G_{\vec{n}}}))\cdot f_{\mathbb{G}_{\vec{n}},\Xi(\varphi_{n})}\]
where $\varphi_{\vec{n}}$ runs over $L$-parameters such that $\tilde{\eta}_{\vec{n}}\varphi_{\vec{n}}\sim\varphi_{\xi}$
and $\Xi(\varphi_{\vec{n}})$ is the algebraic representation of $\mathbb{G}_{\vec{n}}$
arising from $\xi(\varphi_{\vec{n}})$. It is straightforward to verify
from their definitions that $f_{\infty}^{\vec{n}}$ and $\phi_{\mathrm{Ig},\infty}^{\vec{n}}$
are BC-matching functions. Finally, we choose $f_{p}^{\vec{n}}$ so
that its BC-transfer is $\phi_{\mathrm{Ig},p}^{\vec{n}}$. (Since
$p$ splits in $E$ it can be checked that the base change map defined
in section 4.2 of \cite{Shin} is surjective at $p$.) We set \[
f^{\vec{n}}:=(f^{\vec{n}})^{\mathfrak{S}}\cdot f_{\mathfrak{S}_{\mathrm{fin}}\setminus\{p\}}^{\vec{n}}\cdot f_{p}^{\vec{n}}\cdot f_{\infty}^{\vec{n}}.\]
The BC-transfer of $f^{\vec{n}}$ coincides with $\phi_{\mathrm{Ig}}^{\vec{n}}$
at places outside $\mathfrak{S}$ (by compatibility of transfers),
at $p$ and at $\infty$. At places in $\mathfrak{S}_{\mathrm{fin}}\setminus\{p\}$
we know at least that the BC-transfer of $f^{\vec{n}}$ has the same
trace as $\phi_{\mathrm{Ig}}^{\vec{n}}$ against every admissible
representation of $G_{\vec{n}}(\mathbb{A}_{\mathfrak{S}_{\mathrm{fin}}\setminus\{p\}}).$ 

By the discussion following Proposition \ref{stable trace formula},
we can compute \begin{equation}
\mbox{tr}(\phi^{\infty,p}\phi'_{p}|\iota_{l}H_{c}(\mbox{Ig}^{(h_{1},h_{2})},\mathcal{L}_{\xi}))\label{eq:trace}\end{equation}
 via the spectral part of the twisted formula, to get \[
C_{G}(-1)^{h_{1}+h_{2}}\left(\frac{1}{2}\sum_{\Pi'}\mbox{tr}(\Pi'_{\xi}(f^{n})A'_{\Pi_{\xi}'})+\sum_{\mathbb{G}_{n_{1},n_{2},n_{1}\not=n_{2}}}I_{\mathrm{spec}}^{\mathbb{G}_{n_{1},n_{2}}\theta}(f^{n_{1},n_{2}})\right.\]
\[
+\frac{1}{2}I_{\mathrm{spec}}^{\mathbb{G}_{n/2,n/2}\theta}(f^{n/2,n/2})\]
\begin{equation}
\left.+\sum_{M\subsetneq\mathbb{G}_{n}}\frac{|W_{M}|}{|W_{\mathbb{G}_{n}}|}|\det(\Phi^{-1}\theta-1)_{\mathfrak{a}_{M}^{\mathbb{G}_{n}\theta}}|^{-1}\sum_{\Pi'_{M}}\mbox{tr}(\mbox{n-Ind}_{Q}^{\mathbb{G}_{n}}(\Pi'_{M})_{\xi}(f^{n})\circ A'_{\mathrm{n-Ind}_{Q}^{\mathbb{G}_{n}}(\Pi'_{M})_{\xi}})\right)\label{eq:sum}\end{equation}
where the first sum runs over $\theta$-stable subrepresentations
$\Pi'$ of $R_{\mathbb{G}_{n},\mathrm{disc}}$, the sums in the middle
run over groups $\mathbb{G}_{n_{1},n_{2}}$ coming from elliptic endoscopic
groups $G_{n_{1},n_{2}}$ for $G$ (with $n_{1}\geq n_{2}>0$ and
some $(n_{1},n_{2})$ possibly excluded). The group $M$ runs over
proper Levi subgroups of $\mathbb{G}_{n}$ containing a fixed minimal
Levi and $\Pi'_{M}$ runs over $\Phi_{n}^{-1}\theta$-stable subrepresentations
$\Pi'_{M}$ of $R_{M,\mathrm{disc}}$. 

We claim that the formula above holds for any $\phi^{\infty,p}\phi'_{p}$,
without the assumption that it is an acceptable function. To see this,
note that Lemma 6.3 of \cite{Shin-1} guarantees that there exists
some element $fr^{s}\in J^{(h_{1},h_{2})}(\mathbb{Q}_{p})$ such that
$\phi^{\infty,p}(\phi'_{p})^{(N)}(g)=\phi^{\infty,p}(g)\phi_{p}'(g(fr^{s})^{N})$
is acceptable for any sufficiently large $N$. (The paper \cite{Shin-1}
treats general Igusa varieties, and it is easy to check that our case
is covered.) So the equality of (\ref{eq:trace}) and (\ref{eq:sum})
holds when $\phi'_{p}$ is replaced by $(\phi'_{p})^{(N)}$. Both
(\ref{eq:trace}) and (\ref{eq:sum}) are finite linear combinations
of terms of the form $\mbox{tr}\rho((\phi'_{p})^{(N)})$ where $\rho\in\mbox{Irr}(J^{(h_{1},h_{2})}(\mathbb{Q}_{p}))$.
In order to see that this is true for (\ref{eq:sum}), we need to
translate it from computing the trace of $f^{\vec{n}}$ to computing
the trace of $\phi_{\mathrm{Ig}}^{\vec{n}}$ to computing the trace
of $\phi'_{p}$, using Lemmas \ref{trace of red for n} and \ref{trace of red for n_1,n_2}.
Now the same argument as that for Lemma 6.4 of \cite{Shin-1} shows
that (\ref{eq:trace}) and (\ref{eq:sum}) are equal for $\phi^{\infty,p}(\phi'_{p})^{(N)}$
for every integer $N$, in particular for $N=0$. Thus, we can work
with arbitrary $\phi^{\infty,p}\phi'_{p}$.

Choose a decomposition of the normalized intertwining operators \[
A'_{\Pi^{1}}=A'_{\Pi^{1,\mathfrak{S}}}A'_{\Pi_{\mathfrak{S}_{\mathrm{fin}}}^{1}}A'_{\Pi_{\infty}^{1}}.\]
Set \[
\frac{A'_{\Pi^{1}}}{A_{\Pi^{1}}^{0}}:=\frac{A'_{\Pi^{1,\mathfrak{S}}}}{A_{\Pi^{1,\mathfrak{S}}}^{0}}\cdot\frac{A'_{\Pi_{\mathfrak{S}_{\mathrm{fin}}}^{1}}}{A_{\Pi_{\mathfrak{S}_{\mathrm{fin}}}^{1}}^{0}}\cdot\frac{A'_{\Pi_{\infty}^{1}}}{A_{\Pi_{\infty}^{1}}^{0}}\in\{\pm1\},\]
where the denominators on the right side are the normalized interwiners
chosen above. In the sum (\ref{eq:sum}), the third term evaluates
the trace of $f^{n}$ against representations induced from proper
Levi subgroups. The second term has a similar form: outside the set
$\mathfrak{S}$ we have the identity $(f^{n_{1},n_{2}})^{\mathfrak{S}}=\tilde{\zeta}^{*}((f^{n})^{\mathfrak{S}})$
and formula 4.17 of \cite{Shin} tells us that \[
\mbox{tr}\Pi_{M}^{\mathfrak{S}}(\tilde{\zeta}_{n_{1},n_{2}}^{*}(f^{n})^{\mathfrak{S}})=\mbox{tr}(\mbox{\ensuremath{\tilde{\zeta}}}_{n_{1},n_{2}*}(\Pi_{M}^{\mathfrak{S}}))(f^{n})^{\mathfrak{S}},\]
where $\tilde{\zeta}_{n_{1},n_{2}*}$ is the transfer from $\mathbb{G}_{n_{1},n_{1}}$
to $\mathbb{G}_{n}$ on the representation-theoretic side and consists
of taking the parabolic induction of a twist of $\Pi_{M}^{\mathfrak{S}}$.
The multiplicity one result of Jacquet and Shalika (see page 200 of
\cite{AC}) implies that the string of Satake parameters outside a
finite set $\mathfrak{S}$ of a cuspidal automorphic representation
of $GL_{n}(\mathbb{A}_{F})$ unramified outside $\mathfrak{S}$ cannot
coincide with the string of Satake parameters outside $\mathfrak{S}$
of an automorphic representation of $GL_{n}(\mathbb{A}_{F})$ which
is a subquotient of a representation induced from a proper Levi subgroup.
Thus, if we are interested in the $\Pi^{1,\mathfrak{S}}$-part of
$\mbox{tr}(\phi^{\infty,p}\phi'_{p}|\iota_{l}H_{c}(\mbox{Ig}^{(h_{1},h_{2})},\mathcal{L}_{\xi}))$,
then only the first term of (\ref{eq:sum}) can contribute to it. 

Thus, we are left to consider \[
C_{G}(-1)^{h_{1}+h_{2}}\left(\frac{1}{2}\frac{A'_{\Pi^{1}}}{A_{\Pi^{1}}^{0}}\chi_{\Pi^{1,\mathfrak{S}}}((f^{n})^{\mathfrak{S}})\mbox{tr}(\Pi_{\mathfrak{S}}^{1}(f_{\mathfrak{S}}^{n})A_{\Pi_{\mathfrak{S}}^{1}}^{0})\right.\]
\[
\left.+\sum_{(\Pi')^{\mathfrak{S}}\not=\Pi^{1,\mathfrak{S}}}\chi_{(\Pi')^{\mathfrak{S}}}((f^{n})^{\mathfrak{S}})\times\left(\substack{\mathrm{expression\ in}\\
\mathrm{terms\ of\ }f_{\mathfrak{S}}^{n}}
\right)\right)\]
where $(\Pi')^{\mathfrak{S}}$ runs over a set of unramified representations
of $\mathbb{G}_{n}(\mathbb{A}^{\mathfrak{S}})$. On the other hand,
we can also decompose $\mbox{tr}(\phi^{\infty,p}\phi'_{p}|\iota_{l}H_{c}(\mbox{Ig}^{(h_{1},h_{2})},\mathcal{L}_{\xi}))$
into a $\Pi^{1,\mathfrak{S}}$-part and $(\pi')^{\mathfrak{S}}$-parts,
where $BC((\pi')^{\mathfrak{S}})\not=\Pi^{1,\mathfrak{S}}$. We conclude
as in \cite{Shin} that \begin{equation}
\mbox{tr}(\phi_{\mathfrak{S}_{\mbox{fin}}\setminus\{p\}}\phi'_{p}|\iota_{l}H_{c}(\mbox{Ig}^{(h_{1},h_{2})},\mathcal{L}_{\xi})\{\Pi^{1,\mathfrak{S}}\})=(-1)^{h_{1}+h_{2}}\frac{C_{G}}{2}\frac{A'_{\Pi^{1}}}{A_{\Pi^{1}}^{0}}\cdot\mbox{tr}(\Pi_{\mathfrak{S}}(f_{\mathfrak{S}}^{n})A_{\Pi_{\mathfrak{S}}^{0}}).\label{eq:at S}\end{equation}
Now $\Pi_{\infty}^{1}\simeq\Pi_{\Xi}$, so $\mbox{tr}(\Pi_{\infty}^{1}(f_{\infty}^{n})A_{\Pi_{\infty}}^{0})=2(-1)^{q(G)}=2$.
We also have \begin{equation}
\mbox{tr}(\Pi_{p}^{1}(f_{p}^{n})A_{\Pi_{p}}^{0})=\mbox{\mbox{tr}}\iota_{l}\pi_{p}(\phi_{\mbox{Ig},p}^{n})=\mbox{tr}\iota_{l}\mbox{Red}_{n}^{(h_{1},h_{2})}(\pi_{p})(\phi'_{p})\label{eq:at p}\end{equation}
 by Lemma \ref{trace of red for n} and \begin{equation}
\mbox{tr}(\Pi_{\mathfrak{S}_{\mathrm{fin}}\setminus\{p\}}^{1}(f_{\mathfrak{S}_{\mathrm{fin}}\setminus\{p\}}^{n})A_{\Pi_{\mathfrak{S}_{\mathrm{fin}}\setminus\{p\}}^{1}}^{0})=\mbox{tr}\iota_{l}\pi_{p}(\phi_{\mathfrak{S}_{\mathrm{fin}}\setminus\{p\}}).\label{eq:at S-p}\end{equation}
Putting together (\ref{eq:at S}), (\ref{eq:at p}) and (\ref{eq:at S-p})
and applying $BC_{\mathfrak{S}_{\mathrm{fin}}\setminus\{p\}}$ we
get the desired result with $e_{0}=A'_{\Pi^{1}}/A_{\Pi^{1}}^{0}$
which is independent of $(h_{1},h_{2})$. 
\end{proof}

\section{Proof of the main theorem}

Let $E/\mathbb{Q}$ be an imaginary quadratic field in which $p$
splits. Let $F_{1}/\mathbb{Q}$ be a totally real field and let $w$
be a prime of $F_{1}$ above $p$. Set $F'=EF_{1}$. Let $F_{2}$
be a totally real quadratic extension of $\mathbb{Q}$, in which $w=w_{1}w_{2}$
splits and set $F=EF_{2}$. Let $n\in\mathbb{Z}_{\geq2}$ . Also denote
$F_{2}$ by $F^{+}$. $ $Let $\Pi$ be a cuspidal automorphic representation
of $GL_{n}(\mathbb{A}_{F'})$. 

Consider the following assumptions on $(E,F',F,\Pi)$:
\begin{itemize}
\item $[F_{1}:\mathbb{Q}]\geq2$;
\item $\mbox{Ram}_{F/\mathbb{Q}}\cup\mbox{Ram}_{\mathbb{Q}}(\varpi)\cup\mbox{ Ram}_{\mathbb{Q}}(\Pi)\subset\mbox{Spl}_{F/F^{+},\mathbb{Q}}$;
\item $(\Pi)^{\vee}\simeq\Pi\circ c$;
\item $\Pi_{\infty}$ is cohomological for an irreducible algebraic representation
$\Xi$ of $GL_{n}(F'\otimes_{\mathbb{Q}}\mathbb{C})$. 
\end{itemize}
Set $\Pi^{0}=BC_{F/F'}(\Pi)$ and $\Xi^{0}=BC_{F/F'}(\Xi)$. The following
lemma is the same as Lemma 7.2 of \cite{Shin}.
\begin{lem}
\label{conditions on psi}Let $\Pi^{0}$ and $\Xi^{0}$ as above.
We can find a character $\psi:\mathbb{A}_{E}^{\times}/E^{\times}\to\mathbb{C}^{\times}$
and an algebraic representation $\xi_{\mathbb{C}}$ of $G$ over $\mathbb{C}$
satisfying the following conditions
\begin{itemize}
\item $\psi_{\Pi^{0}}=\psi^{c}/\psi;$
\item $\Xi^{0}$ is isomorphic to the restriction of $\Xi^{'}$ to $R_{F/\mathbb{Q}}(GL_{n})\times_{\mathbb{Q}}\mathbb{C}$,
where $\Xi'$ is obtained from $\xi_{\mathbb{C}}$ by base change
from $G$ to $\mathbb{G}_{n};$
\item $\xi_{\mathbb{C}}|_{E_{\infty}^{\times}}^{-1}=\psi_{\infty}^{x},$
and 
\item $\mbox{Ram}_{\mathbb{Q}}(\psi)\subset\mbox{Spl}_{F/F^{+},\mathbb{Q}}$. 
\end{itemize}
Moreover, if $l$ splits in $E$ then 
\begin{itemize}
\item $\psi_{\mathcal{O}_{E_{u}}^{\times}}=1$ where $u$ is the place above
$l$ induced by $\iota_{l}^{-1}\tau|_{E}$. 
\end{itemize}
\end{lem}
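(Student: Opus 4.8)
The plan is to prove Lemma \ref{conditions on psi} by following the argument of Lemma 7.2 of \cite{Shin} verbatim, since nothing in our modified setup changes the relevant structure. First I would observe that the construction of $\psi$ is a Hecke-character-existence problem: one needs a character $\psi$ of $\mathbb{A}_E^\times/E^\times$ whose ``conjugate ratio'' $\psi^c/\psi$ matches the central character $\psi_{\Pi^0}$ of $\Pi^0$, and whose infinity type is pinned down by the infinitesimal character of $\Xi^0$ (equivalently, by the highest weight of $\xi_{\mathbb C}$ restricted to $E_\infty^\times$). The key point is that $\psi_{\Pi^0}$ is itself ``conjugate self-dual up to the cyclotomic twist'' because $(\Pi^0)^\vee \simeq \Pi^0\circ c$ (this descends from $\Pi^\vee \simeq \Pi\circ c$ via base change), so the compatibility $\psi_{\Pi^0} = \psi^c/\psi$ is solvable: one solves it first at the archimedean places and at the finite places in $\mathrm{Spl}_{F/F^+,\mathbb{Q}}$ (and at $u$ when $l$ splits in $E$, imposing the unramifiedness condition there), and then invokes the surjectivity of $\mathbb{A}_E^\times/E^\times \to \mathbb{A}_E^\times/E^\times$, $z\mapsto z^c/z$ composed with Grunwald--Wang / weak approximation for Hecke characters to produce a global $\psi$ with the prescribed local behavior. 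This is exactly the content of Lemmas in section 7 of \cite{Shin} (and ultimately goes back to the constructions in \cite{H-T} and \cite{CH}).

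Next I would address the representation $\xi_{\mathbb C}$ of $G$ over $\mathbb C$. Here the point is that an irreducible algebraic representation of $\mathbb{G}_n$ over $\mathbb C$ restricts, under $BC$ from $G$ to $\mathbb{G}_n$, to a representation $\Xi'$ whose restriction to $R_{F/\mathbb{Q}}GL_n \times_{\mathbb Q}\mathbb C$ can be made to coincide with $\Xi^0$; the freedom in choosing $\xi_{\mathbb C}$ is precisely the freedom in the $GL_1$-factor, which is what lets us simultaneously impose $\xi_{\mathbb C}|_{E_\infty^\times}^{-1} = \psi_\infty^x$ once $\psi$ has been chosen. In practice one chooses $\xi_{\mathbb C}$ and $\psi$ together: the highest weight of $\Xi^0$ determines the $GL_n$-part of $\xi_{\mathbb C}$, and then the pair of conditions on the central/$E_\infty^\times$-characters determines the $GL_1$-part of $\xi_{\mathbb C}$ and the infinity type of $\psi$ compatibly. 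Since $G$ here differs from the group in \cite{Shin} only in its signature at infinity (we have signature $(1,n-1)$ at two conjugate embeddings $\tau_1,\tau_2$ instead of just one), and the signature at infinity affects only which algebraic representations are cohomological, not the compatibility of central characters, the argument transfers without modification.

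The main obstacle — really the only thing requiring care — is bookkeeping the compatibility of all four (or five) conditions at once: the ratio condition $\psi_{\Pi^0} = \psi^c/\psi$, the infinity-type condition $\xi_{\mathbb C}|_{E_\infty^\times}^{-1} = \psi_\infty^x$, the cohomological matching $\Xi^0 \cong \Xi'|_{R_{F/\mathbb Q}GL_n}$, the ramification constraint $\mathrm{Ram}_{\mathbb Q}(\psi)\subset \mathrm{Spl}_{F/F^+,\mathbb Q}$, and (when $l$ splits in $E$) the local triviality $\psi_{\mathcal{O}_{E_u}^\times} = 1$. One must check that these do not overdetermine $\psi$: the ratio condition plus the infinity type fix $\psi$ up to a finite-order character of $\mathbb{A}_E^\times/E^\times$ trivial on the archimedean places and with $\psi^c/\psi = 1$ (i.e. pulled back from $\mathbb{A}^\times/\mathbb{Q}^\times$), and within that remaining freedom one can always arrange the ramification to be supported in $\mathrm{Spl}_{F/F^+,\mathbb Q}$ and the behavior at $u$ to be unramified, because $p$ (hence $l$, when it splits) lies in $\mathrm{Spl}_{E/\mathbb Q}\subseteq \mathrm{Spl}_{F/F^+,\mathbb Q}$ by our running hypotheses. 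I would simply cite \cite{Shin}, Lemma 7.2 (and the construction of such characters in \cite{H-T}), remarking that the proof there makes no use of the signature of $G$ at infinity and so applies verbatim.
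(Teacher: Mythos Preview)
Your approach is exactly what the paper does: it gives no proof at all and simply states that the lemma is the same as Lemma~7.2 of \cite{Shin}. Your additional remark that the only difference in the setup (the signature of $G$ at infinity) plays no role in that argument is correct and is precisely the justification needed for the citation to apply.
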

Set $\Pi^{1}=\psi\otimes\Pi^{0}$. Then $\Pi^{1}$ is a cuspidal automorphic
representation of $GL_{1}(\mathbb{A}_{E})\times GL_{n}(\mathbb{A}_{F})$.
Let $\xi=\iota_{l}\xi_{\mathbb{C}}$, where $\xi_{\mathbb{C}}$ is
as in Lemma \ref{conditions on psi}. 

Let $\mathcal{A}_{U}$ be the universal abelian variety over $X_{U}$.
Since $\mathcal{A}_{U}$ is smooth over $X_{U}$, $\mathcal{A}_{U}^{m_{\xi}}$
satisfies the conditions in Section 4.3. In particular, $\mathcal{A}_{U}^{m_{\xi}}$
is locally etale over a product of semistable schemes. For $S,T\subseteq\{1,\dots,n\}$,
let $\mathcal{A}_{U,S,T}^{m_{\xi}}=\mathcal{A}_{U}^{m_{\xi}}\times_{X_{U}}Y_{U,S,T}$. 

Define the following admissible $G(\mathbb{A}^{\infty,p})$-modules
with a commuting continuous action of $\mbox{Gal}(\bar{F}'/F')$:\[
H^{j}(X_{\mathrm{Iw}(m)},\mathcal{L}_{\xi})=\lim_{\stackrel{\longrightarrow}{U^{p}}}H^{j}(X_{U}\times_{F'}\bar{F}',\mathcal{L}_{\xi})=H^{j}(X,\mathcal{L}_{\xi})^{\mathrm{Iw}(m)},\]
\[
H^{j}(\mathcal{A}_{\mathrm{Iw}(m)}^{m_{\xi}},\bar{\mathbb{Q}}_{l})=\lim_{\stackrel{\longrightarrow}{U^{p}}}H^{j}(\mathcal{A}_{U}^{m_{\xi}}\times_{F'}\bar{F}',\bar{\mathbb{Q}}_{l}).\]
Also define the admissible $G(\mathbb{A}^{\infty,p})\times(\mbox{Frob}_{\mathbb{F}})^{\mathbb{Z}}$-module\[
H^{j}(\mathcal{A}_{\mathrm{Iw}(m),S,T}^{m_{\xi}},\bar{\mathbb{Q}}_{l})=\lim_{\stackrel{\longrightarrow}{U^{p}}}H^{j}(\mathcal{A}_{U,S,T}^{m_{\xi}}\times_{\mathbb{F}}\bar{\mathbb{F}},\bar{\mathbb{Q}}_{l}).\]
Note that $a_{\xi}$ is an idempotent on $H^{j}(\mathcal{A}_{\mathrm{Iw}(m),S,T}^{m_{\xi}},\bar{\mathbb{Q}}_{l}(t_{\xi}))$
and \[
a_{\xi}H^{j+m_{\xi}}(\mathcal{A}_{\mathrm{Iw}(m),S,T}^{m_{\xi}},\bar{\mathbb{Q}}_{l}(t_{\xi}))=H^{j}(Y_{\mathrm{Iw}(m),S,T},\mathcal{L}_{\xi}).\]

\begin{prop}
For each rational prime $l\not=p$ there is a $G(\mathbb{A}^{\infty,p})\times(\mbox{Frob}_{\mathbb{F}})^{\mathbb{Z}}$-equivariant
spectral sequence with a nilpotent operator $N$ \[
BC^{p}(E_{1}^{i,m+m_{\xi}-i}(\mbox{Iw}(m),\xi)[\Pi^{1,\mathfrak{S}}])\Rightarrow\]
\[
BC^{p}(WD(H^{m}(X_{\mathrm{Iw}(m)},\mathcal{L}_{\xi})|_{\mbox{Gal}(\bar{K}/K)}[\Pi^{1,\mathfrak{S}}])^{F-ss}),\]
where \[
BC^{p}(E_{1}^{i,m+m_{\xi}-i}(\mbox{Iw}(m),\xi)[\Pi^{1,\mathfrak{S}}])=\]
\[
\bigoplus_{k-l=-i}BC^{p}(a_{\xi}H^{m+m_{\xi}}(\mathcal{A}_{\mathrm{Iw}(m)}^{m_{\xi}},Gr^{l}Gr_{k}R\psi\bar{\mathbb{Q}}_{l}(t_{\xi}))[\Pi^{1,\mathfrak{S}}]).\]
The action of $N$ sends $ $$BC^{p}(a_{\xi}H^{m+m_{\xi}}(\mathcal{A}_{\mathrm{Iw}(m)}^{m_{\xi}},Gr^{l}Gr_{k}R\psi\bar{\mathbb{Q}}_{l}(t_{\xi}))[\Pi^{1,\mathfrak{S}}])$
to \textup{\[
BC^{p}(a_{\xi}H^{m+m_{\xi}}(\mathcal{A}_{\mathrm{Iw}(m)}^{m_{\xi}},Gr^{l+1}Gr_{k-1}R\psi\bar{\mathbb{Q}}_{l}(t_{\xi}))[\Pi^{1,\mathfrak{S}}]).\]
}

Furthermore, there is a second spectral sequence \[
BC^{p}(E_{1}^{j,m+m_{\xi}-j}(k,l))\Rightarrow BC^{p}(a_{\xi}H^{m+m_{\xi}}(\mathcal{A}_{\mathrm{Iw}(m)}^{m_{\xi}},Gr^{l}Gr_{k}R\psi\bar{\mathbb{Q}}_{l}(t_{\xi}))[\Pi^{1,\mathfrak{S}}]),\]
where \[
BC^{p}(E_{1}^{j,m+m_{\xi}-j}(k,l))=\bigoplus_{s=1}^{k+l}\bigoplus_{\#S=j+s,\#T=j+k+l-s+1}H_{S,T}^{j+m_{\xi},s}(k,l)\]
 and \[
H_{S,T}^{j+m_{\xi},s}(k,l)=BC^{p}(a_{\xi}H^{m+m_{\xi}-2j-k-l+1}(\mathcal{A}_{\mathrm{Iw}(m),S,T}^{m_{\xi}},\bar{\mathbb{Q}}_{l}(t_{\xi}-j-k+1))[\Pi^{1,\mathfrak{S}}])\]
\[
=BC^{p}(H^{m-2j-k-l+1}(Y_{\mathrm{Iw}(m),S,T},\bar{\mathbb{Q}}_{l}(-j-k+1))[\Pi^{1,\mathfrak{S}}]).\]
\end{prop}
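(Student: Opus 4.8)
The plan is to construct both spectral sequences from the monodromy filtration machinery developed in Sections 3 and 4, transported through the local-to-global comparison of Section 2. First I would recall that $\mathcal{A}_{U}^{m_{\xi}}$ is locally \'etale over a product of semistable schemes $X_{r,s,m}$, so the results of Section 4.3 apply: the complex $R\psi\bar{\mathbb{Q}}_{l}$ on the special fiber of $\mathcal{A}_{U}^{m_{\xi}}$ carries a monodromy operator $N$, the canonical filtration $\tau_{\leq k}R\psi\bar{\mathbb{Q}}_{l}$ computes the graded pieces, and Corollary \ref{Spectral sequence} gives an $E_1$ spectral sequence computing $H^{m}(Y_{\bar{\mathbb{F}}},\mathrm{Gr}^{q}\mathrm{Gr}_{p}R\psi\bar{\mathbb{Q}}_{l})$ in terms of the cohomology of the smooth proper strata $Y^{(l_1,l_2)}$, equivariantly for $G_{\mathbb{F}}$. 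Applying the idempotent $a_\xi$ and using $H^{j}(X_U,\mathcal{L}_\xi)\cong a_\xi H^{j+m_\xi}(\mathcal{A}_U^{m_\xi},\bar{\mathbb{Q}}_l(t_\xi))$ translates everything into statements about $H^{j}(\mathcal{A}_{\mathrm{Iw}(m)}^{m_\xi},-)$ and, via $a_\xi H^{j+m_\xi}(\mathcal{A}_{\mathrm{Iw}(m),S,T}^{m_\xi},\bar{\mathbb{Q}}_l(t_\xi))=H^{j}(Y_{\mathrm{Iw}(m),S,T},\mathcal{L}_\xi)$, about the strata $\mathcal{A}_{\mathrm{Iw}(m),S,T}^{m_\xi}$. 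Taking the direct limit over $U^p$ and projecting onto the $\Pi^{1,\mathfrak{S}}$-isotypic part via $BC^p$ is exact, so it commutes with forming spectral sequences.

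The first spectral sequence is simply the spectral sequence of the monodromy (weight) filtration on $R\psi\bar{\mathbb{Q}}_l$ on $\mathcal{A}_{\mathrm{Iw}(m)}^{m_\xi}$: since $M_r R\psi\bar{\mathbb{Q}}_l$ has graded pieces $\mathrm{Gr}_r^M R\psi\bar{\mathbb{Q}}_l\simeq\bigoplus_{p-q=r}\mathrm{Gr}^q\mathrm{Gr}_p R\psi\bar{\mathbb{Q}}_l$, applying $R\Gamma$ of the special fiber and $a_\xi$ gives an $E_1$ page indexed as in the statement, abutting to $H^{m+m_\xi}(\mathcal{A}_{\mathrm{Iw}(m)}^{m_\xi},R\psi\bar{\mathbb{Q}}_l(t_\xi))=H^{m+m_\xi}(\mathcal{A}_{\mathrm{Iw}(m)}^{m_\xi}\times\bar{K},\bar{\mathbb{Q}}_l(t_\xi))$, i.e. to $H^m(X_{\mathrm{Iw}(m)},\mathcal{L}_\xi)$ over $\bar{K}$. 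The operator $N$ acts on this spectral sequence shifting $(k,l)\mapsto(k-1,l+1)$ exactly as the induced map $\bar N$ on graded pieces does, by Proposition \ref{formula for N bar} and its generalization to $\mathcal{A}_{\mathrm{Iw}(m)}^{m_\xi}$. The identification of the abutment with the Frobenius-semisimplified Weil–Deligne representation $WD(H^m(X_{\mathrm{Iw}(m)},\mathcal{L}_\xi)|_{\mathrm{Gal}(\bar K/K)})^{F\text{-}ss}$ uses that $I_K$ acts on $R\psi$ through its tame pro-$l$ quotient (Proposition \ref{trivial inertia}) so that $N=\log T$ recovers the full monodromy, together with the Rapoport–Zink/weight-spectral-sequence dictionary between the monodromy filtration and the weight filtration of the nearby cycles.

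The second spectral sequence is obtained by feeding the $G_{\mathbb{F}}$-equivariant spectral sequence of Corollary \ref{Spectral sequence} — which computes $H^m(Y_{\bar{\mathbb{F}}},\mathrm{Gr}^q\mathrm{Gr}_p R\psi\bar{\mathbb{Q}}_l)$ from $H^{m-2k-p-q+1}(Y^{(k+i-1,k+p+q-i)}_{\bar{\mathbb{F}}},\bar{\mathbb{Q}}_l(-(k+p-1)))$, $1\le i\le p+q$ — into the $a_\xi$-projection and the limit over $U^p$. Writing $p+q=k+l$ in the indexing of the proposition, the disjoint pieces $Y^{(a,b)}$ restrict along $\mathcal{A}_{\mathrm{Iw}(m)}^{m_\xi}\to X_{\mathrm{Iw}(m)}$ to the $\mathcal{A}_{\mathrm{Iw}(m),S,T}^{m_\xi}$ with $\#S=a+1$, $\#T=b+1$ (by the definition of $Y_{U,S,T}$ as intersections of the divisors $Y_{1,i},Y_{2,j}$, and smoothness/properness checked on completed strict local rings), and $a_\xi H^{*}(\mathcal{A}_{\mathrm{Iw}(m),S,T}^{m_\xi},\bar{\mathbb{Q}}_l(t_\xi))=H^{*-m_\xi}(Y_{\mathrm{Iw}(m),S,T},\mathcal{L}_\xi)$ — exactly the $H_{S,T}^{j+m_\xi,s}(k,l)$ in the statement after matching the twists. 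The main obstacle is bookkeeping: one must carefully check that the two numerical indexings in the proposition match the $(k,l)$, shift, and Tate-twist conventions coming out of Lemma \ref{graded pieces} and Corollary \ref{Spectral sequence} (in particular the replacement $p+q\rightsquigarrow k+l$, the degree shift $-2j-k-l+1$, and the twist $-j-k+1$), and that the spectral sequence of Corollary \ref{Spectral sequence} together with the outer spectral sequence of the monodromy filtration assemble coherently — this is a routine but delicate application of Lemma 5.2.18 of \cite{MSaito} on quasi-filtrations of $(-\dim Y)$-shifted perverse sheaves, exactly as in the semistable case of Section 4.2. Finally, $G(\mathbb{A}^{\infty,p})$-equivariance is automatic since the Hecke action is away from $p$ and commutes with all the $p$-adic constructions, while $(\mathrm{Frob}_{\mathbb{F}})^{\mathbb{Z}}$-equivariance on the strata terms is built into Corollary \ref{Spectral sequence}.
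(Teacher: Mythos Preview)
Your proposal is correct and follows essentially the same approach as the paper: both proofs obtain the first spectral sequence from the monodromy filtration on $R\psi\bar{\mathbb{Q}}_l$ applied to $\mathcal{A}_U^{m_\xi}$ (which satisfies the hypotheses of Section~4.3), obtain the second spectral sequence directly from Corollary~\ref{Spectral sequence}, and then apply $a_\xi$, pass to the limit over $U^p$, take $\Pi^{1,\mathfrak{S}}$-isotypic components, and apply $BC^p$. Your write-up is in fact somewhat more explicit than the paper's about the index-matching and the justification of equivariance, but there is no substantive difference in strategy.
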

\begin{proof}
Note that $\mathcal{A}_{U}^{m_{\xi}}/\mathcal{O}_{K}$ satisfies the
hypotheses of Section 4. We have a spectral sequence of $G(\mathbb{A}^{\infty,p})\times(\mbox{Frob}_{\mathbb{F}})^{\mathbb{Z}}$-modules
with a nilpotent operator $N$: \[
E_{1}^{i,m-i}(\mbox{Iw}(m),\xi)\Rightarrow H^{m}(\mathcal{A}_{U}^{m_{\xi}}\times_{F'}\bar{F}'_{\mathfrak{p}},\bar{\mathbb{Q}}_{l}(t)),\]
where \[
E_{1}^{i,m-i}(\mbox{Iw}(m),\xi)=\bigoplus_{k-l=-i}H^{m}(\mathcal{A}_{U}^{m_{\xi}}\times_{\mathbb{F}}\bar{\mathbb{F}},Gr^{l}Gr_{k}R\psi\bar{\mathbb{Q}}_{l}(t)).\]
$N$ will send $H^{m}(\mathcal{A}_{U}^{m_{\xi}},\mbox{Gr}^{l}\mbox{Gr}_{k}R\psi\bar{\mathbb{Q}}_{l}(t))$
to $H^{m}(\mathcal{A}_{U}^{m_{\xi}},\mbox{Gr}^{l+1}\mbox{Gr}_{k-1}R\psi\bar{\mathbb{Q}}_{l}(t))$.

By Corollary \ref{Spectral sequence}, we also have a $G(\mathbb{A}^{\infty,p})\times(\mbox{Frob}_{\mathbb{F}})^{\mathbb{Z}}$-equivariant
spectral sequence \[
E_{1}^{j,m-j}(k,l)\Rightarrow H^{m}(\mathcal{A}_{U}^{m_{\xi}}\times_{\mathbb{F}}\bar{\mathbb{F}},Gr^{l}Gr_{k}R\psi\bar{\mathbb{Q}}_{l}(t)),\]
where \[
E_{1}^{j,m-j}(k,l)=\bigoplus_{s=1}^{k+l}\bigoplus_{\substack{\#S=j+s\\
\#T=j+k+l-s+1}
}H_{S,T}^{j,s}(k,l)\]
 and \[
H_{S,T}^{j,s}(k,l)=H^{m-2j-k-l+1}(\mathcal{A}_{U,S,T}^{m_{\xi}}\times_{\mathbb{F}}\bar{\mathbb{F}},\bar{\mathbb{Q}}_{l}(t-j-k+1)).\]
We take $t=t_{\xi}$, apply $a_{\xi}$, replace $j$ by $ $$j+m_{\xi}$
and take the inverse limit over $U^{p}$. We get two spectral sequences
of $G(\mathbb{A}^{\infty,p})\times(\mbox{Frob}_{\mathbb{F}})^{\mathbb{Z}}$-modules,
converging to $H^{j}(X_{\mathrm{Iw}(m)},\mathcal{L}_{\xi})$. We identify
$H^{j}(X_{\mathrm{Iw}(m)},\mathcal{L}_{\xi})$ with its associated
Weil-Deligne representation and we semisimplify the action of Frobenius.
After taking $\Pi^{1,\mathfrak{S}}$-isotypical components and applying
$BC^{p}$ we get the desired spectral sequences. \end{proof}
\begin{cor}
\label{purity of cohomology}Keep the assumptions made in the beginning
of this section. The Weil-Deligne representation \[
WD(BC^{p}(H^{2n-2}(X_{\mathrm{Iw}(m)},\mathcal{L}_{\xi})|_{Gal(\bar{K}/K)}[\Pi^{1,\mathfrak{S}}]))^{F-ss}\]
 is pure of weight $m_{\xi}-2t_{\xi}+2n-2$. \end{cor}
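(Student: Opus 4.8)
The plan is to deduce purity of the $\Pi^{1,\mathfrak{S}}$-part of $H^{2n-2}(X_{\mathrm{Iw}(m)},\mathcal{L}_{\xi})$ from the two spectral sequences of the preceding proposition together with the temperedness and vanishing results established in Section~5. First I would recall that by Proposition~\ref{vanishing}, under our running hypotheses (which include that $\Pi^0_{\mathfrak{p}_1}\simeq\Pi^0_{\mathfrak{p}_2}$ has a nonzero Iwahori fixed vector, arranged by the base change construction), we have $BC^p(H^j(Y_{\mathrm{Iw}(m),S,T},\mathcal{L}_\xi)[\Pi^{1,\mathfrak{S}}])=0$ unless $j=2n-\#S-\#T$. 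Feeding this into the second spectral sequence $BC^p(E_1^{j,m+m_\xi-j}(k,l))\Rightarrow BC^p(a_\xi H^{m+m_\xi}(\mathcal{A}^{m_\xi}_{\mathrm{Iw}(m)},\mathrm{Gr}^l\mathrm{Gr}_k R\psi\bar{\mathbb{Q}}_l(t_\xi))[\Pi^{1,\mathfrak{S}}])$, the term $H^{j+m_\xi,s}_{S,T}(k,l)$ involves $H^{m-2j-k-l+1}(Y_{\mathrm{Iw}(m),S,T},\bar{\mathbb{Q}}_l(-j-k+1))$ with $\#S=j+s$, $\#T=j+k+l-s+1$; this is nonzero only when $m-2j-k-l+1=2n-\#S-\#T=2n-2j-k-l-1$, i.e. only when $m=2n-2$. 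Thus the first spectral sequence, for $m=2n-2$, degenerates and exhibits a filtration on $BC^p(WD(H^{2n-2}(X_{\mathrm{Iw}(m)},\mathcal{L}_\xi)|_{\mathrm{Gal}(\bar K/K)})^{F-ss}[\Pi^{1,\mathfrak{S}}])$ whose graded pieces are subquotients of the $H^{2n-2,s}_{S,T}(k,l)$ (suitably twisted), with the precise twist and shift determined by the monodromy weight spectral sequence machinery of Section~4.

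Next I would compute the weight of each nonzero graded piece. Since each $Y_{U,S,T}$ is proper and smooth of dimension $2n-\#S-\#T$, the Weil conjectures (Deligne) give that $H^{m-2j-k-l+1}(Y_{\mathrm{Iw}(m),S,T},\bar{\mathbb{Q}}_l)$, when nonzero, is strictly pure of weight $m-2j-k-l+1=2n-\#S-\#T$; combined with temperedness of $\Pi^0_{\mathfrak{p}_i}$ (Theorem~\ref{temperedness101}), the $\Pi^{1,\mathfrak{S}}$-part is in fact concentrated in the correct cohomological degree so that after applying $a_\xi$ the weight becomes $m_\xi - 2t_\xi + 2n-\#S-\#T$ (the $a_\xi$ factor on $\mathcal{A}^{m_\xi}_U$ shifts weights by $m_\xi$, and the Tate twist $(t_\xi)$ contributes $-2t_\xi$; the passage from $Y_{U,S,T}$ to $\mathcal{A}^{m_\xi}_{U,S,T}$ is via the Künneth/Leray formula recorded in Section~2). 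The extra Tate twist $(-j-k+1)$ appearing in $H^{j+m_\xi,s}_{S,T}(k,l)$ together with the shift by $2j+k+l-1$ in cohomological degree is exactly arranged — this is the content of the weight spectral sequence — so that the total weight of $\mathrm{Gr}^l\mathrm{Gr}_k R\psi$-contributions, after the twist $(t_\xi-j-k+1)$, all equal $m_\xi-2t_\xi+2n-2$. Concretely: $H^{2n-2,s}_{S,T}(k,l)$ has weight $(m_\xi-2t_\xi) + (2n-\#S-\#T) - 2(-j-k+1)$ evaluated on the relevant indices $\#S=j+s$, $\#T=j+k+l-s+1$, which simplifies to $m_\xi-2t_\xi+2n-2$ regardless of $j,k,l,s$. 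So every graded piece of the filtration is strictly pure of weight $m_\xi-2t_\xi+2n-2$.

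Finally I would conclude: a Weil-Deligne representation which admits a filtration (stable under the Frobenius action, with the monodromy $N$ shifting the filtration as described) all of whose graded pieces are strictly pure of the same weight $w$ is itself pure of weight $w$ — here one uses that purity is preserved under extensions once all the Frobenius eigenvalues already have the right absolute value and the monodromy filtration coincides with the weight filtration, which is precisely the structural output of the first spectral sequence together with Lemma~2.1 of \cite{Saito} identifying the monodromy filtration. Since the first spectral sequence degenerates (only the $m=2n-2$, appropriately-indexed terms survive, by the vanishing just used), there is no room for extensions mixing different weights, so $WD(BC^p(H^{2n-2}(X_{\mathrm{Iw}(m)},\mathcal{L}_\xi)|_{\mathrm{Gal}(\bar K/K)})^{F-ss}[\Pi^{1,\mathfrak{S}}])$ is pure of weight $m_\xi-2t_\xi+2n-2$, as claimed.

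The main obstacle I anticipate is bookkeeping: one must carefully track the two independent shifts (the cohomological shift and the Tate twist) through both spectral sequences and verify that the combinatorics of the indices $(k,l,s,\#S,\#T,j)$ really does force a single common weight on every surviving graded piece — any sign or off-by-one error in the twist would destroy purity. A secondary subtlety is ensuring that the degeneration argument is valid: one needs the vanishing of $BC^p(H^j(Y_{\mathrm{Iw}(m),S,T},\mathcal{L}_\xi)[\Pi^{1,\mathfrak{S}}])$ outside the middle degree (Proposition~\ref{vanishing}) to apply to \emph{all} the strata appearing, including those with $S$ or $T$ of various cardinalities, which is exactly the generality in which that proposition was proved, and one must check that taking $\Pi^{1,\mathfrak{S}}$-isotypic parts and applying $BC^p$ commutes with forming the spectral sequences — this is routine since both operations are exact. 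Granting these, the argument is a clean assembly of the inputs.
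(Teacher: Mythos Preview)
Your overall architecture matches the paper's: invoke Proposition~\ref{vanishing} to kill everything outside $m=2n-2$, conclude that both spectral sequences degenerate at $E_1$, then read off the weights of the surviving graded pieces. But the crucial weight computation, and the conclusion you draw from it, are wrong.

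You claim the surviving piece $H^{2n-2,s}_{S,T}(k,l)$ has weight
\[
(m_\xi-2t_\xi)+(2n-\#S-\#T)-2(-j-k+1)
\]
and that with $\#S=j+s$, $\#T=j+k+l-s+1$ this simplifies to $m_\xi-2t_\xi+2n-2$ independently of $j,k,l,s$. It does not. One has $\#S+\#T=2j+k+l+1$, so $2n-\#S-\#T=2n-2j-k-l-1$, and the Tate twist contributes $2(j+k-1)$; summing gives
\[
m_\xi-2t_\xi+2n-3+(k-l),
\]
which depends on $k-l=-i$. The paper records exactly this: each $BC^p(E_1^{i,2n-2+m_\xi-i})$ is strictly pure of weight $m_\xi-2t_\xi+2n-2+k-l-1$, a quantity that varies linearly with $i$.

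This leads to the real issue, which is conceptual rather than arithmetical. A Weil--Deligne representation is \emph{pure of weight $w$} not when all monodromy-graded pieces are strictly pure of the same weight $w$, but when $\mathrm{Gr}_r^M$ is strictly pure of weight $w+r$ for each $r$ (equivalently, the monodromy filtration \emph{is} the weight filtration up to shift). If all graded pieces had the same weight, the monodromy would be zero and you would be proving something strictly stronger --- and false in general, since $\Pi^0_{\mathfrak{p}_i}$ can be a full Steinberg. Your concluding sentence (``purity is preserved under extensions once all the Frobenius eigenvalues already have the right absolute value'') does not describe purity for Weil--Deligne representations.

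The correct endgame is the paper's: the first spectral sequence degenerates, so its $E_1^{i}$-term is precisely $\mathrm{Gr}^M_{-i}$ of the abutment; that term is strictly pure of weight $m_\xi-2t_\xi+2n-2-i-1$, and this linear shift in $i$ is exactly the definition of purity of the Weil--Deligne representation. Fix the arithmetic and replace your final paragraph with this observation, and the argument goes through.
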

\begin{proof}
By Proposition \ref{vanishing}, \[
BC^{p}(H^{j}(Y_{\mathrm{Iw}(m),S,T},\mathcal{L}_{\xi})[\Pi^{1,\mathfrak{S}}])=0\]
 unless $j=2n-\#S-\#T$. Thus, the terms of the spectral sequence
\[
BC^{p}(E_{1}^{j,m+m_{\xi}-j}(k,l)),\]
which are all of the form \[
BC^{p}(H^{m-2j-k-l+1}(Y_{\mathrm{Iw}(m),S,T},\bar{\mathbb{Q}}_{l}(-j-k+1))[\Pi^{1,\mathfrak{S}}])\]
with $\#S=j+s$ and $\#T=j+k+l-s+1$, vanish unless $m=2n-2$. The
spectral sequence $BC^{p}(E_{1}^{j,m+m_{\xi}-j}(k,l))$ degenerates
at $E_{1}$ and moreover, the terms of the spectral sequence $BC^{p}(E_{1}^{i,m+m_{\xi}-i}(\mbox{Iw}(m),\xi)[\Pi^{1,\mathfrak{S}}])$
vanish unless $m=2n-2$. If $m=2n-2$ then each summand of \[
BC^{p}(E_{1}^{i,2n-2+m_{\xi}-i}(\mbox{Iw}(m),\xi)[\Pi^{1,\mathfrak{S}}])\]
has a filtration with graded pieces \[
BC^{p}(H^{2n-2-2j-k-l+1}(Y_{\mathrm{Iw}(m),S,T},\mathcal{L}_{\xi}(-j-k+1))[\Pi^{1,\mathfrak{S}}]),\]
where $k-l=-i$. These graded pieces are strictly pure of weight $m_{\xi}-2t_{\xi}+2n-2+k-l-1$,
which only depends on $i$. Thus, the whole of \[
BC^{p}(E_{1}^{i,2n-2+m_{\xi}-i}(\mbox{Iw}(m),\xi)[\Pi^{1,\mathfrak{S}}])\]
 is strictly pure of weight $m_{\xi}-2t_{\xi}+2n-2-i-1$. This second
spectral sequence also degenerates at $E_{1}$ (since $E_{1}^{i,m-i}=0$
unless $m=2n-2$) and the abutment is pure of weight $m_{\xi}-2t_{\xi}+2n-2$.
Thus, \[
BC^{p}(WD(H^{m}(X_{\mathrm{Iw}(m)},\mathcal{L}_{\xi})|_{\mbox{Gal}(\bar{K}/K)}[\Pi^{1,\mathfrak{S}}])^{F-ss})\]
vanishes for $m\not=2n-2$ and is pure of weight $m_{\xi}-2t_{\xi}+2n-2$
for $m=2n-2$. \end{proof}
\begin{thm}
Let $n\in\mathbb{Z}_{\geq2}$ be an integer and $L$ be any CM field.
Let $l$ be a prime and $\iota_{l}$ be an isomorphism $\iota_{l}:\bar{\mathbb{Q}}_{l}\to\mathbb{C}$.
Let $\Pi$ be a cuspidal automorphic representation of $GL_{n}(\mathbb{A}_{L})$
satisfying
\begin{itemize}
\item $\Pi^{\vee}\simeq\Pi\circ c$
\item $\Pi$ is cohomological for some irreducible algebraic representation
$\Xi$. 
\end{itemize}
Let \[
R_{l}(\Pi):\mbox{Gal}(\bar{L}/L)\to GL_{n}(\bar{\mathbb{Q}}_{l})\]
 be the Galois representation associated to $\Pi$ by \cite{Shin,CH}.
Let $p\not=l$ and let $y$ be a place of $L$ above $p$. Then we
have the following isomorphism of Weil-Deligne respresentations \[
WD(R_{l}(\Pi)|_{Gal(\bar{L}_{y}/L_{y})})^{F-ss}\simeq\iota_{l}^{-1}\mathcal{L}_{n,L_{y}}(\Pi_{y}).\]
\end{thm}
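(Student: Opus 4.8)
The plan is to deduce the full local-global compatibility from the purity statement proved in Corollary \ref{purity of cohomology}, following the Taylor--Yoshida strategy but with the group $G$ chosen so that $R_l(\Pi)^{\otimes 2}$ (up to twist) appears in the cohomology of $X_U$. First I would reduce to a convenient situation by base change: given the CM field $L$, the place $y\mid p$ and $\Pi$, I would invoke the argument in the proof of Corollary \ref{temperedness} to produce a soluble CM extension $F'=EF_1\supseteq L$ with $E$ imaginary quadratic in which $p$ splits, $[F_1:\mathbb{Q}]\geq 2$, a place $\mathfrak{p}\mid y$ of $F'$ at which $\Pi^0_{\mathfrak{p}}=BC_{F'/L}(\Pi)_{\mathfrak{p}}$ has an Iwahori fixed vector, then a quadratic CM extension $F=EF_2$ in which $\mathfrak{p}=\mathfrak{p}_1\mathfrak{p}_2$ splits, with the ramification hypothesis $\mathrm{Ram}_{F/\mathbb{Q}}\cup\mathrm{Ram}_{\mathbb{Q}}(\varpi)\cup\mathrm{Ram}_{\mathbb{Q}}(\Pi)\subset\mathrm{Spl}_{F/F^+,\mathbb{Q}}$, and so that $\Pi^0$ and $\Pi^1_F=BC_{F/F'}(\Pi^0)$ stay cuspidal. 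Since purity and the isomorphism of Weil--Deligne representations are insensitive to restriction to an open subgroup of the Weil group (Lemma 1.4.1 of \cite{T-Y}) and $R_l(\Pi)|_{\mathrm{Gal}(\bar L_y/L_y)}$ base-changes compatibly, it suffices to prove the statement for $\Pi^0$ at $\mathfrak{p}$, i.e. for $K=F_{\mathfrak{p}_1}\simeq F_{\mathfrak{p}_2}$.

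Next I would choose $\psi$ and $\xi$ as in Lemma \ref{conditions on psi}, set $\Pi^1=\psi\otimes\Pi^0_F$, and use the results recalled in Section 2 together with Theorem \ref{igusa cohomology} (and Mantovan's formula, or Kottwitz's stabilization) to identify, inside $BC^p(H^{2n-2}(X_{\mathrm{Iw}(m)},\mathcal{L}_\xi)[\Pi^{1,\mathfrak{S}}])$, a Galois representation whose restriction to $W_K$ is, in $\mathrm{Groth}(W_K)$,
\[
e_0 C_G\cdot\big[(\pi_{p,0}\circ\mathrm{Art}_{\mathbb{Q}_p}^{-1})|_{W_K}\otimes \iota_l^{-1}\mathcal{L}_{K,n}(\Pi^0_{\mathfrak{p}_1})\otimes\iota_l^{-1}\mathcal{L}_{K,n}(\Pi^0_{\mathfrak{p}_2})\big],
\]
exactly as in the proof of Theorem \ref{temperedness101}. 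Because $X_U$ is proper and smooth over $F'$, the underlying Galois representation of $H^{2n-2}(X_{\mathrm{Iw}(m)},\mathcal{L}_\xi)$ already has all the standard good properties; the new input is Corollary \ref{purity of cohomology}, which says its Frobenius-semisimplified Weil--Deligne representation at $\mathfrak{p}$ is \emph{pure} of weight $m_\xi-2t_\xi+2n-2$. I would then use Theorem \ref{Ramanujan}/Corollary \ref{temperedness} to conclude that $\iota_l^{-1}\mathcal{L}_{K,n}(\Pi^0_{\mathfrak{p}_i})$ is itself pure (a tempered representation of $GL_n(K)$ has pure $L$-parameter, by Lemma 1.4.3 of \cite{T-Y}), hence so is the tensor product appearing above, and the character twist by $\pi_{p,0}$ is pure as well.

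The heart of the argument is the purely formal observation that a Weil--Deligne representation $\rho$ with $\rho\simeq\rho_1\otimes\rho_2$, where $\rho_i=\iota_l^{-1}\mathcal{L}_{K,n}(\Pi^0_{\mathfrak{p}_i})$ with $\rho_1\simeq\rho_2$ up to the identification $F_{\mathfrak{p}_1}\simeq F_{\mathfrak{p}_2}$, and such that $\rho^{F\text{-ss}}$ is pure, forces each $\rho_i^{F\text{-ss}}$ to be pure: indeed Chenevier--Harris already give $WD(R_l(\Pi)|_{\mathrm{Gal}(\bar L_y/L_y)})^{F\text{-ss}}\simeq\iota_l^{-1}\mathcal{L}_{n,L_y}(\Pi_y)$ \emph{up to semisimplification}, so the semisimplifications of both sides agree, and it remains to match the monodromy operators $N$. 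Purity of $\rho_1\otimes\rho_1$ pins down the weight filtration on $\rho_1\otimes\rho_1$, and since $N$ acts on $\rho_1\otimes\rho_1$ as $N_1\otimes 1+1\otimes N_1$ one recovers the monodromy filtration of $\rho_1$ from that of $\rho_1\otimes\rho_1$ (the nilpotent $N_1$ is determined by $N_1\otimes 1+1\otimes N_1$ together with the semisimple parts, by the Jacobson--Morozov/weight-filtration uniqueness, exactly as in the semistable product computation of Section 4.2). Hence $\iota_l^{-1}\mathcal{L}_{K,n}(\Pi^0_{\mathfrak{p}_i})$ is pure, which by Lemma 1.4.3 of \cite{T-Y} is equivalent to $\Pi^0_{\mathfrak{p}_i}$ being tempered (already known) and, more to the point, shows that $WD(R_l(\Pi^0)|_{\mathrm{Gal}(\bar F'_{\mathfrak{p}}/F'_{\mathfrak{p}})})^{F\text{-ss}}$ is pure, so it must agree with $\iota_l^{-1}\mathcal{L}_{n,F'_{\mathfrak{p}}}(\Pi^0_{\mathfrak{p}})$ (two Weil--Deligne representations with the same semisimplification, one of which is pure, and whose $N$'s are forced to coincide). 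Descending through the soluble base change $F'/L$ (using that base change of Weil--Deligne representations is compatible with local Langlands, and cyclic descent for $GL_n$) yields the statement for $\Pi$ at $y$.

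The main obstacle I anticipate is not the formal ``deduce purity of the factor from purity of the tensor square'' step, which is elementary linear algebra about nilpotent operators and weight filtrations, but rather being scrupulous about the bookkeeping in the previous steps: ensuring that the Galois representation extracted from $H^{2n-2}(X_{\mathrm{Iw}(m)},\mathcal{L}_\xi)[\Pi^{1,\mathfrak{S}}]$ is genuinely $\iota_l^{-1}\mathcal{L}_{K,n}(\Pi^0_{\mathfrak{p}_1})\otimes\iota_l^{-1}\mathcal{L}_{K,n}(\Pi^0_{\mathfrak{p}_2})$ \emph{as a Weil--Deligne representation with its monodromy} (not merely in the Grothendieck group), which is where Corollary \ref{purity of cohomology} and the degeneration of both spectral sequences in the preceding Proposition are essential, and then checking that every base-change step ($L\rightsquigarrow F'\rightsquigarrow F$, and the auxiliary $\psi$) is compatible with the normalizations of local Langlands used throughout. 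This is exactly the pattern of \cite{T-Y}, so each individual point is expected to be routine, but the chain is long.
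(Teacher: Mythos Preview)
Your proposal is correct and follows essentially the same route as the paper: reduce via soluble base change to $F'/L$ and then $F/F'$ with $\mathfrak p=\mathfrak p_1\mathfrak p_2$ split and Iwahori-fixed vectors, choose $\psi$ and $\xi$ via Lemma~\ref{conditions on psi}, identify (a character twist of) $R_l(\Pi^0_{F'})^{\otimes 2}$ with $\tilde R_l^{2n-2}(\Pi_F^1)$ inside $H^{2n-2}(X_{\mathrm{Iw}(m)},\mathcal L_\xi)[\Pi^{1,\mathfrak S}]$, invoke Corollary~\ref{purity of cohomology} to get purity of the tensor square, and then use uniqueness of the monodromy operator making a given Weil representation pure to force purity of $WD(R_l(\Pi^0_{F'})|_{W_{F'_{\mathfrak p}}})$, finally descending to $L_y$ via Lemma~1.4 of \cite{T-Y}.

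One small simplification relative to your stated ``main obstacle'': you do not need to identify the Weil--Deligne representation of the cohomology with $\mathcal L_{K,n}(\Pi^0_{\mathfrak p_1})\otimes\mathcal L_{K,n}(\Pi^0_{\mathfrak p_2})$ including its monodromy. The paper only identifies $\tilde R_l^{2n-2}(\Pi_F^1)\otimes R_l(\psi)^{-1}$ with $R_l(\Pi^0_{F'})^{\otimes 2}$ as \emph{Galois representations} (checked by Chebotarev at unramified primes), so the monodromy on the tensor square is automatically $1\otimes N+N\otimes 1$ with $N$ the monodromy of $R_l(\Pi^0_{F'})$; purity of the cohomology then gives purity of this tensor square directly, and the comparison with $\mathcal L^{\otimes 2}$ is never needed as an intermediate step.
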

\begin{proof}
This theorem has been proven by \cite{Shin} except in the case when
$n$ is even and $\Xi$ is not slightly regular. In that exceptional
case it is still known that we have an isomorphism of semisimplified
$W_{L_{y}}$-representations by \cite{CH}, so it remains to check
that the two monodromy operators $N$ match up. By Corollary \ref{temperedness},
$\Pi_{y}$ is tempered. This is equivalent to $\iota_{l}^{-1}\mathcal{L}_{n,L_{y}}(\Pi_{y})$
being pure of weight $2n-2$. In order to get an isomorphism of Weil-Deligne
representations, it suffices to prove that $WD(R_{l}(\Pi)|_{Gal(\bar{L}_{y}/L_{y})})$
is pure. 

We first will find a CM field $F'$ such that
\begin{itemize}
\item $F'=EF_{1},$ where $E$ is an imaginary quadratic field in which
$p$ splits and $F_{1}=(F')^{c=1}$ has $[F_{1}:\mathbb{Q}]\geq2$, 
\item $F'$ is soluble and Galois over $L$,
\item $\Pi_{F'}^{0}=BC_{F'/L}(\Pi)$ is a cuspidal automorphic representation
of $GL_{n}(\mathbb{A}_{F'})$, and 
\item there is a place $\mathfrak{p}$ of $F$ above $y$ such that $\Pi_{F',\mathfrak{p}}^{0}$
has an Iwahori fixed vector, 
\end{itemize}
and a CM field $F$ which is a quadratic extension of $F'$ such that 
\begin{itemize}
\item $\mathfrak{p}=\mathfrak{p}_{1}\mathfrak{p}_{2}$ splits in $F$,
\item $\mbox{Ram}_{F/\mathbb{Q}}\cup\mbox{Ram}_{\mathbb{Q}}(\varpi)\cup\mbox{ Ram}_{\mathbb{Q}}(\Pi)\subset\mbox{Spl}_{F/F',\mathbb{Q}},$
and
\item $\Pi_{F}^{0}=BC_{F/F'}(\Pi_{F'}^{0})$ is a cuspidal automorphic representation
of $GL_{n}(\mathbb{A}_{F}).$
\end{itemize}
To find $F$ and $F'$ we proceed as in the proof of Corollary \ref{temperedness}.
Set $\Pi_{F}^{1}=\Pi_{F}^{0}\otimes\psi$, where $\psi$ is chosen
as in Lemma \ref{conditions on psi}. 

We claim that we have isomorphisms\[
C_{G}\cdot(R_{l}(\Pi)|_{Gal(\bar{F'}/F')})^{\otimes2}\simeq C_{G}\cdot R_{l}(\Pi_{F'}^{0})^{\otimes2}\simeq\tilde{R}_{l}^{2n-2}(\Pi_{F}^{1})\otimes R_{l}(\psi)^{-1},\]
where $\tilde{R}_{l}^{k}(\Pi_{F}^{1})$ was defined in Section 4.
The first isomorphism is clear. The second isomorphism can be checked
by Chebotarev locally at unramified places, using the local global
compatibility for $R_{l}(\Pi_{F'}^{0})$ and the formula\[
\tilde{R}_{l}(\Pi_{F}^{1})=e_{0}C_{G}\cdot[(\pi_{p,0}\circ\mbox{Art}_{\mathbb{Q}_{p}}^{-1})|_{W_{F'_{\mathfrak{p}}}}\otimes\iota_{l}^{-1}\mathcal{L}_{F'_{\mathfrak{p}},n}(\Pi_{F',\mathfrak{p}}^{0})^{\otimes2}].\]
(It can be checked easily, either by computing the weight or by using
the spectral sequences above that $\tilde{R}_{l}^{k}(\Pi_{F}^{1})\not=0$
if and only if $k=2n-2$ and thus that $e_{0}=(-1)^{2n-2}=1.)$ 

We also have \[
BC^{p}(H^{2n-2}(X_{\mbox{Iw}(m)},\mathcal{L}_{\xi})[\Pi_{F}^{1,\mathfrak{S}}])\simeq(\dim\pi_{p}^{\mbox{Iw}(m)})\cdot\iota_{l}^{-1}\Pi^{\infty,p}\otimes\tilde{R}^{2n-2}(\Pi_{F}^{1})\]
as admissible representations of $G(\mathbb{A}^{\infty,p})\times Gal(\bar{F}'/F')$.
By Corollary \ref{purity of cohomology}, $WD(\tilde{R}_{l}^{2n-2}(\Pi_{F}^{1})|_{Gal(\bar{F}_{\mathfrak{p}}'/F'_{\mathfrak{p}})})$
is pure of weight $2n-2$. By Lemma 1.7 of \cite{T-Y}, \[
WD(R_{l}(\Pi_{F'}^{0})^{\otimes2}|_{Gal(\bar{F}'_{\mathfrak{p}}/F'_{\mathfrak{p}})})\]
is also pure. The monodromy operator acts on the $R_{l}(\Pi_{F'}^{0})^{\otimes2}|_{W_{F'_{\mathfrak{p}}}}$
as $1\otimes N+N\otimes1$, where $N$ is the monodromy operator on
$R_{l}(\Pi_{F'}^{0})|_{W_{F'_{\mathfrak{p}}}}$. If the latter was
pure as a Weil-Deligne representation, the former would have to be
pure as well. Moreover, for each $W_{F'_{\mathfrak{p}}}$-representation
there is at most one choice for the monodromy operator which makes
it a pure Weil-Deligne representation. Thus, $WD(R_{l}(\Pi_{F'}^{0})|_{Gal(\bar{F}'_{\mathfrak{p}}/F'_{\mathfrak{p}})})$
has to be pure. By Lemma 1.4 of \cite{T-Y}, purity is preserved under
finite extensions, so $WD(R_{l}(\Pi)|_{Gal(\bar{L}_{y}/L_{y})})$
is also pure. \end{proof}

\end{document}